\definecolor{marin}{rgb}   {0.,   0.1,   0.5} 
\definecolor{rouge}{rgb}   {0.8,   0.,   0.} 
\definecolor{sepia}{rgb}   {0.4,   0.25,   0.} 
\definecolor{mag}{rgb}   {0.3,   0,   0.3} 
\newcommand{\ic}{\mathrm{i}}
\newtheorem{theorem}{Theorem}[section]
\newtheorem{corollary}[theorem]{Corollary}
\newtheorem{lemma}[theorem]{Lemma}
\newtheorem{proposition}[theorem]{Proposition}
\newtheorem{definition}[theorem]{Definition}
\newtheorem{remark}[theorem]{Remark}
\newtheorem{result}[theorem]{Meta-theorem}
\begin{document}

\title[Almost global existence on compact manifolds]{Almost global existence for Hamiltonian PDEs on compact manifolds}

\author{Dario Bambusi$^*$, Joackim Bernier$^{\dagger\star}$, Beno\^it Gr\'ebert$^\dagger$, Rafik Imekraz$^\ddagger$}

\thanks{$^*$Dipartimento di Matematica, Universit\`a degli Studi di Milano, Via Saldini 50, I-20133, Milano}

\email{dario.bambusi@unimi.it}

\email{joackim.bernier@univ-nantes.fr}

\thanks{$^\dagger$Nantes Universit\'e, CNRS, Laboratoire de Math\'ematiques Jean Leray, LMJL, F-44000 Nantes, France}

\email{benoit.grebert@univ-nantes.fr}

\thanks{$^\ddagger$La Rochelle Universit\'e, Laboratoire MIA, EA 3165, France}

\email{rafik.imekraz@univ-lr.fr}

\thanks{$^\star$Corresponding author: joackim.bernier@univ-nantes.fr}

%We prove an abstract result of almost global existence of small
%solutions to semi-linear Hamiltonian partial differential equations on
%any smooth compact boundaryless Riemannian manifolds. As a main
%application we prove that, for any fixed $r\geq1$, the solution of the
%nonlinear Klein--Gordon equation on such manifolds whose initial datum
%is sufficiently small, says of size $\varepsilon\ll 1$, in the Sobolev
%space $H^s \times H^{s-1}$, with $s$ sufficiently large, exits and
%remains in $H^s \times H^{s-1}$ for polynomial times
%$|t|\leq\varepsilon^{-r}$.  Moreover, for any fixed $s_0$ independent of $r$ and $\varepsilon$,
%   the norm $H^{s_0} \times H^{s_0-1}$ of the solution remains of order
%  $\varepsilon$ for the same times. This is the first result of
%almost global existence without very specific assumptions on the
%compact manifold.  We also apply this abstract result to nonlinear
%Schr\"odinger equations close to ground states and nonlinear quantum
%harmonic oscillators.

\keywords{Almost global existence, Birkhoff normal forms, Hamiltonian PDEs, Klein--Gordon, compact manifold}

\subjclass[2020]{35R01, 37K45, 37K55,  58J90}

\begin{abstract}  In this paper, we prove a general almost global existence result for semilinear Hamiltonian PDEs on compact boundaryless manifolds. As a main application, we prove the almost global existence of small solutions to nonlinear Klein--Gordon equations on such manifolds: for almost all masses, any arbitrarily large $r$ and sufficiently large $s$, solutions with initial data of sufficiently small size $\varepsilon \ll 1$ in the Sobolev space $H^s \times H^{s-1}$ exist and remain in $H^s \times H^{s-1}$ for polynomial times $|t| \leq \varepsilon^{-r}$. Surprisingly, it turns out that the geometry of the manifold has no influence.  The abstract result applies to equations satisfying very weak non resonance conditions, that are typically satisfied for PDEs with external parameters, and natural multilinear estimates.  We also apply it to nonlinear Schr\"odinger equations close to ground states and nonlinear Klein--Gordon equations on $\mathbb{R}^d$ with positive quadratic potentials.

\end{abstract}

\maketitle

\setcounter{tocdepth}{1} 
\tableofcontents

\section{Introduction}

We consider the dynamics of small and smooth solutions to spatially
confined nonlinear dispersive PDEs. We focus on models which,
  after diagonalizing the linear part can be rewritten as infinite systems of coupled
harmonic oscillators of the form
\begin{equation}
\label{eq:ham-pde}
\ic \partial_t u_j = \omega_j u_j + g_j(u), \quad \ j\in \mathbb{N}, \ t\in \mathbb{R},
\end{equation}
where $\mathbb{N} = \{ j \in \mathbb{Z} \ | \ j\geq 1\}$, $u_j(t)\in
\mathbb{C}$, $\omega_j \in \mathbb{R}$ and $g$ is at least of order
$\mathfrak{o}\geq 2$ at $u=0$ (at least formally). Since we are
interested in PDEs, we assume the
nonlinearity $g$ to fulfill
  some good properties like tame estimates. Smoothness and size of
solutions are measured in Sobolev norms defined by
$$
h^s := \big\{ u\in \mathbb{C}^{\mathbb{N}} \ | \ \| u\|_{h^s}^2 := \sum_{j \in \mathbb{N}}  j ^{2s} |u_j|^2 <\infty\big\}, \quad s\in \mathbb{R}.
$$
Local well posedness of \eqref{eq:ham-pde} usually implies that if $s$ is large enough and $\varepsilon := \| u(0) \|_{h^s}$ is small enough, then the solution exists for times of order $\varepsilon^{-\mathfrak{o}+1}$ (and remains of size $\varepsilon$ in $h^s$). For example, this property can be easily proven when $g$ is smooth from $h^s$ to $h^s$ and satisfies $\|g(u)\|_{h^s} = \mathcal{O}(\| u\|_{h^s}^{\mathfrak{o}})$. This time scale is usually called linear because the solution remains close to the solution of the linear system up to this "local" time. For longer times, nonlinear effects may become predominant and the solution could blow up: its existence is no more ensured. This raises the question of the lifespan of small and smooth solutions.

\medskip

Some nonlinear dispersive equations are globally well posed which completely solves the question (see e.g. \cite{BGT04}). Nevertheless these results require the dimension (of the spacial domain) and/or the
degree of the nonlinearity to be small enough. Here, we want to extend such results. To this end, we use a normal form approach to remove the nonlinear terms that could generate a growth of the $h^s$ norms. To avoid resonances we need two extra assumptions on the system.

\medskip

First, we need a geometric assumption on the system. Here, we focus on the Hamiltonian case, i.e. we assume the existence of a real valued function $G\in C^\infty(h^s;\mathbb{R})$ for $s$ large enough such that for all $j\in \mathbb{N}$
$$
g_j = \partial_{\overline{u_j}} G.
$$
Then, denoting by
$$
G(u) = \sum_{q\geq 3} \sum_{ \boldsymbol{j}\in \mathbb{N}^q} \sum_{\boldsymbol{\sigma}\in \{-1,1\}^q } G_{\boldsymbol{j}}^{\boldsymbol{\sigma}} u_{\boldsymbol{j}_1}^{\boldsymbol{\sigma}_1} \cdots u_{\boldsymbol{j}_q}^{\boldsymbol{\sigma}_q}
$$
the Taylor expansion of $G$ in $u=0$ where $G_{\boldsymbol{j}}^{\boldsymbol{\sigma}} \in \mathbb{C}$ and $u^{-1}_{\boldsymbol{j}_i} := \overline{u_{\boldsymbol{j}_i}}$, the point is to remove monomials $u_{\boldsymbol{j}_1}^{\boldsymbol{\sigma}_1} \cdots u_{\boldsymbol{j}_q}^{\boldsymbol{\sigma}_q}$ which do not commute with a well chosen $\widetilde{h}^s$ norm equivalent to the $h^s$ norm. This operation requires non resonance conditions on the frequencies. The two classical ones are
\begin{equation}
\label{eq:nr_1}
\tag{$\mathrm{NR}_1$}
|\boldsymbol{\sigma}_1 \omega_{\boldsymbol{j}_1} + \cdots + \boldsymbol{\sigma}_q \omega_{\boldsymbol{j}_q}  | \geq C_q |\boldsymbol{j}_1^\star|^{-a_q}
\end{equation}
\begin{equation}
\label{eq:nr_3}
\tag{$\mathrm{NR}_3$}
|\boldsymbol{\sigma}_1 \omega_{\boldsymbol{j}_1}+ \cdots + \boldsymbol{\sigma}_q \omega_{\boldsymbol{j}_q}  | \geq C_q |\boldsymbol{j}_3^\star|^{-a_q}
\end{equation}
where $C_q,a_q>0$ are constants depending only on $q$ and $\boldsymbol{j}_1^\star, \cdots, \boldsymbol{j}_q^\star$ denote the non-increasing arrangement of $\boldsymbol{j}_1, \cdots, \boldsymbol{j}_q$.
For systems with parameters (like the mass $m$ in the case of the Klein--Gordon equation \eqref{eq:KG}), the non resonance condition \eqref{eq:nr_1} is usually typically satisfied while condition \eqref{eq:nr_3} is much more restrictive and requires specific assumptions on the PDEs and its domain.

\medskip

For systems satisfying the non resonance condition  \eqref{eq:nr_3},
many results of \emph{almost global existence and stability} have been
proven. Mainly results of the following kind

\begin{result}[Almost global existence and stability]\label{result1} 
Assume \eqref{eq:nr_3}. For all $r\geq1$ if $u(0)\in h^s$ with $s$ large enough and $\|u(0)\|_{h^s}=\varepsilon$ small enough, then the solution $u(t)$ exists in $h^s$ for all time $|t|\leq \varepsilon^{-r}$ and satisfies $\|u(t)\|_{h^s}\lesssim_s \varepsilon$.
\end{result}

%That is a time of existence at least of order $\varepsilon^{-r}$, with $r$ arbitrarily large, for solutions $u$ issued from initial data of size $\varepsilon$ in $h^s$ provided that $s$ is large enough with respect to $r$ and with the additional property that $u(t)$ remains of size less than $2\varepsilon$ in $h^s$ during all this time. 
The first results in that direction was obtained by Bambusi in \cite{Bam03} and generalized in Bambusi-Gr\'ebert \cite{BG06} (and applied essentially to models in one space dimension). Then this technique was extended in a many different contexts, see for instance  
\cite{BDGS07,GIP09,FGL13,YZ14,Del15,BD17,FI21,BMM24,BG25}, but the applications always requires very special structure of the set of the frequencies. We notice that in all the cited results the authors used external parameters to ensure that  \eqref{eq:nr_3} holds true for almost all values of these external parameters.  Recently, but twenty years after  a pioneering work by Bourgain \cite{Bou00},  similar results have been obtained without external parameters \cite{BFG20a,BG21,BC24}. The bound $\|u(t)\|_{h^s}\lesssim_s \varepsilon$ ensures that the zero solution is stable in $h^s$ for times of order $\varepsilon^{-r}$, that is why we call it \emph{almost global existence and stability}. Note that this question of stability is interesting in itself, even for globally well-posed equations (like in \cite{Bou00,Bam03,GIP09,FGL13,YZ14,BFG20a,BG21}), but that it is strictly more specific than the one about the lifespan of the solutions.

%(for instance the mass $m$ in the case of the Klein--Gordon equation \eqref{eq:KG}).

\medskip

When we only have condition \eqref{eq:nr_1} (which is the situation typically occurring in higher space dimension), in general we can just extend a little the local time of existence and stability $\varepsilon^{-\mathfrak{o}+1}$, obtaining typically a time of order  $\varepsilon^{-A(\mathfrak{o}-1)}$ for some fixed $A>1$, but not arbitrary large (see in particular \cite{Del09,DI17,Brun23,FGI23}).  What we claim in this paper is that, if we just focus on the time of existence of the solutions, condition \eqref{eq:nr_1}, complemented with multilinear estimates, is sufficient to obtain, for semi-linear equations, a result of the kind
\begin{result}[Almost global existence]  \label{result2}
Assume \eqref{eq:nr_1}. 
For all $r\geq1$, if $u(0)\in h^s$ with $s$ large enough and
$\|u(0)\|_{h^s}=\varepsilon$ small enough, then the solution $u(t)$
exists in $h^s$ for all time $|t|\leq
\varepsilon^{-r}$. 
\end{result}
The precise abstract result is stated in Theorem \ref{thm:main} below. 
In fact it also includes a stability result but weaker than the one of Meta-Theorem \ref{result1}.
 Nevertheless, surprisingly, in the emblematic case where $u(0) = \epsilon v$ with $v\in h^\infty$ and $\epsilon \ll 1$, these stability results are equivalent (see Corollary \ref{cor:smooth} below). 
 The concrete applications are stated in Theorem \ref{thm:kg} for Klein-Gordon equations on compact manifolds, Theorem \ref{thm:nls} for nonlinear Schr\"odinger equations on compact manifolds close to ground states and Theorem \ref{thm:quatumoscillator} for nonlinear Klein--Gordon equations on $\mathbb{R}^d$ with quadratic potentials.
The important point is that condition \eqref{eq:nr_1} is much weaker than \eqref{eq:nr_3}. In particular, in the case of the Klein-Gordon equations, whereas \eqref{eq:nr_3} has been proved to hold true for almost all value of the mass for some particular manifolds (Zoll manifolds) while \eqref{eq:nr_1} holds true on any smooth compact boundaryless Riemannian manifolds (\cite{DS04}).

\medskip

The fact that \eqref{eq:nr_1} is sufficient to ensure almost global existence was already proved in \cite{BFG20b} for nonlinear Klein--Gordon equations on $d$-dimensional tori. Here we take advantage of this remark in a much more general situation.

\medskip

Finally, we notice that, recently, using Bourgain's cluster
decomposition, the almost global existence and stability of some
\eqref{eq:nr_1}-non-resonant Hamiltonian PDEs (not including
Klein--Gordon) on compact Riemannian manifolds with globally
integrable geodesic flow have been proved \cite{BFM24,BFLM24} {(see
also \cite{BL22})}. The point is that thanks to the existence of a Bourgain's cluster decomposition these systems almost satisfy the \eqref{eq:nr_3} non resonance condition (i.e. up to some quite trivial terms).

\subsection*{Basic notations} The Japanese bracket is defined by $\langle a \rangle := \sqrt{1+a^2}$ for $a\in \mathbb{R}$.  We shall use the notation $A\lesssim B$ to denote $A\le C B$ where $C$ is a positive constant
depending on  parameters fixed once for all. We will emphasize by writing $\lesssim_{\alpha}$ when the constant $C$ depends on some other parameter $\alpha$ and $A \sim_\alpha B$ if $A\lesssim_\alpha B \lesssim_\alpha A$.

 We rewrite as usual $L^2=\ell^2 := h^0$. For all $q\geq 3$ and $\boldsymbol{k} \in \mathbb{C}^q$,  $\boldsymbol{k}_1^\star, \cdots, \boldsymbol{k}_q^\star$ denotes the non-increasing arrangement of $\boldsymbol{k}_1, \cdots, \boldsymbol{k}_q$. For all $j\in \mathbb{N}$, the operators $\partial_{\overline{u_j}},\partial_{u_j}$ are defined by $2\partial_{\overline{u_j}} := \partial_{\Re u_j} + \ic\partial_{\Im u_j}$ and $2\partial_{u_j} := \partial_{\Re u_j} - \ic\partial_{\Im u_j}$. For all $j \in \mathbb{N}$, $\mathds{1}_{\{j\}} \in \mathbb{C}^\mathbb{N}$ denotes the sequence such that $(\mathds{1}_{\{j\}} )_i=1$ if $i=j$ and $(\mathds{1}_{\{j\}} )_i=0$ otherwise.

 %mal dÃƒÂ©fini et pas au bon endroit
 %Correspondingly we define $\Pi_{\left\{j\right\}}u=u_j$. Given a subset $\mathcal{C}\subset \mathbb{N}$ we define $\mathds{1}_{\mathcal{C}} $ to be the characteristic function of the set $\mathcal{C}$, namely the functions s.t. $(\mathds{1}_{\mathcal{C}} )_i=1$ if $i\in\mathcal{C}$ and zero otherwise. We also denote $\Pi_{\mathcal{C}}:=\sum_{j\in\mathcal{C}}\Pi_{\left\{j\right\}}$.
  
\subsection*{Acknowledgments} During the preparation of this work the authors benefited from the support of the Centre Henri Lebesgue ANR-11-LABX-0020-0 and J.B. was also supported by the region "Pays de la Loire" through the project "MasCan".
J.B. and B.G. were partially supported by the ANR project KEN ANR-22-CE40-0016. D.B was supported by the research projects PRIN 2020XBFL "Hamiltonian and dispersive PDEs" of the Italian Ministry of Education and Research (MIUR).

\section{Abstract theorem} We give an abstract theorem to prove the
\emph{almost global existence} of small and smooth solutions to
\eqref{eq:ham-pde}. First we present its setting and then we state the
result and one of its main corollary. As we will see in the next section, these assumptions are
natural for Hamiltonian PDEs on smooth compact boundaryless Riemannian
manifolds and more generally for Hamiltonian PDEs with a linear part
which only has imaginary pure point spectrum.

\medskip

Let $\omega \in (\mathbb{R}_+^*)^{\mathbb{N}}$ be non-decreasing, $s_0\geq 0$,
$\mathcal{U} \subset h^{s_0}$ be an open neighborhood of the origin,
$G\in  C^\infty(\mathcal{U};\mathbb{R})$ be a function of order larger
than or equal to $3$ at the origin and set, for all $j\in \mathbb{N}$,
$g_j := \partial_{\overline{u_j}} G$ (so that the system is
Hamiltonian). 

\medskip

Considering from now these objects as fixed, assume that  
\begin{itemize}
\item \emph{the nonlinearity is smooth, preserves the $h^s$ regularity and is tame}: for all $s\geq s_0$,
$g$ is a $C^\infty$ function from $h^s \cap \mathcal{U}$ into $h^s$, its derivative is uniformly bounded on bounded sets and it satisfies
\begin{equation}
\label{eq:lestimee_tame_pas_facile_a_pas_oublier}
\forall u \in h^s \cap \mathcal{U}, \quad \| g(u) \|_{h^s} \lesssim_{s}  \| u\|_{h^s}.
\end{equation}
\item {\it Weyl law}: there exists $\beta >0 $ such that provided that $\lambda$ is large enough
  \begin{equation}
    \label{weyl}
 \#\left\{j \in \mathbb{N} \ |\ \omega_j\leq\lambda\right\} \sim \lambda^{ \beta}
    \end{equation}

\item {\it Clustering}: there exist two positive numbers $\alpha,\Upsilon>0$ and a decomposition in disjoint subsets\footnote{some of them may be empty.}
$$
\mathbb{N} = \bigcup_{k\in \mathbb{N}} \mathcal{C}_k 
$$
such that
\begin{equation}
\label{eq:inclusion_clusters}
\sup_{k\in \mathbb{N}} \sup_{j\in \mathcal{C}_k} |\omega_j^{1/\alpha} - \Upsilon k| < \infty
\end{equation}
and
\begin{itemize}
\item \emph{the system is non resonant}: 
  for all $q\geq 1$, there exists $a_q > 0$ such that for $\boldsymbol{j} \in \mathbb{N}^q$, $\boldsymbol{\sigma}\in \{-1,1\}^q$ 
\begin{equation}
\label{eq:nr_thm}
\mathrm{if} \ \exists k\in \mathbb{N}, \quad  \sum_{i \ s.t. \ \boldsymbol{j}_i \in  \mathcal{C}_k } \boldsymbol{\sigma}_i \neq 0 \quad \mathrm{then} \quad |\boldsymbol{\sigma}_1 \omega_{\boldsymbol{j}_1}+ \cdots + \boldsymbol{\sigma}_q \omega_{\boldsymbol{j}_q}  | \gtrsim_q |\boldsymbol{j}_1^\star|^{-a_q},
\end{equation}

\item \emph{the nonlinearity satisfies the following multilinear estimate}: there exists $\nu \geq 0$ such that for all $q \geq 3$, all $n\geq 0$, all $\boldsymbol{k} \in \mathbb{N}^q$ and all $(u^{(\ell)})_{1\leq \ell \leq q} \in \prod_{1\leq \ell \leq q} E_{\boldsymbol{k}_{\ell}}$,
\begin{equation}
\label{eq:estimates_thm}
   \big| \mathrm{d}^{q} G(0) (u^{(1)},\cdots,u^{(q)}) \big|  \lesssim_{n,q}  \Gamma_{\boldsymbol{k}} \Big(   \frac{\boldsymbol{k}_2^\star }{ \boldsymbol{k}_1^\star } \Big)^n \Big(\prod_{3\leq \ell \leq q} \boldsymbol{k}_\ell^\star\Big)^\nu  \prod_{\ell=1}^q \| u^{(\ell)} \|_{\ell^2}
\end{equation}
where for all $k\in \mathbb{N}$, 
$E_k := \mathrm{Span}_{\mathbb{C}} \{ \mathds{1}_{\{j\}} \ | \ j\in \mathcal{C}_k \} \subset \ell^2:= h^0$ and 
\begin{equation}
\label{eq:def_gamma_k}
 \Gamma_{\boldsymbol{k}} := \sum_{\boldsymbol{\varsigma} \in \{-1,1\}^q} \langle  \boldsymbol{\varsigma}_1 \boldsymbol{k}_1 + \cdots + \boldsymbol{\varsigma}_q \boldsymbol{k}_q \rangle^{-3}.
\end{equation}
\end{itemize}
\end{itemize}

%%%%%%%%%%%%%%%%%%%%%%%%%%%%%%%%

For convenience, we set
$$
s_{\mathrm{min}} := \max\Big(s_0, \frac{\nu+2}{ \alpha\beta}\Big).
$$

\begin{theorem}
\label{thm:main} 
For all $r \geq 1$, $s_c \geq s_{\mathrm{min}} $, $s\gtrsim_{r,s_c} 1$, $u^{(0)} \in h^s$ satisfying $ \| u^{(0)} \|_{h^s} \leq  1$ and $\varepsilon := \| u^{(0)} \|_{h^{s_c}} \lesssim_{r,s_c} 1$, there exists a unique solution 
 $$
 u\in C^0\big((-\varepsilon^{-r},\varepsilon^{-r}) ;h^s \big) \cap  C^1\big((-\varepsilon^{-r},\varepsilon^{-r}) ;h^{s-\frac{1}{\beta}} \big) 
 $$
 to \eqref{eq:ham-pde} with initial datum $u(0) = u^{(0)}$, satisfying, as long as $|t| \leq \varepsilon^{-r}$, that
 \begin{equation}
 \label{eq:est_growth}
 \| u(t)\|_{h^{s_c}} \lesssim_{s_c} \| u(0)\|_{h^{s_c}}.
 \end{equation}
\end{theorem}

We begin with a few comments on the result.
\begin{itemize}
\item There are $3$ regularity indices in this theorem $s_{\mathrm{min}}\leq s_c \leq s$:
\begin{itemize}
\item $s_{\mathrm{min}}$ is the one on which our tame estimates are based,
\item $s_c$ is the one of the norm we aim at controlling,
\item $s$ represent the regularity needed on the initial data to get the almost global existence.
\end{itemize}   
\item The bound $s\gtrsim_{r,s_c} 1$ could be specified: the constant only depend on $r,\alpha,\beta,\nu,a,s_c$.
\item It would have been more standard to require that  $\varepsilon:= \| u^{(0)} \|_{h^{s}} \lesssim_{r,s} 1 $ instead of $ \| u^{(0)} \|_{h^s} \leq  1$ and $\varepsilon := \| u^{(0)} \|_{h^{s_c}} \lesssim_{r,s_c} 1$ (see e.g. Meta-Theorem \ref{result2}). These assumption are almost equivalent. Indeed, since $s$ is larger than $s_c$, if  $\varepsilon:= \| u^{(0)} \|_{h^{s}} \lesssim_{r,s} 1 $ then $\| u^{(0)} \|_{h^{s_c}} \leq \varepsilon$. Conversely, thanks to the H\"older inequality $\| \cdot \|_{h^{s_\theta}} \leq \| \cdot \|_{h^{s_c}}^{\theta} \| \cdot \|_{h^s}^{1-\theta}$ for $\theta \in [0,1]$ and $s_{\theta} = \theta s_c + (1-\theta) s$, our assumption allows to impose a smallness condition on any high regularity initial datum. Nevertheless, we believe that our assumptions are more natural in this setting because Theorem \ref{thm:main} only allows to control the growth of the $h^{s_c}$ norm.
\end{itemize}
Now, let us compare Theorem \ref{thm:main} with almost global and stability results (of the type Meta-Theorem \ref{result1}). 
With the same kind of non standard assumptions (i.e. $ \| u^{(0)} \|_{h^s} \leq  1$ and $\varepsilon := \| u^{(0)} \|_{h^{s_c}} \lesssim_{r,s_c} 1$ but assuming \eqref{eq:nr_3}), the proof of most of the "almost global and stability results" could be adapted to control the growth of all the $h^{\varsigma}$ norms, i.e. for all $\varsigma \geq s_{\mathrm{min}}$ and all $t\in (-\varepsilon^{-r}, 
\varepsilon^{-r})$, we have $\| u(t)\|_{h^{\varsigma}} \lesssim_{\varsigma} \| u(0)\|_{h^{\varsigma}}$ (see \cite{BG25} for a formulation of this kind). In particular, it would imply that $\| u(t)\|_{h^{s}}$ remains of order $1$. Here, the proof of Theorem \ref{thm:main}  does not allow to control the growth of the  $h^{\varsigma}$ norms when $\varsigma$ is too large with respect to $r$ and $s_c$. In particular, we do not know if  $\| u(t)\|_{h^{s}}$ remains of order $1$. Nevertheless, thanks to the tame estimate \eqref{eq:lestimee_tame_pas_facile_a_pas_oublier}, we still have a rough bound of the form $\| u(t) \|_{h^s} \leq \exp(C_s t) \varepsilon$ where $C_s>0$ is a constant depending only on $s$.

\medskip

In order to look for the apparition of possible energy cascades, it is natural to consider special initial data of the form  $ u^{(\epsilon)}(0) =\epsilon v$ with $v$ smooth and $\epsilon \ll 1$. Surprisingly, in this setting, the conclusions of Theorem  \ref{thm:main} are the same as those we would get from an almost global and stability results like Meta-Theorem \ref{result1}: forward energy cascades, if they exist, are necessarily very slow. This is the content of the following corollary whose proof is given in Appendix \ref{sec:appendix_B}.
\begin{corollary} \label{cor:smooth} Let $v \in h^\infty := \bigcap_{s\geq 0} h^s$. There exists $(T_\epsilon)_{\epsilon\in(0,1)} \in \mathbb{R}_+^{(0,1)}$ satisfying
 $$
 \forall r\geq 0,\quad \frac{T_\epsilon}{\epsilon^{-r}} \mathop{\longrightarrow}_{\epsilon \to 0} +\infty
 $$
 and, for all $ \epsilon \in (0,1)$, there exists a unique solution 
 $$
 u^{(\epsilon)} \in C^\infty\big([-T_\epsilon,T_\epsilon] ;h^\infty \big)
 $$
 to \eqref{eq:ham-pde} with initial datum $ u^{(\epsilon)}(0) =\epsilon v$ satisfying, as long as $|t|\leq T_\epsilon$,
 $$
\forall s\geq  0, \quad \|  u^{(\epsilon)}(t)\|_{h^{s}} \lesssim_{s,v} \|  u^{(\epsilon)}(0)\|_{h^{s}}.
 $$

\end{corollary}

On first reading, this corollary may give the impression that we have shown a result of  Meta-theorem \ref{result1} type, but this is not the case: here the shape of the initial condition, $v$, is fixed and the control we have over the $h^s$ norm of $u$ depends on $v$ (in fact, more precisely, it depends on the distribution of the high Fourier modes of $v$). This corollary can be seen as a "radial stability" result: it prevents growth, for very long times, of the $h^s$ norm (and so weak turbulence) for initial datum $u(0)=\epsilon v$ when $\epsilon$ goes to $0$.

\medskip

Let's make a few comments on the assumptions.
\begin{itemize}
\item In order to prove the multilinear estimate
 \eqref{eq:estimates_thm} typically one uses that the operator
diag$(\omega_j^{1/\alpha})$ is pseudodifferential of order $1$. That is
why we introduced the exponent $\alpha$.

\item By the Weyl law and the bound \eqref{eq:inclusion_clusters}, one has
  $$
 \# \bigcup_{k\leq \lambda}\mathcal{C}_k \sim
 \lambda^{\alpha \beta}.
 $$

\item The non resonance condition \eqref{eq:nr_thm}  allows to remove monomials which do not commute with the super-actions 
\begin{equation}
\label{eq:def_super_actions}
J_k(u) := \sum_{j \in \mathcal{C}_k} |u_j|^2.
\end{equation}  
This is the non resonance condition \eqref{eq:nr_1} of the introduction, in the sense that these monomials are exactly those do not commute with the $H^s$ norms defined in Definition \ref{def:Hs_space} below.

\item The multilinear estimates \eqref{eq:estimates_thm} are new but they are implied by the multilinear estimates of the type of those proved by Delort--Szeftel in \cite{DS06}, namely
\begin{equation}
\label{eq:DS-estimates}
   \big| \mathrm{d}^{q} G(0) (u^{(1)},\cdots,u^{(q)}) \big|  \lesssim_{n,q}   \frac{(\boldsymbol{k}_{3}^\star)^{\nu +n}  (\boldsymbol{k}_{4}^\star)^{\nu } \cdots (\boldsymbol{k}_{q}^\star)^{\nu }}{ (\boldsymbol{k}^\star_{1} - \boldsymbol{k}^\star_{2} + \boldsymbol{k}^\star_{3} )^{n}}  \prod_{\ell=1}^q \| u^{(\ell)} \|_{\ell^2}.
\end{equation}
See  Lemma \ref{A.6} for the fact that \eqref{eq:DS-estimates} implies  \eqref{eq:estimates_thm}.

\item If the nonlinearity $g$ is polynomial then the tame estimate \eqref{eq:lestimee_tame_pas_facile_a_pas_oublier} is a consequence of the multilinear estimates \eqref{eq:estimates_thm} (see Lemma \ref{lem:vf} below for a proof).

 \item The following equivalence holds true
\begin{equation*}
 \Gamma_{\boldsymbol{k}} \sim_q  \big( 1+ \min\limits_{\boldsymbol{\varsigma} \in \{-1,1\}^q} | \boldsymbol{\varsigma}_1 \boldsymbol{k}_1 + \cdots + \boldsymbol{\varsigma}_q \boldsymbol{k}_q  |^3 \big)^{-1} .
\end{equation*}
\item the term $\Upsilon k$ in \eqref{eq:inclusion_clusters} could be replaced by $b_k$, where $b\in (\mathbb{R}_+^*)^{\mathbb{N}}$ would be an increasing sequence satisfying $b_{k+1} - b_k \sim 1$ (i.e. uniformly in $k$).
\end{itemize}

Finally, let us comment some technical novelties of this paper. An important part of the work consists of combining the techniques developed in \cite{DS04,DS06,BFG20b}. Nevertheless, it is not direct. In particular, we point out two crucial points.
\begin{itemize}
\item The method introduced in \cite{BFG20b} allows to prove the existence for times of order $\varepsilon^{- c r_{\flat}/s_{\flat}}$ where $c>0$ is a universal constant, $r_{\flat}$ is the number of Birkhoff normal form steps we perform  and $s_{\flat}$ is the minimal regularity for which we have tame estimates at the end of the Birkhoff normal form process (see e.g. equation \eqref{eq:on_voit_bien_le_temps} below). Basing our construction on Delort--Szeftel multilinear estimates \eqref{eq:DS-estimates}, $s_{\flat}$ would grow linearly with respect to $r$ (see the exponent $\nu$ in \cite[Theorem 2.14]{DS06}) and we could not conclude the almost global existence of the solutions. The point is that these estimates contain too much information and so are too costly to propagate in the normal form process. A natural alternative would have been the tame-modulus estimates of \cite{BG06} but they are too weak: they do not allow to gain derivatives when computing brackets with $h^s$ norms (which is a crucial property for our proof, see Proposition \ref{prop:est_poisson}). That is why we proposed the new multilinear estimates \eqref{eq:estimates_thm} which are in between and overcome these obstructions.
\item In \cite{BFG20b}, the solutions were estimated in mixed norms: $h^{s_c}$ for high modes and $h^s$ with $s\gg r$ for low modes. Nevertheless, this disjunction does not allow for satisfactory treatment of remainder terms and generates technicalities. Here we simplified the approach by removing the mixed norms: the solution is just estimated in $h^{s_c}$ norm.
\item In this paper, $\varepsilon$ represents the $h^{s_c}$ norm of the initial datum, unlike in \cite{BFG20b}, where it represents its $h^{s}$ norm. This refinement is crucial as it enables us to derive Corollary \ref{cor:smooth}.
\end{itemize}

\section{Applications }\label{applications}
Now we present applications of this abstract result to two emblematic
Hamiltonian PDEs on an arbitrary boundaryless smooth Riemannian
manifold: the nonlinear Klein--Gordon equations and the nonlinear
Schr\"odinger equations close to ground states. As in
\cite{BFM24,BFLM24}, the result could be applied to other classical
semi-linear equations like the beam equation or the nonlinear
Schr\"odinger equations close to the origin with a spectral
multiplier. We chose the nonlinear Klein--Gordon equations 
because they are the most emblematic ones and the nonlinear Schr\"odinger equations to
emphasize that we do not need the nonlinearity to be
smoothing. We also provide an application to nonlinear Klein--Gordon equations on $\mathbb{R}^d$ with positive definite quadratic
  potentials to emphasize that our abstract theorem can even be applied beyond the framework of Hamiltonian PDEs on compact manifolds. 
 For brevity, we only state Corollary \ref{cor:smooth} in the context of the nonlinear Klein--Gordon equations on compact manifolds but, of course, it could also be applied to the other equations. Proofs are given in Section \ref{Sec:proofs_app} below.

\subsection{Nonlinear Klein--Gordon equations on compact manifolds}

We consider nonlinear Klein--Gordon equations of the form
\begin{equation}
\label{eq:KG} 
\tag{KG} \partial_t^2 \Psi(t,x) = (\Delta-m)  \Psi(t,x) - V(x) \Psi(t,x)+ f(x, \Psi(t,x)), \quad t\in \mathbb{R_+^*}, \ x\in \mathcal{M}  
\end{equation}
where $ \Psi(t,x)\in \mathbb{R}$,  $\mathcal{M}$ is a smooth compact
boundaryless Riemannian manifold of dimension $d\geq 1$, $V\in
C^\infty(\mathcal{M}; \mathbb{R}_+)$, $m>0$ is a parameter called
\emph{mass} and $f\in \mathcal{C}^\infty(\mathcal{M}\times \mathbb{R};
\mathbb{R})$ satisfies $f(\cdot,0) = \partial_ \Psi
f(\cdot,0)=0$ (to ensure that it is of order at least $2$ with respect to $\Psi$).

\medskip

%{\color{blue}The almost global existence of the small solutions to \eqref{eq:KG} on general manifolds is a longstanding question.  }

%\medskip

\begin{theorem}\label{thm:kg} Fix $s_1\gtrsim_d 1$. For almost all $m>0$, all $r \geq 1$, all $s\gtrsim_{r,m} s_1$, 
and any couple of real-valued functions $(\Psi_0, \Phi_0)\in H^{s+1}(\mathcal M ;\mathbb{R}) \times  H^{s}(\mathcal M;\mathbb{R})$  such that $\varepsilon := \| \Psi_0  \|_{H^{s_1+1}}+\| \Phi_0  \|_{H^{s_1}} \lesssim_{r,s_1,m} 1$ and $\| \Psi_0  \|_{H^{s+1}}+\| \Phi_0  \|_{H^{s}} \leq 1$, there exists a unique solution 
 $$
 \Psi\in C^0\big((-\varepsilon^{-r},\varepsilon^{-r}) ;H^{s+1}(\mathcal M;\mathbb{R}) \big) \cap  C^1\big((-\varepsilon^{-r},\varepsilon^{-r}) ;H^{s}(\mathcal M;\mathbb{R}) \big) 
 $$
 to \eqref{eq:KG} with initial datum $\Psi(0) = \Psi_0,\ \partial_t \Psi(0) = \Phi_0$. Furthermore, as long as $|t| \leq \varepsilon^{-r}$, one has
 $
\| \Psi(t)  \|_{H^{s_1+1}}+\| \partial_t\Psi(t)  \|_{H^{s_1}} \lesssim \varepsilon.
 $

\end{theorem}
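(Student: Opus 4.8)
The plan is to recast \eqref{eq:KG} in the abstract form \eqref{eq:ham-pde}, to verify that the hypotheses of Theorem \ref{thm:main} hold for almost every mass $m$, and to translate back the conclusion. For the reduction, since $V\geq0$ and $m>0$ the operator $\Lambda:=(-\Delta+m+V)^{1/2}$ is a well-defined positive self-adjoint classical pseudodifferential operator of order $1$ on $\mathcal M$, with principal symbol $|\xi|_g$; let $(\omega_j)_{j\geq1}$ be its eigenvalues in non-decreasing order repeated with multiplicity (so $\omega_j=\sqrt{\mu_j+m}$ with $\mu_j$ the eigenvalues of $-\Delta+V$) and $(\varphi_j)_{j\geq1}$ an associated real $L^2(\mathcal M)$-orthonormal eigenbasis. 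Writing $\Phi:=\partial_t\Psi$ and introducing the complex coordinates $u_j:=\tfrac{1}{\sqrt2}\big(\omega_j^{1/2}\langle\Psi,\varphi_j\rangle+\ic\,\omega_j^{-1/2}\langle\Phi,\varphi_j\rangle\big)$, a direct computation shows that \eqref{eq:KG} becomes \eqref{eq:ham-pde} with $g_j=\partial_{\overline{u_j}}G$ and
\[
G(u):=-\int_{\mathcal M}F\Big(x,\sum_{i\geq1}\tfrac{u_i+\overline{u_i}}{\sqrt{2\omega_i}}\,\varphi_i(x)\Big)\,\mathrm dx,\qquad F(x,\xi):=\int_0^\xi f(x,y)\,\mathrm dy,
\]
which is real-valued, $C^\infty$ and of order $\geq3$ at the origin (because $f(\cdot,0)=\partial_\Psi f(\cdot,0)=0$), defined on a small ball $\mathcal U\subset h^{s_0}$. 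Since $\Lambda$ is elliptic of order $1$ and positive, $\|\Lambda^\tau w\|_{L^2}\sim_\tau\|w\|_{H^\tau}$, so combining this with the Weyl exponent $\beta=d$ below one obtains the scale identification $\|u\|_{h^\sigma}\sim_\sigma\|\Psi\|_{H^{s(\sigma)+1}}+\|\Phi\|_{H^{s(\sigma)}}$ with the affine bijection $s(\sigma):=\sigma d-\tfrac12$.

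Next I would check the hypotheses of Theorem \ref{thm:main}. The Weyl law \eqref{weyl} holds with $\beta=d$, since $\#\{j:\omega_j\leq\lambda\}=\#\{j:\mu_j\leq\lambda^2-m\}\sim\lambda^d$ by the classical Weyl asymptotics for $-\Delta+V$ on the $d$-dimensional compact manifold $\mathcal M$. For the clustering \eqref{eq:inclusion_clusters} and the non resonance \eqref{eq:nr_thm} I would invoke \cite{DS04}, which provides a decomposition $\mathbb N=\bigcup_k\mathcal C_k$ with $\alpha=1$ (the clusters lying in bands of bounded width for the spectrum of $\Lambda$) such that, \emph{for almost every $m>0$}, the frequencies $\omega_j=\sqrt{\mu_j+m}$ satisfy \eqref{eq:nr_thm} — precisely the cluster-separation property which makes \eqref{eq:nr_1} valid on an arbitrary compact manifold. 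For the multilinear estimate, writing $\mathrm d^qG(0)(u^{(1)},\dots,u^{(q)})=-\int_{\mathcal M}\partial_\Psi^qF(x,0)\prod_{\ell=1}^q\big(\Lambda^{-1/2}\tfrac{u^{(\ell)}+\overline{u^{(\ell)}}}{\sqrt2}\big)(x)\,\mathrm dx$, the multilinear spectral estimates of \cite{DS06} yield \eqref{eq:DS-estimates} for some $\nu\geq0$ (the smoothing factors $\Lambda^{-1/2}$ only help), and Lemma \ref{A.6} converts \eqref{eq:DS-estimates} into \eqref{eq:estimates_thm}. Finally, for the tame estimate \eqref{eq:lestimee_tame_pas_facile_a_pas_oublier}, the same computation gives $g_j(u)=-\tfrac{1}{\sqrt{2\omega_j}}\langle f(\cdot,w[u]),\varphi_j\rangle$ with $w[u]:=\sum_i\tfrac{u_i+\overline{u_i}}{\sqrt{2\omega_i}}\varphi_i$; by the scale identification $\|w[u]\|_{H^{s(\sigma)+1}}\sim_\sigma\|u\|_{h^\sigma}$ and $\|g(u)\|_{h^\sigma}\sim_\sigma\|f(\cdot,w[u])\|_{H^{s(\sigma)}}$, so, choosing $s_0$ above the Sobolev embedding threshold $d/2$, the classical tame (Moser) estimate for the Nemytskii operator $w\mapsto f(\cdot,w)$ on $\mathcal M$ (using $f(\cdot,0)=0$) gives $\|f(\cdot,w)\|_{H^\tau}\lesssim_{\tau,\mathcal U}\|w\|_{H^\tau}$ on $\mathcal U$, whence \eqref{eq:lestimee_tame_pas_facile_a_pas_oublier} for all $s\geq s_0$; smoothness of $g$ is inherited from that of $f$.

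It then remains to fix $m$ in the full-measure set and apply Theorem \ref{thm:main} with $\beta=d$, $\alpha=1$ and $s_0$ taken large enough — which is allowed — so that it exceeds $d/2$ and satisfies $s(s_0)\geq s_1$: for every $r\geq1$, every $\sigma\gtrsim_r1$ and every small datum (measured either as $\|u^{(0)}\|_{h^\sigma}$ or equivalently as $\|\Psi_0\|_{H^{s+1}}+\|\Phi_0\|_{H^s}$) one gets a unique $u\in C^0\big((-\varepsilon^{-r},\varepsilon^{-r});h^\sigma\big)\cap C^1\big((-\varepsilon^{-r},\varepsilon^{-r});h^{\sigma-1/d}\big)$ with $\|u(t)\|_{h^{s_0}}\lesssim\varepsilon$; unwinding the change of coordinates (a loss of $1/d$ in the $h$-scale being exactly one derivative on $\mathcal M$) turns this into $\Psi\in C^0\big((-\varepsilon^{-r},\varepsilon^{-r});H^{s+1}\big)\cap C^1\big((-\varepsilon^{-r},\varepsilon^{-r});H^{s}\big)$ with $s=s(\sigma)$, while $s(s_0)\geq s_1$ yields the low-regularity bound; uniqueness in this class follows from that of Theorem \ref{thm:main} together with standard uniqueness for \eqref{eq:KG}. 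The substantive inputs are imported from Delort--Szeftel — the almost-everywhere-in-$m$ non resonance \eqref{eq:nr_thm} from \cite{DS04} and the multilinear estimates \eqref{eq:DS-estimates} from \cite{DS06} — so the real work is bookkeeping: making the Sobolev indices of the statement come out correctly through the Weyl exponent $\beta=d$, and verifying the tame estimate for the non-polynomial Nemytskii nonlinearity, which is what forces $s_0$ (hence the lower bound on $s$) above $d/2$. I expect the only mildly delicate point to be checking that the smoothing factors $\Lambda^{-1/2}$ appearing in $G$ are compatible with the Delort--Szeftel multilinear framework.
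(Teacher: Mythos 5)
Your proposal is correct and follows essentially the same route as the paper's Subsection \ref{KG_proof}: the diagonalizing change of variables via $\Lambda_m=\sqrt{-\Delta+V+m}$, the identification $H^s(\mathcal M)\simeq h^{s/d}$ via Weyl's law (so $\beta=d$, $\alpha=1$), the tame/Moser estimate for the Nemytskii nonlinearity, the non-resonance imported from \cite{DS04}, and the multilinear bounds derived from the Delort--Szeftel type estimates (reduced to \eqref{eq:estimates_thm} via Lemma \ref{A.6}) before applying Theorem \ref{thm:main}. The only minor deviation is that the paper constructs the cluster decomposition explicitly in Lemma \ref{lem:seq_c} through a pigeonhole argument on the Weyl law rather than importing it wholesale from \cite{DS04}, and your normalization constants differ cosmetically, but neither affects the argument.
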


\begin{remark} We recall that the Sobolev spaces $H^{s}(\mathcal M;  \mathbb{K})$, $\mathbb{K} \in \{ \mathbb{R},\mathbb{C}\}$ and $s\geq
  0$, are defined, as usual, by 
\begin{equation}
\label{eq:def_Hs_M}
H^s(\mathcal{M}; \mathbb{K}) := \Big\{ u \in L^2(\mathcal M; \mathbb{K}) \ | \ \| u\|_{H^s} := \| (1-\Delta)^{s/2}u\|_{L^2} <\infty  \Big\} .
\end{equation}
\end{remark}

Moreover, applying Corollary \ref{cor:smooth} in this setting, we deduce the following result.
\begin{corollary} \label{cor:kg}
For almost all $m>0$, any couple of real-valued functions $\Psi_0, \Phi_0\in C^\infty(\mathcal{M};\mathbb{R})$ and any $\varepsilon \in (0,1)$, there exists 
a unique solution  $\Psi^{(\varepsilon)}\in C^\infty ((-T_\varepsilon,T_\varepsilon)\times \mathcal{M};\mathbb{R} ) $ to \eqref{eq:KG} with  initial datum $ \Psi^{(\varepsilon)}(0) =\varepsilon \Psi_0,\ \partial_t  \Psi^{(\varepsilon)}(0) = \varepsilon \Phi_0$ satisfying
 $$
\forall r\geq 3, \quad \lim_{\varepsilon \to 0} \frac{T_\varepsilon}{\varepsilon^{-r}} = +\infty. 
 $$
 and
 $$
 \forall t\in (-T_\varepsilon,T_\varepsilon),\forall s\geq 0, \quad  \| \Psi^{(\varepsilon)}(t)  \|_{H^{s+1}}+\| \partial_t \Psi^{(\varepsilon)}(t)  \|_{H^{s}}  \lesssim_s \| \Psi^{(\varepsilon)}(0)  \|_{H^{s+1}}+\| \partial_t \Psi^{(\varepsilon)}(0)  \|_{H^{s}} .
 $$
\end{corollary}

\subsection{Nonlinear Schr\"odinger equations on compact manifolds}

We consider nonlinear Schr\"odinger equations of the form
\begin{equation}
\label{eq:NLS} 
\tag{NLS}\  \ic \partial_t z(t,x) = -\Delta z(t,x) + f(|z(t,x)|^2)z(t,x) \quad t\in \mathbb{R}, \ x\in \mathcal{M}
\end{equation}
where $z(t,x)\in \mathbb{C}$,  $\mathcal{M}$ is a smooth compact
boundaryless  connected Riemannian manifold of dimension $d\geq 2$ and $f\in
\mathcal{C}^\infty( \mathbb{R}; \mathbb{R})$ satisfies $f(0) = 0$. 

\medskip

It is immediate to verify that $z_*(t):=\sqrt{p_0}e^{-\ic\nu t }$ is a
solution of \eqref{eq:NLS} if and only if $\nu=f(p_0)$. To ensure linear stability of this solution, we denote by $\lambda_{2}^2 >0$
the second smallest  eigenvalue (with multiplicity) of $-\Delta$ on $\mathcal{M}$ and only consider values of $p_0>0$ such that  $\lambda_{2}^2 +2p_0f'(p_0)>0$.

\begin{theorem}\label{thm:nls}
  Fix $s_1 \gtrsim_d 1$. There exists a zero measure set $\mathcal{N} \subset \mathbb{R}$ such that for all $p_0 >0$ satisfying $2p_0f'(p_0) \in (-\lambda_{2}^2,+\infty ) \setminus \mathcal{N}$, all $r \geq
  1$, all $s\gtrsim_{r,p_0} s_1$, and any  functions $z_0\in
  H^{s}(\mathcal M)$ such that
  \begin{equation}
  \label{eq:hypopo}
\|z_0\|^2_{L^2}=p_0\ ,\quad \inf _{\theta\in
  \mathbb{T}}\|z_0-\sqrt{p_0}e^{-\ic\theta}\|_{H^{s_1}}=:\varepsilon\lesssim_{r,s,p_0}1 , \quad \|z_0\|_{H^s} \leq 2 \sqrt{p_0}
  \end{equation}
there exists a  unique solution
 $$
z\in C^0\big((-\varepsilon^{-r},\varepsilon^{-r}) ;H^{s}(\mathcal M) \big) \cap  C^1\big((-\varepsilon^{-r},\varepsilon^{-r}) ;H^{s-2}(\mathcal M) \big) 
 $$
 to \eqref{eq:NLS} with initial datum $z(0) =z_0$. Furthermore, as long
 as $|t| \leq \varepsilon^{-r}$, one has
 $$
\inf _{\theta\in   \mathbb{T}}\|z(t)-\sqrt{p_0}e^{-\ic\theta}\|_{H^{s_1}}\lesssim \varepsilon
$$
\end{theorem}
\begin{remark}
The constant $2 \sqrt{p_0}$ in \eqref{eq:hypopo} is purely arbitrary, it can be replaced by any constant larger than $\sqrt{p_0}$.
\end{remark}

\subsection{Nonlinear Klein--Gordon equations on $\mathbb{R}^d$ with positive definite quadratic potential.}

Here, we consider the following Klein-Gordon equation:
\begin{equation}\label{eq:KGQ}
\partial_t^2 \Psi(t,x)=(\Delta-m)\Psi(t,x)-Q(x)\Psi(t,x)+f(\Psi(t,x)),\qquad t\in \mathbb{R}, \ x\in \mathbb{R}^d
\end{equation}
in which $m>0$, $f\in \mathcal{C}^\infty(\mathbb{R};
\mathbb{R})$ satisfies $f(0) = 
f'(0)=0$ and $Q:\mathbb{R}^d\rightarrow \mathbb{R}$ is a positive definite quadratic form:
\begin{equation}
Q(x)=\sum_{i=1}^d \sum_{j=1}^d q_{ij} x_i x_j,\qquad \mbox{with} \quad q_{ij}=q_{ji} \quad \mbox{and}\quad Q(x)>0 \quad \forall x\in \mathbb{R}^d \setminus \{0 \}.
\end{equation}

\begin{remark}
  \label{eigen}
Denoting by $\varrho_d\geq\dots \geq \varrho_1>0$ the eigenvalues of
the quadratic form $Q$ one has that the spectrum of the quantum
operator $-\Delta+Q(x)$ is exactly
\begin{equation*}
\sum_{i=1}^d \sqrt{\varrho_i}(2\mathbb{N}+1).
\end{equation*}
In particular one has that if the numbers $\sqrt{\varrho_i}$ are
independent over the rational then their differences are dense on the
real axe. The same is true for the the eigenvalues of
$\sqrt{-\Delta+Q}$ and, generally speaking also those of
$\sqrt{-\Delta+Q+m}$ which are the frequencies in our problem. 
\end{remark}

As we shall see in subsection \ref{harm-sub}, the adequate Sobolev space is as follows:

\begin{equation}\label{harm-sob}
 \mathcal{H}^s(\mathbb{R}^d):=\Big\{u\in H^s(\mathbb{R}^d) \ | \ \quad \langle x \rangle^{s} u\in L^2(\mathbb{R}^d)\Big\}.
\end{equation}

We have exactly the same statement as Theorem \ref{thm:kg}:
\begin{theorem}
\label{thm:quatumoscillator}
Fix $s_1\gtrsim_d 1$. For almost all $m>0$, all $r \geq 1$, all $s\gtrsim_{r,m} s_1$, 
and any couple of real-valued functions $(\Psi_0, \Phi_0)\in \mathcal{H}^{s+1}(\mathbb{R}^d) \times  \mathcal{H}^{s}(\mathbb{R}^d)$  such that $\varepsilon := \| \Psi_0  \|_{\mathcal{H}^{s_1+1}(\mathbb{R}^d)}+\| \Phi_0  \|_{\mathcal{H}^{s_1}(\mathbb{R}^d)} \lesssim_{r,s_1,m} 1$ and $\| \Psi_0  \|_{\mathcal{H}^{s+1}(\mathbb{R}^d)}+\| \Phi_0  \|_{\mathcal{H}^s(\mathbb{R}^d)} \leq 1$, there exists a unique solution 
 $$
 \Psi\in C^0\big((-\varepsilon^{-r},\varepsilon^{-r}) ;\mathcal{H}^{s+1}(\mathbb{R}^d) \big) \cap  C^1\big((-\varepsilon^{-r},\varepsilon^{-r}) ;\mathcal{H}^{s}(\mathbb{R}^d) \big) 
 $$
 to \eqref{eq:KGQ} with initial datum $\Psi(0) = \Psi_0,\ \partial_t \Psi(0) = \Phi_0$. Furthermore, as long as $|t| \leq \varepsilon^{-r}$, one has
 $
\| \Psi(t)  \|_{\mathcal{H}^{s_1+1}(\mathbb{R}^d)}+\| \partial_t\Psi(t)  \|_{\mathcal{H}^{s_1}(\mathbb{R}^d)} \lesssim \varepsilon.
 $
\end{theorem}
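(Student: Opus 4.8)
The plan is to recast \eqref{eq:KGQ} as a Hamiltonian system of the form \eqref{eq:ham-pde} and to verify the hypotheses of Theorem~\ref{thm:main}, along the lines of the proof of Theorem~\ref{thm:kg}, the only structural change being that the pseudodifferential calculus on $\mathcal M$ is replaced by the Shubin (harmonic oscillator) calculus on $\mathbb{R}^d$ and the spectral data of $\sqrt{-\Delta}$ by those of $-\Delta+Q$. Set $\Lambda:=(-\Delta+Q+m)^{1/2}$; since $Q$ is positive definite, $-\Delta+Q$ is a positive elliptic element of order $2$ of the Shubin calculus, hence self-adjoint with compact resolvent, so that $\Lambda$ is positive self-adjoint of order $1$ with an orthonormal basis of real eigenfunctions $(\phi_j)_{j\in\mathbb N}$ and non-decreasing eigenvalues $\omega_j=\sqrt{\mu_j+m}>0$, the $\mu_j>0$ being the eigenvalues of $-\Delta+Q$ given explicitly (with multiplicity) by Remark~\ref{eigen}. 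Introducing $u:=\tfrac1{\sqrt2}\big(\Lambda^{1/2}\Psi+\ic\Lambda^{-1/2}\partial_t\Psi\big)$ and expanding $u=\sum_j u_j\phi_j$ turns \eqref{eq:KGQ} into \eqref{eq:ham-pde} with
$$
G(u)=-\int_{\mathbb R^d}F\Big(\tfrac1{\sqrt2}\Lambda^{-1/2}(u+\overline u)\Big)\,\mathrm{d}x,\qquad F'=f,
$$
a real valued function of order $\ge3$ at the origin (because $f(0)=f'(0)=0$). Since for $\tau\ge0$ the space $\mathcal H^\tau(\mathbb R^d)$ of \eqref{harm-sob} coincides, with equivalent norm, with the domain of $\Lambda^\tau$, and $\Lambda^{\pm1/2}$ shifts this scale by $\mp\tfrac12$, one gets $\|\Psi\|_{\mathcal H^{\tau+1}}+\|\partial_t\Psi\|_{\mathcal H^{\tau}}\sim\|u\|_{\mathcal H^{\tau+1/2}}$; together with the Weyl asymptotics $\omega_j\sim j^{1/(2d)}$ — so that $\mathcal H^\tau\cong h^{\tau/(2d)}$ — this identifies the hypothesis $\|\Psi_0\|_{\mathcal H^{s+1}}+\|\Phi_0\|_{\mathcal H^s}=\varepsilon$ with $\|u^{(0)}\|_{h^{(s+1/2)/(2d)}}\sim\varepsilon$ and shows that it suffices to apply Theorem~\ref{thm:main} with $\alpha=1$, $\beta=2d$ and $s_0$ taken to be the larger of $(s_1+1/2)/(2d)$ and a sufficiently large constant depending only on $d$.

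It then remains to check the assumptions of Theorem~\ref{thm:main}. The Weyl law \eqref{weyl} holds with $\beta=2d$: by Remark~\ref{eigen}, $\#\{j:\omega_j\le\lambda\}=\#\{n\in\mathbb N^d:\sum_i\sqrt{\varrho_i}(2n_i+1)\le\lambda^2-m\}\sim c(\varrho)\,\lambda^{2d}$. The clustering assumption is trivially satisfied with $\alpha=1$, $\Upsilon=1$ and $\mathcal C_k:=\{j\in\mathbb N\,:\,k\le\omega_j<k+1\}$, the left-hand side of \eqref{eq:inclusion_clusters} being then $\le1$ (some $\mathcal C_k$ being empty, which is allowed). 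The tame estimate \eqref{eq:lestimee_tame_pas_facile_a_pas_oublier} is obtained as in Theorem~\ref{thm:kg}: $\Lambda^{-1/2}$ maps $\mathcal H^\tau$ into $\mathcal H^{\tau+1/2}$, and for $\tau$ large $\mathcal H^\tau(\mathbb R^d)$ is a Banach algebra stable under composition with smooth functions vanishing at the origin (Moser-type estimates in the harmonic-oscillator Sobolev scale), so that $g(u)=-\tfrac1{\sqrt2}\Lambda^{-1/2}f\big(\tfrac1{\sqrt2}\Lambda^{-1/2}(u+\overline u)\big)$ satisfies $\|g(u)\|_{\mathcal H^\tau}\lesssim_\tau\|u\|_{\mathcal H^\tau}$ near the origin, which is \eqref{eq:lestimee_tame_pas_facile_a_pas_oublier} after the identification $h^s\cong\mathcal H^{2ds}$.

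The two substantial hypotheses are, as usual, the multilinear estimate \eqref{eq:estimates_thm} and the non-resonance condition \eqref{eq:nr_thm}. For the former, by Lemma~\ref{A.6} it suffices to establish the Delort--Szeftel-type bound \eqref{eq:DS-estimates}; since $\mathrm{d}^{q}G(0)(u^{(1)},\dots,u^{(q)})$ is, up to a constant, a finite sum of integrals $\int_{\mathbb R^d}\prod_{\ell=1}^q\Lambda^{-1/2}v^{(\ell)}$ in which $v^{(\ell)}$ is spectrally localized by $\Lambda$ in the unit band around $\boldsymbol k_\ell$, \eqref{eq:DS-estimates} follows from the arguments of \cite{DS06} — microlocalization, almost-orthogonality of the two highest frequencies (giving the gain in $\boldsymbol k^\star_1-\boldsymbol k^\star_2+\boldsymbol k^\star_3$), $L^2\to L^\infty$ spectral-cluster bounds for the remaining ones, and the half-derivative of smoothing supplied by each $\Lambda^{-1/2}$ — performed in the Shubin calculus on $\mathbb R^d$ instead of on $\mathcal M$. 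For the non-resonance one argues as in \cite{DS04}, using the explicit spectrum of Remark~\ref{eigen}: for fixed $q$ and a tuple $(\boldsymbol j,\boldsymbol\sigma)$ satisfying $\sum_{i\,:\,\boldsymbol j_i\in\mathcal C_k}\boldsymbol\sigma_i\neq0$ for some $k$, group the indices by distinct eigenvalues of $-\Delta+Q$; the cluster hypothesis forces at least one group to have a nonzero integer signed count, whence, since the functions $m\mapsto\sqrt{\mu+m}$ attached to distinct eigenvalues are linearly independent, the small divisor $m\mapsto\sum_\ell\boldsymbol\sigma_\ell\omega_{\boldsymbol j_\ell}(m)=\sum_\ell\boldsymbol\sigma_\ell\sqrt{\mu_{\boldsymbol j_\ell}+m}$ does not vanish identically; quantitatively, using the explicit form of the eigenvalues one controls the measure of the set of $m$ in any bounded interval where $|\sum_\ell\boldsymbol\sigma_\ell\omega_{\boldsymbol j_\ell}|<c_q|\boldsymbol j_1^\star|^{-a_q}$, obtaining a bound that is summable over the polynomially many relevant tuples with $|\boldsymbol j_1^\star|\le N$ once $a_q$ is chosen large enough, so that the union over $N$ and $q$ of these exceptional sets is Lebesgue-negligible. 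This quantitative measure estimate — making the control on the exceptional masses uniform enough in the tuple to be summable, a point where clusters of very close frequencies must be handled with care — is \emph{the main obstacle}, and it is the part of the argument inherited from Delort--Szeftel.

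Once all the hypotheses are verified, Theorem~\ref{thm:main} provides, for $r\ge1$, $s$ large enough in terms of $r,m,s_1$, and $\varepsilon=\|\Psi_0\|_{\mathcal H^{s+1}}+\|\Phi_0\|_{\mathcal H^s}$ small enough, a unique solution $u\in C^0\big((-\varepsilon^{-r},\varepsilon^{-r});h^{(s+1/2)/(2d)}\big)\cap C^1\big((-\varepsilon^{-r},\varepsilon^{-r});h^{(s+1/2)/(2d)-1/(2d)}\big)$ of \eqref{eq:ham-pde} with $\|u(t)\|_{h^{s_0}}\lesssim\varepsilon$ for $|t|\le\varepsilon^{-r}$; reversing the change of unknown and using $s_0\ge(s_1+1/2)/(2d)$ yields the solution $\Psi$ of \eqref{eq:KGQ} in the stated spaces together with the bound $\|\Psi(t)\|_{\mathcal H^{s_1+1}}+\|\partial_t\Psi(t)\|_{\mathcal H^{s_1}}\lesssim\varepsilon$.
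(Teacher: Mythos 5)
The overall strategy (recast \eqref{eq:KGQ} as \eqref{eq:ham-pde}, verify the hypotheses of Theorem~\ref{thm:main} along the lines of the Klein--Gordon proof on compact manifolds, with the Shubin scale replacing the Sobolev scale on a manifold) is the one followed by the paper, and your identifications $\alpha=1$, $\beta=2d$, $\mathcal H^\tau \cong h^{\tau/(2d)}$ are correct. However, there is a genuine gap: your choice of cluster decomposition $\mathcal C_k:=\{j\,:\,k\le\omega_j<k+1\}$ is inadequate, and the clustering is not the ``trivial'' part of the argument you take it to be.

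Two problems. First, your clusters depend on $m$ (through $\omega_j=\sqrt{\mu_j+m}$), whereas in the paper's scheme they are built from the $m$-independent spectrum of $\sqrt{-\Delta+Q}$. Second, and much more seriously, your clusters are not separated by gaps. The whole point of Lemma~\ref{lem:seq_c} (which in the quadratic-potential case must be re-run with the Weyl exponent $2d$, giving gaps of size $\gtrsim k^{-2d}$) is to use a pigeonhole argument to carve the spectral axis into intervals so that two frequencies lying in \emph{different} clusters are at distance at least polynomial in $k$. This separation is exactly what the non-resonance condition \eqref{eq:nr_thm} feeds on: if $\boldsymbol j_i$, $\boldsymbol j_{i'}$ sit in different clusters while $\mu_{\boldsymbol j_i}$ and $\mu_{\boldsymbol j_{i'}}$ are super-polynomially close --- which Remark~\ref{eigen} shows does happen for generic $\varrho$ --- then the small divisor $\sqrt{\mu_{\boldsymbol j_i}+m}-\sqrt{\mu_{\boldsymbol j_{i'}}+m}$ is tiny \emph{for every} $m$, so no choice of $a_q$ can restore \eqref{eq:nr_thm} on a full-measure set of masses. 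With your $m$-dependent unit bands, such a pair straddles an integer boundary (hence triggers the hypothesis of \eqref{eq:nr_thm}) on a set of $m$ of positive measure, and the required lower bound simply fails there. The paper's construction dissolves the obstacle before it arises: whenever two eigenvalues of $-\Delta+Q$ are closer than the gap scale, they are placed in the \emph{same} cluster, so that the corresponding quasi-resonant monomial has vanishing signed cluster count, never triggers \eqref{eq:nr_thm}, and is absorbed into the resonant normal form. Your own closing remark, ``clusters of very close frequencies must be handled with care --- is the main obstacle,'' is a correct diagnosis, but with the trivial clustering that obstacle is not a technicality to be handled: it is fatal, and the fix is precisely the gap construction you omitted.

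A secondary remark: the paper proves the multilinear estimate \eqref{eq:DS_estim-KG} in the appendix by an elementary commutator/adjoint argument avoiding the Helffer--Sj\"ostrand formula, relying on the admissibility of $P$ (and, in the quadratic-potential case, on the estimates of \cite{Brun23}), rather than by literally transporting the microlocalization of \cite{DS06} to the Shubin calculus. This is a difference of presentation and does not affect the validity of your approach in principle, so the clustering is the only substantive gap.
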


\section{Proofs of the applications}
\label{Sec:proofs_app}
\subsection{Klein--Gordon on compact Riemannian manifold}\label{KG_proof}

Let $\mathcal{M}$ be a smooth compact boundaryless Riemannian manifold of dimension $d\geq 1$, $s_2> \max(d/2 - 1,0)$ and  $V\in C^\infty(\mathcal{M}; \mathbb{R}_+)$.
In order to prove Theorem \ref{thm:kg}, we have first to explain how \eqref{eq:KG} rewrites in the framework of Theorem \ref{thm:main} and then why it satisfies its assumptions.

\medskip

\noindent \underline{\emph{Step 1: Equivalence of the formalisms.}}  First we recall that the spectrum of the operator ${-\Delta+V}$ acting on $L^2(\mathcal{M};\mathbb{R})$  is pure point.
We denote by $(\lambda_j^2)_{j\geq 1}$ the nondecreasing sequence of its
eigenvalues (with multiplicities) and $(e_j)_{j\in \mathbb{N}}$ an associated real Hilbertian basis. The eigenvalues satisfy the Weyl law \cite{Hor68}
\begin{equation}
\label{eq:true_Weyl}
\forall y\gg 1, \quad \# \{ j\in \mathbb{N} \ | \ \lambda_j \leq y \} \sim y^d.
\end{equation}
 We identify any function $u\in L^2(\mathcal{M};\mathbb{C})$ with its associated sequence of coefficients $(u_j)_{j\in \mathbb{N}} \in \ell^2$ in this basis, i.e. such that
$$
(u_j)_{j\in \mathbb{N}}  \equiv u =: \sum_{j\in \mathbb{N}} u_j e_j.
$$  
Then, we prove in the following lemma that the standard Sobolev space $H^s(\mathcal{M};\mathbb{K})$ (defined by \eqref{eq:def_Hs_M}) we used to state Theorem \ref{thm:kg} are equivalent to the discrete Sobolev spaces $h^s$ we used to state our abstract theorem.
\begin{lemma} \label{eq:norm_equiv} For all $s\geq 0$ and all $\mathbb{K}\in \{\mathbb{R} ,\mathbb{C}\}$, we have that
$$
H^s(\mathcal{M};\mathbb{K}) = h^{s/d} \cap \mathbb{K}^{\mathbb{N}} \quad \mathrm{and} \quad \| \cdot \|_{H^s} \sim_{s,V} \| \cdot \|_{h^{s/d}}.
$$
\end{lemma}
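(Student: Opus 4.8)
The plan is to run the comparison through the self-adjoint operator $L := 1-\Delta+V$, which satisfies $L\geq 1$ (because $-\Delta\geq 0$ on the closed manifold $\mathcal M$ and $V\geq 0$), has the $e_j$ as eigenfunctions, and has eigenvalues $1+\lambda_j^2$. Two facts will be combined. First, under the identification $u\equiv(u_j)_{j\in\mathbb N}$ the spectral theorem gives $\|L^{s/2}u\|_{L^2}^2=\sum_{j\in\mathbb N}(1+\lambda_j^2)^s|u_j|^2$. Second, the scale of spaces attached to $L$ is the usual Sobolev scale, i.e. $\|L^{s/2}u\|_{L^2}\sim_{s,V}\|u\|_{H^s}$ for every $s\geq 0$. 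Granting these, Lemma~\ref{eq:norm_equiv} reduces to the elementary two-sided bound $(1+\lambda_j^2)^s\sim_{s,d}j^{2s/d}$ (uniform in $j$), equivalently $\lambda_j\sim_d j^{1/d}$; the assertion about $\mathbb K$ is then automatic, since the $e_j$ are real-valued, so $u$ is $\mathbb R$-valued if and only if every $u_j$ is real.

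For the second fact, take first $s=2k$ with $k\in\mathbb N$. Expanding $L^k=((1-\Delta)+V)^k$, every term other than $(1-\Delta)^k$ carries a factor $V$, so $L^k-(1-\Delta)^k$ is a differential operator of order $\leq 2k-2$ with smooth, hence bounded, coefficients; thus $\|L^ku-(1-\Delta)^ku\|_{L^2}\lesssim_k\|u\|_{H^{2k-2}}=\|(1-\Delta)^{k-1}u\|_{L^2}$ by \eqref{eq:def_Hs_M}. Since $1-\Delta\geq 1$ and $L\geq 1$, this yields $\|(1-\Delta)^ku\|_{L^2}\sim_{k,V}\|L^ku\|_{L^2}$ by an immediate induction on $k$ (using also $\|L^{k-1}u\|_{L^2}\leq\|L^ku\|_{L^2}$). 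The case of general $s\geq 0$ then follows by complex interpolation, using $[\mathcal D(L^{a}),\mathcal D(L^{b})]_\theta=\mathcal D(L^{(1-\theta)a+\theta b})$ for the positive self-adjoint operator $L$ together with the analogous identity for the Sobolev scale on $\mathcal M$; alternatively one may invoke that complex powers of positive elliptic operators on a compact manifold are pseudodifferential operators of the expected order.

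For the first fact, set $N(y):=\#\{j\in\mathbb N : \lambda_j\leq y\}$. The Weyl law~\eqref{eq:true_Weyl} provides constants $0<c_1\leq c_2$ and $y_0$ with $c_1y^d\leq N(y)\leq c_2 y^d$ for $y\geq y_0$. Since the $\lambda_j$ are nondecreasing, $\lambda_j\leq y\iff N(y)\geq j$. If $y_0\leq y<(j/c_2)^{1/d}$ then $N(y)\leq c_2 y^d<j$, so $\lambda_j\geq y$; letting $y\uparrow(j/c_2)^{1/d}$ gives $\lambda_j\geq(j/c_2)^{1/d}$ whenever $(j/c_2)^{1/d}>y_0$. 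If $y\geq\max(y_0,(j/c_1)^{1/d})$ then $N(y)\geq c_1 y^d\geq j$, so $\lambda_j\leq(j/c_1)^{1/d}$ for $j$ large. Hence $\lambda_j\sim_d j^{1/d}$ for $j\gg 1$, and the finitely many remaining indices are harmless because there $1+\lambda_j^2$ and $j^{2/d}$ both lie in a fixed compact subset of $[1,\infty)$. Therefore $1+\lambda_j^2\sim_{d,V}j^{2/d}$ for every $j$, so $(1+\lambda_j^2)^s\sim_{s,d,V}j^{2s/d}$, and summing against $|u_j|^2$ and inserting the second fact closes the proof.

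The main obstacle is the identification $\|L^{s/2}u\|_{L^2}\sim_{s,V}\|u\|_{H^s}$ for arbitrary real $s\geq 0$ and a potential $V$ that is merely smooth and not small: the integer-order case is elementary elliptic regularity, but the passage to non-integer $s$ genuinely relies either on an interpolation theorem for powers of positive operators or on the pseudodifferential calculus of complex powers. Everything else is bookkeeping around the Weyl law.
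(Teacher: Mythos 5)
Your proof is correct and follows the same overall strategy as the paper's: diagonalize $1-\Delta+V$ to identify $\|(1-\Delta+V)^{s/2}u\|_{L^2}^2$ with $\sum_j\langle\lambda_j\rangle^{2s}|u_j|^2$, compare with the $H^s$ norm, and then convert $\langle\lambda_j\rangle\sim j^{1/d}$ via the Weyl law. The one place where you go genuinely beyond the paper is the equivalence $\|(1-\Delta)^{s/2}u\|_{L^2}\sim_{s,V}\|(1-\Delta+V)^{s/2}u\|_{L^2}$: the paper asserts it in a single clause (``since $V$ is bounded and nonnegative'') with no justification for fractional $s$, whereas you correctly flag this as the main technical point and supply a complete argument — integer exponents by a commutator/induction estimate using that $V$ is a smooth multiplication operator and that both $1-\Delta$ and $1-\Delta+V$ are bounded below by $1$, followed by complex interpolation between domains of fractional powers (or equivalently the pseudodifferential calculus of complex powers). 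Your handling of the Weyl-law bookkeeping, the treatment of the finitely many small indices, and the reduction of the real/complex distinction to the reality of the eigenbasis are all accurate. In short, same route, but with the proof's only nontrivial step made explicit.
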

\begin{proof} First, we note that since $V$ is bounded and nonnegative, for all $u\in \mathcal{C}^\infty(\mathcal{M};\mathbb{C})$
$$
\| u \|_{H^s}^2 = \| (\mathrm{Id} - \Delta)^{s/2} u \|_{L^2}^2 \sim_{s,V}  \| (\mathrm{Id} - \Delta + V)^{s/2} u \|_{L^2}^2 = \sum_{j\in \mathbb{N}} \langle \lambda_j \rangle^{2s} |u_j|^2.
$$
Since by the Weyl law, we have $\langle \lambda_j \rangle \sim j^{1/d} $ (it suffices to apply \eqref{eq:true_Weyl} with $y=\lambda_j$ and  $y=\lambda_j^-$), we deduce that $\| u\|_{H^s} \sim_{s,V} \|u\|_{h^{s/d}}$. The rest of the proof follows directly by density.
\end{proof}
Therefore, by setting, for all $m>0$,
$$
u=\Lambda_m^{1/2}\Psi  + i\Lambda_m^{-1/2}\partial_t \Psi ,
$$ 
where $\Lambda_m$ is the spectral multiplier defined by
$$
 \Lambda_m = \sqrt{-\Delta + V + m},
$$
we deduce that $(\Psi,\partial_t \Psi)$  is solution of \eqref{eq:KG} in $H^{s+1}(\mathcal{M};\mathbb{R}) \times H^{s}(\mathcal{M};\mathbb{R})  $ if and only if $u$ is solution of \eqref{eq:ham-pde} in $H^{s+1/2}(\mathcal{M};\mathbb{C})  $ with
\begin{equation}\label{def-g}
\forall j\in \mathbb{N}, \quad \omega_j :=  \sqrt{\lambda_j^2+m} \quad \mathrm{and} \quad g(u) := \Lambda_m^{-1/2}f\left(x,\Lambda_m^{-1/2}\left(\frac{u+\bar u}2\right)\right).
\end{equation} 
Finally, we set naturally, $s_0:= s_2/d$ so that $H^{s_2}(\mathcal{M};\mathbb{C}) = h^{s_0} $ .

\medskip

\noindent \underline{\emph{Step 2: Validity of the assumptions.}} For simplicity, we choose $\mathcal{U} = B_{H^{s_2}}(0,1)$. The fact that $g$ is smooth from $H^s \cap \mathcal{U}$ to $H^s$ for all $s\geq s_2$ is just a consequence of the algebra property of the Sobolev spaces (because we chose $s_2 > d/2-1$).  Due to \eqref{def-g}, the tame estimate \eqref{eq:lestimee_tame_pas_facile_a_pas_oublier} will be a consequence of the following one 
\begin{equation}\label{tame-u}
\forall s>d/2,\forall \Psi \in H^s(\mathcal{M};\mathbb{R}) \quad \|f(\cdot,\Psi)\|_{H^s(\mathcal{M})}\lesssim_{s,\|  \Psi  \|_{L^\infty}} \| \Psi \|_{H^s(\mathcal{M})}
\end{equation}
where $f$ is given in \eqref{eq:KG} and we thus refer to \cite[Thm 2.87 page 104]{BCD11} for a proof. Finally, for the Hamiltonian structure, it suffices to note that for all $j\in \mathbb{N}$, $g_j = \partial_{\overline{u_j}} G$ where
$$
G(u) := \int_{\mathcal{M}} F\Big(x,\Lambda_m^{-1/2} \frac{u(x) + \overline{u(x)}}2\Big) \mathrm{d}x
$$
where $F\in C^\infty(\mathbb{R}\times \mathbb{R} ; \mathbb{R})$ is the function satisfying $F(\cdot,0) =0$ and $\partial_{\Psi} F = f$.

\medskip

Recalling that $(\lambda_j)_{j\in \mathbb{N}}$ are the eigenvalues of the operator $-\Delta+V$, the estimate \eqref{weyl} with $\beta :=d$ is a direct consequence of the Weyl law \eqref{eq:true_Weyl} for this operator.

\medskip

Finally, it only remains to construct the clustering and to check the associated properties. The construction relies on the following lemma.
\begin{lemma}
\label{lem:seq_c}
There exists an increasing sequence $(c_k)_{k\in \mathbb{N}} \in \mathbb{R}_+^{\mathbb{N}}$ such that 
$$
\sigma(\sqrt{-\Delta+V}) = \{ \lambda_j \ | \ j\in \mathbb{N} \} \subset \bigcup_{k\in \mathbb{N}} [c_{2k-1},c_{2k})
$$
and
\begin{equation}
\label{eq:bound_int_c}
\forall k \in \mathbb{N}, \quad   k^{-d} \lesssim |c_{2k+1}-c_{2k}|  \quad \mathrm{and} \quad |c_{2k}-k|+|c_{2k}-c_{2k-1}| \lesssim 1.
\end{equation}
\end{lemma}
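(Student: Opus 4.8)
The idea is to construct the \emph{gaps} first (by a counting/pigeonhole argument based on the Weyl law) and then let the clusters be the complementary intervals. Recall that $\sqrt{-\Delta+V}\geq 0$ since $V\geq 0$, so all the $\lambda_j$ are nonnegative. From the Weyl law \eqref{eq:true_Weyl} one gets the one-sided bound
$$
N(y):=\#\{j\in\mathbb{N}\ |\ \lambda_j\leq y\}\lesssim \langle y\rangle^{d}\qquad\text{for all }y\geq 0,
$$
(the asymptotic provides this for $y$ large, and $N$ is trivially bounded on any compact set). In particular, for every integer $k\geq 1$, the number of eigenvalues $\lambda_j$ lying in $[k+\tfrac14,k+\tfrac34)$ is at most $N(k+1)\lesssim k^{d}$.

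For each $k\geq 1$, fix a constant $C$ (uniform in $k$) with that count $\leq C k^{d}$, and partition the half-open interval $[k+\tfrac14,k+\tfrac34)$ into $N_k:=\lceil C k^{d}\rceil+1$ consecutive half-open subintervals of equal length $\tfrac{1}{2N_k}\gtrsim k^{-d}$. Since there are strictly more subintervals than eigenvalues in $[k+\tfrac14,k+\tfrac34)$ (and each eigenvalue lies in exactly one half-open subinterval), the pigeonhole principle produces a half-open interval $[\alpha_k,\beta_k)\subset[k+\tfrac14,k+\tfrac34)$ with $\beta_k-\alpha_k\gtrsim k^{-d}$ and $[\alpha_k,\beta_k)\cap\{\lambda_j\ |\ j\in\mathbb{N}\}=\emptyset$. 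Set in addition $\beta_0:=0$.

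Now define, for $k\geq 1$, $c_{2k-1}:=\beta_{k-1}$ and $c_{2k}:=\alpha_k$. Since $\beta_{k-1}\in\{0\}\cup[k-\tfrac34,k-\tfrac14)$ and $\alpha_k\in[k+\tfrac14,k+\tfrac34)$, one has $\beta_{k-1}<k<\alpha_k<\beta_k$, so $(c_k)_{k\in\mathbb{N}}$ is a strictly increasing sequence of nonnegative reals. The cluster intervals $[c_{2k-1},c_{2k})=[\beta_{k-1},\alpha_k)$, $k\geq1$, together with the gaps $[c_{2k},c_{2k+1})=[\alpha_k,\beta_k)$, $k\geq1$, tile $[0,\infty)$; every $\lambda_j\geq0$ avoids all the gaps by construction, whence $\sigma(\sqrt{-\Delta+V})=\{\lambda_j\ |\ j\in\mathbb{N}\}\subset\bigcup_{k\in\mathbb{N}}[c_{2k-1},c_{2k})$. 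Finally $|c_{2k}-k|=|\alpha_k-k|\leq\tfrac34$, $c_{2k}-c_{2k-1}=\alpha_k-\beta_{k-1}\leq\tfrac32$ and $c_{2k+1}-c_{2k}=\beta_k-\alpha_k\gtrsim k^{-d}$, which is exactly \eqref{eq:bound_int_c}.

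There is no genuine obstacle here; the lemma is an elementary pigeonhole. The only point requiring a little care is quantitative bookkeeping: one should use the Weyl law only in the weak one-sided form $N(y)\lesssim\langle y\rangle^{d}$ (no sharp remainder is needed, so \eqref{eq:true_Weyl} as stated suffices), and then observe that packing $\sim k^{d}$ eigenvalues into a window of length $\sim 1$ forces a sub-gap of length $\sim k^{-d}$ — matching exactly the exponent $-d$ demanded in \eqref{eq:bound_int_c}, so that no loss occurs and no adjustment of the window is possible to improve it.
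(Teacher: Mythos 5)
Your proof is correct and follows essentially the same pigeonhole strategy as the paper: bound the eigenvalue count near each integer $k$ by $\lesssim k^d$ via the Weyl law, partition a fixed window of length $\sim 1$ near $k$ into more than that many equal pieces of length $\gtrsim k^{-d}$, extract an eigenvalue-free piece, and let that be the gap between consecutive cluster intervals. The paper works in the window $[k-\tfrac12,k)$ and records the empty subinterval by its index $\ell_k$, whereas you work in $[k+\tfrac14,k+\tfrac34)$ and write the gap directly as $[\alpha_k,\beta_k)$ — a purely cosmetic difference. (Two minor remarks: for $k=1$ your bound is $c_2-c_1=\alpha_1\leq \tfrac74$ rather than $\tfrac32$, which is harmless since only $\lesssim 1$ is claimed; and your formulation of $c_{2k},c_{2k+1}$ in terms of the left/right endpoints of the gap is actually cleaner than the paper's, whose displayed formulas for $c_{2k}$ and $c_{2k+1}$ appear to have the two indices swapped.)
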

\begin{proof}
Recalling that by the Weyl law there exists $C\in \mathbb{N}$ such that for all $k \geq 1$
$$
\# \{ j \in \mathbb{N} \ | \ \lambda_j\leq k \} \leq C k^d
$$
Now split the interval $\big[k-\frac{1}{2},k\big)$ into $2Ck^d$ subintervals of the same length (that is $\frac{1}{4Ck^d}$) and of the form 
$\big[ k-\frac{\ell-1}{4Ck^d}   , k-\frac{\ell}{4Ck^d}    \big)$ with $\ell$ being an integer.
 Due to the last inequality, there is at least one subinterval that contains no eigenvalue and we may define it to be at a number $\ell=\ell_k$.
Thus, it suffices to set $c_1 = 0$ and for all $k\in \mathbb{N}$
$$
c_{2k} :=  k - \frac{\ell_k-1}{4 C k^d} \quad \mathrm{and} \quad c_{2k+1} :=k - \frac{\ell_k}{4C k^d} .
$$
In particular, we get the inequalities $\frac{1}{2}\leq c_{2k+2}-c_{2k+1}\leq \frac{3}{2}$ as illustrated in the figure below:
\begin{center}
\includegraphics[scale=0.5]{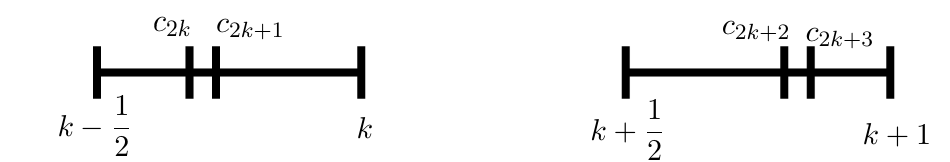}
\end{center}
\end{proof}
Then we set
$$
\mathcal{C}_k = \{ j\in \mathbb{N} \ | \ c_{2k-1} \leq \lambda_j \leq c_{2k} \}.
$$
and, recalling that $\omega_j=\sqrt{\lambda_j^2+m}$, we note that thanks to the second estimate of \eqref{eq:bound_int_c}, the bound \eqref{eq:inclusion_clusters} on the frequencies and the clusters holds with $\alpha=\Upsilon=1$. The non-resonance estimate \eqref{eq:nr_thm}, was proved in \cite{DS04} (see also \cite{DI17}). Here we do not repeat the proof, we just emphasize that it is based on the fact that the frequencies are analytic functions of
$m$ and of the eigenvalues $\lambda_j$ and furthermore the intervals actually containing frequencies are separated among them by a quantity that goes to zero not faster than $k^{-n}$ with some $n$ (given by \eqref{eq:bound_int_c} here).

\medskip

The proof of the multilinear estimate \eqref{eq:estimates_thm} is
done in the appendix. The
proof consists of first proving an estimate of the kind of \eqref{eq:DS-estimates} and then deducing the estimate
\eqref{eq:estimates_thm}. We emphasize that the estimate \eqref{eq:DS-estimates} was essentially proved in \cite{DS06}. Here we
give a slightly simplified version of its proof which does not make use of the Helffer--Sj\"ostrand formula.

\medskip

Finally, due to constants depending on $s$, the assumption $\| \Psi_0  \|_{H^{s+1}}+\| \Phi_0  \|_{H^{s}} \leq 1$ does not necessarily implies that $\|u(0)\|_{h^{s/d}} \leq 1$. Nevertheless, since $\|u(0)\|_{h^{s_1/d}}$ is small, it suffices to choose $s$ a little larger in Theorem \ref{thm:kg} and to interpolate with the $h^{s_1/d}$ norm (i.e. to apply H\"older's inequality) to get the smallness condition  of Theorem \ref{thm:main} on the high regularity norm.

\medskip

Therefore all the hypothesis of Theorem \ref{thm:main} are satisfied
and, when we apply it, we obtain Theorem \ref{thm:kg}. Similarly, applying Corollary \ref{cor:smooth}, we get Corollary \ref{cor:kg}.

\subsection{Schr\"odinger} Let $\mathcal{M}$ be a smooth compact  connected boundaryless Riemannian manifold of dimension $d\geq 1$ and $s_2> d/2$.
In order to prove Theorem \ref{thm:nls}, we have first to explain how, thanks to the Gauge symmetry, \eqref{eq:NLS} rewrites in the framework of Theorem \ref{thm:main} and then why it satisfies its assumptions. These explanations are similar to those given in \cite{FGL13,BFLM24}. To avoid normalization constants, we assume without loss of generality that 
$$
\mathrm{Vol}(\mathcal{M}) :=\int_{\mathcal{M}} 1 \, \mathrm{d}x = 1.
$$

\medskip

\noindent \underline{\emph{Step 1: Formalism.}} As previously, we denote by $(\lambda_j^2)_{j\in \mathbb{N}}$ the non-decreasing sequence of eigenvalues of the Laplace--Beltrami operator $-\Delta$ acting on $L^2(\mathcal{M};\mathbb{R})$ and by $(e_j)_{j\in \mathbb{N}}$ an associated real Hilbertian basis such that $e_1 = 1$. We denote by $ L^2_0(\mathcal{M};\mathbb{C}) $ the space of the zero integral functions in $L^2$, i.e. 
	\begin{equation*}
		L^2_0( \mathcal{M};\mathbb{C}):=\Big\{v\in L^2(\mathcal{M};\mathbb{C})\ \big| \ \int_{\mathcal{M}} v(x) \, \mathrm{d}x=0\Big\}.
	\end{equation*}
	  We identify any function $u\in L^2_0(\mathcal{M};\mathbb{C})$ with its sequence of coefficients $(u_j)_{j\in \mathbb{N}} \in \ell^2$, i.e.
	  $$
	  u = \sum_{j\in \mathbb{N}} u_j e_{j+1}.
	  $$ 
It follows that by Lemma \ref{eq:norm_equiv}, we have
$$
\forall s\geq 0, \quad H^s_0(\mathcal{M};\mathbb{C}) := H^s(\mathcal{M};\mathbb{C}) \cap L^2_0(\mathcal{M};\mathbb{C})  \equiv h^{s/d} 
$$
and that the associated norms are equivalent.

\medskip

Now, we summarize in the following proposition how the construction of Faou--Glauckler--Lubich in  \cite{FGL13} (which is also recalled in \cite{BFLM24}) reduces the analysis of \eqref{eq:NLS} close to a plane wave to the analysis of an equation of the form \eqref{eq:ham-pde}.
\begin{proposition}[Faou--Glauckler--Lubich \cite{FGL13}] For all $p_0>0$ satisfying $2p_0f'(p_0)> \lambda_2^2$, there exist a function $K \in C^\infty( \Omega ; \mathbb{R})$ defined on an open neighborhood $\Omega$ of the origin in $ \mathbb{C} \times \mathbb{R}$ and a $\mathbb{R}$-linear isomorphism\footnote{which is also actually a symplectomorphism.} $S : \ell^2 \to \ell^2 $ which is diagonal in the sense that 
\begin{equation}
\label{eq:S_is_diag}
\forall i\geq 2, \quad S \mathbb{C} e_i = \mathbb{C} e_i
\end{equation}
 such that setting, for all $j\geq 1$,
\begin{itemize}
\item  $\omega_ j := \sqrt{\lambda_{j+1}^4 + 2p_0f'(p_0)\lambda_{j+1}^2}$
\item  $g_j := \partial_{\overline{u}_j} G$ where $G$ denotes the function defined on a neighborhood of the origin in $H^{s_1}_0$ by
\begin{equation}
\label{eq:joli_forme}
G(u) = \int_{\mathcal{M}} K( S u(x), \| S u\|_{L^2}^2 ) \, \mathrm{d}x,
\end{equation}
\end{itemize}
we have the following property. For any $T>0$ and any solution $u \in C^0([0,T]; H^{s_2}_0)$ to \eqref{eq:ham-pde}, there exists a continuous function $\theta \in  C^0([0,T]; \mathbb{R})$ such that
\begin{equation}
\label{eq:z_de_u}
z := e^{- \ic\theta}\left(\sqrt{p_0-\left\|S u\right\|_{L^2}^2}+ Su \right) \in C^0([0,T]; H^{s_2}) \quad \mathrm{is \ a\  solution\ to\ } \eqref{eq:NLS}.
\end{equation}
\end{proposition}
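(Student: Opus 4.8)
The plan is to follow the classical construction of Faou–Glauckler–Lubich \cite{FGL13} (also recalled in \cite{BFLM24}), adapting it to the present normalisation $\mathrm{Vol}(\mathcal{M})=1$. The starting point is the Gauge (modulation) ansatz $z(t,x) = e^{-\ic\theta(t)}\big(\sqrt{p_0 - \|v(t)\|_{L^2}^2} + v(t,x)\big)$ with $v(t)\in H^{s_1}_0$, $\theta(t)\in\mathbb{R}$. One inserts this into \eqref{eq:NLS} and separates the evolution into the dynamics of the zero-mode phase $\theta$ (which is slaved and determined by an ODE involving $f$ and $\|v\|_{L^2}^2$, using mass conservation $\|z\|_{L^2}^2 = p_0$) and the dynamics of the nonzero-mode profile $v$. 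First I would carry out this reduction carefully, using the conservation of the $L^2$ norm of $z$ along \eqref{eq:NLS} to make sure the quantity under the square root stays nonnegative for small $v$, so that $\Omega$ and all the functions below are well defined on a genuine neighborhood of the origin.

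Second, I would linearize the resulting equation for $v$ around $v=0$. The linearized operator is the standard Bogoliubov-type operator: on the real and imaginary parts of $v$ it acts (after using $2p_0f'(p_0)$ as the coupling) as a matrix operator whose symbol on each eigenspace of $-\Delta$ associated to $\lambda_{j+1}^2$ has "dispersion relation" $\omega_j = \sqrt{\lambda_{j+1}^4 + 2p_0 f'(p_0)\lambda_{j+1}^2}$; the hypothesis $2p_0 f'(p_0) > -\lambda_2^2$ (here stated as $>\lambda_2^2$, consistent with the theorem's range) guarantees $\lambda_{j+1}^4 + 2p_0 f'(p_0)\lambda_{j+1}^2 > 0$ for all $j\geq 1$, so the $\omega_j$ are real and positive and the linearization is elliptic/stable. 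Diagonalizing this $2\times2$ symbol eigenspace-by-eigenspace produces the real-linear, mode-diagonal isomorphism $S:\ell^2\to\ell^2$ satisfying \eqref{eq:S_is_diag}; since $S$ mixes only $\Re$ and $\Im$ within each fixed eigenspace $\mathbb{C}e_i$, it is diagonal in the stated sense and, being the canonical symplectic diagonalization of a positive definite quadratic form paired with the symplectic form, it is a symplectomorphism. I would then set $u := S^{-1}v$, so that in the $u$ variables the quadratic part of the Hamiltonian is exactly $\sum_j \omega_j |u_j|^2$, and the nonlinear remainder, expressed through the primitive of $f$ composed with the modulation ansatz, takes the claimed integral form $G(u) = \int_{\mathcal{M}} K(Su(x), \|Su\|_{L^2}^2)\,dx$ for a smooth $K$ on a neighborhood $\Omega$ of $0$ in $\mathbb{C}\times\mathbb{R}$ (the function $K$ being built from an antiderivative of $f$ and the algebraic relations defining the ansatz). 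Because this change of variables is $\mathbb{R}$-linear, diagonal on modes $\geq 2$, and symplectic, the Hamiltonian structure is preserved and $g_j = \partial_{\overline{u_j}}G$.

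Third, for the converse direction asserted in the proposition — that a solution $u$ of \eqref{eq:ham-pde} produces, via \eqref{eq:z_de_u}, a solution $z$ of \eqref{eq:NLS} — I would run the construction backwards: given $u\in C^0([0,T];H^{s_1}_0)$ solving \eqref{eq:ham-pde}, set $v=Su$, define $\theta$ by integrating the slaving ODE for the phase (which has a continuous right-hand side in $t$, hence $\theta\in C^0([0,T];\mathbb{R})$ — in fact $C^1$), and check by direct substitution that $z = e^{-\ic\theta}(\sqrt{p_0-\|Su\|_{L^2}^2} + Su)$ solves \eqref{eq:NLS} with $\|z\|_{L^2}^2 = p_0$; regularity $z\in C^0([0,T];H^{s_1})$ follows since $S$ is bounded on $H^{s_1}_0$, the scalar correction is continuous in $t$, and $e_1 = 1$ is smooth. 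The main obstacle I expect is bookkeeping rather than conceptual: one must verify that the symplectic change of variables $S$ genuinely brings the quadratic form to the normalized diagonal shape with exactly the eigenvalues $\omega_j$ (this is where the condition $2p_0f'(p_0)>-\lambda_2^2$ is used, to keep all $\omega_j$ real) and that the nonlinear part collapses to the clean form \eqref{eq:joli_forme}; keeping track of the modulation parameter $\theta$ and of which terms are absorbed into $K$ versus into the phase ODE is where the computation is most delicate. Since all of this is carried out in detail in \cite{FGL13} and recalled in \cite{BFLM24}, I would present the argument in condensed form and refer there for the routine verifications.
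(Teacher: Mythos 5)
Your plan follows the same path the paper itself sketches in the surrounding remarks (the paper does not give a proof, instead pointing to the constructive argument of \cite{FGL13} and its extension to general $\mathcal{M}$ in \cite{BFLM24}): modulation ansatz isolating the zero mode and the phase, mode-by-mode Bogoliubov diagonalization yielding the $\omega_j$ and the real-linear, mode-diagonal symplectomorphism $S$, and symplecticity of the full change of variables $(p,\theta,u)\mapsto z$ which transports the Hamiltonian structure and gives $g_j = \partial_{\overline{u_j}}G$. Note only that the hypothesis as printed in the Proposition, $2p_0 f'(p_0) > \lambda_2^2$, is a typo for $2p_0 f'(p_0) > -\lambda_2^2$ (this matches Theorem \ref{thm:nls} and the sentence just before the Proposition, ``only consider values of $p_0>0$ such that $\lambda_2^2 + 2p_0 f'(p_0) > 0$''), which is in fact the condition you correctly use to ensure all $\omega_j$ are real.
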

\begin{remark} The proof is fully constructive. In particular there are explicit formulas for $K,S$ and $\theta$. However, they are not useful for us. To explain why $S$ enjoys these good properties let us just mention that it satisfies 
$$
\forall i\geq 2,\forall a,b\in \mathbb{R}, \quad S(a+ib)e_i = (\alpha_ia  + i \alpha_i^{-1} b )e_i \quad \mathrm{where} \quad \alpha_i \sim 1.
$$
The proof in \cite{FGL13} is done in $\mathbb{T}^d$ with a cubic nonlinearity but, as explained in \cite{BFLM24}, there is no difficulty to extend it in our setting. The key point is to note that the map $(p,\theta,u)\mapsto z$ defined by \eqref{eq:z_de_u} is symplectic (in some classical sense not recalled here).
\end{remark}
\begin{remark}
Since $S$ is diagonal, it is also an isomorphism from $H^s_0$ to $H^s_0$ for all $s\in \mathbb{R}$. 
\end{remark}
\begin{remark} Let us note that $G$  is a well defined smooth function on a bounded neighborhood of the origin in $H^{s_2}_0$ and so that $g$ is well defined. Indeed, it is a consequence of the fact that $S$ is continuous on $H^{s_2}$ and of the algebra property of $H^{s_2}$ (because $s_2>d/2$).
\end{remark}

Then it suffices to note that if $u$ is small enough in $L^2$ and $z$ is given by \eqref{eq:z_de_u} then for all $s\geq 0$ we have
$$
\| u\|_{H^s} \sim_{s,p_0} \inf_{\varphi \in \mathbb{T}} \| e^{i \varphi} \sqrt{p_0} - z \|_{H^s}
$$
to deduce that if \eqref{eq:ham-pde} is almost globally well posed (in the sense that Theorem \ref{thm:main} applies) then Theorem \ref{thm:nls} holds.

\medskip

\noindent \underline{\emph{Step 2: Validity of the assumptions.}} Most of the discussions are the same as the ones we presented for Klein--Gordon, so we just explain briefly the constructions.	The open set $\mathcal{U}$ can be chosen as a centered open ball in $H^{s_2}_0$ of sufficiently small radius. The fact that $g$ is smooth from $H^s_0 \cap \mathcal{U}$ to $H^s_0$ for all $s\geq s_2$ is just a consequence of the algebra property of the Sobolev spaces and of the continuity of $S$ in $H^s_0$. As previously, the tame estimate \eqref{eq:lestimee_tame_pas_facile_a_pas_oublier} relies on para-differential calculus techniques and we refer to \cite[Thm 2.87 page 104]{BCD11} for a proof. 
Recalling that $(\lambda_j)_{j\in \mathbb{N}}$ are the eigenvalues of the operator $-\Delta$, the estimate \eqref{weyl} with $\beta :=d/2$ is a direct consequence of the Weyl law \eqref{eq:true_Weyl} for this operator.

\medskip

Now we focus on the clustering property. Let $(c_k)_{k\geq 1}$ be the sequence given by Lemma \ref{lem:seq_c} in the case $V=0$. As previously we define the cluster by
$$
\mathcal{C}_k = \{ j\in \mathbb{N} \ | \ c_{2k-1} \leq \lambda_{j+1} \leq c_{2k} \}.
$$
We note that by construction of the sequence $(c_k)_k$ the cluster satisfies the bound \eqref{eq:inclusion_clusters} with $\alpha := 2$ and $\Upsilon=1$.
The nonresonance condition \eqref{eq:nr_thm} was proved (following \cite{DS06}) in \cite{BFLM24}, see Section 7.2 and in particular Lemma 7.5 (using $ 2p_0f'(p_0)$ as parameter).

\medskip

We come to the multilinear estimate \eqref{eq:estimates_thm}.  We have
to prove that if one considers the Taylor expansion of a functions of
the form \eqref{eq:joli_forme}, each Taylor polynomial fulfills the
estimate \eqref{eq:estimates_thm}. The spectral multiplier $S$ being diagonal (in the sense of \eqref{eq:S_is_diag}) and bounded it can be ignored in this discussion. 
The proof is almost the same as for Klein--Gordon. The only novelty is the extra dependence of $G$ with respect to the $L^2$ norm. We prove in the following lemma that this dependence is not an obstruction.

\begin{lemma}
  \label{gro.lem}
  Let $P:\ell^2 \to \mathbb{R}$ be a locally bounded $\mathbb{R}-$homogeneous polynomial of degree $q\geq 3$ fulfilling the estimate
\eqref{eq:estimates_thm}. Then  $Q:=P \| \cdot\|_{L^2}^2$  fulfills the estimate
\eqref{eq:estimates_thm}.  
\end{lemma}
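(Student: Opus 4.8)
\emph{Proof proposal.} The plan is to expand $Q = P\cdot N$, with $N:=\|\cdot\|_{L^2}^2$ (which on $\ell^2=h^0$ is $N(u)=\sum_j|u_j|^2$), by the higher-order Leibniz rule and to estimate each resulting term separately. Both $P$ and $N$ are locally bounded $\mathbb{R}$-homogeneous polynomials, of degrees $q$ and $2$, hence so is $Q$, of degree $q+2\ge 5$; in particular it is enough to check \eqref{eq:estimates_thm} for the single nonzero derivative $\mathrm{d}^{q+2}Q(0)$. By homogeneity the derivatives of $P$ (resp. of $N$) at the origin vanish in all orders except $q$ (resp. $2$), so the Leibniz formula collapses to the finite sum
$$
\mathrm{d}^{q+2}Q(0)\big(u^{(1)},\dots,u^{(q+2)}\big)=\sum_{\substack{A\sqcup B=\{1,\dots,q+2\}\\ |A|=q,\ |B|=2}}\mathrm{d}^{q}P(0)\big((u^{(a)})_{a\in A}\big)\,\mathrm{d}^{2}N(0)\big((u^{(b)})_{b\in B}\big),
$$
which has $\binom{q+2}{2}$ terms. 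It therefore suffices to bound each summand by the right-hand side of \eqref{eq:estimates_thm} written for the degree $q+2$, with a constant depending only on $n$ and $q$.

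Fix such a partition $A\sqcup B$ with $B=\{b_1,b_2\}$. Since $\mathrm{d}^2N(0)(v,w)=2\Re\big(\sum_j v_j\overline{w_j}\big)$ and the subspaces $E_k$ are pairwise orthogonal in $\ell^2$, the corresponding summand vanishes unless $\boldsymbol{k}_{b_1}=\boldsymbol{k}_{b_2}$; and when $\boldsymbol{k}_{b_1}=\boldsymbol{k}_{b_2}$, Cauchy--Schwarz gives $|\mathrm{d}^2N(0)(u^{(b_1)},u^{(b_2)})|\le 2\,\|u^{(b_1)}\|_{\ell^2}\|u^{(b_2)}\|_{\ell^2}$. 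For the other factor I would apply the hypothesis \eqref{eq:estimates_thm} for $P$ to the $q$-tuple $\boldsymbol{k}':=(\boldsymbol{k}_a)_{a\in A}$ and the vectors $(u^{(a)})_{a\in A}$, bounding $|\mathrm{d}^qP(0)((u^{(a)})_{a\in A})|$ by $\Gamma_{\boldsymbol{k}'}\big((\boldsymbol{k}')_2^\star/(\boldsymbol{k}')_1^\star\big)^n\big(\prod_{3\le\ell\le q}(\boldsymbol{k}')_\ell^\star\big)^\nu\prod_{a\in A}\|u^{(a)}\|_{\ell^2}$. Multiplying the two estimates, the norms combine into $\prod_{\ell=1}^{q+2}\|u^{(\ell)}\|_{\ell^2}$ and one is left to compare the weight $\Gamma_{\boldsymbol{k}'}\big((\boldsymbol{k}')_2^\star/(\boldsymbol{k}')_1^\star\big)^n\big(\prod_{3\le\ell\le q}(\boldsymbol{k}')_\ell^\star\big)^\nu$ with $\Gamma_{\boldsymbol{k}}\big(\boldsymbol{k}_2^\star/\boldsymbol{k}_1^\star\big)^n\big(\prod_{3\le\ell\le q+2}\boldsymbol{k}_\ell^\star\big)^\nu$.

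This comparison uses only that $(\boldsymbol{k}')^\star$ is obtained from $\boldsymbol{k}^\star$ by deleting two \emph{equal} entries (the common value of $\boldsymbol{k}_{b_1}$ and $\boldsymbol{k}_{b_2}$), which yields the interlacing $\boldsymbol{k}_{i+2}^\star\le(\boldsymbol{k}')_i^\star\le\boldsymbol{k}_i^\star$ for all $i$. Three elementary inequalities then close the estimate: first, $(\boldsymbol{k}')_\ell^\star\le\boldsymbol{k}_\ell^\star$ together with $\boldsymbol{k}_\ell^\star\ge 1$ gives $\prod_{3\le\ell\le q}(\boldsymbol{k}')_\ell^\star\le\prod_{3\le\ell\le q+2}\boldsymbol{k}_\ell^\star$; second, choosing $\boldsymbol{\varsigma}_{b_1}=-\boldsymbol{\varsigma}_{b_2}$ cancels the two deleted frequencies, so every term of $\Gamma_{\boldsymbol{k}'}$ occurs among the nonnegative terms of $\Gamma_{\boldsymbol{k}}$, whence $\Gamma_{\boldsymbol{k}'}\le\Gamma_{\boldsymbol{k}}$; third, $(\boldsymbol{k}')_2^\star/(\boldsymbol{k}')_1^\star\le\boldsymbol{k}_2^\star/\boldsymbol{k}_1^\star$, because if $\boldsymbol{k}_1^\star=\boldsymbol{k}_2^\star$ the right-hand side equals $1\ge(\boldsymbol{k}')_2^\star/(\boldsymbol{k}')_1^\star$, while if $\boldsymbol{k}_1^\star>\boldsymbol{k}_2^\star$ the common deleted value must be $<\boldsymbol{k}_1^\star$ (two entries equal to $\boldsymbol{k}_1^\star$ would force $\boldsymbol{k}_2^\star=\boldsymbol{k}_1^\star$), so the unique entry of $\boldsymbol{k}$ realizing the maximum survives in $\boldsymbol{k}'$, giving $(\boldsymbol{k}')_1^\star=\boldsymbol{k}_1^\star$ and $(\boldsymbol{k}')_2^\star\le\boldsymbol{k}_2^\star$. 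Combining and summing over the $\binom{q+2}{2}$ partitions yields \eqref{eq:estimates_thm} for $Q$ with the same exponent $\nu$.

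The only point where the hypotheses genuinely enter is the third comparison: it would fail if one were allowed to delete two \emph{distinct} entries (removing the single largest one could push $\boldsymbol{k}_2^\star/\boldsymbol{k}_1^\star$ up to order $1$), and it is exactly the orthogonality of the clusters $\mathcal{C}_k$ — which annihilates every term with $\boldsymbol{k}_{b_1}\neq\boldsymbol{k}_{b_2}$ — that rescues the bound. Everything else (the collapse of the Leibniz expansion, the Cauchy--Schwarz bound on $\mathrm{d}^2N(0)$, and the monotonicity of the non-increasing rearrangement under deletion) is routine.
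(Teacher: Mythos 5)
Your proof is correct and follows essentially the same strategy as the paper: collapse the Leibniz expansion of $\mathrm{d}^{q+2}Q(0)$ to terms with $q$ slots for $P$ and $2$ for the squared $L^2$ norm, use cluster orthogonality to restrict to $\boldsymbol{k}_{b_1}=\boldsymbol{k}_{b_2}$, and compare the weights $\Gamma$, $(\boldsymbol{k}_2^\star/\boldsymbol{k}_1^\star)^n$ and $\prod(\boldsymbol{k}_\ell^\star)^\nu$ for the reduced versus full index. The only noticeable stylistic difference is in the middle comparison: the paper splits into three cases depending on whether the (repeated) deleted index exceeds $\boldsymbol{h}_1^\star$, lies between $\boldsymbol{h}_1^\star$ and $\boldsymbol{h}_2^\star$, or is at most $\boldsymbol{h}_2^\star$, while you dispose of the ratio inequality $(\boldsymbol{k}')_2^\star/(\boldsymbol{k}')_1^\star\le\boldsymbol{k}_2^\star/\boldsymbol{k}_1^\star$ directly by observing that two equal deleted entries cannot include a strict maximum; that is a clean shortcut but does not change the substance of the argument.
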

\begin{proof} The function $Q$ being a homogeneous polynomial of degree $q+2$ it suffices to consider $d^{q+2}Q(0)$ which is equal to
$$
d^{q+2}Q(0)(v^{(1)},...,v^{(q+2)})=\sum_{\tau\in \mathfrak{S}_{q+2}}
\mathrm{d}^{q}P(0)(v^{(\tau(1))},...,v^{(\tau(q))})\int_{\mathcal{M}}v^{(\tau(q+1))}\bar
v^{(\tau(q+2))} \mathrm{d}x
$$
with $v^{(1)},\cdots, v^{(q+2)}\in \ell^2$ and $\mathfrak{S}_{q+2}$ the group of the permutations of the first $q+2$ integers.
We analyze only the identical permutation, the other being equal.

Denote $(\boldsymbol{h}_1,...,\boldsymbol{h}_q):=(\boldsymbol{k}_1,...,\boldsymbol{k}_{q})$, then, if $v^{(j)}\in E_{\boldsymbol{k}_j}$ for all $j\in \{1,\cdots,q+2\}$ one has
$$
\left|\mathrm{d}^{q}P(0)(v^{(1)},\cdots,v^{(q)})\int_{\mathcal{M}}v^{(q+1)}\bar
v^{(q+2)} \mathrm{d}x \right|
\lesssim \mathds{1}_{  \boldsymbol{k}_{q+1}=\boldsymbol{k}_{q+2}}\Gamma_{\boldsymbol{h}}\left(\frac{\boldsymbol{h}^{\star}_2}{\boldsymbol{h}^{\star}_1}\right)^n
\prod_{l=3}^{q}(\boldsymbol{h}_l^\star)^\nu
\prod_{\ell=1}^{q+2} \|v^{(\ell)}\|_{\ell^2} \ , 
$$
so it enough to show that
$$
 \mathds{1}_{  \boldsymbol{k}_{q+1}=\boldsymbol{k}_{q+2}}\Gamma_{\boldsymbol{h}}\left(\frac{\boldsymbol{h}^{\star}_2}{\boldsymbol{h}^{\star}_1}\right)^n
\prod_{l=3}^{q}(\boldsymbol{h}_l^\star)^\nu
\lesssim \Gamma_{\boldsymbol{k}}\left(\frac{\boldsymbol{k}^{\star}_2}{\boldsymbol{k}^{\star}_1}\right)^n
\prod_{l=3}^{q+2}(\boldsymbol{k}_l^\star)^\nu\ .
$$
In turn, since $\Gamma_{\boldsymbol{k}}\geq \Gamma_{\boldsymbol{h}}$ this is implied by
\begin{equation}
  \label{da_dim}
 \mathds{1}_{  \boldsymbol{k}_{q+1}=\boldsymbol{k}_{q+2}} \left(\frac{\boldsymbol{h}^{\star}_2}{\boldsymbol{h}^{\star}_1}\right)^n
\prod_{l=3}^{q}(\boldsymbol{h}_l^\star)^\nu\lesssim \left(\frac{\boldsymbol{h}^{\star}_2}{\boldsymbol{h}^{\star}_1}\right)^n
\prod_{l=3}^{q}(\boldsymbol{h}_l^\star)^\nu\ ,
\end{equation}
which is the one we now prove. To this end we denote (just for this
prove) by $K$ the l.h.s. of \eqref{da_dim}. We distinguish three cases.
\begin{itemize}
\item \underline{\emph{Case 1:  $\boldsymbol{k}_{q+1}\geq \boldsymbol{h}^\star_1$.}}
In this case we have (since $\boldsymbol{k}_{q+1}=\boldsymbol{k}_{q+2}$)
$$
K\leq \prod_{l=3}^{q}(\boldsymbol{h}_l^\star)^\nu\leq
\left(\frac{\boldsymbol{k}_{q+2}}{\boldsymbol{k}_{q+1}}\right)^n \prod_{l=1}^{q}(\boldsymbol{k}_l^\star)^\nu\ , 
$$
which is the wanted estimate.

\item \underline{\emph{Case 2:  $\boldsymbol{h}_1^\star>\boldsymbol{k}_{q+1}\geq \boldsymbol{h}_2^\star$.}} Here we have
  $$
K\leq \left(\frac{\boldsymbol{k}_{q+1}}{\boldsymbol{h}^\star_1}\right)^n
\prod_{l=3}^{q}(\boldsymbol{h}_l^\star)^\nu< \left(\frac{\boldsymbol{k}_{q+1}}{\boldsymbol{k}^\star_1}\right)^n
\prod_{l=3}^{q}(\boldsymbol{h}_l^\star)^\nu \boldsymbol{k}_{q+2}^\nu 
  $$
which is the wanted estimate.

\item \underline{\emph{Case 3:  $\boldsymbol{h}_2^\star>\boldsymbol{k}_{q+1}$.}}    In this case the estimate trivially holds.
\end{itemize}

\end{proof}

\subsection{Klein-Gordon equation on $\mathbb{R}^d$ with positive definite quadratic potential}\label{harm-sub}

We now briefly explain the proof of Theorem \ref{thm:quatumoscillator} by emphasizing the differences with the compact case.
Here, the Sobolev space naturally associated to the potential $Q$ for any $s\geq 0$ is defined as follows:
\begin{equation}
\mathcal{H}_Q^s(\mathbb{R}^d):=\mbox{Dom}((-\Delta+Q)^{\frac{s}{2}})=\{ u\in L^2(\mathbb{R}^d), \quad (-\Delta+Q)^{\frac{s}{2}}u\in L^2(\mathbb{R}^d)            \}.
\end{equation}
It is well-known that the norms
  \begin{equation}\label{equivHS}
\left\|(-\Delta+Q)^{s/2}u\right\|_{L^2}\qquad \text{and}\qquad
\left\|(1-\Delta)^{s/2}u\right\|_{L^2}+\left\| Q^{s/2}u \right\|_{L^2}\  
\end{equation}
are equivalent (see e.g. \cite{YZ04}) and
 \begin{equation*}
 \mathcal{H}_Q^s(\mathbb{R}^d)=\{u\in H^s(\mathbb{R}^d) \quad | \quad Q^{\frac{s}{2}}u\in L^2(\mathbb{R}^d)\}.
 \end{equation*}
 Since $Q$ is $2$-homogeneous, we clearly find $\mathcal{H}_Q^s(\mathbb{R}^d)=\mathcal{H}^s(\mathbb{R}^d)$ defined in \eqref{harm-sob}.

As above, we denote by $(\lambda_j^2)_{j\in \mathbb{N}}$ the nondecreasing sequence (with multiplicities) of the eigenvalues of $-\Delta+Q$ and by 
$(-\Delta+Q)e_j=\lambda_j^2 e_j$ the corresponding relation. The Weyl law here reads (see Theorem XIII.81 of \cite{RS80}):
$$
\forall y \gtrsim 1,\qquad \# \{j\in \mathbb{N} \ | \ \lambda_j \leq y \} \sim  \int_{Q(x)\leq y^2} (y^2-Q(x))^{\frac{d}{2}} dx    \sim  y^{2d}
$$
and in particular $\lambda_j\sim j^{\frac{1}{2d}}$.
Then we may repeat the same argumentation as in Subsection \ref{KG_proof} upon making a few modifications. For instance, once we have identified a $\mathbb{K}\in \{\mathbb{R},\mathbb{C}\}$ valued function $u=\sum_{j\in \mathbb{N}} u_j e_j$ with the sequence of coefficients $(u_j)_{j\in \mathbb{N}}\in \mathbb{K}^\mathbb{N}$, Lemma \ref{eq:norm_equiv} now reads
\begin{equation*}
\mathcal{H}^s(\mathbb{R}^d;\mathbb{K})=h^{\frac{s}{2d}}\cap \mathbb{K}^{\mathbb{N}}.
\end{equation*}
Let us now turn to the tame estimate \eqref{eq:lestimee_tame_pas_facile_a_pas_oublier} which, as above, relies on an analogue of \eqref{tame-u}.
Thanks to \eqref{equivHS}, it is sufficient to prove the following inequality for any smooth function $f$ vanishing at the origin and any $u\in \mathcal{H}^s(\mathbb{R}^d)$ for $s>\frac{d}{2}$:
\begin{equation*}
\|f(u)\|_{H^s(\mathbb{R}^d)}^2 +\int_{\mathbb{R}^d} \langle x\rangle^{2s} |f(u(x))|^2 \mathrm{d}x 
\lesssim_{s,f,\|u\|_{L^\infty}} \|u\|_{H^s(\mathbb{R}^d)}^2 + \int_{\mathbb{R}^d} \langle x\rangle^{2s} |u(x)|^2 \mathrm{d}x
\end{equation*}
in which we recall that the constant may depend on $s,f$ and $\|u\|_{L^\infty}$. The bound on $\|f(u)\|_{H^s(\mathbb{R}^d)}$ is already done in \cite[Thm 2.87 page 104]{BCD11} as used above. In order to bound the second term, we use the embedding $\mathcal{H}^s(\mathbb{R}^d)\subset L^\infty(\mathbb{R}^d)$ and we set 
\begin{equation*}
C:=\sup\limits_{|t|\leq \|u\|_{L^\infty(\mathbb{R}^d)}} |f'(t)|,
\end{equation*}
then we immediately get
\begin{equation*}
\int_{\mathbb{R}^d} \langle x\rangle^{2s} |f(u(x))|^2 \mathrm{d}x 
\leq C^2  \int_{\mathbb{R}^d} \langle x\rangle^{2s} |u(x)|^2 \mathrm{d}x.
\end{equation*}
The proof of the multilinear estimate \eqref{eq:estimates_thm} is done in the appendix. It mainly relies on multilinear estimates proved in \cite{Brun23}. The rest of the proof of Theorem \ref{thm:quatumoscillator} is similar to the one of Theorem \ref{thm:kg} done in Subsection \ref{KG_proof}.

\section{Functional setting} From now and until the end of this paper, we fix a non-decreasing sequence of frequencies $\omega \in (\mathbb{R}_+^*)^{\mathbb{N}}$ satisfying the Weyl law \eqref{weyl} and a cluster decomposition 
$$
\mathbb{N} = \bigsqcup_{k\in \mathbb{N}} \mathcal{C}_k
$$
 satisfying the bound \eqref{eq:inclusion_clusters}. We recall that the spaces $E_k$, $k\in \mathbb{N}$ are defined by
$$
E_k := \mathrm{Span}_{\mathbb{C}} \{ \mathds{1}_{\{j\}} \ | \ j\in \mathcal{C}_k \}.
$$
In this section, first we introduce some basic notations and definitions. Then, we introduce a class of polynomials and prove the associated multilinear estimates. Finally, we prove some estimates on the Hamiltonian flows generated by these polynomials.

\subsection{Formalism}

\begin{definition}[$\ell^2$ scalar product] We endow $\ell^2$ of the real scalar product
$$
(u,v)_{\ell^2} := \Re \sum_{k\in \mathbb{N}} u_k \overline{v_k}
$$
and of the symplectic form $(\ic \cdot,\cdot)_{\ell^2}$.
\end{definition}

\begin{definition}[Projections $\Pi_k$] For all $k\in \mathbb{N}$ and all $u\in \mathbb{C}^\mathbb{N}$, we define $\Pi_k u \in \mathbb{C}^\mathbb{N}$ by
$$
\forall j \in \mathbb{N}, \quad (\Pi_k u)_j = \mathds{1}_{j\in \mathcal{C}_k} u_j.
$$
Moreover, we set
$$
\Pi_k^{-1} u := \overline{\Pi_k u},
$$
and for all $N \in \mathbb{N}$,
$$
\Pi_{\leq N} = \sum_{k\leq N}\Pi_{k} \quad \mathrm{and} \quad \Pi_{>N} = \mathrm{Id} - \Pi_{\leq N}.
$$ 
\end{definition}
\begin{remark}
$\Pi_k$ is nothing but the orthogonal projection on $E_k$  in $\ell^2$.
\end{remark}

\begin{definition}[Super-actions $J_k$] For all $k\in \mathbb{N}$ and $u\in \mathbb{C}^\mathbb{N}$, we set
$$
J_k(u) := \| \Pi_k u \|_{\ell^2}^2 = \sum_{j\in \mathcal{C}_k} |u_j|^2.
$$
\end{definition}

\begin{definition}[$H^s$ spaces] 
\label{def:Hs_space}
For all $s\in \mathbb{R}$, we set
$$
H^s := \big\{ u\in \mathbb{C}^\mathbb{N} \ | \ \| u\|_{H^s}^2 := \sum_{k \in \mathbb{N}}  k^{2s} J_k(u) < \infty \big\}.
$$
For all $\varepsilon$, we denote by $B_{H^s}(0,\varepsilon)$ the open ball of $H^s$ of center $0$ and radius $\varepsilon$.
\end{definition}
Then we note in the following lemma that  thanks to the Weyl law (see \eqref{weyl}), the Sobolev spaces $H^s$ are equivalent to the discrete Sobolev spaces $h^s$ we used to state our abstract result. Therefore, from now, we only work with $H^s$ norms.
\begin{lemma} For all $s\geq 0$, we have
$$
H^s = h^{s/\alpha \beta} \quad \mathrm{and} \quad \| \cdot \|_{h^{s/\alpha \beta}} \sim_s \| \cdot \|_{H^s}.
$$
\end{lemma}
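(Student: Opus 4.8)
The plan is to reduce the statement to a pointwise comparison of the weights defining the two norms. Writing $k(j)\in\mathbb{N}$ for the unique index with $j\in\mathcal{C}_{k(j)}$ (well defined since $\mathbb{N}=\bigsqcup_{k}\mathcal{C}_k$), the definition of the super-actions $J_k$ gives, for every $u\in\mathbb{C}^{\mathbb{N}}$,
\[
\|u\|_{H^s}^2=\sum_{k\in\mathbb{N}}k^{2s}\sum_{j\in\mathcal{C}_k}|u_j|^2=\sum_{j\in\mathbb{N}}k(j)^{2s}\,|u_j|^2 ,
\]
so it suffices to prove that $k(j)^{2s}\sim_s j^{2s/(\alpha\beta)}$ uniformly in $j\in\mathbb{N}$ — equivalently, for $s>0$ (the case $s=0$ being trivial since then $H^0=h^0=\ell^2$), that $k(j)\sim j^{1/(\alpha\beta)}$ uniformly in $j$. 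Once this is known, $u\in H^s\Leftrightarrow u\in h^{s/(\alpha\beta)}$ and the norms are equivalent, which is exactly the claim.

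First I would translate \eqref{eq:inclusion_clusters} into a two-sided control of $\omega_j$ by $k(j)$: there is $M>0$ with $|\omega_j^{1/\alpha}-\Upsilon k(j)|\le M$ for all $j$, hence $\omega_j\le(\Upsilon k(j)+M)^{\alpha}$ for all $j$, and $\omega_j\ge(\Upsilon k(j)-M)^{\alpha}\ge(\Upsilon k(j)/2)^{\alpha}$ whenever $\Upsilon k(j)\ge 2M$. I would also observe that $k(j)\to\infty$ as $j\to\infty$: for fixed $K$, $\{j:k(j)\le K\}=\bigcup_{k\le K}\mathcal{C}_k\subset\{j:\omega_j\le(\Upsilon K+M)^{\alpha}\}$, which is finite by the Weyl law \eqref{weyl}. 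Consequently only finitely many $j$ fail $\Upsilon k(j)\ge 2M$, and for those finitely many indices $k(j)^{2s}$ and $j^{2s/(\alpha\beta)}$ take values in a finite subset of $(0,\infty)$, hence are comparable with an $s$-dependent constant; so it only remains to compare $k(j)$ and $j^{1/(\alpha\beta)}$ for $j$ with $k(j)$ large.

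For such $j$ I would sandwich $j$ between two values of the Weyl counting function, using that $\omega$ is non-decreasing. Since $\omega_i\le\omega_j\le(\Upsilon k(j)+M)^{\alpha}$ for all $i\le j$, one gets $j\le\#\{i:\omega_i\le(\Upsilon k(j)+M)^{\alpha}\}$, and by \eqref{weyl} the right-hand side is $\sim(\Upsilon k(j)+M)^{\alpha\beta}\lesssim k(j)^{\alpha\beta}$; thus $j\lesssim k(j)^{\alpha\beta}$, i.e.\ $k(j)\gtrsim j^{1/(\alpha\beta)}$. Conversely, since $\omega_i\ge\omega_j\ge(\Upsilon k(j)-M)^{\alpha}$ for all $i\ge j$, the set $\{i:\omega_i<(\Upsilon k(j)-M)^{\alpha}\}$ is contained in $\{1,\dots,j-1\}$, so $j>\#\{i:\omega_i<(\Upsilon k(j)-M)^{\alpha}\}$, and by \eqref{weyl} (valid for the strict inequality as well, since $\#\{i:\omega_i<\mu\}\ge\#\{i:\omega_i\le\mu/2\}\sim(\mu/2)^\beta$) the right-hand side is $\gtrsim(\Upsilon k(j)-M)^{\alpha\beta}\sim k(j)^{\alpha\beta}$; thus $j\gtrsim k(j)^{\alpha\beta}$, i.e.\ $k(j)\lesssim j^{1/(\alpha\beta)}$. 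Combining the two inequalities gives $k(j)\sim j^{1/(\alpha\beta)}$, and together with the treatment of the finitely many exceptional indices this yields $k(j)^{2s}\sim_s j^{2s/(\alpha\beta)}$ for all $j$, hence $\|u\|_{H^s}\sim_s\|u\|_{h^{s/(\alpha\beta)}}$.

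The only mildly delicate point is the passage from the Weyl \emph{counting} asymptotic \eqref{weyl} (an asymptotic for $\#\{j:\omega_j\le\lambda\}$ in $\lambda$) to a growth rate for the map $j\mapsto k(j)$ in $j$ — essentially inverting the counting function. This is the reason for routing the whole argument through the frequencies $\omega_j$ rather than attempting to count the clusters directly, which would be awkward because some $\mathcal{C}_k$ may be empty.
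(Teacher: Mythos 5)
Your argument is correct and follows essentially the same route as the paper: use the clustering bound \eqref{eq:inclusion_clusters} to compare the cluster index with $\omega_j$, then the Weyl law \eqref{weyl} to compare $\omega_j$ with $j$, treating the finitely many exceptional indices by adjusting constants. You simply compress the two comparisons into a single statement $k(j)\sim j^{1/(\alpha\beta)}$ and spell out the inversion of the Weyl counting function more carefully than the paper's one-line sketch (which, incidentally, cites the wrong equation — the clustering bound is what is actually used there, not the tame estimate).
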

\begin{proof} First, we note that thanks to the bound \eqref{eq:lestimee_tame_pas_facile_a_pas_oublier}, if $j\in \mathcal{C}_k$ we have $\omega_j\sim k^{\alpha}$. Therefore, we have, for all $u\in \mathbb{C}^\mathbb{N}$
$$
\| u \|_{H^s}^2 = \sum_{k\in \mathbb{N}} \sum_{j\in \mathcal{C}_k} k^{2s} |u_j|^2 \sim_s  \sum_{j\in \mathbb{N}}  \omega_j ^{2s/\alpha} |u_j|^2.
$$
Finally, to conclude the proof, it suffices to note that, thanks to the Weyl law \eqref{weyl}, we have $\langle \omega_j \rangle \sim j^{1/\beta}$ (it suffices to consider $\lambda = \omega_j$ and $\lambda= \omega_j^-$ in \eqref{weyl} and to use that $\omega$ is non decreasing).
\end{proof}

%\begin{remark}
%{  We note that this is the norm that in Subsect. \ref{KG_proof} was
%  denoted by $\left\| .\right\|_{\tilde H^s}$ and was proved to be
%  equivalent to the standard $H^s$ norm. From now on we omit the
%  tilde from $H$.}
%
%  Thanks to the polynomial grows of the frequencies (see \eqref{weyl}), one has
%$$
%H^s = h^{s/\alpha\beta} \quad \mathrm{and} \quad \| \cdot \|_{h^{s/\alpha\beta}} \sim_s \| \cdot \|_{H^s}.
%$$
%So, from now, we only work with $H^s$ norms.
%\end{remark}

\begin{definition}[Gradient $\nabla$] Given $s\geq 0$, $\mathcal{V}$ an open subset of $H^s$ and $P \in C^1(\mathcal{V};\mathcal{R})$, $\nabla P : \mathcal{U} \to H^{-s}$ denotes the unique function satisfying 
$$
\forall u\in \mathcal{V}, \forall v\in H^s, \quad (\nabla P (u),v)_{\ell^2} = \mathrm{d}P(u)(v).
$$
\end{definition}
\begin{remark}
As usual, for all $j\in \mathbb{N}$, one has $(\nabla P (u))_j = 2 \partial_{\overline{u_j}} P(u)$.
\end{remark}

\begin{definition}[Symplectic map] Given $s\geq 0$ and $\mathcal{V}$ an open subset of $H^s$, a map $\tau \in C^1(\mathcal{V};h^s)$ is \emph{symplectic} if
$$
\forall u\in \mathcal{V}, \forall v,w\in H^s, \quad (\ic \mathrm{d}\tau(u)(v),\mathrm{d}\tau(u)(w))_{\ell^2} = (\ic v,w)_{\ell^2}.
$$
\end{definition}

\begin{definition}[Poisson bracket] Let $s\geq 0$, $\mathcal{V}$ an open subset of $H^s$, and two functions $P,Q \in C^1(\mathcal{V}; \mathbb{R})$ such that $\nabla P$ or $\nabla Q$ is $H^s$ valued. The \emph{Poisson bracket} of $P,Q$ is defined by
$$
\{ P,Q \} := (\ic \nabla P, \nabla Q)_{\ell^2}.
$$
\end{definition}
\begin{remark} As usual, one has
$$
\{ P,Q \} =2\ic  \sum_{j \in \mathbb{N}} \partial_{\overline{u_j}} P \partial_{u_j} Q -  \partial_{u_j} P \partial_{\overline{u_j}}  Q.
$$

\end{remark}

\begin{definition}[Multlinear forms $\mathscr{L}_{\boldsymbol{k}}$] Given $q\geq 3$ and $\boldsymbol{k} \in \mathbb{N}^q$, $\mathscr{L}_{\boldsymbol{k}}$ denote the space of the $\mathbb{C}$ multilinear forms on $E_{\boldsymbol{k}_1} \times \cdots E_{\boldsymbol{k}_q}$. Moreover, we endow $\mathscr{L}_{\boldsymbol{k}}$ of its canonical norm
$$
\forall M \in \mathscr{L}_{\boldsymbol{k}}, \quad \| M \|_{\mathscr{L}_{\boldsymbol{k}}} := \sup_{ \substack{u^{(1)} \in E_{\boldsymbol{k}_1}\\ \| u^{(1)}  \|_{\ell^2}\leq 1}} \cdots \sup_{ \substack{u^{(q)} \in E_{\boldsymbol{k}_q}\\ \| u^{(q)}  \|_{\ell^2}\leq 1}} |M(u^{(1)},\cdots, u^{(q)})|.
$$
\end{definition}

\subsection{Homogeneous polynomials} 

\begin{definition}[Spaces of homogeneous polynomials $\mathscr{H}_{q}^{\nu,n}$] Given $q\geq 3$, $\nu ,n\geq 0$, $\mathscr{H}_{q}^{\nu,n}$ denotes the space of the formal real valued homogeneous polynomial  $P$ of degree $q$ on $\mathbb{C}^\mathbb{N}$ of the form
\begin{equation}
\label{eq:series_P}
P(u) = \sum_{\boldsymbol{k}\in \mathbb{N}^q} \sum_{\boldsymbol{\sigma} \in \{-1 , 1\}^q } P_{\boldsymbol{k}}^{\boldsymbol{\sigma}}(\Pi_{\boldsymbol{k}_1}^{\boldsymbol{\sigma}_1} u, \cdots, \Pi_{\boldsymbol{k}_q}^{\boldsymbol{\sigma}_q} u)
\end{equation}
where $P_{\boldsymbol{k}}^{\boldsymbol{\sigma}} \in \mathscr{L}_{\boldsymbol{k}}$ satisfies the symmetry condition
$$
 P_{\boldsymbol{k}}^{\boldsymbol{\sigma}}(u^{(1)},\cdots,u^{(q)}) = P_{\varphi \boldsymbol{k}}^{ \varphi \boldsymbol{\sigma}} ( u^{(\varphi_1)},\cdots,u^{(\varphi_q)} )
$$
 for all permutation $\varphi$ of $\{1,\cdots, q\}$ and all $(u^{(\ell)})_{1\leq \ell \leq q} \in \prod_{1\leq \ell \leq q} E_{\boldsymbol{k}_{\ell}}$,
 the reality condition $P_{\boldsymbol{k}}^{\boldsymbol{\sigma}}  = \overline{P_{\boldsymbol{k}}^{-\boldsymbol{\sigma}}(\overline{\cdot},\cdots, \overline{\cdot})}  $ and the bound
 $$
\| P \|_{\mathscr{H}_{q}^{\nu,n}} := \sup_{\boldsymbol{k}\in \mathbb{N}^q} \sup_{\boldsymbol{\sigma}\in \{-1 , 1\}^q }  \left[ \Gamma_{\boldsymbol{k}} \Big(   \frac{\boldsymbol{k}_2^\star }{ \boldsymbol{k}_1^\star } \Big)^n \prod_{3\leq \ell \leq q} (\boldsymbol{k}_\ell^\star)^\nu \right]^{-1}  \| P_{\boldsymbol{k}}^{\boldsymbol{\sigma}} \|_{\mathscr{L}_{\boldsymbol{k}}} < \infty
 $$
 with $\Gamma_{\boldsymbol{k}}$ defined by \eqref{eq:def_gamma_k}.
\end{definition}

First, we note that formal polynomials are polynomials functions on some Sobolev spaces.
\begin{lemma} \label{lem:pointwise} Given $q\geq 3$, $\nu,n \geq 0$, all formal polynomial $P \in \mathscr{H}_{q}^{\nu,n}$ defines a $C^\infty$ real valued function on $H^{1+\nu}$. 
\end{lemma}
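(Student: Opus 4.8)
The plan is to deduce the statement from a single multilinear bound. Consider the $\mathbb{R}$-$q$-linear form on $(H^{1+\nu})^q$ obtained by decoupling the arguments in \eqref{eq:series_P},
$$
\widehat P(v^{(1)},\dots,v^{(q)}):=\sum_{\boldsymbol{k}\in\mathbb{N}^q}\ \sum_{\boldsymbol{\sigma}\in\{-1,1\}^q}P_{\boldsymbol{k}}^{\boldsymbol{\sigma}}\big(\Pi_{\boldsymbol{k}_1}^{\boldsymbol{\sigma}_1}v^{(1)},\dots,\Pi_{\boldsymbol{k}_q}^{\boldsymbol{\sigma}_q}v^{(q)}\big),
$$
each term of which is $\mathbb{R}$-multilinear in $(v^{(1)},\dots,v^{(q)})$ since $P_{\boldsymbol{k}}^{\boldsymbol{\sigma}}$ is $\mathbb{C}$-multilinear and every map $v\mapsto \Pi_{\boldsymbol{k}_\ell}^{\boldsymbol{\sigma}_\ell}v$ is $\mathbb{R}$-linear. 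First I would show that this series converges absolutely, uniformly on bounded subsets of $(H^{1+\nu})^q$, with
$$
\big|\widehat P(v^{(1)},\dots,v^{(q)})\big|\ \lesssim_q\ \|P\|_{\mathscr{H}_{q}^{\nu,n}}\ \prod_{\ell=1}^q\|v^{(\ell)}\|_{H^{1+\nu}}.
$$
Granting this, $\widehat P$ is a bounded $\mathbb{R}$-$q$-linear form and $P$ is its restriction to the diagonal $v^{(1)}=\dots=v^{(q)}=u$; hence $P$ is a continuous $\mathbb{R}$-homogeneous polynomial of degree $q$ on $H^{1+\nu}$, therefore $C^\infty$ (with $\mathrm d^jP\equiv 0$ for $j>q$ and, for $j\le q$, $\mathrm d^jP(u)$ given by the standard polarization of $\widehat P$). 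Real-valuedness follows from the reality condition $P_{\boldsymbol{k}}^{\boldsymbol{\sigma}}=\overline{P_{\boldsymbol{k}}^{-\boldsymbol{\sigma}}(\overline{\cdot},\dots,\overline{\cdot})}$, which makes the terms indexed by $(\boldsymbol{k},\boldsymbol{\sigma})$ and $(\boldsymbol{k},-\boldsymbol{\sigma})$ complex conjugate.

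For the bound itself, the point is that crude estimates suffice -- none of the finer structure of $\mathscr{H}_{q}^{\nu,n}$ is used. Since $\Pi_{\boldsymbol{k}_\ell}^{\boldsymbol{\sigma}_\ell}v^{(\ell)}\in E_{\boldsymbol{k}_\ell}$ has $\ell^2$-norm equal to $\|\Pi_{\boldsymbol{k}_\ell}v^{(\ell)}\|_{\ell^2}$, the definitions of $\|\cdot\|_{\mathscr{L}_{\boldsymbol{k}}}$ and of $\|P\|_{\mathscr{H}_{q}^{\nu,n}}$ give
$$
\big|P_{\boldsymbol{k}}^{\boldsymbol{\sigma}}\big(\Pi_{\boldsymbol{k}_1}^{\boldsymbol{\sigma}_1}v^{(1)},\dots,\Pi_{\boldsymbol{k}_q}^{\boldsymbol{\sigma}_q}v^{(q)}\big)\big|\le\|P\|_{\mathscr{H}_{q}^{\nu,n}}\,\Gamma_{\boldsymbol{k}}\Big(\frac{\boldsymbol{k}_2^\star}{\boldsymbol{k}_1^\star}\Big)^n\Big(\prod_{3\le\ell\le q}\boldsymbol{k}_\ell^\star\Big)^\nu\prod_{\ell=1}^q\|\Pi_{\boldsymbol{k}_\ell}v^{(\ell)}\|_{\ell^2}.
$$
Then I would bound $\Gamma_{\boldsymbol{k}}\le 2^q$ (a sum of $2^q$ terms, each $\le 1$), $(\boldsymbol{k}_2^\star/\boldsymbol{k}_1^\star)^n\le 1$ (here $\boldsymbol{k}_1^\star\ge\boldsymbol{k}_2^\star\ge1$ and $n\ge0$), and $\prod_{3\le\ell\le q}(\boldsymbol{k}_\ell^\star)^\nu\le\prod_{\ell=1}^q\boldsymbol{k}_\ell^\nu$ (the two discarded largest factors being $\ge1$ since every $\boldsymbol{k}_\ell\ge1$ and $\nu\ge0$). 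Summing the $2^q$ choices of $\boldsymbol{\sigma}$ and then over $\boldsymbol{k}\in\mathbb{N}^q$, the remaining sum factorizes into $\prod_{\ell=1}^q\big(\sum_{k\in\mathbb{N}}k^\nu\|\Pi_kv^{(\ell)}\|_{\ell^2}\big)$, and each factor is controlled by Cauchy--Schwarz after writing $k^\nu=k^{-1}\cdot k^{1+\nu}$ and using $\sum_{k\ge1}k^{-2}<\infty$:
$$
\sum_{k\in\mathbb{N}}k^\nu\|\Pi_kv^{(\ell)}\|_{\ell^2}\le\Big(\sum_{k\ge1}k^{-2}\Big)^{1/2}\Big(\sum_{k\in\mathbb{N}}k^{2(1+\nu)}\|\Pi_kv^{(\ell)}\|_{\ell^2}^2\Big)^{1/2}=\frac{\pi}{\sqrt6}\,\|v^{(\ell)}\|_{H^{1+\nu}}.
$$
This closes the estimate and in particular shows that the defining series of $P$ is absolutely convergent on $H^{1+\nu}$.

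I do not expect a genuine obstacle here: this is a boundedness bookkeeping whose only noteworthy feature is that the two largest indices $\boldsymbol{k}_1^\star,\boldsymbol{k}_2^\star$ do not carry the weight $\boldsymbol{k}^\nu$, so that a single extra derivative -- the ``$+1$'' in $H^{1+\nu}$ -- makes all $q$ factors $\ell^1$-summable against $k^{-1}$. Neither the decay encoded in $\Gamma_{\boldsymbol{k}}$ nor the low-mode gain $(\boldsymbol{k}_2^\star/\boldsymbol{k}_1^\star)^n$ is needed at this stage; these become essential only later, when one must control commutators with the $H^s$-norms and iterate the Birkhoff normal form.
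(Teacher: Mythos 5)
Your proof is correct and follows essentially the same route as the paper: drop the factors $\Gamma_{\boldsymbol{k}}$ and $(\boldsymbol{k}_2^\star/\boldsymbol{k}_1^\star)^n$ (they are $\lesssim_q 1$), factorize the resulting sum over $\boldsymbol{k}$ into a product of one-index sums $\sum_k k^\nu\|\Pi_k u\|_{\ell^2}$, and control each by Cauchy--Schwarz against $k^{-1}$ to land in $\|u\|_{H^{1+\nu}}$. The only cosmetic difference is that you make the bounded polar $q$-linear form $\widehat P$ explicit and deduce smoothness directly, whereas the paper simply states that a locally bounded homogeneous polynomial is $C^\infty$ and cites Bochnak--Siciak; the underlying estimate and conclusion are identical.
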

\begin{proof} It suffices to note that 
$$
\| P_{\boldsymbol{k}}^{\boldsymbol{\sigma}} \|_{\mathscr{L}_{\boldsymbol{k}}} \leq \| P \|_{\mathscr{H}_{q}^{\nu,n}} (\boldsymbol{k}_{3}^\star)^\nu \cdots (\boldsymbol{k}_{q}^\star)^\nu
$$ 
to get that for all $u\in H^{1+\nu}$
\begin{equation*}
\begin{split}
\sum_{\boldsymbol{k}\in \mathbb{N}^q} \sum_{\boldsymbol{\sigma} \in \{-1 , 1\}^q } | P_{\boldsymbol{k}}^{\boldsymbol{\sigma}}(\Pi_{\boldsymbol{k}_1}^{\boldsymbol{\sigma}_1} u, \cdots, \Pi_{\boldsymbol{k}_q}^{\boldsymbol{\sigma}_q} u) |
& \lesssim_q \| P \|_{\mathscr{H}_{q}^{\nu,n}} \sum_{\boldsymbol{k}\in \mathbb{N}^q}    (\boldsymbol{k}_{3}^\star)^\nu \cdots (\boldsymbol{k}_{q}^\star)^\nu \prod_{i=1}^q  \|\Pi_{\boldsymbol{k}_i} u\|_{\ell^2} \\
& \lesssim_q \| P \|_{\mathscr{H}_{q}^{\nu,n}} \big(\sum_{k \in \mathbb{N}} k^\nu\|\Pi_{k} u\|_{\ell^2} \big)^q \lesssim_q \| P \|_{\mathscr{H}_{q}^{\nu,n}} \| u\|_{H^{1+\nu}}^q .
\end{split}
\end{equation*}
Thus the series defining $P$ converge. The fact that $P$ is real valued is a direct consequence of the reality condition on its coefficients. Finally, since $P$ is a locally bounded homogeneous polynomial, it is smooth (see e.g. \cite{BS71}).

\end{proof}
\begin{remark}
Thanks to the symmetry condition, the polynomial functions characterize the formal polynomial: from now, we identity the formal polynomials with the associated function.
\end{remark}

Now, we prove vector field estimates.
\begin{lemma} \label{lem:vf} Given $q\geq 3$, $\nu,n \geq 0$ and $P \in \mathscr{H}_{q}^{\nu,n}$, for all $s\in [0,n]$, $\nabla P$ is a $q-1$ homogeneous polynomial from $H^s$ to $H^s$ satisfying
$$
\forall u\in H^s \cap H^{1+\nu}, \quad \| \nabla P(u)\|_{H^s} \lesssim_{q}   \| P \|_{\mathscr{H}_{q}^{\nu,n}} \| u \|_{H^s} \| u\|_{H^{1+\nu}}^{q-2}.
$$ 
\end{lemma}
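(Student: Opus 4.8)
The plan is to compute $\nabla P$ explicitly from the series representation \eqref{eq:series_P}, estimate the $H^s$ norm of each homogeneous block, and sum. First I would note that, by the definition of the gradient and the symmetry condition on the coefficients $P_{\boldsymbol{k}}^{\boldsymbol{\sigma}}$, one has for $j\in\mathcal{C}_{k_0}$ that $2\partial_{\overline{u_j}}P(u)$ is $q$ times a sum over $\boldsymbol{k}\in\mathbb{N}^{q-1}$ and $\boldsymbol{\sigma}\in\{-1,1\}^{q-1}$ of terms of the form $P_{(k_0,\boldsymbol{k})}^{(1,\boldsymbol{\sigma})}(\mathds{1}_{\{j\}},\Pi_{\boldsymbol{k}_1}^{\boldsymbol{\sigma}_1}u,\dots,\Pi_{\boldsymbol{k}_{q-1}}^{\boldsymbol{\sigma}_{q-1}}u)$ (the $\overline{u_j}$-derivative forces the first index and sign slot to $(j,+1)$; keeping track of the fixed $j$ inside the multilinear form rather than summing over it). The key structural point is that $\Pi_{k_0}\nabla P(u)$, as an element of $E_{k_0}$, is controlled by the operator norm of the map $u^{(1)}\in E_{k_0}\mapsto \mathrm{d}^q P(0)(u^{(1)},\Pi_{\boldsymbol{k}_1}u,\dots)$, so I get
$$
\|\Pi_{k_0}\nabla P(u)\|_{\ell^2}\lesssim_q \|P\|_{\mathscr{H}_q^{\nu,n}}\sum_{\boldsymbol{k}\in\mathbb{N}^{q-1}}\Gamma_{(k_0,\boldsymbol{k})}\Big(\frac{\boldsymbol{m}_2^\star}{\boldsymbol{m}_1^\star}\Big)^n\Big(\prod_{3\le \ell\le q}\boldsymbol{m}_\ell^\star\Big)^\nu\prod_{i=1}^{q-1}\|\Pi_{\boldsymbol{k}_i}u\|_{\ell^2},
$$
where $\boldsymbol{m}^\star$ denotes the decreasing rearrangement of $(k_0,\boldsymbol{k}_1,\dots,\boldsymbol{k}_{q-1})$.

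Next I would bound $\|\nabla P(u)\|_{H^s}^2=\sum_{k_0}k_0^{2s}\|\Pi_{k_0}\nabla P(u)\|_{\ell^2}^2$. The idea is to distribute the weight $k_0^{s}$ among the $q-1$ input factors using the factor $(\boldsymbol{m}_2^\star/\boldsymbol{m}_1^\star)^n$ together with $s\le n$ and the factor $(\prod_{3\le\ell\le q}\boldsymbol{m}_\ell^\star)^\nu$. Concretely: if $k_0$ is the largest index, $k_0=\boldsymbol{m}_1^\star$, then $(\boldsymbol{m}_2^\star/k_0)^n\le(\boldsymbol{m}_2^\star/k_0)^s$ lets me trade $k_0^s$ for $(\boldsymbol{m}_2^\star)^s\le(\text{second largest }\boldsymbol{k}_i)^s$, up to the factor $\Gamma$; if $k_0$ is not the largest, then $k_0\le\boldsymbol{m}_1^\star$ and I can already bound $k_0^s$ by the largest of the $\boldsymbol{k}_i$ to the power $s$. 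In either case, after also using $(\prod_{3\le\ell\le q}\boldsymbol{m}_\ell^\star)^\nu\le\prod_{\boldsymbol{k}_i}(1+\boldsymbol{k}_i)^\nu$ generously, one reduces to an estimate of the shape
$$
k_0^s\,\Gamma_{(k_0,\boldsymbol{k})}\Big(\tfrac{\boldsymbol{m}_2^\star}{\boldsymbol{m}_1^\star}\Big)^n\Big(\prod_{3\le\ell\le q}\boldsymbol{m}_\ell^\star\Big)^\nu\ \lesssim_q\ \Gamma_{(k_0,\boldsymbol{k})}\Big[\text{(largest }\boldsymbol{k}_i)^s\ \text{or a }\boldsymbol{k}_i^s\Big]\prod_i(1+\boldsymbol{k}_i)^\nu,
$$
so that the weight ends up on at most one of the $q-1$ genuine input indices and every index carries a harmless $(1+\boldsymbol{k}_i)^\nu$. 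I would then use Schur's test / discrete Young's inequality in the variable $k_0$: the kernel $\Gamma_{(k_0,\boldsymbol{k})}$ is summable in $k_0$ uniformly (by the $\langle\cdot\rangle^{-3}$ in \eqref{eq:def_gamma_k}, exactly as in the proof of Lemma \ref{lem:pointwise}), which produces $\|\nabla P(u)\|_{H^s}\lesssim_q\|P\|_{\mathscr{H}_q^{\nu,n}}$ times a product of $q-1$ factors, one of the form $\sum_k k^{s+\nu}\|\Pi_k u\|_{\ell^2}\lesssim\|u\|_{H^{s+\nu+1/2+\delta}}$... — here I must be careful: to get exactly $\|u\|_{H^s}\|u\|_{H^{1+\nu}}^{q-2}$ I should put the full weight $k^s$ on one factor (giving $\|u\|_{H^s}$ after a Cauchy–Schwarz in that index against an $\ell^2$-summable tail) and only $k^{\nu}$, together with one extra power from summability, on each of the remaining $q-2$ factors (giving $\|u\|_{H^{1+\nu}}$ each), using $\sum_k (1+k)^{\nu}\|\Pi_k u\|_{\ell^2}\lesssim\|u\|_{H^{1+\nu}}$ as in Lemma \ref{lem:pointwise}.

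The step I expect to be the main obstacle is this bookkeeping of where the weight $k_0^s$ lands relative to the rearrangement, i.e. showing rigorously that in every configuration the displayed inequality holds with the power $s$ on at most one true input index and that the $\Gamma$ factor is preserved — this is the analogue of the case analysis in Lemma \ref{gro.lem}, and the constraint $s\le n$ is used precisely to absorb $k_0^s$ into $(\boldsymbol{m}_2^\star/\boldsymbol{m}_1^\star)^n$ when $k_0$ is the dominant index. The smoothness and $q-1$ homogeneity of $\nabla P$ are then immediate: $\nabla P$ is (componentwise) a homogeneous polynomial of degree $q-1$, and local boundedness from $H^s$ to $H^s$ (which the displayed estimate gives, since $H^s\hookrightarrow H^{1+\nu}$ is false in general — rather one uses $u\in H^s\cap H^{1+\nu}$ as in the statement, or notes $s\ge 1+\nu$ is not assumed so the estimate is stated with both norms) upgrades to $C^\infty$ by the classical fact on polynomials between Banach spaces cited in Lemma \ref{lem:pointwise}.
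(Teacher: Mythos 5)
Your proposal is correct and follows essentially the same approach as the paper: the core mechanism (bounding $\|P_{\boldsymbol k}^{\boldsymbol\sigma}\|_{\mathscr L_{\boldsymbol k}}$, using $s\le n$ to trade the output weight $k_0^s$ against $(\boldsymbol m_2^\star/\boldsymbol m_1^\star)^n$ and hand it to one input index, then using $\Gamma_{\boldsymbol k}$ for summability via a Young/Schur estimate) is exactly the paper's. The only presentational difference is that the paper avoids your direct $\ell^2$-sum over $j\in\mathcal C_{k_0}$ by writing $\|\nabla P(u)\|_{H^s}=\sup_{\|v\|_{H^{-s}}=1}|\mathrm dP(u)(v)|$ and organizing the case analysis via the index $p$ of the slot carrying the test function, which sidesteps tracking conjugations in $\partial_{\overline{u_j}}$ and the squaring in $\sum_{k_0}k_0^{2s}\|\cdot\|_{\ell^2}^2$, but the combinatorics are identical.
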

\begin{proof} Without loss of generality, we assume that $\| P \|_{\mathscr{H}_{q}^{\nu,n}}=1$. First, we note that thanks to the symmetry condition
\begin{equation}
\label{eq:un_peu_algebre}
\forall u,v\in H^{1+\nu}, \quad \mathrm{d}P(u)(v) = q \sum_{\boldsymbol{k}\in \mathbb{N}^q} \sum_{\boldsymbol{\sigma} \in \{-1 , 1\}^q }   P_{\boldsymbol{k}}^{\boldsymbol{\sigma}}(\Pi_{\boldsymbol{k}_1}^{\boldsymbol{\sigma}_1} u, \cdots,\Pi_{\boldsymbol{k}_{q-1}}^{\boldsymbol{\sigma}_{q-1}} u, \Pi_{\boldsymbol{k}_q}^{\boldsymbol{\sigma}_q} v).
\end{equation}
Then we recall that by duality and density
\begin{equation}
\label{eq:jolie_id}
\| \nabla P(u)\|_{H^s} = \sup_{\substack{v \in H^{1+\nu}\\ \| v\|_{H^{-s}} =1 }} | (\nabla P(u),v)_{\ell^2}| = \sup_{\substack{v \in H^{1+\nu}\\ \| v\|_{H^{-s}} =1 }} | \mathrm{d}P(u)(v)|.
\end{equation}
So, we fix $u\in H^s \cap H^{1+\nu}$, $v \in H^{1+\nu}$ such that $\| v\|_{H^{-s}} =1$, we set $w := \sum_k k^{-2s} \Pi_k v$ and for all $p\in \mathbb{N}$ we set $p'=1 + \mathds{1}_{p=1}$. We apply the triangular inequality in \eqref{eq:un_peu_algebre} to get that (since $s\leq n$)
\begin{equation*}
\begin{split}
| \mathrm{d}P(u)(v)| &\lesssim_q    \sum_{\boldsymbol{k}\in \mathbb{N}^q}  \Gamma_{\boldsymbol{k}} \Big(   \frac{\boldsymbol{k}_2^\star }{ \boldsymbol{k}_1^\star } \Big)^n \big( \prod_{3\leq \ell \leq q} \boldsymbol{k}_\ell^\star \big)^\nu \boldsymbol{k}_q^{2s} \| \Pi_{\boldsymbol{k}_q} w \|_{\ell^2} \prod_{i=1}^{q-1} \| \Pi_{\boldsymbol{k}_i} u \|_{\ell^2} \\
&\lesssim_q \sum_{1\leq p \leq q} \sum_{ \boldsymbol{k}_1 \geq \cdots \geq \boldsymbol{k}_q  } \Gamma_{\boldsymbol{k}} \Big(   \frac{\boldsymbol{k}_2 }{ \boldsymbol{k}_1 } \Big)^n \big( \prod_{3\leq \ell \leq q} \boldsymbol{k}_\ell \big)^\nu \boldsymbol{k}_p^{2s} \| \Pi_{\boldsymbol{k}_p} w \|_{\ell^2} \prod_{i\neq p}\| \Pi_{\boldsymbol{k}_i} u \|_{\ell^2} \\
&\lesssim_q \sum_{1\leq p \leq q}\sum_{ \boldsymbol{k}_1 \geq \cdots \geq \boldsymbol{k}_q  } \Gamma_{\boldsymbol{k}} \big( \prod_{\ell \neq p,p'} \boldsymbol{k}_\ell \big)^\nu \boldsymbol{k}_p^s \boldsymbol{k}_{p'}^s \| \Pi_{\boldsymbol{k}_p} w \|_{\ell^2} \prod_{i\neq p}\| \Pi_{\boldsymbol{k}_i} u \|_{\ell^2} \\
&\lesssim_q  \sum_{\boldsymbol{\varsigma} \in \{-1,1\}^q}  \sum_{\boldsymbol{k}\in \mathbb{N}^q} \langle  \boldsymbol{\varsigma}_1 \boldsymbol{k}_1 + \cdots + \boldsymbol{\varsigma}_q \boldsymbol{k}_q \rangle^{-3} \big( \prod_{\ell \geq 3} \boldsymbol{k}_\ell \big)^\nu \boldsymbol{k}_1^s \boldsymbol{k}_{2}^s \| \Pi_{\boldsymbol{k}_1} w \|_{\ell^2} \prod_{i\geq 2}\| \Pi_{\boldsymbol{k}_i} u \|_{\ell^2} \\
&\lesssim_q \|w\|_{H^s} \| u\|_{H^s} \| u\|_{H^{1+\nu}}^{q-2}.
\end{split}
\end{equation*}
where the last estimate is just the Young convolution inequality.
Since $\|w\|_{H^s}=1$, we get the expected estimate by \eqref{eq:jolie_id}.

\end{proof}

\begin{remark} Since $\nabla P$ is locally bounded homogeneous polynomial from $H^s$ to $H^s$ (see e.g. \cite{BS71} for details about polynomials), it is $C^\infty$ and satisfies
\begin{equation}
\label{eq:bound_dP}
\forall u,v\in H^s, \quad\|  \mathrm{d}\nabla P(u)(v) \|_{H^s}\lesssim_{n,q} \| P \|_{\mathscr{H}_{q}^{\nu,n}}  \| u\|_{H^{s}}^{q-2} \|v\|_{H^{s}} .
\end{equation}
\end{remark}

Finally, we prove that these spaces of polynomials are stable by Poisson bracket.
\begin{proposition} \label{prop:poisson} Let $\nu\geq 0$, $n\geq \nu+1$, $q,q' \geq 3$. For all $P\in \mathscr{H}_q^{n,\nu}$ and $Q\in \mathscr{H}_{q'}^{n,\nu}$, their Poisson bracket $\{P,Q\}$ belongs to $\mathscr{H}_{q+q'-2}^{n,\nu}$ and satisfies
$$
\| \{P,Q\} \|_{\mathscr{H}_{q+q'-2}^{n,\nu}} \lesssim q q' \| P \|_{\mathscr{H}_{q}^{n,\nu}}\| Q\|_{\mathscr{H}_{q'}^{n,\nu}}.
$$
\end{proposition}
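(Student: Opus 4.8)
The plan is to reduce the statement to a coefficient–wise bound on $\{P,Q\}$, and then to an elementary arithmetic inequality on the weights defining $\mathscr H_q^{n,\nu}$ which itself splits into a pointwise estimate and a summability estimate.

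\textbf{Step 1: reduction to a single contraction.} Using the symmetry of the coefficients exactly as in \eqref{eq:un_peu_algebre}, one has $\partial_{\overline{u_j}}P=q\sum_{\boldsymbol k:\,j\in\mathcal C_{\boldsymbol k_q},\,\boldsymbol\sigma_q=-1}P_{\boldsymbol k}^{\boldsymbol\sigma}(\Pi^{\boldsymbol\sigma_1}_{\boldsymbol k_1}u,\dots,\Pi^{\boldsymbol\sigma_{q-1}}_{\boldsymbol k_{q-1}}u,\mathds{1}_{\{j\}})$ and likewise for $Q$. Since each $j$ belongs to exactly one cluster, inserting these into $\{P,Q\}=2\ic\sum_j(\partial_{\overline{u_j}}P\,\partial_{u_j}Q-\partial_{u_j}P\,\partial_{\overline{u_j}}Q)$ exhibits $\{P,Q\}$, in its ordered (not yet symmetrised) monomial form, as the sum of two pieces, each having coefficient indexed by $(\boldsymbol m,\boldsymbol\rho)\in\mathbb N^{q+q'-2}\times\{-1,1\}^{q+q'-2}$ equal to $R_{\boldsymbol m}^{\boldsymbol\rho}=qq'\sum_{k\in\mathbb N}\mathrm{tr}_{E_k}\bigl(P^{(\boldsymbol\rho',\mp1)}_{(\boldsymbol a,k)}\otimes Q^{(\boldsymbol\rho'',\pm1)}_{(\boldsymbol b,k)}\bigr)$, where $\boldsymbol a=(\boldsymbol m_1,\dots,\boldsymbol m_{q-1})$, $\boldsymbol b=(\boldsymbol m_q,\dots,\boldsymbol m_{q+q'-2})$ (carrying the corresponding blocks $\boldsymbol\rho',\boldsymbol\rho''$ of $\boldsymbol\rho$), and $\mathrm{tr}_{E_k}$ is the contraction of the two last slots over the orthonormal basis $(\mathds{1}_{\{j\}})_{j\in\mathcal C_k}$ of $E_k$. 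The representative of $\{P,Q\}$ lying in $\mathscr H^{n,\nu}_{q+q'-2}$ is the symmetrisation of $R$, i.e.\ the \emph{average} of $R^{\varphi\boldsymbol\rho}_{\varphi\boldsymbol m}$ over permutations $\varphi$; being an average (not a sum) it inherits any bound valid for each $R^{\boldsymbol\rho}_{\boldsymbol m}$, so no binomial factor is produced and the final constant stays linear in $qq'$. It therefore suffices to prove
$$\bigl\|R^{\boldsymbol\rho}_{\boldsymbol m}\bigr\|_{\mathscr L_{\boldsymbol m}}\ \lesssim\ qq'\,\|P\|_{\mathscr H^{n,\nu}_q}\|Q\|_{\mathscr H^{n,\nu}_{q'}}\ \Gamma_{\boldsymbol m}\Bigl(\tfrac{\boldsymbol m^\star_2}{\boldsymbol m^\star_1}\Bigr)^{\!n}\prod_{3\le\ell\le q+q'-2}(\boldsymbol m^\star_\ell)^\nu .$$

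\textbf{Step 2: from the contraction to an arithmetic inequality.} Cauchy--Schwarz over $j\in\mathcal C_k$ gives $\|\mathrm{tr}_{E_k}(M\otimes N)\|_{\mathscr L_{\boldsymbol m}}\le\|M\|_{\mathscr L_{(\boldsymbol a,k)}}\|N\|_{\mathscr L_{(\boldsymbol b,k)}}$, and the definition of the $\mathscr H$ norm bounds the right-hand side by $\|P\|\|Q\|\,\Gamma_{(\boldsymbol a,k)}\Gamma_{(\boldsymbol b,k)}\Phi(\boldsymbol a,k)\Phi(\boldsymbol b,k)$, writing $\Phi(\boldsymbol x):=(\boldsymbol x^\star_2/\boldsymbol x^\star_1)^n\prod_{\ell\ge3}(\boldsymbol x^\star_\ell)^\nu$. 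Since $\Gamma$ and $\Phi$ depend only on the multiset of entries, the whole proposition reduces to
$$\sum_{k\in\mathbb N}\Gamma_{(\boldsymbol a,k)}\Gamma_{(\boldsymbol b,k)}\,\Phi(\boldsymbol a,k)\,\Phi(\boldsymbol b,k)\ \lesssim\ \Gamma_{(\boldsymbol a,\boldsymbol b)}\,\Phi(\boldsymbol a,\boldsymbol b),$$
with constant independent of $q,q'$. I would obtain this by combining two independent facts, pulling the pointwise bound on $\Phi$ out of the sum. \emph{(a) Pointwise bound.} For every $k$, $\Phi(\boldsymbol a,k)\Phi(\boldsymbol b,k)\le\Phi(\boldsymbol a,\boldsymbol b)$: rewriting $\Phi(\boldsymbol x)=(\boldsymbol x^\star_2)^{n-\nu}(\boldsymbol x^\star_1)^{-n-\nu}\bigl(\prod_i\boldsymbol x_i\bigr)^\nu$ and using $\prod(\boldsymbol a,k)\cdot\prod(\boldsymbol b,k)=k^2\prod(\boldsymbol a,\boldsymbol b)$, this reduces (WLOG $\alpha_1\ge\beta_1$, with $\alpha_1\ge\alpha_2$, $\beta_1\ge\beta_2$ the two largest entries of $\boldsymbol a$, $\boldsymbol b$ — there are $\ge2$ since $q,q'\ge3$ — and $\mu_1\ge\mu_2$ those of $(\boldsymbol a,\boldsymbol b)$) to $\bigl((\boldsymbol a,k)^\star_2(\boldsymbol b,k)^\star_2\bigr)^{n-\nu}k^{2\nu}\mu_1^{n+\nu}\le\bigl((\boldsymbol a,k)^\star_1(\boldsymbol b,k)^\star_1\bigr)^{n+\nu}\mu_2^{n-\nu}$, which I would check directly in the three regimes $k\le\beta_1$, $\beta_1<k\le\alpha_1$, $k>\alpha_1$; in each the surplus exponent $n-\nu>0$ (ensured by $n\ge\nu+1$) is exactly what absorbs the factor $k$ (for instance in the last regime it collapses to $(\alpha_1/k)^{2n}(\beta_1/\mu_2)^{n-\nu}\le1$). \emph{(b) Summability with a universal constant.} $\sum_{k\in\mathbb Z}\Gamma_{(\boldsymbol a,k)}\Gamma_{(\boldsymbol b,k)}\lesssim\Gamma_{(\boldsymbol a,\boldsymbol b)}$: with $p(x):=\langle x\rangle^{-3}$ and $\mu_{\boldsymbol a}$ the (symmetric) counting measure of $\{\pm\boldsymbol a_1\}*\cdots*\{\pm\boldsymbol a_{q-1}\}$ on $\mathbb Z$, one has $\Gamma_{(\boldsymbol a,k)}=2(\mu_{\boldsymbol a}*p)(k)$, an even function of $k$, so $\sum_k\Gamma_{(\boldsymbol a,k)}\Gamma_{(\boldsymbol b,k)}\lesssim(\mu_{\boldsymbol a}*p*\mu_{\boldsymbol b}*p)(0)=\bigl((\mu_{\boldsymbol a}*\mu_{\boldsymbol b})*(p*p)\bigr)(0)$; since $\mu_{\boldsymbol a}*\mu_{\boldsymbol b}=\mu_{(\boldsymbol a,\boldsymbol b)}$ and $(p*p)(x)\lesssim\langle x\rangle^{-3}$ (Peetre), this is $\lesssim(\mu_{(\boldsymbol a,\boldsymbol b)}*p)(0)=\Gamma_{(\boldsymbol a,\boldsymbol b)}$.

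Combining (a) and (b) gives the inequality of Step 2, hence the proposition; the reality and symmetry conditions needed for $\{P,Q\}\in\mathscr H^{n,\nu}_{q+q'-2}$ follow at once from $\{P,Q\}$ being a real-valued function and are already accounted for in Step 1.

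\textbf{Expected main obstacle.} The crux is the arithmetic inequality of Step 2, and within it part (a): one must keep precise track of which entries realise the two largest values of $\boldsymbol a$, of $\boldsymbol b$ and of the merged tuple, and it is the surplus $n-\nu>0$ that makes the borderline regimes ($k$ between the two largest entries, or $k$ the global maximum) work — an analogous statement fails when $n<\nu$. Part (b) is the point where one must resist using the coarse equivalence $\Gamma_{\boldsymbol k}\sim(1+\min_{\boldsymbol\varsigma}|\boldsymbol\varsigma_1\boldsymbol k_1+\cdots+\boldsymbol\varsigma_q\boldsymbol k_q|^3)^{-1}$, which would cost a factor growing like $2^{q+q'}$ and destroy the linear dependence on $qq'$; the convolution identity keeps the constant universal.
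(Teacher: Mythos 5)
Your proposal is correct and follows essentially the same route as the paper: you contract the Poisson bracket over the shared cluster index via duality and Cauchy--Schwarz, symmetrise by averaging over permutations, and then reduce to exactly the same pair of facts — a summability estimate $\sum_k\Gamma_{(\boldsymbol a,k)}\Gamma_{(\boldsymbol b,k)}\lesssim\Gamma_{(\boldsymbol a,\boldsymbol b)}$ and a pointwise bound $\Phi(\boldsymbol a,k)\Phi(\boldsymbol b,k)\le\Phi(\boldsymbol a,\boldsymbol b)$ for the weight $\Phi$ (which is the paper's $A$). The only differences are presentational: you phrase the summability step via a convolution identity where the paper invokes a Peetre-type inequality $\sum_\ell\langle\ell-x\rangle^{-b}\langle\ell-y\rangle^{-b}\lesssim\langle x-y\rangle^{-b}$, and you check the pointwise bound by a direct three-regime case analysis after rewriting $\Phi(\boldsymbol x)=(\boldsymbol x_2^\star)^{n-\nu}(\boldsymbol x_1^\star)^{-n-\nu}(\prod_i\boldsymbol x_i)^\nu$, whereas the paper factors this same expression as $a_{\boldsymbol k}^\nu b_{\boldsymbol k}^{n-\nu}$ and treats the two factors separately.
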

\begin{proof} We divide the proof in $3$ steps. Actually, the proof is done in the first step up to the technical estimates \eqref{eq:reste_a_faire} which are proved in the two last steps.

\medskip

\noindent \underline{\emph{$\triangleright$ Step $1$ : Core of the proof.}}
First, we note that, (since $n\geq \nu+1$) $\nabla P$ is smooth from $H^{1+\nu}$ into $H^{1+\nu}$ (see Lemma \ref{lem:vf}) and that $Q$ is smooth on  $H^{1+\nu}$ (see Lemma \ref{lem:pointwise}). Therefore $ \{P,Q\}$ is a well defined  function on $H^{1+\nu}$.

Then, we note that jor all $j\in \mathbb{N}$ and $\sigma \in \{1,1\}$, we have
$$
\partial_{u_j^\sigma} P(u) =  q\sum_{\boldsymbol{k}\in \mathbb{N}^{q-1}} \sum_{\boldsymbol{\sigma} \in \{-1 , 1\}^{q-1}  } P_{\boldsymbol{k},\underline{j}}^{\boldsymbol{\sigma},\sigma}(\Pi_{\boldsymbol{k}_1}^{\boldsymbol{\sigma}_1} u, \cdots, \Pi_{\boldsymbol{k}_{q-1}}^{\boldsymbol{\sigma}_{q-1}} u, \mathds{1}_{\{j\}})
$$
where $\underline{j}\in \mathbb{N}$ denotes the index such that $j \in \mathcal{C}_{\underline{j}}$.
It follows that for all $u\in H^{1+\nu}$, we have
\begin{equation}
\label{eq:rel1}
\{P,Q\}(u) = 2\ic q q'  \sum_{\boldsymbol{k}''\in \mathbb{N}^{q''}} \sum_{\boldsymbol{\sigma}'' \in \{-1 , 1\}^{q''}  } R^{\boldsymbol{\sigma}''}_{\boldsymbol{k}''}(\Pi_{\boldsymbol{k}_1}^{\boldsymbol{\sigma}_1} u,\cdots, \Pi_{\boldsymbol{k}_{q''}}^{\boldsymbol{\sigma}_{q''}} u)
\end{equation}
where $q''=q+q'-2$ and decomposing $\boldsymbol{k}''=(\boldsymbol{k},\boldsymbol{k}')\in \mathbb{N}^{q-1}\times\mathbb{N}^{q'-1} $, $\boldsymbol{\sigma}''=(\boldsymbol{\sigma},\boldsymbol{\sigma}')\in \{-1,1\}^{q-1}\times \{-1,1\}^{q'-1}$, 
$$
R^{\boldsymbol{\sigma}''}_{\boldsymbol{k}''} :=\sum_{\ell\in \mathbb{N}} \sum_{\sigma\in \{-1,1\}} \sigma \sum_{j\in \mathcal{C}_\ell} P_{\boldsymbol{k},\ell}^{\boldsymbol{\sigma},-\sigma}(\, \cdot \, ,\mathds{1}_{\{j\}})\otimes Q_{\boldsymbol{k}',\ell}^{\boldsymbol{\sigma}',\sigma}(\, \cdot \, ,\mathds{1}_{\{j\}})  .
$$
Then, it suffices to note that by duality
$$
\Big( \sum_{j\in \mathcal{C}_\ell} |  P_{\boldsymbol{k},\ell}^{\boldsymbol{\sigma},-\sigma}(\, \cdot \, ,\mathds{1}_{\{j\}}) |^2 \Big)^{1/2}= \sup_{ \substack{v\in E_\ell \\ \| v\|_{\ell^2}\leq 1} } | P_{\boldsymbol{k},\ell}^{\boldsymbol{\sigma},-\sigma}(\, \cdot \, ,v) | 
$$
to get by Cauchy--Schwarz that
\begin{equation}
\label{eq:rel2}
\| R^{\boldsymbol{\sigma}''}_{\boldsymbol{k}''} \|_{\mathscr{L}_{\boldsymbol{k}''}} \leq 2 \sum_{\ell\in \mathbb{N}} \| P \|_{\mathscr{L}_{\boldsymbol{k},\ell}} \| Q \|_{\mathscr{L}_{\boldsymbol{k}',\ell}} \leq   2  \sum_{\ell \in \mathbb{N}} \Gamma_{\boldsymbol{k},\ell}\Gamma_{\boldsymbol{k}',\ell} A_{\boldsymbol{k},\ell} A_{\boldsymbol{k}',\ell}
\end{equation}
where we have assumed by homogeneity that $\| P \|_{\mathscr{H}_{q}^{n,\nu}}=\| Q\|_{\mathscr{H}_{q'}^{n,\nu}}=1$ and used the notation
$$
\forall p\geq 2,\forall \boldsymbol{h}\in\mathbb{N}^p, \quad A_{\boldsymbol{h}} =   \big(\frac{\boldsymbol{h}_2^\star}{\boldsymbol{h}_1^\star} \big)^n \prod_{i = 3}^p  (\boldsymbol{h}_i^\star)^\nu.
$$
In the two next steps, we are going to prove that
\begin{equation}
\label{eq:reste_a_faire}
\sum_{\ell \in \mathbb{N}}  \Gamma_{\boldsymbol{k},\ell}\Gamma_{\boldsymbol{k}',\ell} \lesssim  \Gamma_{\boldsymbol{k},\boldsymbol{k}'} \quad \mathrm{and} \quad \sup_{\ell \in \mathbb{N}}  A_{\boldsymbol{k},\ell}A_{\boldsymbol{k}',\ell} \leq  A_{\boldsymbol{k},\boldsymbol{k}'}.
\end{equation}
These estimates imply that
\begin{equation*}
\| R^{\boldsymbol{\sigma}''}_{\boldsymbol{k}''} \|_{\mathscr{L}_{\boldsymbol{k}''}} \lesssim  \Gamma_{\boldsymbol{k},\boldsymbol{k}'}A_{\boldsymbol{k},\boldsymbol{k}'}.
\end{equation*}

\medskip

This estimates on $R^{\boldsymbol{\sigma}''}_{\boldsymbol{k}''}$ is almost the expected one. The only missing property is the symmetry condition on the coefficients of $\{P,Q\}$. To get it, it suffices to average \eqref{eq:rel1} by the action of the group of the permutations of $\{ 1,\cdots,q''\}$ and to note that it does not affect the multilinear estimates we proved on the coefficients $R^{\boldsymbol{\sigma}''}_{\boldsymbol{k}''}$.

\medskip

\noindent \underline{\emph{$\triangleright$ Step $2$ : Estimate on $\Gamma$.}} The first estimate in \eqref{eq:reste_a_faire} is a consequence of the more general fact: for all $b>1$, all $x,y\in\mathbb{R}$
$$\sum_{\ell \in \mathbb{N}}\frac1{\langle\ell-x\rangle^b}\frac1{\langle\ell-y\rangle^b}\lesssim_b \frac1{\langle x-y\rangle^b}$$
which in turn can be proved as follow: as $\langle x-y\rangle\leq \langle\ell-x\rangle+\langle\ell-y\rangle$ we deduce that for all $\ell\in\mathbb{N}$, either  $\langle x-y\rangle\leq 2\langle\ell-x\rangle$ or $\langle x-y\rangle\leq 2\langle\ell-y\rangle$. Thus
$$\sum_{\ell \in \mathbb{N}}\frac1{\langle\ell-x\rangle^b}\frac1{\langle\ell-y\rangle^b}\leq \frac{2^b}{\langle x-y\rangle^b} \left(\sum_{\ell\in\mathbb{N}}\frac1{\langle\ell-x\rangle^b}+ \sum_{\ell\in\mathbb{N}}\frac1{\langle\ell-y\rangle^b}\right)\lesssim_b \frac1{\langle x-y\rangle^b}.$$

\medskip

\noindent \underline{\emph{$\triangleright$ Step $3$ : Estimate on $A$.}} We want to prove that, $\forall p,p'\geq 2,\forall \boldsymbol{k}\in\mathbb{N}^p, \forall \boldsymbol{k}'\in\mathbb{N}^{p'}, \forall \ell\in\mathbb{N},$ we have
$$A_{\boldsymbol{k},\ell}A_{\boldsymbol{k}',\ell} \leq  A_{\boldsymbol{k},\boldsymbol{k}'}.
$$
Without loss of generality we can assume that $\boldsymbol{k}$ and $\boldsymbol{k}'$ are ordered and we denote by $\boldsymbol{k}''$ the ordered version of $(\boldsymbol{k},\boldsymbol{k}')$ and we set $p''=p+p'$. Then we rewrite $A_{\boldsymbol{k}}$ as
$$A_{\boldsymbol{k}}= a_{\boldsymbol{k}}^\nu b_{\boldsymbol{k}}^{n-\nu}$$
where
$$a_{\boldsymbol{k}}= \frac{\prod_{i=1}^p \boldsymbol{k}_i }{\boldsymbol{k}_1^2},\quad b_{\boldsymbol{k}}=\frac{\boldsymbol{k}_2}{\boldsymbol{k}_1}.$$
As $n\geq \nu$ it remains to prove
\begin{equation}\label{ab}a_{\boldsymbol{k},\ell}a_{\boldsymbol{k}',\ell} \leq  a_{\boldsymbol{k}''} \quad \text{and}\quad b_{\boldsymbol{k},\ell}b_{\boldsymbol{k}',\ell} \leq  b_{\boldsymbol{k}''}.\end{equation}
To begin with we have
$$a_{\boldsymbol{k},\ell}a_{\boldsymbol{k}',\ell}=\frac{\ell^2\prod_{i=1}^{p''} \boldsymbol{k}''_i }{\max(\boldsymbol{k}_1,\ell)^2\max(\boldsymbol{k}'_1,\ell)^2}\leq \frac{\ell^2\prod_{i=1}^{p''} \boldsymbol{k}''_i }{(\boldsymbol{k}''_1)^2\ell^2}=a_{\boldsymbol{k}''}.$$
For the second estimate in \eqref{ab} we can assume without loss of generality that $\boldsymbol{k}_1=\boldsymbol{k}''_1$ (and thus $\boldsymbol{k}'_1\leq\boldsymbol{k}''_2$) . Then we argue according to the place of $\ell$ with respect to $\boldsymbol{k}''_1$ and $\boldsymbol{k}''_2$:
\begin{itemize}

\item If $\ell\leq\boldsymbol{k}''_2$  then $b_{\boldsymbol{k},\ell}=\frac{\max(\boldsymbol{k}_2,\ell)}{\boldsymbol{k}''_1}\leq \frac{\boldsymbol{k}''_2}{\boldsymbol{k}''_1}=b_{\boldsymbol{k}''}.$ Thus 
$b_{\boldsymbol{k},\ell}b_{\boldsymbol{k}',\ell} \leq  b_{\boldsymbol{k}''}$ since $b_{\boldsymbol{k}',\ell}\leq1$.

\item If $\boldsymbol{k}''_2\leq\ell\leq\boldsymbol{k}''_1$ then $b_{\boldsymbol{k},\ell}=\frac{\ell}{\boldsymbol{k}''_1}$ and $b_{\boldsymbol{k}',\ell}=\frac{\boldsymbol{k}'_1}{\ell}\leq \frac{\boldsymbol{k}''_2}{\ell}$ and thus $b_{\boldsymbol{k},\ell}b_{\boldsymbol{k}',\ell} \leq  b_{\boldsymbol{k}''}$.

\item If $\boldsymbol{k}''_1\leq\ell$ then
$$b_{\boldsymbol{k},\ell}b_{\boldsymbol{k}',\ell}=\frac{\boldsymbol{k}_1}{\ell}\frac{\boldsymbol{k}'_1}{\ell}\leq \frac{\boldsymbol{k}'_1}{\ell}\leq \frac{\boldsymbol{k}''_2}{\boldsymbol{k}''_1}=b_{\boldsymbol{k}''}.$$

\end{itemize}
\end{proof}

\subsection{Inhomogeneous polynomials and flows}

\begin{definition}[Space of inhomogeneous polynomials $\mathscr{I}^{\nu,n}_{\leq r}$] For all $n,\nu\geq 0$ and $r \geq 3$, we define a space of polynomials of degree smaller or equal to $r$ by
$$
\mathscr{I}^{\nu,n}_{\leq r} := \bigoplus_{3\leq q \leq r} \mathscr{H}_q^{\nu,n}.
$$
For all $P\in \mathscr{I}^{\nu,n}_{\leq r}$, we denote by 
$$
P =: P^{(3)} + \cdots + P^{(r)} \quad \mathrm{with} \quad P^{(q)} \in \mathscr{H}_q^{\nu,n}, \ \forall q\in\{3,\cdots,r\}
$$
the associated decomposition and we set, for all $\gamma>0$,
$$
\| P \|_{\mathscr{I}^{\nu,n}_{\gamma}} := \sup_{3\leq q \leq r} \gamma^{q-3} \| P^{(q)} \|_{\mathscr{H}_q^{\nu,n}}.
$$
\end{definition}
As a consequence of Lemma \ref{lem:pointwise} and Lemma \ref{lem:vf}, we have the following tame estimate.
\begin{lemma} \label{lem:tame_inhom} Let $n,\nu\geq 0$ and $r \geq 3$. All polynomial $P\in \mathscr{I}^{\nu,n}_{\leq r}$ defines a smooth real valued function on $H^{1+\nu}$. Moreover, for all $s\in [1+\nu,n]$, $\nabla P$ is a smooth function from $H^s$ into $H^s$ and it satisfies, for all $\gamma \in (0,1]$, all $u\in H^s$ 
$$
\| \nabla P(u) \|_{H^s} \lesssim_{r,n} \| P \|_{\mathscr{I}^{\nu,n}_{\gamma}} \| u\|_{H^s} \| u\|_{H^{1+\nu}} \big(1+\gamma^{-1} \| u\|_{H^{1+\nu}} \big)^{r-3}.
$$
\end{lemma}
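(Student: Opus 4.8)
The statement is a direct consequence of Lemma \ref{lem:pointwise} and Lemma \ref{lem:vf} applied homogeneous piece by homogeneous piece, together with the book-keeping of the $\gamma$-weights in the definition of $\| \cdot \|_{\mathscr{I}^{\nu,n}_{\gamma}}$. So the plan is as follows.

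First, write $P = \sum_{q=3}^{r} P^{(q)}$ with $P^{(q)}\in \mathscr{H}_q^{\nu,n}$ and $\| P^{(q)} \|_{\mathscr{H}_q^{\nu,n}} = \gamma^{-(q-3)} \gamma^{q-3}\| P^{(q)} \|_{\mathscr{H}_q^{\nu,n}} \leq \gamma^{-(q-3)} \| P \|_{\mathscr{I}^{\nu,n}_{\gamma}}$. By Lemma \ref{lem:pointwise} each $P^{(q)}$ is a smooth real valued function on $H^{1+\nu}$, hence so is $P$, being a finite sum; and by Lemma \ref{lem:vf}, for $s\in[0,n]$, $\nabla P^{(q)}$ is a $(q-1)$-homogeneous smooth map from $H^s$ to $H^s$, hence $\nabla P = \sum_q \nabla P^{(q)}$ is a smooth map from $H^s$ to $H^s$ on the relevant range of $s$. (Here I use $s\geq 1+\nu$ so that $H^s\subset H^{1+\nu}$ and all the bounds of Lemma \ref{lem:vf} apply with $\| u\|_{H^{1+\nu}}$ controlled by $\| u\|_{H^s}$ when needed, but the cleaner route is to keep $\| u\|_{H^{1+\nu}}$ explicit as in the statement.)

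Second, sum the estimates. Fix $u\in H^s$ with $\| u\|_{H^{1+\nu}} \leq \gamma$. By Lemma \ref{lem:vf} applied to each $P^{(q)}$,
\begin{equation*}
\| \nabla P^{(q)}(u) \|_{H^s} \lesssim_q \| P^{(q)} \|_{\mathscr{H}_q^{\nu,n}} \| u\|_{H^s} \| u\|_{H^{1+\nu}}^{q-2} \leq \gamma^{-(q-3)} \| P \|_{\mathscr{I}^{\nu,n}_{\gamma}} \| u\|_{H^s} \| u\|_{H^{1+\nu}} \big( \gamma^{-1} \| u\|_{H^{1+\nu}}\big)^{q-3} \cdot \gamma^{q-3} \cdot \gamma^{-(q-3)+?}
\end{equation*}
— more carefully: $\| u\|_{H^{1+\nu}}^{q-2} = \| u\|_{H^{1+\nu}} \cdot \| u\|_{H^{1+\nu}}^{q-3}$, so
\begin{equation*}
\| \nabla P^{(q)}(u) \|_{H^s} \lesssim_q \gamma^{-(q-3)} \| P \|_{\mathscr{I}^{\nu,n}_{\gamma}} \| u\|_{H^s} \| u\|_{H^{1+\nu}} \| u\|_{H^{1+\nu}}^{q-3} = \| P \|_{\mathscr{I}^{\nu,n}_{\gamma}} \| u\|_{H^s} \| u\|_{H^{1+\nu}} \Big( \gamma^{-1} \| u\|_{H^{1+\nu}} \Big)^{q-3}.
\end{equation*}
Summing over $q\in\{3,\dots,r\}$ and using $\sum_{q=3}^{r} x^{q-3} \leq r (1+x)^{r-3} \lesssim_r (1+x)^{r-3}$ with $x = \gamma^{-1}\| u\|_{H^{1+\nu}}$, together with $\sup_q (\text{constant}_q) \lesssim_r 1$, gives
\begin{equation*}
\| \nabla P(u) \|_{H^s} \lesssim_{r,n} \| P \|_{\mathscr{I}^{\nu,n}_{\gamma}} \| u\|_{H^s} \| u\|_{H^{1+\nu}} \big( 1 + \gamma^{-1} \| u\|_{H^{1+\nu}} \big)^{r-3},
\end{equation*}
which is the claimed bound (the exponent $q-3$ in the statement is a typo for $r-3$, or equivalently one keeps the sum written out; in any case this finite geometric sum is the only nontrivial point). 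The main (and only real) obstacle is purely bookkeeping: matching the $\gamma$-weights so that the $\gamma^{q-3}$ from the norm definition exactly cancels the $\gamma^{-(q-3)}$ produced by turning $q-3$ powers of $\| u\|_{H^{1+\nu}}$ into powers of $\gamma^{-1}\| u\|_{H^{1+\nu}}$, and then resumming the finite geometric series; there is no analytic difficulty beyond what Lemmas \ref{lem:pointwise} and \ref{lem:vf} already provide.
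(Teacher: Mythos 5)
Your proposal is correct and follows exactly the route the paper intends: the paper gives no written proof, stating only that the lemma is "a consequence of Lemma \ref{lem:pointwise} and Lemma \ref{lem:vf}," and your homogeneous-piece-by-homogeneous-piece decomposition with the cancellation $\gamma^{-(q-3)} \|u\|_{H^{1+\nu}}^{q-3} = (\gamma^{-1}\|u\|_{H^{1+\nu}})^{q-3}$ followed by summing the finite geometric series is the only sensible way to fill that in. You also correctly note that the exponent $q-3$ in the displayed estimate is a typo for $r-3$ (indeed $q$ is not even bound in the statement), which is a genuine defect in the paper's text; the remainder of your argument, including the observation that $\|u\|_{H^{1+\nu}}\leq\gamma$ actually makes the factor $(1+\gamma^{-1}\|u\|_{H^{1+\nu}})^{r-3}\lesssim_r 1$, is sound.
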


Now, we are going to consider Hamiltonian systems of the form
\begin{equation}
\label{eq:aux} 
\ic \partial_t v = \nabla \chi(v)
\end{equation}
where $\chi \in \mathscr{I}^{\nu,n}_{\leq r}$ some some $r,n,\nu\geq 0$. Thanks to the tame estimate of Lemma \ref{lem:tame_inhom} the local Cauchy theory of this equation is given by the Cauchy--Lipschitz Theorem. In particular, we have the following classical proposition about its flow.
\begin{proposition}[Flows $\Phi_\chi^t$] \label{prop:flow} Let $n \geq \nu+1 \geq 0$,  $r\geq 3$ and $\chi \in \mathscr{I}^{\nu,n}_{\leq r}$. There exists a unique open set $\mathcal{V}_\chi \subset \mathbb{R} \times H^{1+\nu}$ and a unique map $\Phi_\chi \equiv (t,u) \mapsto \Phi_{\chi}^t(u) \in C^\infty(\mathcal{V}_\chi; H^{1+\nu})$ such that
$$
\forall (t,u) \in \mathcal{V}_\chi, \quad \ic \partial_t \Phi_\chi^t(u) = \nabla \chi( \Phi_\chi^t(u) ) \quad \mathrm{and} \quad \Phi_\chi^0(u) = u
$$
and satisfying the following properties
\begin{itemize}
\item \emph{maximality:} $\mathcal{V}_\chi$ is of the form
$$
\mathcal{V}_\chi = \bigcup_{u \in H^{1+\nu}} (-T_u^-,T_u^+) \times \{ u\} , \quad \mathrm{with} \quad T_u^-,T_u^+ \in (0,+\infty]
$$
and for all $u\in H^{1+\nu}$ and $\sigma \in \{-1,1\}$, 
\begin{center} 
either $T_u^{\sigma} = +\infty$ or $\displaystyle \lim_{t \to T_u^{\sigma}} \| \Phi_\chi^t(u) \|_{H^{1+\nu}}=+\infty.$
\end{center}
\item \emph{symplecticity:} For all $t\in \mathbb{R}$, $\Phi_\chi^t$ is symplectic on $ \{ u \in H^{1+\nu} \ | \ (t,u)\in \mathcal{V}_\chi \}.$
\item \emph{preservation of regularity:} for all $s\in [1+\nu,n]$, $\Phi_\chi \in \mathcal{C}^\infty(\mathcal{V}_\chi \cap (\mathbb{R} \times H^s) ; H^s)$.
\end{itemize}
\end{proposition}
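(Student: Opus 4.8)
The plan is to treat \eqref{eq:aux} as an ordinary differential equation on the scale of Hilbert spaces $H^s$, $s\in[1+\nu,n]$, and to apply the Cauchy--Lipschitz theorem in Banach spaces, supplemented by two standard but careful arguments: one for symplecticity, which exploits that the vector field is a gradient, and one for the persistence of regularity, which relies crucially on the fact that the estimate of Lemma \ref{lem:vf} is \emph{linear} in the top norm.

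First I would record that, by Lemma \ref{lem:tame_inhom} together with \eqref{eq:bound_dP}, for every $s\in[1+\nu,n]$ the map $v\mapsto -\ic\nabla\chi(v)$ is a $C^\infty$ vector field from $H^s$ into $H^s$, in particular locally Lipschitz. The Cauchy--Lipschitz theorem in Banach spaces then produces, taking $s=1+\nu$, a unique maximal flow: an open set $\mathcal{V}_\chi\subset\mathbb{R}\times H^{1+\nu}$ of the stated fibred form $\bigcup_{u}(-T_u^-,T_u^+)\times\{u\}$, a solution $\Phi_\chi$ of the equation, and the blow-up alternative, namely that if $T_u^{\sigma}<\infty$ then $\|\Phi_\chi^t(u)\|_{H^{1+\nu}}\to+\infty$ as $t\to\sigma T_u^{\sigma}$; this yields uniqueness and the maximality statement. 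The smoothness $(t,u)\mapsto\Phi_\chi^t(u)\in C^\infty(\mathcal{V}_\chi;H^{1+\nu})$ is the standard $C^k$-dependence theorem for ODEs whose right-hand side is $C^k$, applied for every $k$.

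For symplecticity, fix $(t,u)\in\mathcal{V}_\chi$ and set $L_t:=\mathrm{d}\Phi_\chi^t(u)$, a bounded operator on $H^{1+\nu}$; differentiating the flow equation in $u$ gives the linearized equation $\partial_t L_t=-\ic\,\mathrm{d}(\nabla\chi)(\Phi_\chi^t(u))\,L_t$ with $L_0=\mathrm{Id}$. The key observation is that $\mathrm{d}(\nabla\chi)(v)$ is symmetric for the real scalar product $(\cdot,\cdot)_{\ell^2}$: since $(\nabla\chi(v),h)_{\ell^2}=\mathrm{d}\chi(v)(h)$, differentiating in $v$ yields $(\mathrm{d}(\nabla\chi)(v)g,h)_{\ell^2}=\mathrm{d}^2\chi(v)(g,h)$, which is symmetric in $(g,h)$. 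Combining this with the identity $(\ic a,\ic b)_{\ell^2}=(a,b)_{\ell^2}$, a one-line computation shows $\frac{\mathrm{d}}{\mathrm{d}t}(\ic L_t v,L_t w)_{\ell^2}=0$ for all $v,w\in H^{1+\nu}$, hence $(\ic L_t v,L_t w)_{\ell^2}=(\ic v,w)_{\ell^2}$, which is precisely the symplecticity of $\Phi_\chi^t$.

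Finally, the preservation of regularity, which I expect to be the main point. Fix $s\in[1+\nu,n]$. Running the same Cauchy--Lipschitz argument in $H^s$ produces an $H^s$-valued maximal flow defined on intervals $(-\widetilde T_u^-,\widetilde T_u^+)\subset(-T_u^-,T_u^+)$ which, by uniqueness in $H^{1+\nu}$, agrees with $\Phi_\chi$ where both are defined; it remains to prove $\widetilde T_u^{\pm}=T_u^{\pm}$. If, say, $\widetilde T_u^+<T_u^+$, then on the compact interval $[0,\widetilde T_u^+]\subset(-T_u^-,T_u^+)$ the quantity $\|\Phi_\chi^t(u)\|_{H^{1+\nu}}$ is bounded by some $M<\infty$ by continuity, and Lemma \ref{lem:vf} (applied to each homogeneous component of $\chi$ and summed) gives $\|\nabla\chi(\Phi_\chi^t(u))\|_{H^s}\lesssim_{M,r,n}\|\Phi_\chi^t(u)\|_{H^s}$ for $t\in[0,\widetilde T_u^+)$. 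Since multiplication by $\ic$ is an isometry of $H^s$, this yields $\frac{\mathrm{d}}{\mathrm{d}t}\|\Phi_\chi^t(u)\|_{H^s}\lesssim_{M,r,n}\|\Phi_\chi^t(u)\|_{H^s}$, so Gronwall's lemma keeps $\|\Phi_\chi^t(u)\|_{H^s}$ bounded on $[0,\widetilde T_u^+)$, contradicting the blow-up alternative for the $H^s$-flow. Hence $\widetilde T_u^{\pm}=T_u^{\pm}$, the maximal domain of the $H^s$-flow is exactly $\mathcal{V}_\chi\cap(\mathbb{R}\times H^s)$ (which is open in $\mathbb{R}\times H^s$ because the inclusion $H^s\hookrightarrow H^{1+\nu}$ is continuous), and the Banach-space ODE smoothness theorem then gives $\Phi_\chi\in C^\infty(\mathcal{V}_\chi\cap(\mathbb{R}\times H^s);H^s)$. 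The only genuinely nontrivial ingredient is the linearity of Lemma \ref{lem:vf} in $\|u\|_{H^s}$: it prevents the high norm from feeding superlinearly back into itself, which is exactly why the $H^s$ lifespan cannot be strictly shorter than the $H^{1+\nu}$ one.
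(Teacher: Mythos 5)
Your proof is correct and it fills in precisely the standard details that the paper omits: indeed the paper merely declares this proposition to be a classical consequence of Lemma \ref{lem:tame_inhom} via the Cauchy--Lipschitz theorem and gives no proof. Your three arguments — Cauchy--Lipschitz in $H^{1+\nu}$ with the blow-up alternative, symplecticity via the symmetry of $\mathrm{d}(\nabla\chi)$ and the identity $(\ic a,\ic b)_{\ell^2}=(a,b)_{\ell^2}$, and persistence of $H^s$-regularity by a Gronwall argument exploiting the \emph{linearity in the top norm} of the tame estimate from Lemma \ref{lem:vf} — are exactly the expected route; the last point correctly identifies where the tame structure is genuinely used, namely to rule out the $H^s$ lifespan being shorter than the $H^{1+\nu}$ one.
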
 

Finally, we prove some useful estimates on small solutions.
\begin{lemma}\label{lem:est_flow}  Let $n,\nu\geq 0$, $s\in [\nu+1,n]$,  $r\geq 3$ and $\chi \in \mathscr{I}^{\nu,n}_{\leq r}$. There exists 
\begin{equation}
\label{eq:le_rayon_de_la_boule}
\varepsilon_{n,\gamma,\chi} \sim_{n,r} \min( \| \chi \|_{\mathscr{I}^{\nu,n}_{\gamma}} ^{-1} ,\gamma)
\end{equation}
such that, for $t\in[-1,1]$, $\Phi_\chi^t$ is well defined on $B_{H^{1+\nu}}(0,2\varepsilon_{n,\gamma,\chi})$, i.e.
\begin{equation}
\label{eq:inclusion}
[-1,1] \times B_{H^{1+\nu}}(0,2\varepsilon_{n,\gamma,\chi}) \subset \mathcal{V}_\chi,
\end{equation}
and it is close to the identity, i.e. for all $u\in H^s \cap B_{H^{1+\nu}}(0,2\varepsilon_{n,\gamma,\chi})$ and $t\in [-1,1]$ we have
\begin{equation}
\label{eq:close_id_vf}
\| \Phi_\chi^t(u) - u \|_{H^s} \leq \frac{\|u  \|_{H^{1+\nu}}}{\varepsilon_{n,\gamma,\chi}} \| u\|_{H^s},
\end{equation}
\begin{equation}
\label{eq:close_id_diff}
\| \mathrm{d} \Phi_\chi^t(u) - \mathrm{Id} \|_{H^s \to H^s} \leq \frac{\| u\|_{H^s}}{\varepsilon_{n,\gamma,\chi}} \big\langle \frac{\| u\|_{H^s}}{2\varepsilon_{n,\gamma,\chi}}  \big\rangle^{r-3}.
\end{equation}
\end{lemma}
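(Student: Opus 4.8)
The strategy is the standard bootstrap argument for the Hamiltonian flow \eqref{eq:aux}, quantifying the local Cauchy theory from Lemma \ref{lem:tame_inhom} in the $H^{1+\nu}$ norm and then propagating the resulting control to the higher norm $H^s$. First I would set $\gamma \le 1$ and work on the ball $B_{H^{1+\nu}}(0,2\varepsilon)$ with $\varepsilon := \varepsilon_{n,\gamma,\chi}$ to be fixed. By Lemma \ref{lem:tame_inhom} applied with $s = 1+\nu$, for $u$ with $\|u\|_{H^{1+\nu}} \le \gamma$ one has $\|\nabla\chi(u)\|_{H^{1+\nu}} \lesssim_{r,n} \|\chi\|_{\mathscr{I}^{\nu,n}_\gamma}\,\|u\|_{H^{1+\nu}}^2\,(1+\gamma^{-1}\|u\|_{H^{1+\nu}})^{r-3}$; on the ball of radius $2\varepsilon \le \gamma$ the last factor is $\lesssim_r 1$, so the vector field is bounded by $C_{r,n}\|\chi\|_{\mathscr{I}^{\nu,n}_\gamma}\,\|u\|_{H^{1+\nu}}^2$. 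A Gr\"onwall/continuity argument on $t\mapsto \|\Phi_\chi^t(u)\|_{H^{1+\nu}}$ then shows that if one chooses $\varepsilon \sim_{n,r}\min(\|\chi\|_{\mathscr{I}^{\nu,n}_\gamma}^{-1},\gamma)$ with a small enough implicit constant, the norm at most doubles on $|t|\le 1$: this gives both the inclusion \eqref{eq:inclusion} (the norm stays bounded, so by the blow-up alternative in Proposition \ref{prop:flow} the solution survives up to time $1$) and the a priori bound $\|\Phi_\chi^t(u)\|_{H^{1+\nu}}\le 2\varepsilon$ for $|t|\le 1$, $\|u\|_{H^{1+\nu}}\le 2\varepsilon$.

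Next, for the closeness estimate \eqref{eq:close_id_vf} I would fix $s\in[1+\nu,n]$ and $u\in H^s\cap B_{H^{1+\nu}}(0,2\varepsilon)$. By the preservation of regularity in Proposition \ref{prop:flow}, $\Phi_\chi^t(u)$ stays in $H^s$, and from the integral formulation $\Phi_\chi^t(u)-u = -\ic\int_0^t \nabla\chi(\Phi_\chi^\tau(u))\,d\tau$ together with Lemma \ref{lem:tame_inhom} (now with the exponent $s$) one gets
$$
\|\Phi_\chi^t(u)-u\|_{H^s}\le C_{r,n}\|\chi\|_{\mathscr{I}^{\nu,n}_\gamma}\int_0^{|t|}\|\Phi_\chi^\tau(u)\|_{H^s}\,\|\Phi_\chi^\tau(u)\|_{H^{1+\nu}}\,d\tau,
$$
using $\|\Phi_\chi^\tau(u)\|_{H^{1+\nu}}\le \gamma$ to kill the $(1+\gamma^{-1}\|\cdot\|_{H^{1+\nu}})^{r-3}$ factor. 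Since $\|\Phi_\chi^\tau(u)\|_{H^{1+\nu}}\le 2\varepsilon$ from the first part and $C_{r,n}\|\chi\|_{\mathscr{I}^{\nu,n}_\gamma}\cdot 2\varepsilon$ is, by the choice of $\varepsilon$, a small universal constant (say $\le \tfrac14$), a Gr\"onwall argument on $\|\Phi_\chi^\tau(u)\|_{H^s}$ first shows $\|\Phi_\chi^t(u)\|_{H^s}\le 2\|u\|_{H^s}$, and feeding this back into the integral inequality yields $\|\Phi_\chi^t(u)-u\|_{H^s}\le C_{r,n}\|\chi\|_{\mathscr{I}^{\nu,n}_\gamma}\cdot 2\varepsilon\cdot 2\|u\|_{H^s}\cdot|t|$; reabsorbing constants into the definition of $\varepsilon$ (i.e. choosing the implicit constant in \eqref{eq:le_rayon_de_la_boule} appropriately) gives exactly $\|\Phi_\chi^t(u)-u\|_{H^s}\le \varepsilon^{-1}\|u\|_{H^{1+\nu}}\|u\|_{H^s}$, where the extra $\|u\|_{H^{1+\nu}}/\varepsilon \le 2$ factor is harmlessly present since we want the stated form.

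For the differential estimate \eqref{eq:close_id_diff}, I would differentiate the flow equation: $w(t):=\mathrm{d}\Phi_\chi^t(u)(v)$ solves the linearized equation $\ic\partial_t w = \mathrm{d}\nabla\chi(\Phi_\chi^t(u))(w)$ with $w(0)=v$, so $\mathrm{d}\Phi_\chi^t(u)-\mathrm{Id} = -\ic\int_0^t \mathrm{d}\nabla\chi(\Phi_\chi^\tau(u))\circ \mathrm{d}\Phi_\chi^\tau(u)\,d\tau$. Using the bound \eqref{eq:bound_dP} summed over the homogeneous components of $\chi$ — which gives $\|\mathrm{d}\nabla\chi(w)(v)\|_{H^s}\lesssim_{r,n}\|\chi\|_{\mathscr{I}^{\nu,n}_\gamma}\|w\|_{H^s}^{q-2}\|v\|_{H^s}$ with the $\gamma$-weights arranged so that on $\|w\|_{H^{1+\nu}}\le\gamma$ the dangerous high-degree terms are controlled — together with $\|\Phi_\chi^\tau(u)\|_{H^s}\le 2\|u\|_{H^s}$ from the previous step, a Gr\"onwall argument gives $\|\mathrm{d}\Phi_\chi^t(u)\|_{H^s\to H^s}\le \exp\big(C_{r,n}\|\chi\|_{\mathscr{I}^{\nu,n}_\gamma}\,(\|u\|_{H^s}/\varepsilon)\,\langle\|u\|_{H^s}/(2\varepsilon)\rangle^{r-3}\big)$, and subtracting $\mathrm{Id}$ and using $e^x-1\le x e^x \le 2x$ on a suitable range (or absorbing into constants) produces the claimed polynomial-times-$\langle\cdot\rangle^{r-3}$ bound. \textbf{The main obstacle} is bookkeeping the $\gamma$-weighted norm $\|\chi\|_{\mathscr{I}^{\nu,n}_\gamma}$ correctly through each application of Lemma \ref{lem:tame_inhom} and \eqref{eq:bound_dP}: one must check that on the ball of radius $2\varepsilon\lesssim\gamma$ the factors $(1+\gamma^{-1}\|u\|_{H^{1+\nu}})^{q-3}$ (resp. the $\|u\|_{H^s}^{q-3}$ powers) are uniformly controlled across all degrees $q\le r$, so that the single scalar $\|\chi\|_{\mathscr{I}^{\nu,n}_\gamma}$ governs the whole flow, and that the final choice of $\varepsilon$ in \eqref{eq:le_rayon_de_la_boule} simultaneously makes every Gr\"onwall constant small; the rest is routine integral inequalities.
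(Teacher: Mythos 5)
Your proposal follows essentially the same route as the paper's proof: a bootstrap in $H^{1+\nu}$ to establish existence up to time $1$ and the a priori bound, followed by the integral/Gr\"onwall argument in $H^s$ using the tame estimate of Lemma~\ref{lem:tame_inhom} for \eqref{eq:close_id_vf}, and differentiation of the flow plus \eqref{eq:bound_dP} and Gr\"onwall for \eqref{eq:close_id_diff}. The paper's write-up is terser (it compresses the $H^s$ Gr\"onwall step and directly displays the integrated bound) but the ideas and the bookkeeping of the $\gamma$-weights via $\|\chi\|_{\mathscr{I}^{\nu,n}_\gamma}$ and the choice $\varepsilon_{n,\gamma,\chi} = c_{n,r}\min(\|\chi\|_{\mathscr{I}^{\nu,n}_\gamma}^{-1},\gamma)$ are identical to yours.
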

\begin{proof} Without loss of generality, we focus only on positive times. We set
$$
\varepsilon_{n,\gamma,\chi} := c_{n,r} \min( \| \chi \|_{\mathscr{I}^{\nu,n}_{\gamma}} ^{-1} ,\gamma)
$$
where $c_{n,r}$ is a constant depending only on $r$ and $n$ that we will choose small enough. 

\medskip

We fix $u\in  B_{H^{1+\nu}}(0,2\varepsilon_{n,\gamma,\chi})$ and we set 
$$
T^* = \sup \big\{ t \leq \min(1,T_u^+) \ | \ \forall \tau \leq t, \  \| \Phi_\chi^\tau(u) \|_{H^{1+\nu}} \leq 2 \| u\|_{H^{1+\nu}} \big\}.
$$
Let $t< T_*$ and $s \in [\nu+1,n] $ such that $u\in H^s$. First, we note that, provided that $c_{n,r} \leq \frac14$ then $\|\Phi_\chi^t(u)\|_{H^{1+\nu}}<\gamma$. Thus, by Lemma  \ref{lem:tame_inhom}, we have that
$$
 \| \Phi_\chi^t(u) - u\|_{H^s}  \lesssim_{r,n}  |t| \| \chi \|_{\mathscr{I}^{\nu,n}_{\gamma}} \| u\|_{H^s} \| u\|_{H^{1+\nu}}.
$$
Therefore, provided that $c_{n,r} $ is small enough, we have that
$$
 \| \Phi_\chi^t(u) - u\|_{H^s} \leq \frac12 \frac{\|u  \|_{H^{1+\nu}}}{\varepsilon_{n,\gamma,\chi}} \| u\|_{H^{s}}.
$$
This estimate implies that $T^*= 1$ (i.e. \eqref{eq:inclusion}) and that \eqref{eq:close_id_vf} holds.

\medskip

It only remains to prove \eqref{eq:close_id_diff}. So let $u\in B_{H^{1+\nu}}(0,2\varepsilon_{n,\gamma,\chi})$, $s \in [\nu+1,n]$ and $t\leq 1$. Deriving \eqref{eq:aux}, it comes
$$
\forall w\in H^{1+\nu}, \quad \ic \partial_t \mathrm{d} \Phi_\chi^t(u)(w) = \mathrm{d}\nabla \chi( \Phi_\chi^t(u)) ( \mathrm{d} \Phi_\chi^t(u)(w) )
$$
and so
\begin{equation}
\label{eq:to_gron}
\| \mathrm{d} \Phi_\chi^t(u) - \mathrm{Id} \|_{H^s \to H^s} \leq \int_{0}^t \| \mathrm{d}\nabla \chi( \Phi_\chi^{\tau}(u)) \|_{H^s \to H^s}  \| \mathrm{d} \Phi_\chi^\tau(u)   \|_{H^s \to H^s}   \, \mathrm{d}\tau.
\end{equation}
Using the bound \eqref{eq:bound_dP} on $\mathrm{d}\nabla \chi$ it comes 
$$
\| \mathrm{d}\nabla \chi( \Phi_\chi^t(u)) \|_{H^s \to H^s} \lesssim_{n,r}  \| \chi \|_{\mathscr{I}^{\nu,n}_{\gamma}}  \sum_{3\leq q\leq r}\gamma^{-q+3} \| \Phi_\chi^t(u) \|_{H^s}^{q-2}.
$$
Therefore, provided that $c_{n,r}$ is small enough, we have
$$
\| \mathrm{d}\nabla \chi( \Phi_\chi^t(u)) \|_{H^s \to H^s} \leq  \frac18 \frac{\| u\|_{H^s}}{\varepsilon_{n,\gamma,\chi}} \big\langle \frac{\| u\|_{H^s}}{2\varepsilon_{n,\gamma,\chi}}  \big\rangle^{r-3}.
$$
Hence, it suffices to apply the Gr\"onwall estimate to \eqref{eq:to_gron} to get \eqref{eq:close_id_diff}.

\end{proof}

\section{Normal form}
In this section the setting is the same as in the previous one. We frequencies $\omega$ and the cluster decomposition $(\mathcal{C}_k)_{k\in \mathbb{N}}$ are considered as given. The aim of this section is to prove a Birkhoff normal form theorem to remove by symplectic changes of variables terms which "do not commute enough" with  
$$
Z_2(u) := \frac12 \sum_{j\in \mathbb{N}} \omega_j |u_j|^2, \quad u \in H^{\frac{\alpha}{2}}.
$$
This Hamiltonian correspond to the linear part of the abstract Hamiltonian PDE \eqref{eq:ham-pde}, i.e.
$$
\nabla Z_2(u) = \Omega u =: (\omega_j u_j)_{j\in \mathbb{N}}.
$$
In order to quantify how much terms are resonant it is useful to introduce the following operator.
\begin{definition}[Operator $\mathcal{L}_{\boldsymbol{k},\boldsymbol{\sigma}}$] For all $q\geq 3$, $\boldsymbol{k} \in \mathbb{N}^q$, $\boldsymbol{\sigma} \in \{-1,1\}^q$, we define the endomorphism $\mathcal{L}_{\boldsymbol{k},\boldsymbol{\sigma}} :  \mathscr{L}_{\boldsymbol{k}} \to  \mathscr{L}_{\boldsymbol{k}}$ by the relation
$$
\mathcal{L}_{\boldsymbol{k},\boldsymbol{\sigma}}(M)(u^{(1)},\cdots,u^{(q)})  := \sum_{1\leq i \leq q}  \boldsymbol{\sigma}_i M(u^{(1)},\cdots,u^{(i-1)}, \Omega u^{(i)}, u^{(i+1)} ,\cdots, u^{(q)} )
$$
for all $M\in \mathscr{L}_{\boldsymbol{k}}$ and all $(u^{(1)},\cdots,u^{(q)}) \in E_{\boldsymbol{k}_1} \times\cdots \times  E_{\boldsymbol{k}_q} $.
\end{definition}
As stated in the following lemma, it appears naturally when considering Poisson brackets with $Z_2$. Indeed, by a straightforward calculation, we have:
\begin{lemma} \label{lem:brack_Z_2} Let $q\geq 3$, $n,\nu\geq 0$ and $P\in \mathscr{H}_q^{n,\nu}$. Then, for all $u\in \mathbb{C}^{\mathbb{N}}$ with finite support, we have
$$
\{ Z_2,P\}(u) =  \sum_{\boldsymbol{k}\in \mathbb{N}^q} \sum_{\boldsymbol{\sigma} \in \{-1 , 1\}^q } (\ic\mathcal{L}_{\boldsymbol{k},\boldsymbol{\sigma}}P_{\boldsymbol{k}}^{\boldsymbol{\sigma}})(\Pi_{\boldsymbol{k}_1}^{\boldsymbol{\sigma}_1} u, \cdots, \Pi_{\boldsymbol{k}_q}^{\boldsymbol{\sigma}_q} u).
$$
\end{lemma}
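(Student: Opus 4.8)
The plan is to compute both sides by following the linear flow generated by $Z_2$. Since $\nabla Z_2(u)=\Omega u$ is diagonal with real entries, the flow of \eqref{eq:aux} with $\chi=Z_2$ is explicit: $\Phi_{Z_2}^t(u)=e^{-\ic t\Omega}u$, i.e. $(\Phi_{Z_2}^t u)_j=e^{-\ic t\omega_j}u_j$. For finitely supported $u$ this is defined for all $t$ and stays finitely supported, so no convergence issue arises and $\{Z_2,P\}$, $\mathrm{d}P(u)$, $\nabla P(u)$ are all genuine finite sums; thus it suffices to check the identity pointwise on such $u$, as in the statement.

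First I would record the elementary commutation identity $\Pi_k^\sigma(e^{-\ic t\Omega}u)=e^{-\ic\sigma t\Omega}\Pi_k^\sigma u$ for $\sigma\in\{-1,1\}$, which holds because $\Omega$ is diagonal (hence commutes with $\Pi_k$) and $\overline{e^{-\ic t\Omega}u}=e^{\ic t\Omega}\overline u$ since the $\omega_j$ are real. Plugging this into the series \eqref{eq:series_P} for $P$, differentiating at $t=0$ term by term (legitimate because only finitely many summands are nonzero), and using multilinearity of each $P_{\boldsymbol{k}}^{\boldsymbol{\sigma}}$ slot by slot together with $\frac{d}{dt}\big|_{t=0}e^{-\ic\sigma t\Omega}v=-\ic\sigma\Omega v$, I obtain
\[
\frac{d}{dt}\Big|_{t=0}P(e^{-\ic t\Omega}u)=-\ic\sum_{\boldsymbol{k}\in\mathbb{N}^q}\sum_{\boldsymbol{\sigma}\in\{-1,1\}^q}\big(\mathcal{L}_{\boldsymbol{k},\boldsymbol{\sigma}}P_{\boldsymbol{k}}^{\boldsymbol{\sigma}}\big)(\Pi_{\boldsymbol{k}_1}^{\boldsymbol{\sigma}_1}u,\cdots,\Pi_{\boldsymbol{k}_q}^{\boldsymbol{\sigma}_q}u),
\]
where I use that $\Omega$ preserves each $E_k$, so the right-hand side lies in the range where $\mathcal{L}_{\boldsymbol{k},\boldsymbol{\sigma}}$ acts. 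On the other hand, by the chain rule $\frac{d}{dt}\big|_{t=0}P(e^{-\ic t\Omega}u)=\mathrm{d}P(u)(-\ic\Omega u)=(\nabla P(u),-\ic\Omega u)_{\ell^2}$, and a one-line manipulation of the real scalar product gives $(\nabla P(u),-\ic\Omega u)_{\ell^2}=(\ic\nabla P(u),\Omega u)_{\ell^2}=\{P,Z_2\}(u)=-\{Z_2,P\}(u)$, the last equality being antisymmetry of the Poisson bracket. Comparing the two expressions yields the claimed formula, and the factor $\ic$ in $\ic\mathcal{L}_{\boldsymbol{k},\boldsymbol{\sigma}}$ comes exactly from this antisymmetry.

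An equally short purely computational route is also available, and is perhaps the ``straightforward calculation'' alluded to: from the Wirtinger form of the bracket recalled after its definition and $\partial_{\overline{u_j}}Z_2=\tfrac12\omega_j u_j$, one has $\{Z_2,P\}=\ic\sum_j\big(\omega_j u_j\,\partial_{u_j}P-\omega_j\overline{u_j}\,\partial_{\overline{u_j}}P\big)$; then one observes that, applied to a monomial $P_{\boldsymbol{k}}^{\boldsymbol{\sigma}}(\Pi_{\boldsymbol{k}_1}^{\boldsymbol{\sigma}_1}u,\cdots,\Pi_{\boldsymbol{k}_q}^{\boldsymbol{\sigma}_q}u)$, the operator $u_j\partial_{u_j}$ (resp. $\overline{u_j}\partial_{\overline{u_j}}$) replaces, for each slot $i$ with $\boldsymbol{\sigma}_i=1$ (resp. $\boldsymbol{\sigma}_i=-1$) and $j\in\mathcal{C}_{\boldsymbol{k}_i}$, that slot's argument by its $j$-th coordinate times $\mathds{1}_{\{j\}}$. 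Summing over $j$ against the weight $\omega_j$ reconstitutes $\sum_i\boldsymbol{\sigma}_i$ times the form with $\Omega$ inserted in the $i$-th slot, i.e. $\mathcal{L}_{\boldsymbol{k},\boldsymbol{\sigma}}P_{\boldsymbol{k}}^{\boldsymbol{\sigma}}$, which gives the result after summing over $(\boldsymbol{k},\boldsymbol{\sigma})$.

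There is no genuine obstacle here: the statement is a bookkeeping identity. The only points requiring minor care are the sign and convention tracking — in particular locating the origin of the $\ic$ in $\ic\mathcal{L}_{\boldsymbol{k},\boldsymbol{\sigma}}$ — and the interchange of differentiation with the sum over $(\boldsymbol{k},\boldsymbol{\sigma})$, which is immediate once one restricts, as in the statement, to finitely supported $u$.
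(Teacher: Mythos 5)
Your two derivations are both correct, and the second one (Wirtinger calculus applied to each monomial) is precisely the ``straightforward calculation'' the paper alludes to without spelling out. The first route via the explicit linear flow $\Phi_{Z_2}^t(u)=e^{-\ic t\Omega}u$ is a pleasant conceptual variant: it trades the term-by-term $u_j\partial_{u_j}$ bookkeeping for the single commutation identity $\Pi_k^\sigma(e^{-\ic t\Omega}u)=e^{-\ic\sigma t\Omega}\Pi_k^\sigma u$ plus multilinearity of $P_{\boldsymbol{k}}^{\boldsymbol{\sigma}}$, and the origin of the factor $\ic$ becomes transparent (it is the $\ic$ in the Hamiltonian flow equation together with antisymmetry of the bracket). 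Your sign check $\mathrm{d}P(u)(-\ic\Omega u)=(\ic\nabla P(u),\Omega u)_{\ell^2}=\{P,Z_2\}(u)=-\{Z_2,P\}(u)$ is accurate with the paper's real scalar product, and the restriction to finitely supported $u$ does dispose of all convergence issues, as the paper intends.
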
 
 Now, we define the polynomials which "almost commute" with $Z_2$.
\begin{definition}[$\gamma$-resonance] \label{def:non_res}Let $\gamma >0$, $q\geq 3$ and $n,\nu\geq 0$.  A couple $(\boldsymbol{k},\boldsymbol{\sigma})\in \mathbb{N}^q \times \{-1,1\}^q$ is non $\gamma$-resonant if 
\begin{equation}
\label{eq:sd_gen}
   \mathcal{L}_{\boldsymbol{k},\boldsymbol{\sigma}} \ \mathrm{is \ invertible} \quad \mathrm{and} \quad \forall M \in \mathscr{L}_{\boldsymbol{k}} , \quad  \|    \mathcal{L}_{\boldsymbol{k},\boldsymbol{\sigma}}^{-1} M \|_{\mathscr{L}_{\boldsymbol{k}}  } \leq \gamma^{-1} \| M \|_{\mathscr{L}_{\boldsymbol{k}}}.
\end{equation}
A homogeneous polynomial $P\in \mathscr{H}_{q}^{\nu,n}$ is \emph{$\gamma$-resonant} if for all $(\boldsymbol{k},\boldsymbol{\sigma})\in \mathbb{N}^q \times \{-1,1\}^q$ either $P_{\boldsymbol{k}}^{\boldsymbol{\sigma}} =0$ or $(\boldsymbol{k},\boldsymbol{\sigma})$ is $\gamma$-resonant. An inhomogeneous polynomial is \emph{$\gamma$-resonant} if it is the sum of \emph{$\gamma$-resonant} homogeneous polynomials.
\end{definition}
Note that \eqref{eq:sd_gen} is stronger than standard small divisor estimates. Nevertheless, it is a natural multi-dimensional extension. We provide examples of $\gamma$-non resonant terms in subsection \ref{sub:eg_nr}.

\subsection{A Birkhoff normal form theorem} Now, we prove a Birkhoff normal form theorem which allows to remove the non-$\gamma$-resonant terms (see e.g. \cite{BG25,BC24} for similar formulations).
\begin{theorem}
\label{thm:birk} Let $r\geq 3$, $\nu \geq \alpha$. For all $n \geq \nu + 1$, all $P \in \mathscr{I}^{\nu,n}_{\leq r} $ of norm $B:= \|P\|_{\mathscr{I}^{\nu,n}_{1}}$ and all $\gamma \in (0,1)$, there exists $\chi \in \mathscr{I}^{\nu,n}_{\leq r}$ such that $\|\chi\|_{\mathscr{I}^{\nu,n}_{\gamma}}\lesssim_{n,\nu,r,B } \gamma^{-1}$ and 
$$
(Z_2+P)\circ \Phi_\chi^{-1} = Z_2 + Q + R \quad \mathrm{on} \quad B_{H^{1+\nu}}(0, \varepsilon_{n,\gamma,\chi} ) 
$$
where $ \varepsilon_{n,\gamma,\chi} \gtrsim_{n,r,B,\nu} \gamma$ is given by Lemma \ref{lem:est_flow}\footnote{to ensure that  $\Phi_\chi^{-1}$ is well defined} and
\begin{itemize}
\item $Q \in \mathscr{I}^{\nu,n}_{\leq r} $ is a $\gamma$-resonant polynomial of norm $\|Q\|_{\mathscr{I}^{\nu,n}_{\gamma}}\lesssim_{n,\nu,r,B } 1$,
\item $R$ is a $C^\infty$ real valued function on $B_{H^{1+\nu}}(0, \varepsilon_{n,\gamma,\chi} ) $ satisfying, for all $s\in [1+\nu,n]$ and all $u\in B_{H^s}(0,\varepsilon_{n,\gamma,\chi})$
\begin{equation}
\label{eq:est_R}
\| \nabla R(u) \|_{H^s} \lesssim_{n,r,\nu,B} \gamma^{-r+2} \| u\|_{H^s}^r.
\end{equation}
\end{itemize}

\end{theorem}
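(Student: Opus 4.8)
The plan is to construct $\chi$ by the standard iterative Birkhoff procedure, removing non-$\gamma$-resonant terms degree by degree while carefully tracking the norms in the scale $\mathscr{I}^{\nu,n}_\gamma$. We write $\chi = \chi^{(3)} + \cdots + \chi^{(r)}$ and determine the homogeneous components inductively. Suppose after some steps we have transformed $Z_2 + P$ into $Z_2 + Q_{\mathrm{low}} + P_{\mathrm{new}}$ where $Q_{\mathrm{low}}$ collects the already-normalized $\gamma$-resonant terms of low degree and $P_{\mathrm{new}} \in \mathscr{I}^{\nu,n}_{\leq r}$ has no non-resonant part below degree $q$. Let $P_{\mathrm{new}}^{(q)} = Z^{(q)} + N^{(q)}$ be the splitting of its degree-$q$ part into its $\gamma$-resonant part $Z^{(q)}$ (those $(\boldsymbol{k},\boldsymbol{\sigma})$ for which \eqref{eq:sd_gen} holds) and the complementary part $N^{(q)}$. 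We then choose $\chi^{(q)} \in \mathscr{H}_q^{\nu,n}$ to solve the homological equation $\{Z_2,\chi^{(q)}\} + N^{(q)} = 0$, which by Lemma \ref{lem:brack_Z_2} amounts to setting $(\chi^{(q)})_{\boldsymbol{k}}^{\boldsymbol{\sigma}} = \ic\, \mathcal{L}_{\boldsymbol{k},\boldsymbol{\sigma}}^{-1} (N^{(q)})_{\boldsymbol{k}}^{\boldsymbol{\sigma}}$ on the non-resonant indices and $0$ elsewhere. The definition of $\gamma$-resonance gives immediately $\|\chi^{(q)}\|_{\mathscr{H}_q^{\nu,n}} \leq \gamma^{-1}\|N^{(q)}\|_{\mathscr{H}_q^{\nu,n}} \leq \gamma^{-1}\|P_{\mathrm{new}}^{(q)}\|_{\mathscr{H}_q^{\nu,n}}$, which is the source of the $\gamma^{-1}$ loss per step.

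The bookkeeping is done via the Lie transform. Using the Taylor expansion with integral remainder,
$$
(Z_2 + P_{\mathrm{new}}) \circ \Phi_{\chi^{(q)}}^{-1} = Z_2 + P_{\mathrm{new}} + \{Z_2 + P_{\mathrm{new}}, \chi^{(q)}\} + \int_0^1 (1-t)\{\{Z_2 + P_{\mathrm{new}},\chi^{(q)}\},\chi^{(q)}\}\circ \Phi_{\chi^{(q)}}^{-t}\, \mathrm{d}t,
$$
so that the degree-$q$ part becomes $Z^{(q)}$ (the homological equation kills $N^{(q)}$), the degrees $3,\dots,q-1$ are unchanged, and all newly generated terms have degree $> q$ (from iterated brackets) except for a genuinely higher-degree remainder coming from the degree $\geq q$ parts of $P_{\mathrm{new}}$. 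The key quantitative input is Proposition \ref{prop:poisson}: each bracket $\{\cdot,\chi^{(q)}\}$ costs a factor $\lesssim_{q,r} \gamma^{-1} \|P_{\mathrm{new}}\|$ in the $\mathscr{I}^{\nu,n}_\gamma$ norm (after rescaling by $\gamma$, since $\chi^{(q)}$ has norm $O(\gamma^{-1})$ in $\mathscr{H}_q^{\nu,n}$, i.e. $O(1)$ in the $\gamma$-rescaled norm once the $\gamma^{q-3}$ weights are accounted for). Since we run at most $r-2$ steps and each step multiplies the polynomial part's norm by a constant depending only on $n,\nu,r,B$, after the full induction the polynomial part $Z_2 + Q$ has $\|Q\|_{\mathscr{I}^{\nu,n}_\gamma} \lesssim_{n,\nu,r,B} 1$, $Q$ is $\gamma$-resonant, and $\|\chi\|_{\mathscr{I}^{\nu,n}_\gamma} \lesssim_{n,\nu,r,B} \gamma^{-1}$; the radius $\varepsilon_{n,\gamma,\chi} \gtrsim_{n,r,B,\nu}\gamma$ then follows from \eqref{eq:le_rayon_de_la_boule} in Lemma \ref{lem:est_flow} applied with $\gamma$ as the size parameter. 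For the composition $(Z_2+P)\circ\Phi_\chi^{-1}$ to make sense we invoke Proposition \ref{prop:flow} and Lemma \ref{lem:est_flow} which guarantee $\Phi_\chi^{-1}$ maps $B_{H^{1+\nu}}(0,\varepsilon_{n,\gamma,\chi})$ into the domain of $Z_2+P$, and preserves $H^s$ regularity for $s\in[1+\nu,n]$.

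The remainder $R$ is everything of degree $> r$ that is generated: schematically, $R = \sum (\text{iterated brackets of } Z_2+P \text{ with the } \chi^{(q)}) $ truncated to degrees $> r$, plus the integral remainders in the Lie transforms. To estimate $\nabla R$ in $H^s$ for $s\in[1+\nu,n]$ and $u\in B_{H^s}(0,\varepsilon_{n,\gamma,\chi})$ one argues as follows: each such remainder term is of the form $\Theta \circ \Phi_{\chi^{(q)}}^{-t}$ with $\Theta$ a sum of iterated Poisson brackets whose total degree is $> r$; one bounds $\|\nabla(\Theta\circ\Phi^{-t})(u)\|_{H^s} \leq \|\mathrm{d}\Phi^{-t}(u)\|_{H^s\to H^s}\,\|\nabla\Theta(\Phi^{-t}(u))\|_{H^s}$ using the chain rule and the symplecticity/near-identity bounds \eqref{eq:close_id_vf}--\eqref{eq:close_id_diff}, then applies Lemma \ref{lem:vf} (or rather the inhomogeneous version Lemma \ref{lem:tame_inhom}) to $\nabla\Theta$. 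Since $\Theta$ has degree $\geq r+1$ and norm controlled by $\lesssim_{n,\nu,r,B}\gamma^{-(r-2)}$ in $\mathscr{I}^{\nu,n}_\gamma$ (at most $r-2$ brackets, each costing $\gamma^{-1}$), and since $\|u\|_{H^{1+\nu}}\leq \varepsilon_{n,\gamma,\chi}\lesssim\gamma$ controls the lower-norm factors while $\|u\|_{H^s}$ carries one derivative factor per the tame structure, the number of $u$-factors beyond the two needed for the vector field estimate is $\geq r-2$, producing $\gamma^{-(r-2)}\cdot (\gamma^{-1})^{?}$... more precisely one collects a factor $\gamma^{-(r-2)}$ from the bracket chain and the homogeneity of degree $\geq r+1$ gives $\|u\|_{H^s}\|u\|_{H^{1+\nu}}^{r-1}$-type control, and bounding $\|u\|_{H^{1+\nu}}\leq \|u\|_{H^s}$ trivially yields $\|\nabla R(u)\|_{H^s}\lesssim_{n,r,\nu,B}\gamma^{-(r-2)}\|u\|_{H^s}^r$, which is \eqref{eq:est_R}. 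The main obstacle I anticipate is precisely this last bookkeeping: one must verify that the total power of $\gamma^{-1}$ accumulated through the $\leq r-2$ homological steps is exactly $r-2$ and not larger — in particular that the $\gamma$-rescaled norms $\mathscr{I}^{\nu,n}_\gamma$ are designed so that each bracket $\{P,\chi^{(q)}\}$ with $\|\chi^{(q)}\|_{\mathscr{H}^{\nu,n}_q}\sim\gamma^{-1}\|P\|$ costs only a single $\gamma^{-1}$ in the rescaled norm (this is where the weight $\gamma^{q-3}$ in the definition of $\|\cdot\|_{\mathscr{I}^{\nu,n}_\gamma}$ and Proposition \ref{prop:poisson}'s clean constant $\lesssim qq'$ are essential), and simultaneously that the remainder's degree being $>r$ supplies enough powers of $\|u\|_{H^s}\lesssim\gamma$ to match. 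Everything else is a routine but lengthy induction.
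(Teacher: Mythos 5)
Your proposal and the paper's proof share the same skeleton — iterative solution of cohomological equations with $\gamma^{-1}$ loss per degree, the use of the $\gamma$-rescaled norms $\mathscr{I}^{\nu,n}_\gamma$ so that each Poisson bracket costs a clean factor, and the mechanism whereby $\|u\|_{H^{1+\nu}} \lesssim \gamma$ absorbs the excess powers of $\gamma^{-1}$ in the high-degree remainder — but they diverge in a way that matters for the theorem as stated.

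You construct the change of variables as a \emph{composition} of Lie transforms, $\Phi_{\chi^{(r)}}^{-1}\circ\cdots\circ\Phi_{\chi^{(3)}}^{-1}$, one homogeneous generator at a time. The paper instead fixes a \emph{single} generator $\chi = \chi^{(3)}+\cdots+\chi^{(r)}$ at the outset and expands $(Z_2+P)\circ\Phi_\chi^{-1}$ directly via the iterated $\mathrm{ad}_\chi$ Taylor expansion. This is not cosmetic: the statement you are proving asserts the existence of one $\chi\in\mathscr{I}^{\nu,n}_{\leq r}$ such that $(Z_2+P)\circ\Phi_\chi^{-1} = Z_2+Q+R$, i.e.\ the transformation is the time-$1$ flow of a single Hamiltonian, not a composition of $r-2$ such flows. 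In general $\Phi_{\chi^{(r)}}^{-1}\circ\cdots\circ\Phi_{\chi^{(3)}}^{-1} \neq \Phi_{\chi^{(3)}+\cdots+\chi^{(r)}}^{-1}$, and the $\chi^{(q)}$ your iterative scheme produces are not even the same as those for which the single-flow expansion works, since the degree-$q$ piece to be cancelled at step $q$ is different in the two schemes. The observation that makes the single-flow version possible — and this is the part your proposal omits — is that in the expansion of $(Z_2+P)\circ\Phi_\chi^{-1}$, the contributions to the degree-$d$ homogeneous component coming from iterated brackets (the paper's $K^{(d)}$ and $H^{(d)}$) involve only $\chi^{(q)}$ with $q<d$; the system of cohomological equations is therefore triangular and can be solved by induction on $d$ within the single-flow framework. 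Without this you cannot close the induction with one generator. For the downstream application (Step 3 of the proof of Theorem~\ref{thm:main}) a composition of symplectic near-identity maps would actually serve just as well, so your version could be salvaged by weakening the conclusion of the theorem; but as a proof of the stated result it has a gap. Two lesser points: the sign of your cohomological equation is off (with the paper's convention $\partial_t F\circ\Phi_\chi^{-t} = \{\chi,F\}\circ\Phi_\chi^{-t}$, the first-order contribution at degree $q$ is $\{\chi^{(q)},Z_2\} = -\{Z_2,\chi^{(q)}\}$, so one wants $\{Z_2,\chi^{(q)}\} = N^{(q)}$, not $-N^{(q)}$), and the closing estimate for $\nabla R$ cannot bound $\|u\|_{H^{1+\nu}}$ by $\|u\|_{H^s}$ when the degree of a remainder term exceeds $r+1$: one must genuinely use $\|u\|_{H^{1+\nu}}\lesssim\gamma$ to cancel the extra $\gamma^{-1}$'s, exactly as in the paper's $\gamma^{-d+3}\gamma^{d-r-1}\|u\|_{H^s}^r$ computation.
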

\begin{proof}  Let $n\geq \nu+1$. Then, let $\gamma \in (0,1)$, $P\in \mathscr{I}^{\nu,n}_{\leq r} $ and set $B:= \|P\|_{\mathscr{I}^{\nu,n}_{1}}$. We divide the proof in $3$ steps.

\noindent \underline{\emph{$\triangleright$ Step 1: Expansion.}} Let $\chi \in \mathscr{I}^{\nu,n}_{\leq r}$. It will be fixed at the second step but for the moment we do not impose any constraint on it.

First, we note that if $F \in C^\infty(H^{1+\nu} ;\mathbb{R})$ then
$$
\partial_t F\circ \Phi_\chi^{-t}(u) = \{\chi , F\} \circ \Phi_\chi^{-t}(u) \quad \mathrm{for} \quad (t,u) \in \mathcal{V}_\chi.
$$
Therefore denoting $\mathrm{ad}_\chi := \{\chi,\cdot\}$ and performing the Taylor expansion of $(Z_2+P)\circ \Phi_\chi^{-1}$, it comes, on $ B_{H^{1+\nu}}(0, \varepsilon_{n,\gamma,\chi} ) $\footnote{because by Lemma \ref{lem:est_flow}, one has that $[-1,1] \times B_{H^{1+\nu}}(0,\varepsilon_{n,\gamma,\chi}) \subset \mathcal{V}_\chi$.}
$$
(Z_2+P)\circ \Phi_\chi^{-1}  = \sum_{\ell = 0}^{r-2} \frac{\mathrm{ad}_{\chi}^{\ell}}{\ell ! } (Z_2 + P) + \int_{0}^1 \frac{(1-t)^{r-2}}{(r-2) !} \mathrm{ad}_\chi^{r-1} (Z_2 + P) \circ \Phi_\chi^{-t} \mathrm{d}t.
$$
Then, we expand all the polynomial and group terms depending on their homogeneity. More precisely, we set
$$
K^{(d)} = \sum_{\ell=1}^{r-2}  \frac{K^{(\ell,d)}}{\ell !} \quad  \mathrm{with} \quad  K^{(\ell,d)} := \sum_{ \boldsymbol{q}_1 + \cdots + \boldsymbol{q}_{\ell+1} = d + 2\ell } \mathrm{ad}_{\chi^{(\boldsymbol{q}_1)}} \cdots  \mathrm{ad}_{\chi^{(\boldsymbol{q}_{\ell})}} P^{(\boldsymbol{q}_{\ell +1})},
$$
$$
H^{(d)} = \sum_{\ell=2}^{r-2}  \frac{H^{(\ell,d)}}{(\ell+1) !}  \quad  \mathrm{with} \quad  H^{(\ell,d)} := \sum_{ \boldsymbol{q}_1 + \cdots + \boldsymbol{q}_{\ell+1} = d + 2\ell } \mathrm{ad}_{\chi^{(\boldsymbol{q}_1)}} \cdots  \mathrm{ad}_{\chi^{(\boldsymbol{q}_{\ell})}} \{\chi^{(\boldsymbol{q}_{\ell+1})} , Z_2\},
$$
and it comes
$$
(Z_2+P)\circ \Phi_\chi^{-1} = Z_2 + Q + R \quad \mathrm{on} \quad B_{H^{1+\nu }}(0, \varepsilon_{n,\gamma,\chi} ) 
$$
where
\begin{equation}
\label{eq:def_Q}
Q :=  P + \{\chi,Z_2\}+ \sum_{3\leq d \leq r} K^{(d)} + H^{(d)},
\end{equation}
$$
 R := \underbrace{\sum_{r+1\leq d \leq r^2} K^{(d)} + H^{(d)} }_{=:R^{(\mathrm{pol})}} + \underbrace{\sum_{r+1\leq d \leq r^2} \int_{0}^1 \frac{(1-t)^{r}}{r !} ( K^{(r+1,d)} + H^{(r,d)} ) \circ \Phi_\chi^{-t} \mathrm{d}t}_{=:R^{(\mathrm{Tay})}}.
$$

\medskip

\noindent \underline{\emph{$\triangleright$ Step 2: Resolution of the cohomological equations.}} Now, we have to design $\chi$ so that $Q$, defined by \eqref{eq:def_Q}, belongs to  $\mathscr{I}^{\nu,n}_{\leq r}$ and is $\gamma$-resonant. Let us note that decomposing $Q$ as a sum of homogeneous polynomials, we have
$$
Q = \sum_{d = 3}^r Q^{(d)} \quad \mathrm{where} \quad Q^{(d)} := P^{(d)}  + K^{(d)}+ H^{(d)}- \{  Z_2 , \chi^{(d)} \}.
$$

\medskip

We proceed by induction. We aim at proving that, for all $d \in \{3,\cdots,r\}$
there exists $\chi^{(d)}  \in \mathscr{H}_d^{\nu,n}$ so that $Q^{(d)},H^{(d)},K^{(d)} \in \mathscr{H}_d^{\nu,n}$, $Q^{(d)}$ is $\gamma$ resonant and the following bounds holds 
\begin{equation}
\label{eq:ind_hp}
\gamma \| \chi^{(d)} \|_{ \mathscr{H}_d^{\nu,n} }  + \| H^{(d)} \|_{ \mathscr{H}_d^{\nu,n} }+\| K^{(d)} \|_{ \mathscr{H}_d^{\nu,n} }+\| Q^{(d)} \|_{ \mathscr{H}_d^{\nu,n} } \lesssim_{n,B } \gamma^{-d+3}.
\end{equation}

\medskip

To prove it, we consider $d \in \{3,\cdots,r\}$ and assume that $\chi^{(q)}$ have been designed for all $q<d$. The key point is to note that $K^{(d)}$ and $H^{(d)}$ only depend on $\chi$ through the homogeneous terms $\chi^{(q)}$ with $q < d$ (which makes the system triangular in some sense).

\medskip

First, we control $K^{(d)}$. Indeed, applying Proposition \ref{prop:poisson} to control the Poisson brackets, using the induction hypothesis \eqref{eq:ind_hp} and the bound $\|P\|_{\mathscr{I}^{\nu,n}_{\gamma}} \leq \|P\|_{\mathscr{I}^{\nu,n}_{1}} =B $, it comes that for all $\ell \leq d$, $K^{(\ell,d)}\in \mathscr{H}_d^{\nu,n} $ and
\begin{equation}
\label{eq:jai_pas_envie_de_le_refaire}
\begin{split}
\| K^{(\ell,d)} \|_{ \mathscr{H}_d^{\nu,n} } &\lesssim_{n,B} \! \! \! \!  \! \! \sum_{ \boldsymbol{q}_1 + \cdots + \boldsymbol{q}_{\ell+1} = d + 2\ell } \! \! \! \!  \! \! \| \chi^{(\boldsymbol{q}_1)} \|_{ \mathscr{H}_{\boldsymbol{q}_1}^{\nu,n}}   \cdots  \| \chi^{(\boldsymbol{q}_\ell)} \|_{ \mathscr{H}_{\boldsymbol{q}_\ell}^{\nu,n} }  \| P^{(\boldsymbol{q}_{\ell+1})} \|_{ \mathscr{H}_{\boldsymbol{q}_{\ell+1}}^{\nu,n} }\\
&\lesssim_{n,B} \! \! \! \!  \! \! \sum_{ \boldsymbol{q}_1 + \cdots + \boldsymbol{q}_{\ell+1} = d + 2\ell } \! \! \! \!  \! \! \gamma^{-\boldsymbol{q}_1+2} \cdots \gamma^{-\boldsymbol{q}_{\ell}+2} \gamma^{-\boldsymbol{q}_{\ell+1}+3} \sim_{n,B} \gamma^{-d+3}.
\end{split}
\end{equation}
Therefore, we have $\| K^{(d)} \|_{ \mathscr{H}_d^{\nu,n} } \lesssim_{n,\nu,d,B} \gamma^{-d+3}$.

\medskip

Now we focus on $H^{(d)}$. We note that, by construction, for all $q<d$
$$
\{  Z_2 , \chi^{(q)} \} =  P^{(q)}  + K^{(q)}+ H^{(q)} - Q^{(q)}.
$$
Therefore, by induction hypothesis, we get that for all $q<d$, $\{  Z_2 , \chi^{(q)} \} \in \mathscr{H}_q^{\nu,n} $ and that it satisfies the bound
$$
\| \{  Z_2 , \chi^{(q)} \}  \|_{\mathscr{H}_q^{\nu,n}}  \lesssim_{n,B} \gamma^{-q+3}.
$$
As a consequence, proceeding exactly as we did for $K^{(\ell,d)} $, we deduce that 
$$
\| H^{(d)} \|_{ \mathscr{H}_d^{\nu,n} } \lesssim_{n,B} \gamma^{-d+3}.
$$

\medskip

Now, we design $\chi^{(d)}$. For all $\boldsymbol{k}\in \mathbb{N}^d$ and $\boldsymbol{\sigma} \in \{ -1,1 \}^d$, we set 
$$
(\chi^{(d)})_{\boldsymbol{k}}^{\boldsymbol{\sigma}} :=    (\ic\mathcal{L}_{\boldsymbol{k},\boldsymbol{\sigma}})^{-1} (P^{(d)}  + K^{(d)}+ H^{(d)})_{\boldsymbol{k}}^{\boldsymbol{\sigma}} \quad \mathrm{if} \quad (\boldsymbol{k},\boldsymbol{\sigma}) \ \mathrm{is} \ \mathrm{non-}\gamma\mathrm{-resonant}
$$
and $(\chi^{(d)})_{\boldsymbol{k}}^{\boldsymbol{\sigma}}= 0$ else. By definition of $\gamma$-resonance, using the bounds we proved on $K^{(d)}$ and $H^{(d)}$, it implies that $\chi^{(d)}\in \mathscr{H}_q^{\nu,n}$ and that it satisfies the bound 
$$
\| \chi^{(d)} \|_{ \mathscr{H}_d^{\nu,n} } \lesssim_{n,B}\gamma^{-d+2}.
$$

\medskip

Finally, using the formula of Lemma \ref{lem:brack_Z_2}, it implies that
for all $\boldsymbol{k}\in \mathbb{N}^d$ and $\boldsymbol{\sigma} \in \{ -1,1 \}^d$, 
$$
(Q^{(d)})_{\boldsymbol{k}}^{\boldsymbol{\sigma}} =    (P^{(d)}  + K^{(d)}+ H^{(d)})_{\boldsymbol{k}}^{\boldsymbol{\sigma}} \quad \mathrm{if} \quad (\boldsymbol{k},\boldsymbol{\sigma}) \ \mathrm{is} \ \gamma\mathrm{-resonant}
$$
and $(Q^{(d)})_{\boldsymbol{k}}^{\boldsymbol{\sigma}}= 0$ else. Thus, we get that $Q^{(d)} \in \mathscr{H}_d^{\nu,n}$ is $\gamma$-resonant and satisfies the bound  $
\| Q^{(d)} \|_{ \mathscr{H}_d^{\nu,n} } \lesssim_{n,B} \gamma^{-d+3}.
$
This conclude the proof of the induction.

\medskip

\noindent \underline{\emph{$\triangleright$ Step 3: Bound on the remainder terms.}} Finally, we have to control the remainder term $R = R^{(\mathrm{pol})}+R^{(\mathrm{Tay})} $.

\medskip

The key point here is to note that proceeding exactly as we did at the previous step (see \eqref{eq:jai_pas_envie_de_le_refaire}), we have that for all $d\leq r^2$ and $\ell\leq r-1$, $K^{(\ell,d)}\in \mathscr{H}_d^{\nu,n} ,H^{(\ell,d)}\in \mathscr{H}_d^{\nu,n} $ satisfy the bound
\begin{equation}
\label{eq:pas_si_mal}
\| K^{(\ell,d)} \|_{\mathscr{H}_d^{\nu,n}} +  \| H^{(\ell,d)} \|_{\mathscr{H}_d^{\nu,n}} \lesssim_{n,B} \gamma^{-d+3}.
\end{equation}

\medskip

Now, we fix $u\in B_{H^s}(0,\varepsilon_{n,\gamma,\chi})$ and  $s\in [1+\nu,n]$.  We note that by construction of $\chi$, we have $\|\chi\|_{\mathscr{I}^{\nu,n}_{\gamma}}\lesssim_{n,\nu,r,B } \gamma^{-1}$ and so
$\varepsilon_{n,\gamma,\chi} \sim_{n,r,B,\nu} \gamma$ (see \eqref{eq:le_rayon_de_la_boule} and \eqref{eq:ind_hp}). Therefore, applying Lemma \ref{lem:vf} and using \eqref{eq:pas_si_mal}, we directly have that
$$
\| \nabla R^{(\mathrm{pol})}(u) \|_{H^s} \lesssim_{n,\nu,r,B}  \gamma^{-r+2} \| u\|_{H^s}^r.
$$
To estimate $R^{(\mathrm{Tay})}$, we first note that, for all $r+1\leq d\leq r^2$ and $t\in [0,1]$, since $\Phi_\chi^{-t}$ is symplectic and $\Phi_\chi^{-t} \circ \Phi_\chi^{t} = \mathrm{id}$, we have
$$
 \nabla \big[ ( K^{(r+1,d)} + H^{(r,d)} ) \circ \Phi_\chi^{-t} \big](u)= -\ic \mathrm{d} \Phi_\chi^{t} (  \Phi_\chi^{-t}(u) ) \Big( \ic \big[ \nabla ( K^{(r+1,d)} + H^{(r,d)} ) \big] \circ \Phi_\chi^{-t} (u) \Big).
$$
Then, we use that by Lemma \ref{lem:est_flow}, we have 
$$
\|   \Phi_\chi^{-t}(u) \|_{H^s} \leq 2 \|   u \|_{H^s} \lesssim_{n,B}  \varepsilon_{n,\gamma,\chi} 
$$
to deduce by Lemma \ref{lem:vf}, the estimate \eqref{eq:pas_si_mal} and the estimate on $\mathrm{d} \Phi_\chi^{t}$ given by Lemma \ref{lem:est_flow} that
$$
\big\|  \nabla \big[ ( K^{(r+1,d)} + H^{(r,d)} ) \circ \Phi_\chi^{-t} \big](u) \big\|_{H^s} \lesssim_{n,B} \gamma^{-d+3} \| u\|_{H^s}^{d-1}.
$$
Finally, using that $d\geq r+1$ and $ \| u\|_{H^s} \lesssim_{n,B} \gamma$, we deduce that
$$
\| \nabla R^{(\mathrm{Tay})} \|_{H^s} \lesssim_{n,B}  \gamma^{-r+2} \| u\|_{H^s}^r.
$$

\end{proof}

\subsection{About $\gamma$-resonant terms}
\label{sub:eg_nr}

To deduce dynamical corollary of the Birkhoff normal form theorem we have just proved, we have to know more about $\gamma$-resonant terms (defined in Definition \ref{def:non_res}). 

\begin{lemma} \label{lem:2grand} Let $q\geq 3$, $\boldsymbol{k}\in \mathbb{N}^q$ and $\boldsymbol{\sigma} \in \{-1,1\}^q$ be such that 
$$
\boldsymbol{k}_1\geq \cdots \geq \boldsymbol{k}_q \quad \mathrm{and} \quad |\boldsymbol{\sigma}_1 \boldsymbol{k}_1^{\alpha}  + \boldsymbol{\sigma}_2 \boldsymbol{k}_2^{\alpha}| \gtrsim_{q} \max(\boldsymbol{k}_3^\alpha,\boldsymbol{k}_1^{\alpha-1}).
$$
Then, $(\boldsymbol{k},\boldsymbol{\sigma})$ is non-$1$-resonant.
\end{lemma}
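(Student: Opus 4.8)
The plan is to realise $\mathcal{L}_{\boldsymbol{k},\boldsymbol{\sigma}}$ as precomposition with an explicit self-adjoint operator on a finite dimensional Hilbert space and then to invert it through a Laplace-transform representation that behaves well with respect to the multilinear norm on $\mathscr{L}_{\boldsymbol{k}}$. First I would dispose of the trivial case in which some $\mathcal{C}_{\boldsymbol{k}_i}$ is empty (then $\mathscr{L}_{\boldsymbol{k}}=\{0\}$) and set $E:=E_{\boldsymbol{k}_1}\otimes\cdots\otimes E_{\boldsymbol{k}_q}$, with the Hilbert norm induced by the $\ell^2$ norms on the factors, identifying each $M\in\mathscr{L}_{\boldsymbol{k}}$ with a linear form on $E$; with this identification $\|M\|_{\mathscr{L}_{\boldsymbol{k}}}=\sup\{|M(v)|:v\text{ simple tensor},\ \|v\|\le 1\}$. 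Writing $\Omega|_{E_{\boldsymbol{k}_i}}=\mathrm{diag}(\omega_j)_{j\in\mathcal{C}_{\boldsymbol{k}_i}}$ and
$$
\mathcal{A}:=\sum_{i=1}^q\boldsymbol{\sigma}_i\,\big(\mathrm{Id}_{E_{\boldsymbol{k}_1}}\otimes\cdots\otimes\Omega|_{E_{\boldsymbol{k}_i}}\otimes\cdots\otimes\mathrm{Id}_{E_{\boldsymbol{k}_q}}\big),
$$
a self-adjoint operator on $E$, one has $\mathcal{L}_{\boldsymbol{k},\boldsymbol{\sigma}}(M)=M\circ\mathcal{A}$; hence $\mathcal{L}_{\boldsymbol{k},\boldsymbol{\sigma}}$ is invertible iff $\mathcal{A}$ is, and then $\mathcal{L}_{\boldsymbol{k},\boldsymbol{\sigma}}^{-1}(M)=M\circ\mathcal{A}^{-1}$. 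The eigenvalues of $\mathcal{A}$ are $\lambda_{\boldsymbol{j}}:=\sum_{i=1}^q\boldsymbol{\sigma}_i\omega_{j_i}$, $j_i\in\mathcal{C}_{\boldsymbol{k}_i}$.

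Next I would convert the hypothesis into spectral information on $\mathcal{A}$. By \eqref{eq:inclusion_clusters} and the smoothness of $t\mapsto t^\alpha$ on $(0,\infty)$, there is a constant (depending only on $\alpha$, $\Upsilon$ and the constant in \eqref{eq:inclusion_clusters}) such that $|\omega_j-(\Upsilon\boldsymbol{k}_i)^\alpha|\lesssim_\alpha \max(1,\boldsymbol{k}_i^{\alpha-1})$ for every $j\in\mathcal{C}_{\boldsymbol{k}_i}$; since $\boldsymbol{k}_1\ge\cdots\ge\boldsymbol{k}_q\ge 1$ this gives $|\omega_j-(\Upsilon\boldsymbol{k}_i)^\alpha|\lesssim_\alpha \max(1,\boldsymbol{k}_1^{\alpha-1})$ for $i\in\{1,2\}$ and $0<\omega_j\lesssim_\alpha\boldsymbol{k}_i^\alpha\le\boldsymbol{k}_3^\alpha$ for $i\ge 3$. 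Summing over $i$ yields
$$
\lambda_{\boldsymbol{j}}=\Upsilon^\alpha\big(\boldsymbol{\sigma}_1\boldsymbol{k}_1^\alpha+\boldsymbol{\sigma}_2\boldsymbol{k}_2^\alpha\big)+\theta_{\boldsymbol{j}},\qquad |\theta_{\boldsymbol{j}}|\le C_{q,\alpha}\max\!\big(\boldsymbol{k}_3^\alpha,\boldsymbol{k}_1^{\alpha-1}\big),
$$
with $C_{q,\alpha}$ depending only on $q$, $\alpha$, $\Upsilon$ and the constant in \eqref{eq:inclusion_clusters}. I would then fix the implicit constant in the statement, requiring $|\boldsymbol{\sigma}_1\boldsymbol{k}_1^\alpha+\boldsymbol{\sigma}_2\boldsymbol{k}_2^\alpha|\ge c_q\max(\boldsymbol{k}_3^\alpha,\boldsymbol{k}_1^{\alpha-1})$ with $c_q$ large enough that $\Upsilon^\alpha c_q\ge 2C_{q,\alpha}+2$. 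Since $\max(\boldsymbol{k}_3^\alpha,\boldsymbol{k}_1^{\alpha-1})\ge\boldsymbol{k}_3^\alpha\ge 1$, this forces $\Upsilon^\alpha|\boldsymbol{\sigma}_1\boldsymbol{k}_1^\alpha+\boldsymbol{\sigma}_2\boldsymbol{k}_2^\alpha|\ge 2|\theta_{\boldsymbol{j}}|+2$ for all $\boldsymbol{j}$, so every $\lambda_{\boldsymbol{j}}$ has the sign of $\boldsymbol{\sigma}_1\boldsymbol{k}_1^\alpha+\boldsymbol{\sigma}_2\boldsymbol{k}_2^\alpha$ and $|\lambda_{\boldsymbol{j}}|\ge 2$. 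In particular $\mathcal{A}$ is invertible; and since $\mathcal{L}_{\boldsymbol{k},-\boldsymbol{\sigma}}=-\mathcal{L}_{\boldsymbol{k},\boldsymbol{\sigma}}$ while the hypothesis and the conclusion are invariant under $\boldsymbol{\sigma}\mapsto-\boldsymbol{\sigma}$, I may assume $\boldsymbol{\sigma}_1\boldsymbol{k}_1^\alpha+\boldsymbol{\sigma}_2\boldsymbol{k}_2^\alpha>0$, so that $\mathcal{A}\ge 2\,\mathrm{Id}$.

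The final and main step is to bound $\|M\circ\mathcal{A}^{-1}\|_{\mathscr{L}_{\boldsymbol{k}}}$, and the only delicate point is that this \emph{cannot} be read off from the pointwise bound $|\lambda_{\boldsymbol{j}}|\ge 2$ alone: the norm $\|\cdot\|_{\mathscr{L}_{\boldsymbol{k}}}$ is the multilinear (injective) norm, not a weighted $\ell^\infty$ norm in the index $\boldsymbol{j}=(j_1,\dots,j_q)$, so inverting $\mathcal{A}$ ``entrywise'' is not legitimate (a Schur-multiplier obstruction — this is exactly why \eqref{eq:sd_gen} is genuinely stronger than a scalar small-divisor estimate). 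I would circumvent it with the Laplace representation $\mathcal{A}^{-1}=\int_0^\infty e^{-t\mathcal{A}}\,\mathrm{d}t$ (valid since $\mathcal{A}>0$), observing that the commuting summands of $\mathcal{A}$ give $e^{-t\mathcal{A}}=\bigotimes_{i=1}^q e^{-t\boldsymbol{\sigma}_i\Omega|_{E_{\boldsymbol{k}_i}}}$, an honest tensor product of operators on the factors. For any operators $B_i$ on $E_{\boldsymbol{k}_i}$ one has the elementary multiplicative bound $\|M\circ(B_1\otimes\cdots\otimes B_q)\|_{\mathscr{L}_{\boldsymbol{k}}}\le\big(\prod_{i=1}^q\|B_i\|_{E_{\boldsymbol{k}_i}\to E_{\boldsymbol{k}_i}}\big)\|M\|_{\mathscr{L}_{\boldsymbol{k}}}$, obtained just by estimating $|M(B_1u^{(1)},\dots,B_qu^{(q)})|$ on simple tensors. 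Applying it with $B_i=e^{-t\boldsymbol{\sigma}_i\Omega|_{E_{\boldsymbol{k}_i}}}$, a positive self-adjoint operator of operator norm $e^{-tm_i}$ with $m_i:=\min_{j\in\mathcal{C}_{\boldsymbol{k}_i}}\boldsymbol{\sigma}_i\omega_j$, and using that $\sum_{i=1}^q m_i=\min_{\boldsymbol{j}}\lambda_{\boldsymbol{j}}\ge 2$ (the minimisation decouples over the $j_i$'s), I obtain
$$
\|\mathcal{L}_{\boldsymbol{k},\boldsymbol{\sigma}}^{-1}(M)\|_{\mathscr{L}_{\boldsymbol{k}}}\le\int_0^\infty\big\|M\circ e^{-t\mathcal{A}}\big\|_{\mathscr{L}_{\boldsymbol{k}}}\,\mathrm{d}t\le\Big(\int_0^\infty e^{-t\sum_i m_i}\,\mathrm{d}t\Big)\|M\|_{\mathscr{L}_{\boldsymbol{k}}}=\frac{1}{\sum_{i}m_i}\,\|M\|_{\mathscr{L}_{\boldsymbol{k}}}\le\|M\|_{\mathscr{L}_{\boldsymbol{k}}},
$$
which is \eqref{eq:sd_gen} with $\gamma=1$, i.e. $(\boldsymbol{k},\boldsymbol{\sigma})$ is non-$1$-resonant. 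I expect the spectral estimates of the second paragraph to be routine consequences of \eqref{eq:inclusion_clusters}; the genuine content — and the step most likely to require care — is the passage from the scalar bound on $\lambda_{\boldsymbol{j}}$ to the operator bound on $\mathscr{L}_{\boldsymbol{k}}$, for which the Laplace-transform / tensor-factorisation device is, as far as I see, essential.
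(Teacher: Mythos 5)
Your proof is correct, but it takes a genuinely different route from the paper's. The paper's argument is a direct perturbation estimate on $\mathscr{L}_{\boldsymbol{k}}$: using \eqref{eq:inclusion_clusters} and the mean value inequality it bounds
$$
\| \mathcal{L}_{\boldsymbol{k},\boldsymbol{\sigma}}(M) - \Upsilon^\alpha(\boldsymbol{k}_1^{\alpha} + \boldsymbol{\sigma}_2 \boldsymbol{k}_2^{\alpha}) M \|_{\mathscr{L}_{\boldsymbol{k}}} \lesssim \| M\|_{\mathscr{L}_{\boldsymbol{k}}} \big( \boldsymbol{k}_1^{\alpha-1} + \boldsymbol{k}_2^{\alpha-1} + \boldsymbol{k}_3^{\alpha} + \cdots + \boldsymbol{k}_q^{\alpha} \big),
$$
and the hypothesis makes the scalar $\Upsilon^\alpha|\boldsymbol{k}_1^\alpha+\boldsymbol{\sigma}_2\boldsymbol{k}_2^\alpha|$ dominate the error, so $\|\mathcal{L}_{\boldsymbol{k},\boldsymbol{\sigma}}(M)\|_{\mathscr{L}_{\boldsymbol{k}}} \geq \|M\|_{\mathscr{L}_{\boldsymbol{k}}}$; invertibility then follows from rank-nullity in the finite-dimensional space $\mathscr{L}_{\boldsymbol{k}}$, with $\|\mathcal{L}_{\boldsymbol{k},\boldsymbol{\sigma}}^{-1}\|\leq 1$. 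You instead pass through the tensor-product operator $\mathcal{A}$ on $E_{\boldsymbol{k}_1}\otimes\cdots\otimes E_{\boldsymbol{k}_q}$ and invert via the Laplace representation $\mathcal{A}^{-1}=\int_0^\infty e^{-t\mathcal{A}}\,\mathrm{d}t$, exploiting the factorization $e^{-t\mathcal{A}}=\bigotimes_i e^{-t\boldsymbol{\sigma}_i\Omega|_{E_{\boldsymbol{k}_i}}}$. Your concern about a Schur-multiplier obstruction is well founded and is precisely why the paper's companion Lemma~\ref{lem:on_tue_tout} loses a factor $\prod_\ell(\#\mathcal{C}_\ell)^{1/2}$ when it inverts entrywise; the paper sidesteps it here by the perturbation argument, you by the semigroup factorization. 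Note though that both proofs lean on the same underlying fact: precomposition of $M$ with a tensor-factorized operator $B_1\otimes\cdots\otimes B_q$ is bounded on $\mathscr{L}_{\boldsymbol{k}}$ by $\prod_i\|B_i\|$ — the paper applies it to each summand of the error, you to the whole semigroup. The paper's route is shorter and avoids functional calculus; yours is more structural and makes the tensor mechanism explicit, which is conceptually clarifying but heavier machinery for the same conclusion.
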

\begin{proof} Without loss of generality we assume that $\boldsymbol{\sigma}_1 =1$. 
First, we note that thanks to \eqref{eq:lestimee_tame_pas_facile_a_pas_oublier} and the mean value inequality
$$
\forall k\in \mathbb{N},\forall u\in E_k\cap B_{L^2}(0,1), \quad \| \Omega u_k - (\Upsilon k)^\alpha u_k \|_{\ell^2} \leq  \sup_{j \in \mathcal{C}_k } |\omega_j -  (\Upsilon k)^\alpha| \lesssim  k^{\alpha-1}.
$$
Now we consider $M\in \mathscr{L}_{\boldsymbol{k}}$. It follows that
$$
\| \mathcal{L}_{\boldsymbol{k},\boldsymbol{\sigma}}(M) - \Upsilon^\alpha (\boldsymbol{k}_1^{\alpha} + \boldsymbol{\sigma}_2 \boldsymbol{k}_2^{\alpha}) M \|_{\mathscr{L}_{\boldsymbol{k}}} \lesssim \| M\|_{\mathscr{L}_{\boldsymbol{k}}} ( \boldsymbol{k}_1^{\alpha-1} + \boldsymbol{k}_2^{\alpha-1} + \boldsymbol{k}_3^{\alpha} + \cdots +  \boldsymbol{k}_q^{\alpha} ).
$$
Thus, provided that $\boldsymbol{k}$ satisfies an assumption of the kind $|\boldsymbol{\sigma}_1 \boldsymbol{k}_1^{\alpha}  + \boldsymbol{\sigma}_2 \boldsymbol{k}_2^{\alpha}| \gtrsim_{q} \max(\boldsymbol{k}_3^\alpha,\boldsymbol{k}_1^{\alpha-1})$ we have
$$
\| \mathcal{L}_{\boldsymbol{k},\boldsymbol{\sigma}}(M) \|_{\mathscr{L}_{\boldsymbol{k}}} \geq \| M\|_{\mathscr{L}_{\boldsymbol{k}}} .
$$
Therefore, the rank-nullity theorem implies that  $(\boldsymbol{k},\boldsymbol{\sigma})$ is non-$1$-resonant.

\end{proof}

A refined version of the following lemma can be found in \cite[Proposition 10]{DI17}. 
\begin{lemma} \label{lem:on_tue_tout} Assume the non resonance condition \eqref{eq:nr_thm} on the frequencies $\omega$. Then  for all $q\geq 3$, $\boldsymbol{k}_1\geq \cdots \geq \boldsymbol{k}_q$ and $\boldsymbol{\sigma} \in \{-1,1\}^q$ such that
$$ \sum_{1\leq \ell \leq q} \boldsymbol{\sigma}_{\ell}
  \mathds{1}_{\{\boldsymbol{k}_\ell \}} \neq 0,
$$
there exists $\gamma \gtrsim_{q} \boldsymbol{k}_1^{-a_q- q\alpha\beta}$ such
that $(\boldsymbol{k},\boldsymbol{\sigma})$ is
non-$\gamma$-resonant. 
\end{lemma}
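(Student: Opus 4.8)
The plan is to exploit that $\mathcal{L}_{\boldsymbol{k},\boldsymbol{\sigma}}$ is diagonal in the coefficients of a multilinear form — with "eigenvalues" the small divisors $\boldsymbol{\sigma}_1\omega_{\boldsymbol{j}_1}+\cdots+\boldsymbol{\sigma}_q\omega_{\boldsymbol{j}_q}$, $\boldsymbol{j}_\ell\in\mathcal{C}_{\boldsymbol{k}_\ell}$ — and to bound these divisors from below using \eqref{eq:nr_thm}; the only subtlety is that the canonical norm of $\mathscr{L}_{\boldsymbol{k}}$ is not diagonal, which costs a loss polynomial in the cluster sizes.

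First I would check that the sign condition triggers the non resonance hypothesis. The assumption $\sum_{1\le\ell\le q}\boldsymbol{\sigma}_\ell\mathds{1}_{\{\boldsymbol{k}_\ell\}}\neq 0$ means precisely that there is a cluster index $k$ with $\sum_{\ell\,:\,\boldsymbol{k}_\ell=k}\boldsymbol{\sigma}_\ell\neq 0$. Since the clusters are pairwise disjoint, for \emph{every} tuple $\boldsymbol{j}=(\boldsymbol{j}_1,\dots,\boldsymbol{j}_q)$ with $\boldsymbol{j}_\ell\in\mathcal{C}_{\boldsymbol{k}_\ell}$ one has $\{\ell:\boldsymbol{j}_\ell\in\mathcal{C}_k\}=\{\ell:\boldsymbol{k}_\ell=k\}$, so the premise of \eqref{eq:nr_thm} holds for each such $\boldsymbol{j}$, giving $|\boldsymbol{\sigma}_1\omega_{\boldsymbol{j}_1}+\cdots+\boldsymbol{\sigma}_q\omega_{\boldsymbol{j}_q}|\gtrsim_q|\boldsymbol{j}_1^\star|^{-a_q}$. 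Next, by \eqref{eq:inclusion_clusters} any mode $j\in\mathcal{C}_{\boldsymbol{k}_\ell}$ satisfies $\omega_j\lesssim\boldsymbol{k}_\ell^{\alpha}$, so the monotonicity of $\omega$ and the Weyl law \eqref{weyl} give $j\lesssim\boldsymbol{k}_\ell^{\alpha\beta}\le\boldsymbol{k}_1^{\alpha\beta}$; hence $|\boldsymbol{j}_1^\star|\lesssim\boldsymbol{k}_1^{\alpha\beta}$ for all such $\boldsymbol{j}$ and therefore
$$\delta:=\inf\Big\{\,\big|\boldsymbol{\sigma}_1\omega_{\boldsymbol{j}_1}+\cdots+\boldsymbol{\sigma}_q\omega_{\boldsymbol{j}_q}\big|\ :\ \boldsymbol{j}_\ell\in\mathcal{C}_{\boldsymbol{k}_\ell},\ 1\le\ell\le q\,\Big\}\ \gtrsim_q\ \boldsymbol{k}_1^{-a_q\alpha\beta}>0.$$

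I would then convert this into the operator estimate \eqref{eq:sd_gen}. Identifying $M\in\mathscr{L}_{\boldsymbol{k}}$ with its coefficients $M_{\boldsymbol{j}}:=M(\mathds{1}_{\{\boldsymbol{j}_1\}},\dots,\mathds{1}_{\{\boldsymbol{j}_q\}})$, one has $(\mathcal{L}_{\boldsymbol{k},\boldsymbol{\sigma}}M)_{\boldsymbol{j}}=(\boldsymbol{\sigma}_1\omega_{\boldsymbol{j}_1}+\cdots+\boldsymbol{\sigma}_q\omega_{\boldsymbol{j}_q})M_{\boldsymbol{j}}$ because $\Omega=\mathrm{diag}(\omega_j)$; as all these factors are $\geq\delta$ in modulus, $\mathcal{L}_{\boldsymbol{k},\boldsymbol{\sigma}}$ is invertible and $\mathcal{L}_{\boldsymbol{k},\boldsymbol{\sigma}}^{-1}$ divides coefficientwise by them. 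Combining the Cauchy--Schwarz bound $\|N\|_{\mathscr{L}_{\boldsymbol{k}}}\le(\sum_{\boldsymbol{j}}|N_{\boldsymbol{j}}|^2)^{1/2}$ (as in the proof of Lemma \ref{lem:pointwise}) with $|M_{\boldsymbol{j}}|\le\|M\|_{\mathscr{L}_{\boldsymbol{k}}}$ and the fact that there are only $\prod_{\ell}\#\mathcal{C}_{\boldsymbol{k}_\ell}$ coefficients, I obtain
$$\|\mathcal{L}_{\boldsymbol{k},\boldsymbol{\sigma}}^{-1}M\|_{\mathscr{L}_{\boldsymbol{k}}}\ \le\ \delta^{-1}\Big(\sum_{\boldsymbol{j}}|M_{\boldsymbol{j}}|^2\Big)^{1/2}\ \le\ \delta^{-1}\Big(\prod_{\ell=1}^q\#\mathcal{C}_{\boldsymbol{k}_\ell}\Big)^{1/2}\|M\|_{\mathscr{L}_{\boldsymbol{k}}}.$$
Since $\#\mathcal{C}_{\boldsymbol{k}_\ell}\lesssim\boldsymbol{k}_\ell^{\alpha\beta}\le\boldsymbol{k}_1^{\alpha\beta}$, this shows that \eqref{eq:sd_gen} holds with $\gamma:=\delta\,(\prod_\ell\#\mathcal{C}_{\boldsymbol{k}_\ell})^{-1/2}\gtrsim_q\boldsymbol{k}_1^{-a_q\alpha\beta-q\alpha\beta/2}\ge\boldsymbol{k}_1^{-a_q\alpha\beta-q\alpha\beta}$, i.e. the announced bound once $a_q\alpha\beta$ is relabelled $a_q$ (a constant still depending only on $q$, since $\alpha,\beta$ are fixed).

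The only genuinely delicate point is the last one: $\mathcal{L}_{\boldsymbol{k},\boldsymbol{\sigma}}$ is diagonal but $\|\cdot\|_{\mathscr{L}_{\boldsymbol{k}}}$ is not, so inverting it on $\mathscr{L}_{\boldsymbol{k}}$ unavoidably costs a power of the cluster sizes $\#\mathcal{C}_{\boldsymbol{k}_\ell}\lesssim\boldsymbol{k}_\ell^{\alpha\beta}$ — this is why the exponent in $\gamma$ is far from the bare small-divisor exponent $a_q$ of \eqref{eq:nr_thm}. The loss is harmless for the intended applications, where any polynomial-in-$\boldsymbol{k}_1$ lower bound on $\gamma$ suffices; a sharper estimate is available in \cite[Proposition 10]{DI17}. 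Everything else is routine: identifying when the sign condition activates \eqref{eq:nr_thm}, and converting the frequency bounds into mode-index bounds via the Weyl law.
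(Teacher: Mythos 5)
Your argument is the paper's own: $\mathcal{L}_{\boldsymbol{k},\boldsymbol{\sigma}}$ is diagonal on the coefficients $M_{\boldsymbol{j}}$, each divisor is controlled by \eqref{eq:nr_thm} once you observe that the sign condition triggers its premise for every tuple $\boldsymbol{j}\in\mathcal{C}_{\boldsymbol{k}_1}\times\cdots\times\mathcal{C}_{\boldsymbol{k}_q}$, and passing from the coefficientwise $\ell^2$ bound to the operator norm on $\mathscr{L}_{\boldsymbol{k}}$ via Cauchy--Schwarz costs the factor $\bigl(\prod_\ell\#\mathcal{C}_{\boldsymbol{k}_\ell}\bigr)^{1/2}\lesssim\boldsymbol{k}_1^{q\alpha\beta/2}$. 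You are in fact a bit more scrupulous than the paper, whose displayed chain writes $\boldsymbol{k}_1^{a_q}$ where the conversion $|\boldsymbol{j}_1^\star|\lesssim\boldsymbol{k}_1^{\alpha\beta}$ really yields $\boldsymbol{k}_1^{a_q\alpha\beta}$; as you note, this only amounts to relabelling $a_q$ (still depending only on $q$ since $\alpha,\beta$ are fixed) and is harmless in the sequel, where any polynomial lower bound on $\gamma$ in $\boldsymbol{k}_1$ suffices.
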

\begin{proof} Let $M\in \mathscr{L}_{\boldsymbol{k}}$ and $u^{(1)} \in E_{\boldsymbol{k}_1} ,\cdots, u^{(q)} \in E_{\boldsymbol{k}_q}$ of norm $\|u^{(1)} \|_{\ell^2} \leq 1$,... $\|u^{(q)} \|_{\ell^2}\leq 1$.
Expanding $\mathcal{L}_{\boldsymbol{k},\boldsymbol{\sigma}}(M)$ it comes
$$
\mathcal{L}_{\boldsymbol{k},\boldsymbol{\sigma}}(M)(u^{(1)}  ,\cdots, u^{(q)}) = \! \! \! \! \! \! \sum_{\boldsymbol{j} \in \mathcal{C}_{\boldsymbol{k}_1} \times \cdots \times \mathcal{C}_{\boldsymbol{k}_q} } \! \! \! \!  \! \big(  \sum_{\ell=1}^q \boldsymbol{\sigma}_\ell \omega_{\boldsymbol{j}_\ell}\big) M_{\boldsymbol{j}} \prod_{\ell=1}^q u^{(\ell)}_{\boldsymbol{j}_\ell} \quad \mathrm{where} \quad M_{\boldsymbol{j}} := M(\mathds{1}_{\{\boldsymbol{j}_1\}},\cdots,\mathds{1}_{\{\boldsymbol{j}_q\}} ).
$$ 
Therefore, $\mathcal{L}_{\boldsymbol{k},\boldsymbol{\sigma}}$ is clearly invertible of invert given by
$$
\forall \boldsymbol{j} \in \mathcal{C}_{\boldsymbol{k}_1} \times \cdots \times \mathcal{C}_{\boldsymbol{k}_q} , \quad \big(\mathcal{L}_{\boldsymbol{k},\boldsymbol{\sigma}}^{-1} (M) \big)_{\boldsymbol{j}}=  \big(  \sum_{\ell=1}^q \boldsymbol{\sigma}_\ell \omega_{\boldsymbol{j}_\ell}\big)^{-1} M_{\boldsymbol{j}}.  
$$
As a consequence, using the small divisor estimate \eqref{eq:nr_thm},
the Cauchy--Schwarz inequality and the assumptions \eqref{weyl} and \eqref{eq:inclusion_clusters} on the
clusters, it comes
\begin{equation*}
\begin{split}
| \mathcal{L}_{\boldsymbol{k},\boldsymbol{\sigma}}^{-1}(M)(u^{(1)}  ,\cdots, u^{(q)}) |&\lesssim_q \boldsymbol{k}_1^{ a_q} \Big( \sum_{\boldsymbol{j} \in \mathcal{C}_{\boldsymbol{k}_1} \times \cdots \times \mathcal{C}_{\boldsymbol{k}_q} } |M_{\boldsymbol{j}}|^2 \Big)^{1/2} \prod_{\ell=1}^q \| u^{(\ell)} \|_{\ell^2} \\
&\lesssim_q \boldsymbol{k}_1^{ a_q}  \| M\|_{\mathscr{L}_{\boldsymbol{k}}}  \prod_{\ell=1}^q (\# \mathcal{C}_\ell)^{1/2} \lesssim_q \boldsymbol{k}_1^{ a_q + q \frac{\alpha\beta}2}  \| M\|_{\mathscr{L}_{\boldsymbol{k}}}.
\end{split}
\end{equation*}
\end{proof}

 As a direct consequence of these lemmas we deduce the following lemma.
 \begin{lemma} \label{lem:tous_les_types} Assume the non resonance condition \eqref{eq:nr_thm} on the frequencies $\omega$. For all $q\geq 3$ and all $N\gtrsim_q 1$, setting
 $$
 \gamma = N^{-2 a_q - q\alpha\beta - 2} ,
 $$
 all $\gamma$-resonant couple $(\boldsymbol{k},\boldsymbol{\sigma}) \in \mathbb{N}^q \times \{-1,1\}^q$ is of one of the following type
 \begin{itemize}
 \item \emph{Type I :} $\boldsymbol{k}_3^\star >N$,
  \item \emph{Type II :} $\boldsymbol{k}_1^\star \leq N$ and $\boldsymbol{\sigma}_1 \mathds{1}_{\{ \boldsymbol{k}_1 \}} + \cdots +\boldsymbol{\sigma}_q \mathds{1}_{\{ \boldsymbol{k}_q \}} =0 $,
   \item \emph{Type III :} $\boldsymbol{k}_2^\star > N$, $\boldsymbol{k}_3^\star \leq N$ and $\boldsymbol{\sigma}_1^\star =- \boldsymbol{\sigma}_2^\star$.
 \end{itemize}
 where $\boldsymbol{\sigma}^\star $ denotes an arrangement of $\boldsymbol{\sigma}$ corresponding to $\boldsymbol{k}^{\star}$\footnote{i.e. $\boldsymbol{\sigma}^\star$ is an element of $\{-1,1\}^q$ such that there exists $\varphi\in \mathfrak{S}_q$ satisfying $\boldsymbol{\sigma}^\star_{j} = \boldsymbol{\sigma}_{\varphi{j}}$ and $\boldsymbol{k}^\star_{j} = \boldsymbol{j}_{\varphi{j}}$ for all $j\in \{1,\cdots,q\}$.}.
 \end{lemma}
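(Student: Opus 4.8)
The statement follows by contraposition from Lemmas \ref{lem:2grand} and \ref{lem:on_tue_tout} together with an elementary combinatorial dichotomy; the plan is the following. First I would reduce to the case $\boldsymbol{k}_1 \geq \cdots \geq \boldsymbol{k}_q$, so that $\boldsymbol{k}^\star = \boldsymbol{k}$ and one may take $\boldsymbol{\sigma}^\star = \boldsymbol{\sigma}$ up to reordering the slots sharing the same $\boldsymbol{k}$-value: this is legitimate since $\mathcal{L}_{\boldsymbol{k},\boldsymbol{\sigma}}$ and the notion of $\gamma$-resonance are equivariant under the diagonal action of $\mathfrak{S}_q$ on $(\boldsymbol{k},\boldsymbol{\sigma})$, and the three types are invariant under it. I would then fix a $\gamma$-resonant couple $(\boldsymbol{k},\boldsymbol{\sigma})$ which is not of Type I, i.e. $\boldsymbol{k}_3 \leq N$, and show it is of Type II or III. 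Two remarks will be used repeatedly. Since $\gamma \leq 1$, the couple is also $1$-resonant (by Definition \ref{def:non_res}, non-$1$-resonance would imply non-$\gamma$-resonance), hence by the contrapositive of Lemma \ref{lem:2grand} one has $|\boldsymbol{\sigma}_1 \boldsymbol{k}_1^{\alpha} + \boldsymbol{\sigma}_2 \boldsymbol{k}_2^{\alpha}| \lesssim_q \max(\boldsymbol{k}_3^\alpha, \boldsymbol{k}_1^{\alpha - 1})$. Moreover, if $\sum_{1 \leq \ell \leq q} \boldsymbol{\sigma}_\ell \mathds{1}_{\{\boldsymbol{k}_\ell\}} \neq 0$, then Lemma \ref{lem:on_tue_tout} yields $\gamma_0 \gtrsim_q \boldsymbol{k}_1^{-a_q - q\alpha\beta}$ with $(\boldsymbol{k},\boldsymbol{\sigma})$ non-$\gamma_0$-resonant, and since the couple is $\gamma$-resonant this forces $\gamma_0 < \gamma$, i.e. $\boldsymbol{k}_1^{a_q + q\alpha\beta} \gtrsim_q \gamma^{-1} = N^{2a_q + q\alpha\beta + 2}$; as $2 a_q + q\alpha\beta + 2$ exceeds $a_q + q\alpha\beta$ by $a_q + 2 > 0$, this gives $\boldsymbol{k}_1 \geq M_q N$ for any prescribed constant $M_q$, provided $N \gtrsim_q 1$.

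The first case is $\sum_{1 \leq \ell \leq q} \boldsymbol{\sigma}_\ell \mathds{1}_{\{\boldsymbol{k}_\ell\}} = 0$. If furthermore $\boldsymbol{k}_1 \leq N$, then the couple is of Type II and there is nothing to prove. If $\boldsymbol{k}_1 > N$, the coordinate of the vanishing sum at the index $\boldsymbol{k}_1$ equals $\sum_{\ell \,:\, \boldsymbol{k}_\ell = \boldsymbol{k}_1} \boldsymbol{\sigma}_\ell = 0$; since the $\boldsymbol{\sigma}_\ell$ take values in $\{-1,1\}$, this sum has an even number $\geq 2$ of terms — hence $\boldsymbol{k}_2 = \boldsymbol{k}_1 > N$ — with at least one term $+1$ and one $-1$, so one may choose the arrangement with $\boldsymbol{\sigma}_1^\star = 1$ and $\boldsymbol{\sigma}_2^\star = -1$; combined with $\boldsymbol{k}_3^\star = \boldsymbol{k}_3 \leq N$ (note that $\boldsymbol{k}_3 = \boldsymbol{k}_1$ would be Type I, already excluded), this is precisely Type III. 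This branch uses neither Lemma \ref{lem:2grand} nor Lemma \ref{lem:on_tue_tout}.

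The remaining case is $\sum_{1 \leq \ell \leq q} \boldsymbol{\sigma}_\ell \mathds{1}_{\{\boldsymbol{k}_\ell\}} \neq 0$. By the second remark, $\boldsymbol{k}_1 \geq M_q N$ with $M_q$ as large as wanted; by the first remark, together with $\boldsymbol{k}_3 \leq N \leq \boldsymbol{k}_1 / M_q$ and $\boldsymbol{k}_1 \geq M_q$, one gets $|\boldsymbol{\sigma}_1 \boldsymbol{k}_1^\alpha + \boldsymbol{\sigma}_2 \boldsymbol{k}_2^\alpha| \leq C_q \max(N^\alpha, \boldsymbol{k}_1^{\alpha-1}) \leq C_q \boldsymbol{k}_1^\alpha / M_q^{\min(\alpha,1)} < \boldsymbol{k}_1^\alpha / 4$ once $M_q$ is large enough, where $C_q$ is the constant of Lemma \ref{lem:2grand}. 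This immediately rules out $\boldsymbol{k}_2 \leq N$ (which would give, by the reverse triangle inequality, $|\boldsymbol{\sigma}_1 \boldsymbol{k}_1^\alpha + \boldsymbol{\sigma}_2 \boldsymbol{k}_2^\alpha| \geq \boldsymbol{k}_1^\alpha - \boldsymbol{k}_2^\alpha \geq \boldsymbol{k}_1^\alpha / 2$) and rules out $\boldsymbol{\sigma}_1 = \boldsymbol{\sigma}_2$ (which would give $|\boldsymbol{\sigma}_1 \boldsymbol{k}_1^\alpha + \boldsymbol{\sigma}_2 \boldsymbol{k}_2^\alpha| = \boldsymbol{k}_1^\alpha + \boldsymbol{k}_2^\alpha \geq \boldsymbol{k}_1^\alpha$). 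Hence $\boldsymbol{k}_1 \geq \boldsymbol{k}_2 > N \geq \boldsymbol{k}_3$ and $\boldsymbol{\sigma}_1 = -\boldsymbol{\sigma}_2$; since $\boldsymbol{k}_2 > \boldsymbol{k}_3$, the two largest slots of any valid arrangement are $\{1,2\}$, so one may take $(\boldsymbol{\sigma}_1^\star, \boldsymbol{\sigma}_2^\star) = (\boldsymbol{\sigma}_1, \boldsymbol{\sigma}_2)$ and conclude $\boldsymbol{\sigma}_1^\star = -\boldsymbol{\sigma}_2^\star$: the couple is of Type III.

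The only genuinely delicate point is the exponent bookkeeping: one must verify that the prescribed $\gamma = N^{-2a_q - q\alpha\beta - 2}$ is small enough that the resulting lower bound on $\boldsymbol{k}_1$ is a power of $N$ large enough to absorb the constant $C_q$ of Lemma \ref{lem:2grand} and the Weyl-law constant hidden in Lemma \ref{lem:on_tue_tout}. Everything else reduces to a routine finite case analysis, with only mild care needed — when $\boldsymbol{k}_1 = \boldsymbol{k}_2$ — in choosing the arrangement $\boldsymbol{\sigma}^\star$.
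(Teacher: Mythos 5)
Your proof is correct, and it takes a genuinely different route from the paper's. The paper organizes the argument as a tripartite case split on the size of $\boldsymbol{k}_1$ (namely $\boldsymbol{k}_1\leq N$, $N<\boldsymbol{k}_1\leq N^2$, $\boldsymbol{k}_1>N^2$), invoking Lemma~\ref{lem:on_tue_tout} in the first two cases and Lemma~\ref{lem:2grand} in the third. You instead split on whether $\sum_\ell \boldsymbol{\sigma}_\ell\mathds{1}_{\{\boldsymbol{k}_\ell\}}$ vanishes: when it does, a purely combinatorial pigeonhole on the coordinate at $\boldsymbol{k}_1$ suffices (no small-divisor input at all), and when it does not, you use Lemma~\ref{lem:on_tue_tout} only once — to conclude $\boldsymbol{k}_1\geq M_q N$ for an arbitrary prescribed $M_q$ — and then Lemma~\ref{lem:2grand} to force $\boldsymbol{\sigma}_1=-\boldsymbol{\sigma}_2$ and $\boldsymbol{k}_2>N$. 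Your reduction to ordered $\boldsymbol{k}$ via permutation equivariance of $\mathcal{L}_{\boldsymbol{k},\boldsymbol{\sigma}}$ is legitimate, and your care in constructing a valid arrangement $\boldsymbol{\sigma}^\star$ when $\boldsymbol{k}_1=\boldsymbol{k}_2$ is exactly what is needed.

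One substantive advantage of your organization is worth recording: your exponent bookkeeping uses only the bound $\gamma_0\gtrsim_q \boldsymbol{k}_1^{-a_q-q\alpha\beta}$ \emph{as stated} in Lemma~\ref{lem:on_tue_tout} (the deduction $\boldsymbol{k}_1\gtrsim_q N^{1+(a_q+2)/(a_q+q\alpha\beta)}$ works directly). The paper's intermediate case $N<\boldsymbol{k}_1\leq N^2$, by contrast, requires $\gamma_0\geq\gamma$ for $\boldsymbol{k}_1$ as large as $N^2$, which with the stated exponent reads $c_q N^{-2a_q-2q\alpha\beta}\geq N^{-2a_q-q\alpha\beta-2}$ and fails for large $N$ when $q\alpha\beta>2$; that step actually needs the sharper $\gamma_0\gtrsim_q \boldsymbol{k}_1^{-a_q-q\alpha\beta/2}$ established inside the proof of Lemma~\ref{lem:on_tue_tout}. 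Your bipartite split sidesteps this subtlety entirely. (The paper's Case~3 also has a dropped sign in the displayed chain of inequalities, though the conclusion $\boldsymbol{\sigma}_2=-1$ is correct; your version of that computation is the clean one.)
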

 \begin{proof} Let $(\boldsymbol{k},\boldsymbol{\sigma}) \in \mathbb{N}^q \times \{-1,1\}^q$ be a $\gamma$-resonant couple such that $\boldsymbol{k}_1 \geq \cdots \geq \boldsymbol{k}_q$ and  $\boldsymbol{k}_3 \leq N$. We aim at proving that it is either of type II or of type III. We distinguish $3$ basic cases.
 
 \noindent \underline{\emph{$\triangleright$ Case 1 : $\boldsymbol{k}_1 \leq N$.}}  Provided that $N$ is large enough, we have by Lemma \ref{lem:on_tue_tout}  that $(\boldsymbol{k},\boldsymbol{\sigma})$ is of type II.
 
 \medskip
 
  \noindent \underline{\emph{$\triangleright$ Case 2 : $N<\boldsymbol{k}_1 \leq  N^2$.}}  Provided that $N$ is large enough, we have by Lemma \ref{lem:on_tue_tout}  that 
  $$
  \boldsymbol{\sigma}_1 \mathds{1}_{\{ \boldsymbol{k_1} \}} + \cdots +\boldsymbol{\sigma}_q \mathds{1}_{\{ \boldsymbol{k_q} \}} = 0.
  $$
   Since $ \boldsymbol{k}_3 <N$ it implies that $\boldsymbol{\sigma}_1 = -\boldsymbol{\sigma}_2$ and $\boldsymbol{k_1} = \boldsymbol{k_2}$.  Therefore  $(\boldsymbol{k},\boldsymbol{\sigma})$ is of type III.
 
 \medskip

 \noindent \underline{\emph{$\triangleright$ Case 3 : $N^2<\boldsymbol{k}_1$.}} Without loss of generality, we assume that $\boldsymbol{\sigma}_1=1$.  Lemma \ref{lem:2grand} implies that,  provided that $N$ is large enough, 
 $$
 \boldsymbol{\sigma}_2 \boldsymbol{k}_2^\alpha \geq  \boldsymbol{k}_1^\alpha - C_q ( \boldsymbol{k}_3^\alpha + \boldsymbol{k}_1^{\alpha-1}) \geq N^{2\alpha} - C_q  (N^{\alpha} + N^{2(\alpha -1)}) > N^{\alpha}
 $$
 where $C_q>0$ is the implicit constant in Lemma \ref{lem:2grand}. Therefore $\boldsymbol{\sigma}_2=-1$ and $\boldsymbol{k}_2>N$, i.e. $(\boldsymbol{k},\boldsymbol{\sigma})$ is of type III.
 
 \end{proof}

Finally, in the following proposition, we deduce a result about the dynamics generated by the $\gamma$-resonant terms. 
\begin{proposition} \label{prop:est_poisson} Assume the non resonance condition \eqref{eq:nr_thm} on the frequencies $\omega$. For all $q\geq 3$, all $N\gtrsim_q 1$, all $n\geq 2+\nu$ and all $s\in [\nu+2,n]$, setting
 $$
 \gamma = N^{-2 a_q - q \alpha\beta - 2}  ,
 $$
 we have that for all $\gamma$-resonant homogeneous polynomial $Q \in \mathscr{H}_{q}^{n,\nu}$ of norm $\| Q\|_{\mathscr{H}_q^{n,\nu}} = 1$ and all $u\in H^s$
 $$
 \big|\{ \| \Pi_{\leq N} \cdot \|_{H^s}^2, Q   \}(u) \big| \lesssim_{q}  \| \Pi_{>N}u \|_{H^s}^2 \|  u \|_{H^{1+\nu}}^{q-2},
 $$
 $$
  \big| \{ \| \Pi_{> N} \cdot \|_{\ell^2}^2,Q \}(u)  \big| \lesssim_{q}  \| \Pi_{>N} u \|_{\ell^2}^2 \| \Pi_{> N} u \|_{H^{1+\nu}} \|  u \|_{H^{1+\nu}}^{q-3},
 $$
 $$
   \big| \{ \| \Pi_{> N} \cdot \|_{H^s}^2,Q \}(u)  \big|  \lesssim_{q}  \| \Pi_{>N}  u \|_{H^s}^{2-\frac1s} \| \Pi_{> N} u \|_{\ell^2}^{\frac1s}   \|  u \|_{H^{2+\nu}}^{q-2} +  \| \Pi_{>N}u  \|_{H^s}^2 \| \Pi_{>N}u  \|_{H^{1+\nu}} \|  u \|_{H^{2+\nu}}^{q-3}.
 $$
\end{proposition}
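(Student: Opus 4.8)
The plan is to expand each bracket over the monomials of $Q$, use the $\gamma$-resonance of $Q$ together with the classification of Lemma~\ref{lem:tous_les_types} to keep only a handful of index--sign patterns, and then sum the resulting multilinear expressions exactly as in the proof of Lemma~\ref{lem:vf}.

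First I would record the elementary bracket, obtained as in Lemma~\ref{lem:brack_Z_2} with $\Omega$ replaced by the orthogonal projector $\Pi_k$:
\[
\{J_k,Q\}(u)=2\ic\sum_{\boldsymbol{k}\in\mathbb{N}^q}\sum_{\boldsymbol{\sigma}\in\{-1,1\}^q}\Big(\sum_{i\,:\,\boldsymbol{k}_i=k}\boldsymbol{\sigma}_i\Big)Q_{\boldsymbol{k}}^{\boldsymbol{\sigma}}(\Pi_{\boldsymbol{k}_1}^{\boldsymbol{\sigma}_1}u,\dots,\Pi_{\boldsymbol{k}_q}^{\boldsymbol{\sigma}_q}u),
\]
so that, summing over the relevant clusters, each of the three brackets takes the form $\{\Phi,Q\}(u)=2\ic\sum_{\boldsymbol{k},\boldsymbol{\sigma}}c_{\boldsymbol{k},\boldsymbol{\sigma}}\,Q_{\boldsymbol{k}}^{\boldsymbol{\sigma}}(\Pi_{\boldsymbol{k}_1}^{\boldsymbol{\sigma}_1}u,\dots)$ with respectively $c_{\boldsymbol{k},\boldsymbol{\sigma}}=\sum_{\boldsymbol{k}_i\le N}\boldsymbol{k}_i^{2s}\boldsymbol{\sigma}_i$, $c_{\boldsymbol{k},\boldsymbol{\sigma}}=\sum_{\boldsymbol{k}_i> N}\boldsymbol{\sigma}_i$, $c_{\boldsymbol{k},\boldsymbol{\sigma}}=\sum_{\boldsymbol{k}_i> N}\boldsymbol{k}_i^{2s}\boldsymbol{\sigma}_i$. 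Only couples with $Q_{\boldsymbol{k}}^{\boldsymbol{\sigma}}\ne0$ matter, and since $Q$ is $\gamma$-resonant with $\gamma=N^{-2a_q-q\alpha\beta-2}$, Lemma~\ref{lem:tous_les_types} says each of them is of Type~I, II or III. The useful bookkeeping is: for Type~II all indices are $\le N$ and the signs cancel within every cluster, hence $c_{\boldsymbol{k},\boldsymbol{\sigma}}=0$ for all three functions; for Type~III exactly $\boldsymbol{k}_1^\star$ and $\boldsymbol{k}_2^\star$ exceed $N$ and they carry opposite signs, so $c_{\boldsymbol{k},\boldsymbol{\sigma}}=0$ for $\|\Pi_{>N}\cdot\|_{\ell^2}^2$, while $|c_{\boldsymbol{k},\boldsymbol{\sigma}}|\le q(\boldsymbol{k}_3^\star)^{2s}$ for $\|\Pi_{\le N}\cdot\|_{H^s}^2$ and $|c_{\boldsymbol{k},\boldsymbol{\sigma}}|=|(\boldsymbol{k}_1^\star)^{2s}-(\boldsymbol{k}_2^\star)^{2s}|$ for $\|\Pi_{>N}\cdot\|_{H^s}^2$; for Type~I the three largest modes all exceed $N$, so only $\boldsymbol{k}_4^\star,\dots,\boldsymbol{k}_q^\star$ can be $\le N$ and $|c_{\boldsymbol{k},\boldsymbol{\sigma}}|\le q$, $\le q(\boldsymbol{k}_4^\star)^{2s}$, $\le q(\boldsymbol{k}_1^\star)^{2s}$ respectively.

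Granting this, all the ``generic'' contributions are handled by the multilinear summation of Lemma~\ref{lem:vf}: the two largest modes (always $>N$ in the surviving patterns) are placed in the two $\ell^2$ slots, carrying the Sobolev weight demanded by the target ($0$, $s$, or $s-\tfrac12$), the other $q-2$ modes go into $\ell^1$ slots carrying the weight $\nu$ provided by \eqref{eq:estimates_thm}, and $\Gamma_{\boldsymbol{k}}$ plays the role of the $\ell^1$ convolution kernel. The only algebra involved is $(\boldsymbol{k}_2^\star/\boldsymbol{k}_1^\star)^n\le1$ together with $n\ge s$ (so that $(\boldsymbol{k}_2^\star/\boldsymbol{k}_1^\star)^{n-s}\le1$, which lets $(\boldsymbol{k}_1^\star)^{2s}$ be replaced by $(\boldsymbol{k}_1^\star)^s(\boldsymbol{k}_2^\star)^s$ in the Type~I cases), the bound $(\boldsymbol{k}_3^\star)^{2s}\le(\boldsymbol{k}_1^\star)^s(\boldsymbol{k}_2^\star)^s$ in the Type~III case of the first estimate, and the trivial $\boldsymbol{k}_\ell^\star\ge1$, used to raise low-mode weights from $\nu$ to $\nu+1$ when a factor $\|u\|_{H^{2+\nu}}$ is needed. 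This delivers the first two estimates and the second summand of the third (coming from Type~I).

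The one genuinely delicate point is the Type~III contribution to $\{\|\Pi_{>N}\cdot\|_{H^s}^2,Q\}$, where the coefficient $(\boldsymbol{k}_1^\star)^{2s}-(\boldsymbol{k}_2^\star)^{2s}$ does not vanish; this is precisely why that estimate is stated with the half-derivative loss $\|\Pi_{>N}u\|_{H^s}^{2-1/s}\|\Pi_{>N}u\|_{\ell^2}^{1/s}$, which by H\"older dominates $\|\Pi_{>N}u\|_{H^{s-1/2}}^2$, so that it suffices to bound the Type~III part by $\|\Pi_{>N}u\|_{H^{s-1/2}}^2\|u\|_{H^{2+\nu}}^{q-2}$. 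Here I would bound the coefficient by the mean value inequality, $(\boldsymbol{k}_1^\star)^{2s}-(\boldsymbol{k}_2^\star)^{2s}\lesssim_s(\boldsymbol{k}_1^\star)^{2s-1}(\boldsymbol{k}_1^\star-\boldsymbol{k}_2^\star)$, use that in Type~III the equivalence $\Gamma_{\boldsymbol{k}}\sim_q\big(1+\min_{\boldsymbol{\varsigma}\in\{-1,1\}^q}|\boldsymbol{\varsigma}_1\boldsymbol{k}_1+\cdots+\boldsymbol{\varsigma}_q\boldsymbol{k}_q|^3\big)^{-1}$ together with $\boldsymbol{k}_3^\star,\dots,\boldsymbol{k}_q^\star\le N$ gives $\Gamma_{\boldsymbol{k}}\lesssim_q\langle\,\boldsymbol{k}_1^\star-\boldsymbol{k}_2^\star-(q-2)N\,\rangle^{-3}$, and combine these with $(\boldsymbol{k}_2^\star/\boldsymbol{k}_1^\star)^{n-s+\frac12}\le1$: after peeling off the weight $s-\tfrac12$ from each of the two big modes, what remains is $\lesssim_s(\boldsymbol{k}_1^\star-\boldsymbol{k}_2^\star)\langle\,\boldsymbol{k}_1^\star-\boldsymbol{k}_2^\star-(q-2)N\,\rangle^{-3}$, a kernel in $(\boldsymbol{k}_1^\star,\boldsymbol{k}_2^\star)$ that is summable in each variable, so Schur's lemma (as in Lemma~\ref{lem:vf}) closes the estimate. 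I expect this last case --- matching the non-resonant weight $(\boldsymbol{k}_1^\star)^{2s}-(\boldsymbol{k}_2^\star)^{2s}$ against the decay of $\Gamma_{\boldsymbol{k}}$ and the large exponent $n$ while being allowed to spend only half a derivative --- to be the main obstacle; the rest is the bookkeeping above plus the convolution machinery already used for Lemma~\ref{lem:vf}.
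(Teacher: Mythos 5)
Your plan follows the paper's route closely: the same case split via Lemma \ref{lem:tous_les_types} (Type II vanishes for all three brackets, Type III vanishes for $\|\Pi_{>N}\cdot\|_{\ell^2}^2$, the remaining contributions sum as in Lemma \ref{lem:vf}), the same mean-value-theorem device for the Type III contribution to $\{\|\Pi_{>N}\cdot\|_{H^s}^2,Q\}$, and the same interpolation $\|\Pi_{>N}u\|_{H^{s-1/2}}^2\leq\|\Pi_{>N}u\|_{H^s}^{2-1/s}\|\Pi_{>N}u\|_{\ell^2}^{1/s}$ at the end. The first two estimates and the Type~I/Type~II bookkeeping are fine.

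There is, however, a genuine gap in your treatment of the delicate Type III contribution to the third estimate. You propose to bound $\Gamma_{\boldsymbol{k}}\lesssim_q\langle\boldsymbol{k}_1^\star-\boldsymbol{k}_2^\star-(q-2)N\rangle^{-3}$ and then apply Schur's lemma to the two-variable kernel
$K(\boldsymbol{k}_1^\star,\boldsymbol{k}_2^\star):=(\boldsymbol{k}_1^\star-\boldsymbol{k}_2^\star)\langle\boldsymbol{k}_1^\star-\boldsymbol{k}_2^\star-(q-2)N\rangle^{-3}$, which you assert is ``summable in each variable.'' It is not, uniformly in $N$: writing $m=\boldsymbol{k}_1^\star-\boldsymbol{k}_2^\star$,
\begin{equation*}
\sup_{\boldsymbol{k}_1^\star}\sum_{\boldsymbol{k}_2^\star}K(\boldsymbol{k}_1^\star,\boldsymbol{k}_2^\star)\;=\;\sum_{m\geq 0}m\,\langle m-(q-2)N\rangle^{-3}\;\sim\;(q-2)N,
\end{equation*}
because the factor $m$ is of order $N$ precisely on the $O(1)$-window where the cubic weight is not small. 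So Schur's lemma delivers a bound with an extra factor $N$, which is not the claimed $N$-uniform estimate (and in the application $N$ is a negative power of $\varepsilon$, so the loss is fatal). The root of the problem is that collapsing $\Gamma_{\boldsymbol{k}}$ to a function of $\boldsymbol{k}_1^\star-\boldsymbol{k}_2^\star$ alone discards the coupling to the low modes that makes the $q$-fold convolution work. The paper instead keeps the full kernel $\langle\boldsymbol{\varsigma}_1\boldsymbol{k}_1+\cdots+\boldsymbol{\varsigma}_q\boldsymbol{k}_q\rangle^{-3}$, peels off one power via
$\displaystyle\frac{\boldsymbol{k}_1^\star-\boldsymbol{k}_2^\star}{\langle\boldsymbol{\varsigma}_1\boldsymbol{k}_1+\cdots+\boldsymbol{\varsigma}_q\boldsymbol{k}_q\rangle}\lesssim_q\boldsymbol{k}_3^\star$
(this extra $\boldsymbol{k}_3^\star$ is exactly what promotes $\|u\|_{H^{1+\nu}}$ to $\|u\|_{H^{2+\nu}}$ in the stated bound), and then closes by the Young convolution inequality with the remaining $\ell^1$ kernel $\langle\cdot\rangle^{-2}$, which sums the full $q$-tuple rather than only the two high modes. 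If you redo your Type III step keeping $\langle\boldsymbol{\varsigma}\cdot\boldsymbol{k}\rangle^{-2}$ and paying a single low-mode power instead of using Schur on a $2$-variable kernel, your argument matches the paper's and is correct.
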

\begin{proof}  By Lemma \ref{lem:tous_les_types}, it suffices to consider all the types separately. 

\medskip

 \noindent \underline{\emph{$\triangleright$ Case 1 : $Q$ is supported on indices of type {\rm II}.}} In that case $Q$ commutes with all the super actions $J_k$ for $k \in \mathbb{N}$. A fortiori it commutes with $\| \Pi_{\leq N} \cdot \|_{H^s}^2,\| \Pi_{>N} \cdot \|_{\ell}^2$ and $\| \Pi_{>N} \cdot \|_{H^s}^2$ (i.e. the considered Poisson bracket are identically equal to $0$).

\medskip

 \noindent \underline{\emph{$\triangleright$ Case 2 : $Q$ is supported on indices of type {\rm I}.}} A minor extension of the proof of Lemma \ref{lem:vf} (i.e. paying attention to the support of the functions) provides the estimate
 $$
 \| \nabla Q(u) \|_{H^{s_*}}  \lesssim_{q}  \|\Pi_{>N} u\|_{H^{s_*}} \| \Pi_{>N} u\|_{H^{1+\nu}} \| u\|_{H^{1+\nu}}^{q-3}, \quad \mathrm{for} \quad s_* \in [0,n].
 $$
 Thus all the Poisson bracket estimates follow by Cauchy-Schwarz inequality.
 
\medskip

  \noindent \underline{\emph{$\triangleright$ Case 3 : $Q$ is supported on indices of type {\rm III}.}} First we note that $ \{ \| \Pi_{> N} \cdot \|_{\ell^2}^2,Q \} = 0$. Then, using the  mean value inequality and the Young convolution inequality, we have
  \begin{equation*}
  \begin{split}
  |\{ \| \Pi_{> N} \cdot \|_{H^s}^2,Q \}(u)|
   =& \big| \sum_{\boldsymbol{k}\in \mathbb{N}^q} \sum_{\boldsymbol{\sigma} \in \{-1 , 1\}^q }  (\boldsymbol{\sigma}_1^\star (\boldsymbol{k}_1^\star)^{2s} + \boldsymbol{\sigma}_2^\star (\boldsymbol{k}_2^\star)^{2s}  ) Q_{\boldsymbol{k}}^{\boldsymbol{\sigma}}(\Pi_{\boldsymbol{k}_1}^{\boldsymbol{\sigma}_1} u, \cdots,  \Pi_{\boldsymbol{k}_q}^{\boldsymbol{\sigma}_q} u ) \big| \\
  \lesssim_q&  \sum_{\boldsymbol{k}_1 \geq \boldsymbol{k}_2 >N \geq  \boldsymbol{k}_3 \cdots \geq  \boldsymbol{k}_q}  \Gamma_{\boldsymbol{k}} \Big(   \frac{\boldsymbol{k}_2 }{ \boldsymbol{k}_1 } \Big)^n \big( \prod_{3\leq \ell \leq q} \boldsymbol{k}_\ell \big)^\nu    (\boldsymbol{k}_1^{2s} - \boldsymbol{k}_2^{2s}) \prod_{\ell=1}^q \| \Pi_{\boldsymbol{k}_\ell}^{\boldsymbol{\sigma}_\ell} u \|_{\ell^2}    \\
  \lesssim_q& \sum_{ \substack{\boldsymbol{k}_1 \geq \boldsymbol{k}_2 >N \geq  \boldsymbol{k}_3 \cdots \geq  \boldsymbol{k}_q \\ \boldsymbol{\varsigma} \in \{-1,1\}^q}}  \frac{(\boldsymbol{k}_1  - \boldsymbol{k}_2)    \boldsymbol{k}_1^{2s-1}}{\langle  \boldsymbol{\varsigma}_1  \boldsymbol{k}_1 + \cdots +  \boldsymbol{\varsigma}_q  \boldsymbol{k}_q \rangle^3}  \Big(   \frac{\boldsymbol{k}_2 }{ \boldsymbol{k}_1 } \Big)^n \big( \prod_{3\leq \ell \leq q} \boldsymbol{k}_\ell \big)^\nu    \prod_{\ell=1}^q \| \Pi_{\boldsymbol{k}_\ell}^{\boldsymbol{\sigma}_\ell} u \|_{\ell^2} \\
  \lesssim_q& \sum_{ \substack{\boldsymbol{k}_1 \geq \boldsymbol{k}_2 >N \geq  \boldsymbol{k}_3 \cdots \geq  \boldsymbol{k}_q \\ \boldsymbol{\varsigma} \in \{-1,1\}^q}} \frac{ \boldsymbol{k}_3 \boldsymbol{k}_1^{s-\frac12} \boldsymbol{k}_2^{s-\frac12} }{ \langle  \boldsymbol{\varsigma}_1  \boldsymbol{k}_1 + \cdots +  \boldsymbol{\varsigma}_q  \boldsymbol{k}_q \rangle^{2}}    \big( \prod_{3\leq \ell \leq q} \boldsymbol{k}_\ell \big)^{\nu}    \prod_{\ell=1}^q \| \Pi_{\boldsymbol{k}_\ell}^{\boldsymbol{\sigma}_\ell} u \|_{\ell^2} \\
  \lesssim_q&  \| \Pi_{> N} u \|_{H^{s-1/2}}^{2}   \|  u \|_{H^{2+\nu}}^{q-2} \\
  \lesssim_{q}& \| \Pi_{>N} u\|_{H^s}^{2 - \frac1{s}}   \| \Pi_{>N} u\|_{\ell^2}^{\frac1s}  \|  u \|_{H^{2+\nu}}^{q-2}
\end{split}  
  \end{equation*}
where the last estimate comes from the H\"older inequality.
Finally, applying the Young convolution inequality as previously, we have
  \begin{equation*}
  \begin{split}
  |\{ \| \Pi_{\leq N} \cdot \|_{H^s}^2,Q \}(u)|& = \big| \sum_{\boldsymbol{k}\in \mathbb{N}^q} \sum_{\boldsymbol{\sigma} \in \{-1 , 1\}^q }  (\boldsymbol{\sigma}_3^\star (\boldsymbol{k}_3^\star)^{2s} + \cdots + \boldsymbol{\sigma}_q^\star (\boldsymbol{k}_q^\star)^{2s}  ) Q_{\boldsymbol{k}}^{\boldsymbol{\sigma}}(\Pi_{\boldsymbol{k}_1}^{\boldsymbol{\sigma}_1} u, \cdots,  \Pi_{\boldsymbol{k}_q}^{\boldsymbol{\sigma}_q} u )\big| \\
  &\lesssim_q  \sum_{\boldsymbol{k}_1 \geq \boldsymbol{k}_2 >N \geq  \boldsymbol{k}_3 \cdots \geq  \boldsymbol{k}_q} \Gamma_{\boldsymbol{k}}   \boldsymbol{k}_3^{2s} \big( \prod_{3\leq \ell \leq q} \boldsymbol{k}_\ell \big)^{\nu} \prod_{\ell=1}^q \| \Pi_{\boldsymbol{k}_\ell}^{\boldsymbol{\sigma}_\ell} u \|_{\ell^2} \\
  &\lesssim_{q}\sum_{\boldsymbol{k}_1 \geq \boldsymbol{k}_2 >N \geq  \boldsymbol{k}_3 \cdots \geq  \boldsymbol{k}_q} \Gamma_{\boldsymbol{k}} \boldsymbol{k}_1^{s} \boldsymbol{k}_2^{s}  \big( \prod_{3\leq \ell \leq q} \boldsymbol{k}_\ell \big)^{\nu} \prod_{\ell=1}^q \| \Pi_{\boldsymbol{k}_\ell}^{\boldsymbol{\sigma}_\ell} u \|_{\ell^2} \\
  &\lesssim_q \| \Pi_{>N} u\|_{H^{s}}^2   \|  u \|_{H^{1+\nu}}^{q-2} . 
\end{split}  
  \end{equation*}

\end{proof}

\section{Dynamical consequences : proof of Theorem \ref{thm:main}}

From now, we are in the setting of Theorem \ref{thm:main}. In particular, we assume all the related hypothesis. The only difference is that we work with $H^s$ spaces instead of $h^s$ spaces, we recall that since for all $s\geq 0$
$$
\| \cdot \|_{h^{s/\alpha\beta}} \sim_s \| \cdot \|_{H^s}
$$
this is equivalent. We just set
$$
\mathfrak{s}_0 := s_0\alpha\beta, \quad \mathfrak{s}_{\mathrm{min}} =   s_{\mathrm{min}}\alpha\beta
$$
so that the indices $s_0,s_{\mathrm{min}}$ of the theorem becomes $\mathfrak{s}_0,\mathfrak{s}_{\mathrm{min}}$ in this setting. We note that by definition of $s_{\mathrm{min}}$, we have 
$$
\mathfrak{s}_{\mathrm{min}} \geq \max(\mathfrak{s}_{0} ,\nu +2).
$$
  Without loss of generality, we assume that  that  the sequence $(a_q)_q$ (related to the assumptions \eqref{eq:nr_thm}) is increasing and that $\mathcal{U}$ is a ball, i.e. their exists $\rho_0>0$ such that
$$
\mathcal{U} = B_{H^{\mathfrak{s}_0}}(0,\rho_0).
$$
Moreover, we note that thanks to the tame estimate  \eqref{eq:lestimee_tame_pas_facile_a_pas_oublier}, it suffices to prove Theorem \ref{thm:main} for one special value of $s$ to get it for all larger $s$ (see Step $2$ below for more details).

\medskip

\noindent \underline{\emph{$\triangleright$ Step 1: Parameters.}} Let $r\geq 17$, $\mathfrak{s}_c\geq \mathfrak{s}_{\mathrm{min}}$ and set
\begin{equation*}
 \mathfrak{s}  :=  \mathfrak{s}_c + 9 r^2 \big(2a_r + r \alpha\beta + 2 \big) + \alpha.
\end{equation*}

\medskip

Let $\varepsilon \in (0,1)$. All along the proof we will add smallness assumptions on $\varepsilon$ with respect to $r$ and $\mathfrak{s}_c$. We set
$$
N := \varepsilon^{-\frac{8r}{\mathfrak{s} -\mathfrak{s}_c }} \quad \mathrm{and} \quad \gamma := N^{-(2a_r + r \alpha\beta + 2 )}.
$$
We note that, we have the useful estimate
\begin{equation}
\label{eq:sympa_non}
\gamma^{-r} = N^{r(2a_r + r \alpha\beta + 2 )} \leq N^{\frac{\mathfrak{s}  - \mathfrak{s}_c}{9r}} = \varepsilon^{- \frac{8}9}.
\end{equation}
We define for $u\in H^{\mathfrak{s}_0}$
$$
P(u) := \sum_{q=3}^r P^{(q)}(u) \quad \mathrm{where} \quad P^{(q)}(u):=  \frac1{q !} \mathrm{d}^q G(0)(u,\cdots,u). 
$$
Since $g$ is a smooth function from $B_{H^{\mathfrak{s}_c}}(0,\rho_0)$ into $H^{\mathfrak{s}_c}$, it implies that provided that $5\varepsilon<\rho_0$ 
\begin{equation}
\label{eq:estime_terme_de_reste}
\forall u\in B_{H^{\mathfrak{s}_c}}(0,5\varepsilon), \quad \| \nabla (G-P)(u) \|_{H^{\mathfrak{s}_c}} \lesssim_{r,s_c} \varepsilon^r.
\end{equation}
Then, we set $n\in \mathbb{N}$ as the integer satisfying
$$
n +1\leq \mathfrak{s} < n+2.
$$

 Finally, we have to prove that $P\in \mathscr{I}_{\leq r}^{n,\nu}$ (i.e. $P^{(q)} \in \mathscr{H}_q^{n,\nu}$ for all $q\geq 3$). We define the coefficients $ (P^{(q)})_{\boldsymbol{k}}^{\boldsymbol{\sigma}}$ by the polarization formula\footnote{coming from the Fourier inversion formula in the Abelian group $(\mathbb{Z}/2\mathbb{Z})^q$.}
 $$
 (P^{(q)})_{\boldsymbol{k}}^{\boldsymbol{\sigma}} ( \Pi_{\boldsymbol{k}_1} u^{(1)}, \cdots, \Pi_{\boldsymbol{k}_q} u^{(q)} ) =2^{-q}  \sum_{\boldsymbol{\eta} \in \{0,1\}^q }\big( \prod_{\ell=1}^q( -\ic \boldsymbol{\sigma}_\ell)^{\boldsymbol{\eta}_\ell} \big) \frac{\mathrm{d}^q G(0)}{q!}( \ic^{\boldsymbol{\eta}_1} \Pi_{\boldsymbol{k}_1}^{\boldsymbol{\sigma}_1}  u^{(1)},\cdots, \ic^{\boldsymbol{\eta}_q} \Pi_{\boldsymbol{k}_q}^{\boldsymbol{\sigma}_q}  u^{(q)})
 $$
for $q \geq 3$, $\boldsymbol{k}\in \mathbb{N}$, $\boldsymbol{\sigma}\in \{-1,1\}^q$ and $u^{(1)},\cdots,u^{(q)} \in \mathbb{C}^\mathbb{N}$. By a tedious calculation, it can be checked that $(P^{(q)})_{\boldsymbol{k}}^{\boldsymbol{\sigma}} \in \mathscr{L}_{\boldsymbol{k}}$ (i.e. that it is $\mathbb{C}-$multilinear) and that 
\begin{equation}
\label{eq:dec_P}
P (u)= \sum_{q=3}^r \sum_{\boldsymbol{k}\in \mathbb{N}^q } \sum_{\boldsymbol{\sigma} \in \{-1,1\}^q }   (P^{(q)})_{\boldsymbol{k}}^{\boldsymbol{\sigma}}  (  \Pi_{\boldsymbol{k}_1}^{\boldsymbol{\sigma}_1}  u,\cdots, \Pi_{\boldsymbol{k}_q}^{\boldsymbol{\sigma}_q}  u) 
\end{equation}
 when $u\in \mathbb{C}^\mathbb{N}$ with finite support. Moreover, we note that the reality and the symmetry condition are obvious. Finally, we note that thanks to the polarization formula above and the bound \eqref{eq:estimates_thm} we assumed on $\mathrm{d}^q G(0)$, we have that $\| P \|_{\mathscr{I}_{1}^{n,\nu}} \lesssim_{n,q} 1$ (and so that \eqref{eq:dec_P} holds in $H^{1+\nu}$ by density).

\medskip

\noindent \underline{\emph{$\triangleright$ Step 2: Setting of the bootstrap.}} Without loss of generality,  we only focus on positive times. Let $u^{(0)} \in H^{\mathfrak{s}}$ such that $\| u^{(0)} \|_{H^{\mathfrak{s}_c}} = \varepsilon$ and $\| u^{(0)} \|_{H^{\mathfrak{s}}} \leq 1$. Let 
$$
u\in C^0([0,T^{\max}); H^{\mathfrak{s}}) \cap C^1([0,T^{\max}); H^{\mathfrak{s}-\alpha}) 
$$
be the maximal solution of \eqref{eq:ham-pde} with initial datum $u(0) = u^{(0)}$. We note that thanks to the tame estimate \eqref{eq:lestimee_tame_pas_facile_a_pas_oublier}, we have 
\begin{center}
either $T^{\max}=+\infty$ or $\displaystyle \lim_{t \to T^{\max}} \| u(t) \|_{H^{\mathfrak{s}_0}} = \rho_0$.
\end{center}

\medskip

Then, we set
\begin{equation}
\label{eq:on_voit_bien_le_temps}
T_\varepsilon = \sup \Big\{t\leq \min(T^{\max},\varepsilon^{-\frac{r}{9 \mathfrak{s}_c}}) \ | \ \sup_{\tau \leq t} \| u(\tau) \|_{H^{\mathfrak{s}_c}} < 5 \varepsilon\Big\}.
\end{equation} 
Since $r$ can be chosen arbitrarily large, in order to prove Theorem \ref{thm:main}, it suffices\footnote{note that here we use that $\mathfrak{s}_2$ does not depend on $r$.} to prove that $T_\varepsilon = \varepsilon^{-\frac{r}{9\mathfrak{s}_c}}$.
By a continuity argument, it means that it suffices to prove that for all $t< T_\varepsilon$, we have $\| u(t) \|_{H^{\mathfrak{s}_c}} \leq 4\varepsilon$.

\medskip

\noindent \underline{\emph{$\triangleright$ Step 3: Normal form.}} Now, we apply the Birkhoff normal form Theorem \ref{thm:birk}. We note that by \eqref{eq:sympa_non}, we have that
$$
\varepsilon_{n,\gamma,\chi}\gtrsim \gamma \geq \varepsilon^{\frac{8}{9r}} \gg \varepsilon.
$$
So, by Lemma \ref{lem:est_flow},  it makes senses to define, for all $t\leq T_\varepsilon$,
$$
v(t) := \Phi_\chi^1(u(t)).
$$
Since $u\in C^1((0,T_\varepsilon); H^{\mathfrak{s}_c})$ (because $\mathfrak{s} -\alpha \geq \mathfrak{s}_c$) and $\Phi_\chi^1$ is smooth on $B_{H^{\mathfrak{s}_c}}(0,\varepsilon_{n,\gamma,\chi})$ (because $\mathfrak{s}_c \geq \nu +1 $), we have that 
$$
v\in C^1((0,T_\varepsilon); H^{\mathfrak{s}_c}).
$$
Moreover, since $\chi$ is symplectic, it satisfies
\begin{equation}
\label{eq:eq_on_v}
\ic \partial_t v= \nabla (Z_2+Q+R)(v) - \underbrace{\ic \mathrm{d}\Phi_\chi^1 (u)( \ic \nabla (G-P)(u) )}_{=: f}. 
\end{equation}
Using the estimates of Lemma \ref{lem:est_flow} on $\Phi_\chi^1$, the bootstrap assumption and the estimate \eqref{eq:estime_terme_de_reste} on $\nabla (G-P)(u)$, we have that for all $t<T_\varepsilon$
\begin{equation}
\label{eq:eq_tot1}
\| v(0) \|_{H^{\mathfrak{s}}} \leq 2 , \quad \| v(0) \|_{H^{\mathfrak{s}_c}} \leq 2 \varepsilon, \quad \| v(t) \|_{H^{\mathfrak{s}_c}} \leq 6 \varepsilon, \quad  \|f(t)\|_{H^{\mathfrak{s}_c}} \lesssim_{r,\mathfrak{s}_c} \varepsilon^{r} .
\end{equation} 
We note that in particular
\begin{equation}
\label{eq:init_grand}
\| \Pi_{>N} v(0) \|_{H^{\mathfrak{s}_c}} \leq \| v(0) \|_{H^{\mathfrak{s}}}  N^{-(\mathfrak{s} - \mathfrak{s}_c)} \leq 2   N^{-(\mathfrak{s} - \mathfrak{s}_c)} = 2  \varepsilon^{8r}.
\end{equation} 
Moreover, using the bound on $\| v(t) \|_{H^{\mathfrak{s}_c}} $, the estimate \eqref{eq:est_R} on $\nabla R$ and the estimate \eqref{eq:sympa_non} on $\gamma^{-r}$, we have that for all $t<T_\varepsilon$
\begin{equation}
\label{eq:eq_rem2}
\|\nabla R(v(t)) \|_{H^{\mathfrak{s}_c}} \leq \varepsilon^{r-1}.
\end{equation}
Finally, using that $\Phi_\chi^1$ is close to the identity as proven in Lemma \ref{lem:est_flow}, we point out that it suffices to prove that $\| v(t) \|_{H^{\mathfrak{s}_c}} \leq 3 \varepsilon$ for $t<T_\varepsilon$ to deduce that $\| u(t) \|_{H^{\mathfrak{s}_c}} \leq 4\varepsilon$ and so to conclude the proof.

\medskip

\noindent \underline{\emph{$\triangleright$ Step 4: system of estimates.}} Since $v\in C^1((0,T_\varepsilon); H^{\mathfrak{s}_c})$ solve the equation \eqref{eq:eq_on_v} and $Z_2$ commutes with $\| \cdot \|_{H^{\mathfrak{s}_c }}^2 $, we have
$$
\partial_t \| \Pi_{>N} v \|_{H^{\mathfrak{s}_c }}^2 = \{  \| \Pi_{>N} \cdot \|_{H^{\mathfrak{s}_c }}^2, Q \} +2 (\ic D^{\mathfrak{s}_c} \Pi_{>N} v,D^{\mathfrak{s}_c} (\nabla R(v) + f))_{\ell^2}.
$$
where for all $z\in \mathbb{C}^{\mathbb{N}}$, $D^{\mathfrak{s}_c}z := (k^{\mathfrak{s}_c}z_k)_{k\in \mathbb{N}}$. Then by using the Cauchy--Schwarz inequality and the estimates \eqref{eq:eq_tot1}, \eqref{eq:eq_rem2} on the remainder terms (and that $\|v(t) \|_{H^{\mathfrak{s}_c }} \lesssim \varepsilon$; see \eqref{eq:eq_tot1}), it comes
$$
 |(\ic D^{\mathfrak{s}_c} \Pi_{>N} v,D^{\mathfrak{s}_c} (\nabla R(v) + f))_{\ell^2}| \lesssim  \varepsilon^{r}.
$$
Since $\varepsilon \leq \gamma$ (by \eqref{eq:sympa_non}), $Q$ is $\gamma$-resonant and satisfies $\| Q\|_{\mathscr{I}_{\gamma}^{n,\nu}} \lesssim_{r,n} 1 $ (by Theorem \ref{thm:birk}),  we get by Proposition \ref{prop:est_poisson} that
$$
 |\{  \| \Pi_{>N} \cdot \|_{H^{\mathfrak{s}_c }}^2, Q \}(v) |  \lesssim_{r,n} \varepsilon \| \Pi_{>N}  v \|_{H^{\mathfrak{s}_c }}^{2-\frac1{\mathfrak{s}_c }} \| \Pi_{> N} v \|_{\ell^2}^{\frac1{\mathfrak{s}_c }}   +  \| \Pi_{>N}v  \|_{H^{\mathfrak{s}_c }}^3 .
$$
Thus we have proven that
\begin{equation}
\label{eq:s0_grand}
\partial_t \| \Pi_{>N} v \|_{H^{\mathfrak{s}_c }}^2  \lesssim_{r,n} \varepsilon \| \Pi_{>N}  v \|_{H^{\mathfrak{s}_c }}^{2-\frac1{\mathfrak{s}_c }} \| \Pi_{> N} v \|_{\ell^2}^{\frac1{\mathfrak{s}_c }}   +  \| \Pi_{>N}v  \|_{H^{\mathfrak{s}_c }}^3 + \varepsilon^r.
\end{equation}
Then, by using the same arguments we get that
\begin{equation}
\label{eq:s0_petit}
\partial_t \| \Pi_{\leq N} v \|_{H^{\mathfrak{s}_c }}^2 \lesssim_{r,n} \varepsilon  \| \Pi_{>N}  v \|_{H^{\mathfrak{s}_c }}^{2} +  \varepsilon^r .
\end{equation}
\begin{equation}
\label{eq:l2_grand}
\partial_t \| \Pi_{>N} v \|_{\ell^2}^2 \lesssim_{r,n} \| \Pi_{>N} v \|_{\ell^2}^2   \| \Pi_{>N}  v \|_{H^{\mathfrak{s}_c }} +  \varepsilon^r.
\end{equation}

\medskip

\noindent \underline{\emph{$\triangleright$ Step 5: last bootstrap and conclusion.}} In order to fully exploit the system of estimates (\eqref{eq:s0_grand}, \eqref{eq:s0_petit}, \eqref{eq:l2_grand}) we do a second bootstrap argument. We set
$$
T^* = \sup \big\{ t \leq T_\varepsilon \ | \ \forall \tau \leq t, \quad  \| \Pi_{\leq N} v(\tau) \|_{H^{\mathfrak{s}_c }}^2 \leq 3\varepsilon \quad \mathrm{and} \quad \| \Pi_{> N} v(\tau) \|_{H^{\mathfrak{s}_c }}^2 \leq \varepsilon^{\frac{r}8} \}
$$
and we note that by \eqref{eq:eq_tot1} and \eqref{eq:init_grand} we have $T^*>0$. We note that now, to conclude the proof, it suffices to prove that if $t<T^*$ then $\| \Pi_{\leq N} v(\tau) \|_{H^{\mathfrak{s}_c }}^2 < 3\varepsilon$ and $\| \Pi_{> N} v(\tau) \|_{H^{\mathfrak{s}_c }}^2 < \varepsilon^{\frac{r}8} $.

\medskip

So let $t<T^*$. We note that 
$$
t<T^* \leq T_\varepsilon \leq \varepsilon^{-\frac{r}{9 \mathfrak{s}_c}} \leq \varepsilon^{-\frac{r}{18}}.
$$

\medskip

 First, by using \eqref{eq:s0_petit}, we get that
 $$
\partial_t \| \Pi_{\leq N} v(t) \|_{H^{\mathfrak{s}_c }}^2 \lesssim_{r,n} \varepsilon^{1+\frac{r}4}
$$
and so that (since $r\geq 17$)
$$
\| \Pi_{\leq N} v(t) \|_{H^{\mathfrak{s}_c }}^2 \leq T^*  \varepsilon^{\frac{r}4}\leq \varepsilon^{\frac{r}8} < \varepsilon^2 .
$$
Thus it suffices to focus on estimating $\| \Pi_{> N} v(t) \|_{H^{\mathfrak{s}_c }}^2$.

\medskip

Then by applying the Gr\"ownwall inequality in \eqref{eq:l2_grand}, we get a constant $C_{q,n}>0$ depending only on $q$ and $n$ such that
$$
\| \Pi_{>N} v(t) \|_{\ell^2}^2  \leq \| \Pi_{>N} v(0) \|_{\ell^2}^2 e^{ C_{q,n} t \varepsilon^{\frac{r}8}} + C_{q,n}\int_{0}^t  e^{ C_{q,n} (t-\tau) \varepsilon^{\frac{r}8}}  \varepsilon^r   \mathrm{d}\tau .
$$
Now recalling that that by \eqref{eq:init_grand} $\| \Pi_{>N} v(0) \|_{\ell^2}^2 \leq \varepsilon^r$, we get that (provided that $\varepsilon$ is small enough)
$$
\| \Pi_{>N} v(t) \|_{\ell^2}^2  \leq 2   \varepsilon^r + C_{q,n} T^*  \varepsilon^r \leq \varepsilon^{\frac{r}2}. 
$$
Then plugging this estimate in \eqref{eq:s0_grand} and using the bootstrap assumption, we get that
$$
\partial_t \| \Pi_{>N} v (t)\|_{H^{\mathfrak{s}_c }}^2  \lesssim_{r,n} \varepsilon^{\frac{r}8 (2-\frac1{\mathfrak{s}_c })} \varepsilon^{\frac{r}{4\mathfrak{s}_c }}   + \varepsilon^{\frac{3r}{8}}.
$$
and so using the bound \eqref{eq:init_grand} on $\| \Pi_{>N} v(0) \|_{H^{\mathfrak{s}_c }}^2$, we get that
$$
\| \Pi_{>N} v(t) \|_{H^{\mathfrak{s}_c }}^2 \lesssim_{r,n} \varepsilon^{r} + T^* \varepsilon^{\frac{r}{4}} ( \varepsilon^{\frac{r}{8}} + \varepsilon^{\frac{r}{8\mathfrak{s}_c }}  ) \lesssim_{r,n} \varepsilon^{r} +  \varepsilon^{-\frac{r}{9\mathfrak{s}_c }} \varepsilon^{\frac{r}{4}} ( \varepsilon^{\frac{r}{8}} + \varepsilon^{\frac{r}{8\mathfrak{s}_c }}  ) \ll \varepsilon^{\frac{r}4}.
$$
This last estimate ensures that, provided that $\varepsilon$ is small enough, $\| \Pi_{>N} v(t) \|_{H^{\mathfrak{s}_c }}< \varepsilon^{\frac{r}8}$ which concludes the proof of Theorem \ref{thm:main}.

\appendix

\section{Multilinear estimates} Let $\mathcal{M}$ be a
 Riemannian manifold which is either $\mathbb{R}^d$ or a boundaryless compact manifold. Let us consider an unbounded operator of the form $-\Delta+V$ on 
$L^2(\mathcal{M})$ in which $V$ is a positive smooth potential. We assume that $-\Delta+V$ is essentially self-adjoint and has pure-point spectrum. 

\medskip

Let us write a few words about Sobolev spaces. There is no issue to define the Sobolev space $W^{N,\infty}(\mathcal{M})$ (eventually using a finite atlas in the manifold framework) as the space of functions such that the $N$ first derivatives are bounded. For any fixed number $s_0>\frac{d}{2}$, due to the Sobolev embedding $H^{s_0}(\mathcal{M})\subset L^\infty(\mathcal{M})$, we may deduce the following one
\begin{equation}\label{soboi}
H^{N+s_0}(\mathcal{M}) \subset W^{N,\infty}(\mathcal{M}).
\end{equation}

\medskip

By positivity and self-adjointness, one may define the following operator by functional calculus
$$
P:=\sqrt{-\Delta+V}.
$$
  In all the examples of our paper, the following inequalities hold true
\begin{equation}\label{sobo-comp}
\|u\|_{H^s(\mathcal{M})} \lesssim \|P^s u\|_{L^2(\mathcal{M})}, 
\qquad \forall s\in \mathbb{N}, \ \forall u\in \mbox{Dom}(P^s).
\end{equation}
We shall set an abstract definition of admissible operators but before stating it, let us write two examples of admissible operators we have in mind :
\begin{itemize}
\item $-\Delta+V$ on a boundaryless Riemannian compact manifold (see \cite{DS06}). For \eqref{sobo-comp}, this is actually and equivalence. For complements about checking Definition \ref{abs}, one may 
 see a proof due to J.M. Delort presented in the thesis of F. Monzani \cite[Section 2.3]{Mon24}.
\item $-\Delta+V$ on $\mathbb{R}^d$ and $V$ being a positive-definite quadratic form (see \eqref{equivHS} and \cite[Section 3]{Brun23}). Note that, in this case, in the rest of the paper, the quadratic potential is denoted by $Q$ instead of $V$.
\end{itemize}

\begin{definition}\label{abs}
The linear operator $-\Delta+V$ is admissible if the following assumptions is true : there exists an integer $\nu>0$ such that for any $a\in \mathcal{C}^\infty (\mathcal{M})$ with bounded derivatives, the operator $A$ of multiplication by $a$ satisfies for any $n\in \mathbb{N}$
\begin{equation}\label{adP}
\sup\limits_{0\leq \ell\leq n} \| \mathrm{Ad}_P^\ell (A)\|_{L^2(\mathcal{M})\rightarrow L^2(\mathcal{M})}
\leq C_n \|a\|_{W^{n+\nu,\infty}(\mathcal{M})}
\end{equation}
in which $\mathrm{Ad}_P:L \mapsto [P,L]$ is the adjoint mapping.
%\item $P$ satisfies a semi-classical Bernstein inequality : there exists an integer $\nu'$ such that for any $\chi\in C^\infty(\R)$ with compact support, we may write
%\begin{equation*}
%\|\chi (h P)\|_{L^2(\mathcal{M})\rightarrow W^{N,\infty}(\mathcal{M})} \leq C h^{-N-\nu'}                              \qquad    0<h\leq 1.
%\end{equation*}
%\end{enumerate}
\end{definition}

Let us now explain the consequences for multilinear estimates. Let us denote $(I_k)_{k\in \mathbb{N}}$ a sequence of bounded intervals of $\mathbb{R}$ which satisfy (for some constant $C>0$):
\begin{equation}\label{thet0}     \min(I_k) = Ck+O(1)   \qquad k\rightarrow +\infty 
\end{equation}
\begin{equation}\label{thet}
\max(I_k)-\min(I_k)=O(1).
\end{equation}
We shall need to select once and for all an element of $I_k$, say $\theta_k\in I_k$. We also have the asymptotic
 \begin{equation*}
 \theta_k=Ck+O(1)
 \end{equation*} and thus there exists a positive constant $\tau>0$ fulfilling
\begin{equation}\label{diff}
 \boldsymbol{k}_1- \boldsymbol{k}_2 \geq \tau \qquad \implies \qquad \theta_{ \boldsymbol{k}_1}-\theta_{ \boldsymbol{k}_2} \gtrsim  \boldsymbol{k}_1- \boldsymbol{k}_2.
\end{equation}
The goal of this part is to prove the next result in which, for all $k\in \mathbb{N}$, $\Pi_{\mathcal E_k}:L^2(\mathcal{M})\rightarrow L^2(\mathcal{M})$ is the orthonormal projector on
$\mathcal E_k$ the subspace spanned by the eigenvectors of $P$ whose associated eigenvalue belong to $I_k$,
i.e.
$$
\Pi_{\mathcal{E}_{k}}=\sum_{\lambda \in I_{k}} \Pi_{\lambda} 
$$
where $\Pi_{\lambda}$ denotes the spectral projector associated with the eigenvalue $\lambda$ of $P$.
\begin{proposition}\label{prop:multi}
There exists $\nu \geq 0$ such that for all $q \geq 3$, for all $n\geq 4$, all $\boldsymbol{k} \in \mathbb{N}^q$ and all $u^{(1)},\cdots,u^{(q)}\in L^2(\mathcal M)$, the following two sets of inequalities hold true
\begin{equation}
\label{eq:DS_estim-KG}
   \big| \int_{\mathcal M} (\Pi_{\mathcal E_{\boldsymbol{k}_{1}}} u^{(1)})\cdots (\Pi_{\mathcal E_{\boldsymbol{k}_{q}}}u^{(q)})\mathrm{d} x \big|  \lesssim_{n,q}  \frac{(\boldsymbol{k}_{3}^\star)^{\nu +n} (\boldsymbol{k}_{4}^\star)^{\nu } \dots (\boldsymbol{k}_{q}^\star)^{\nu }  }{ (\boldsymbol{k}_{1}^\star - \boldsymbol{k}_{2}^\star + \boldsymbol{k}_{3}^\star )^{n}} \prod_{\ell=1}^q \| u^{(\ell)} \|_{L^2}
\end{equation}
and
\begin{equation}
\label{eq:DS_estim-KG2}
   \big| \int_{\mathcal M} (\Pi_{\mathcal E_{\boldsymbol{k}_{1}}} u^{(1)})\cdots (\Pi_{\mathcal E_{\boldsymbol{k}_{q}}}u^{(q)})\mathrm{d} x \big|  
\lesssim_{n,q}  \Gamma_{\boldsymbol{k}} \Big(\frac{\boldsymbol{k}_{2}^\star}{\boldsymbol{k}_{1}^\star}\Big)^{n}
\Big(\prod_{3\leq \ell\leq q} \boldsymbol{k}_\ell^\star \Big)^{\nu}
 \prod_{\ell=1}^q \| u^{(\ell)} \|_{L^2}
\end{equation}
where $\mathrm{d} x$ is the Riemannian volume on $\mathcal M$
and $\Gamma_{\boldsymbol{k}}$ 
is defined in \eqref{eq:def_gamma_k}. 
\end{proposition}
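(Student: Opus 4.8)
The plan is to establish the "Delort--Szeftel type" estimate \eqref{eq:DS_estim-KG} first, by an iterated commutator argument with $P=\sqrt{-\Delta+V}$, and then to deduce \eqref{eq:DS_estim-KG2} from it by elementary bookkeeping on monomials in the $\boldsymbol{k}_\ell^\star$.

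For \eqref{eq:DS_estim-KG}, by symmetry of the integral under relabelling of the factors I may assume $\boldsymbol{k}_1\geq\cdots\geq\boldsymbol{k}_q$, so $\boldsymbol{k}_\ell=\boldsymbol{k}_\ell^\star$. Since each $\Pi_{\mathcal{E}_{\boldsymbol{k}_\ell}}u^{(\ell)}$ is a finite combination of eigenfunctions (hence smooth with bounded derivatives) and the eigenfunctions may be chosen real, one writes the integral as $\langle\,\overline{\Pi_{\mathcal{E}_{\boldsymbol{k}_1}}u^{(1)}}\,,\,\Pi_{\mathcal{E}_{\boldsymbol{k}_1}}B\,\Pi_{\mathcal{E}_{\boldsymbol{k}_2}}u^{(2)}\,\rangle_{L^2}$ where $B$ is multiplication by $b:=(\Pi_{\mathcal{E}_{\boldsymbol{k}_3}}u^{(3)})\cdots(\Pi_{\mathcal{E}_{\boldsymbol{k}_q}}u^{(q)})$, so that $|\text{integral}|\leq\|\Pi_{\mathcal{E}_{\boldsymbol{k}_1}}B\,\Pi_{\mathcal{E}_{\boldsymbol{k}_2}}\|_{L^2\to L^2}\|u^{(1)}\|_{L^2}\|u^{(2)}\|_{L^2}$. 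The heart of the matter is then the bound
$$
\|\Pi_{\mathcal{E}_{\boldsymbol{k}_1}}B\,\Pi_{\mathcal{E}_{\boldsymbol{k}_2}}\|_{L^2\to L^2}\lesssim_{m,q}\frac{\|b\|_{W^{m+\nu,\infty}(\mathcal{M})}}{\langle\boldsymbol{k}_1-\boldsymbol{k}_2\rangle^{m}},\qquad m\in\mathbb{N},
$$
which I would prove as follows. Since $P$ commutes with every $\Pi_{\mathcal{E}_k}$, for any bounded operator $L$ one has $\Pi_{\mathcal{E}_{\boldsymbol{k}_1}}[P,L]\Pi_{\mathcal{E}_{\boldsymbol{k}_2}}=P_1T-TP_2$ with $T:=\Pi_{\mathcal{E}_{\boldsymbol{k}_1}}L\,\Pi_{\mathcal{E}_{\boldsymbol{k}_2}}$ and $P_i:=P|_{\mathcal{E}_{\boldsymbol{k}_i}}$ self-adjoint with spectrum in $I_{\boldsymbol{k}_i}$. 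When $\boldsymbol{k}_1-\boldsymbol{k}_2$ exceeds a fixed constant $\tau_0$, the intervals $I_{\boldsymbol{k}_1},I_{\boldsymbol{k}_2}$ are disjoint with $\min I_{\boldsymbol{k}_1}-\max I_{\boldsymbol{k}_2}\gtrsim\boldsymbol{k}_1-\boldsymbol{k}_2$ by \eqref{thet0}--\eqref{thet}, and the Sylvester-type bound $\|T\|\leq(\min I_{\boldsymbol{k}_1}-\max I_{\boldsymbol{k}_2})^{-1}\|P_1T-TP_2\|$ (coming from the representation $T=\int_0^{\infty}e^{-tP_1}(P_1T-TP_2)e^{tP_2}\,\mathrm{d}t$) yields $\|\Pi_{\mathcal{E}_{\boldsymbol{k}_1}}L\,\Pi_{\mathcal{E}_{\boldsymbol{k}_2}}\|\lesssim(\boldsymbol{k}_1-\boldsymbol{k}_2)^{-1}\|\Pi_{\mathcal{E}_{\boldsymbol{k}_1}}[P,L]\Pi_{\mathcal{E}_{\boldsymbol{k}_2}}\|$; iterating with $L=B,\mathrm{Ad}_P(B),\dots,\mathrm{Ad}_P^{m-1}(B)$ and invoking admissibility (Definition \ref{abs}, i.e. \eqref{adP}, which gives $\|\mathrm{Ad}_P^{m}B\|_{L^2\to L^2}\lesssim_m\|b\|_{W^{m+\nu,\infty}}$) gives the claim. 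When $\boldsymbol{k}_1-\boldsymbol{k}_2\leq\tau_0$ one uses instead the trivial bound $\|\Pi_{\mathcal{E}_{\boldsymbol{k}_1}}B\,\Pi_{\mathcal{E}_{\boldsymbol{k}_2}}\|\leq\|b\|_{L^\infty}$. Finally, Bernstein-type inequalities — for $w$ spectrally localized in $I_k$ one has $\|w\|_{W^{j,\infty}}\lesssim_j\langle k\rangle^{j+s_0}\|w\|_{L^2}$, obtained from \eqref{soboi}, \eqref{sobo-comp} and $\|P^{j+s_0}w\|_{L^2}\leq(\max I_k)^{j+s_0}\|w\|_{L^2}$ after fixing $s_0\in\mathbb{N}$ — together with the Leibniz rule bound $\|b\|_{W^{m+\nu,\infty}}$ in terms of $(\boldsymbol{k}_3)^{m}\prod_{\ell\geq 3}(\boldsymbol{k}_\ell)^{s_0+\nu}\prod_{\ell\geq 3}\|u^{(\ell)}\|_{L^2}$; distinguishing whether $\boldsymbol{k}_3$ is larger or smaller than $\boldsymbol{k}_1-\boldsymbol{k}_2$ turns $\langle\boldsymbol{k}_1-\boldsymbol{k}_2\rangle^{m}$ into $(\boldsymbol{k}_1^\star-\boldsymbol{k}_2^\star+\boldsymbol{k}_3^\star)^{m}$, which gives \eqref{eq:DS_estim-KG} with some value $\nu_1\geq 0$.

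To deduce \eqref{eq:DS_estim-KG2}, observe first that $\Gamma_{\boldsymbol{k}}\gtrsim_q\langle\boldsymbol{k}_1^\star-\boldsymbol{k}_2^\star+\boldsymbol{k}_3^\star\rangle^{-3}$: indeed $\min_{\boldsymbol{\varsigma}}|\varsigma_1\boldsymbol{k}_1^\star+\cdots+\varsigma_q\boldsymbol{k}_q^\star|\leq\boldsymbol{k}_1^\star-\boldsymbol{k}_2^\star+\boldsymbol{k}_3^\star$, as is seen by taking the signs $(+,-)$ on the two largest entries and alternating signs on $\boldsymbol{k}_3^\star\geq\boldsymbol{k}_4^\star\geq\cdots$. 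Hence it suffices to bound the integral by $\langle\boldsymbol{k}_1^\star-\boldsymbol{k}_2^\star+\boldsymbol{k}_3^\star\rangle^{-3}\,(\boldsymbol{k}_2^\star/\boldsymbol{k}_1^\star)^{n}\,(\boldsymbol{k}_3^\star\cdots\boldsymbol{k}_q^\star)^{\nu_1+3}\prod_\ell\|u^{(\ell)}\|_{L^2}$, and this follows from \eqref{eq:DS_estim-KG} applied with exponent $m=\max(n+3,4)$ by splitting into the regimes $\boldsymbol{k}_1^\star\leq 2\boldsymbol{k}_2^\star$ and $\boldsymbol{k}_1^\star>2\boldsymbol{k}_2^\star$: in the first $(\boldsymbol{k}_2^\star/\boldsymbol{k}_1^\star)^{n}\sim_n 1$ and the inequality reduces to $(\boldsymbol{k}_3^\star)^{m-3}\leq(\boldsymbol{k}_1^\star-\boldsymbol{k}_2^\star+\boldsymbol{k}_3^\star)^{m-3}$; in the second $\boldsymbol{k}_1^\star-\boldsymbol{k}_2^\star+\boldsymbol{k}_3^\star\sim\boldsymbol{k}_1^\star$ and $\boldsymbol{k}_3^\star/\boldsymbol{k}_1^\star\lesssim\boldsymbol{k}_2^\star/\boldsymbol{k}_1^\star<1/2$, so the surplus power $m-3-n\geq 0$ produces exactly the gain $(\boldsymbol{k}_2^\star/\boldsymbol{k}_1^\star)^{n}$. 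Stating both \eqref{eq:DS_estim-KG} and \eqref{eq:DS_estim-KG2} with the common enlarged value $\nu:=\nu_1+3$ completes the proof (enlarging the exponent only weakens \eqref{eq:DS_estim-KG} since all $\boldsymbol{k}_\ell^\star\geq 1$).

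The main obstacle is the iterated commutator estimate: one must propagate the multiplication structure through the brackets $\mathrm{Ad}_P^{m}(B)$, and it is precisely here that the elliptic, order-one pseudodifferential nature of $P=\sqrt{-\Delta+V}$ enters — namely that multiplications commute with $P$ up to operators controlled by finitely many derivatives of the symbol. This is packaged abstractly in the admissibility assumption \eqref{adP}, whose verification for $-\Delta+V$ on a compact boundaryless manifold (cf. \cite{DS06} and \cite[Section 2.3]{Mon24}) and for $-\Delta+Q$ on $\mathbb{R}^d$ with $Q$ positive definite quadratic (cf. \cite[Section 3]{Brun23}) is quoted; the remaining ingredients (the Sylvester bound, the Bernstein inequalities, and the case analysis) are elementary.
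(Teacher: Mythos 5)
Your proposal is correct, and the overall architecture matches the paper's: reduce the integral to a bound on $\|\Pi_{\mathcal{E}_{\boldsymbol{k}_1}}B\,\Pi_{\mathcal{E}_{\boldsymbol{k}_2}}\|$ with $B$ multiplication by the product of the low-frequency factors, control that operator norm by iterated commutators with $P$ together with admissibility \eqref{adP}, bound the $W^{m+\nu,\infty}$-norm of the low-frequency product by Leibniz plus the Sobolev embedding \eqref{soboi}--\eqref{sobo-comp}, split on $\boldsymbol{k}_3$ versus $\boldsymbol{k}_1-\boldsymbol{k}_2$ to replace $\langle\boldsymbol{k}_1-\boldsymbol{k}_2\rangle$ by $\boldsymbol{k}_1^\star-\boldsymbol{k}_2^\star+\boldsymbol{k}_3^\star$, and finally pass from \eqref{eq:DS_estim-KG} to \eqref{eq:DS_estim-KG2} by an elementary inequality. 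The one genuinely different ingredient is the mechanism for the key spectral-gap estimate. The paper proves, by induction on $n$, the bilinear-form bound \eqref{adp2}: at each step the commutator $[A,P]$ is expanded over the spectral decomposition and the factor $\lambda-\mu$ is split as $(\theta_{\boldsymbol{k}_1}-\theta_{\boldsymbol{k}_2})+(\lambda-\theta_{\boldsymbol{k}_1})+(\theta_{\boldsymbol{k}_2}-\mu)$, with the latter two handled by the induction hypothesis applied to the spectrally localized $v^{(i)}$. You instead invoke the Sylvester/Rosenblum identity $T=\int_0^\infty e^{-tP_1}(P_1T-TP_2)e^{tP_2}\,\mathrm{d}t$ (valid since $\min I_{\boldsymbol{k}_1}>\max I_{\boldsymbol{k}_2}$ once $\boldsymbol{k}_1-\boldsymbol{k}_2\gtrsim 1$), which gives the single-step gain $(\min I_{\boldsymbol{k}_1}-\max I_{\boldsymbol{k}_2})^{-1}$ directly, and iterate. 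Both are elementary alternatives to the Helffer--Sj\"ostrand route of Delort--Szeftel and both exploit pure-point spectrum; the Sylvester route dispenses with the choice of anchors $\theta_k$ and the explicit split of $\lambda-\mu$, at the modest cost of checking that the semigroup representation converges on the localized subspaces. For the passage \eqref{eq:DS_estim-KG}$\Rightarrow$\eqref{eq:DS_estim-KG2}, your split on $\boldsymbol{k}_1^\star\le 2\boldsymbol{k}_2^\star$ versus $\boldsymbol{k}_1^\star>2\boldsymbol{k}_2^\star$ is a mild variant of the paper's Lemma \ref{A.6} (which uses the monotonicity of $t\mapsto t/(\boldsymbol{k}_1-\boldsymbol{k}_2+t)$ together with $\Gamma_{\boldsymbol{k}}\gtrsim\langle\boldsymbol{k}_1-\boldsymbol{k}_2+\boldsymbol{k}_3+\cdots+\boldsymbol{k}_q\rangle^{-3}$); the bookkeeping is equivalent.
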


\begin{remark} $\bullet$ A proof of \eqref{eq:DS_estim-KG} in the compact setting is done in \cite{DS06} and involves the Helffer-Sj\"{o}strand formula (precisely Lemma 1.2.3 of \cite{DS06}). We give here an elementary alternative proof by exploiting that $-\Delta+V$ and hence its square root $P$ have pure-point spectrum.\\
 $\bullet$ In \cite{Brun23}, a Bernstein inequality is also proved in order to follow the proof of Delort and Szeftel. But it turns out that we may overcome such an inequality thanks to the Sobolev embedding \eqref{soboi}.
\end{remark}

Let us begin the proof of Proposition \ref{prop:multi}. We first need the following result.
\begin{lemma} Assume $\boldsymbol{k}_1-\boldsymbol{k}_2\geq \tau$ as in \eqref{diff}. For any bounded self-adjoint operator $A:L^2(\mathcal{M})\rightarrow L^2(\mathcal{M})$, the following bound holds true
\begin{equation}\label{adp2}
|\langle A \Pi_{\mathcal{E}_{\boldsymbol{k}_1}} u^{(1)}, \Pi_{\mathcal{E}_{\boldsymbol{k}_2}}u^{(2)}\rangle | \lesssim_n 
\sup\limits_{0\leq \ell\leq n} \| (\mathrm{Ad}_P^\ell (A)\|_{L^2(\mathcal{M})\rightarrow L^2(\mathcal{M})} 
  \frac{ \| u^{(1)}\|_{L^2(\mathcal{M})}\|u^{(2)}\|_{L^2(\mathcal{M})} }{(    \boldsymbol{k}_1 - \boldsymbol{k}_2  )^n}  
\end{equation}
where $\langle \cdot,\cdot \rangle $ denotes the $L^2(\mathcal{M};\mathbb{R})$ scalar product. 
\end{lemma}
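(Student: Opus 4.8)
The plan is to exploit that $P$ has pure point spectrum and that $\Pi_{\mathcal{E}_{\boldsymbol{k}_1}}$, $\Pi_{\mathcal{E}_{\boldsymbol{k}_2}}$ project onto eigenspaces attached to the well-separated intervals $I_{\boldsymbol{k}_1}$, $I_{\boldsymbol{k}_2}$. The key identity is that, since $[P,\Pi_{\mathcal{E}_{\boldsymbol{k}}}]=0$, one has for every $\ell\geq 1$
$$
(\mathrm{Ad}_P^\ell A)\,\Pi_{\mathcal{E}_{\boldsymbol{k}_1}} = \sum_{m=0}^{\ell}\binom{\ell}{m}(-1)^{\ell-m} P^{m}\, \big(A\,\Pi_{\mathcal{E}_{\boldsymbol{k}_1}}\big)\, P^{\ell-m}\,\Pi_{\mathcal{E}_{\boldsymbol{k}_1}} ,
$$
so that, testing against $\Pi_{\mathcal{E}_{\boldsymbol{k}_2}} u^{(2)}$ and using that $P$ acts as a scalar $\mu\in I_{\boldsymbol{k}_1}$ (resp. $\lambda\in I_{\boldsymbol{k}_2}$) on the relevant eigenspaces, we get
$$
\langle (\mathrm{Ad}_P^\ell A)\,\Pi_{\mathcal{E}_{\boldsymbol{k}_1}} u^{(1)}, \Pi_{\mathcal{E}_{\boldsymbol{k}_2}} u^{(2)}\rangle = \sum_{\lambda\in I_{\boldsymbol{k}_2}}\sum_{\mu\in I_{\boldsymbol{k}_1}} (\mu-\lambda)^{\ell}\, \langle A\, \Pi_{\mu} u^{(1)}, \Pi_{\lambda} u^{(2)}\rangle .
$$
Here I would argue more cleanly by writing $\langle A\Pi_{\mathcal{E}_{\boldsymbol{k}_1}}u^{(1)},\Pi_{\mathcal{E}_{\boldsymbol{k}_2}}u^{(2)}\rangle$ as a double spectral sum $\sum_{\mu,\lambda}\langle A\Pi_\mu u^{(1)},\Pi_\lambda u^{(2)}\rangle$ and noticing that the same double sum with weight $(\mu-\lambda)^{\ell}$ equals (up to the binomial expansion above) $\langle (\mathrm{Ad}_P^\ell A)\Pi_{\mathcal{E}_{\boldsymbol{k}_1}}u^{(1)},\Pi_{\mathcal{E}_{\boldsymbol{k}_2}}u^{(2)}\rangle$; inverting this relation for a single fixed $\ell=n$ is what we need.

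Concretely, the plan is: pick any $\mu_0\in I_{\boldsymbol{k}_1}$ and $\lambda_0\in I_{\boldsymbol{k}_2}$ (e.g.\ $\theta_{\boldsymbol{k}_1},\theta_{\boldsymbol{k}_2}$), and write $\mu-\lambda=(\mu_0-\lambda_0)+(\mu-\mu_0)-(\lambda-\lambda_0)$ where $|\mu-\mu_0|,|\lambda-\lambda_0|=O(1)$ by \eqref{thet} while $\mu_0-\lambda_0=\theta_{\boldsymbol{k}_1}-\theta_{\boldsymbol{k}_2}\gtrsim \boldsymbol{k}_1-\boldsymbol{k}_2$ by \eqref{diff}. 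Factor
$$
\langle A\Pi_{\mathcal{E}_{\boldsymbol{k}_1}}u^{(1)},\Pi_{\mathcal{E}_{\boldsymbol{k}_2}}u^{(2)}\rangle = (\mu_0-\lambda_0)^{-n}\sum_{\mu,\lambda} (\mu-\lambda)^n\Big(\frac{\mu_0-\lambda_0}{\mu-\lambda}\Big)^n \langle A\Pi_\mu u^{(1)},\Pi_\lambda u^{(2)}\rangle .
$$
The factor $\big((\mu_0-\lambda_0)/(\mu-\lambda)\big)^n$ is $O(1)$ once $\boldsymbol{k}_1-\boldsymbol{k}_2\geq\tau$ is large enough (it is a bounded perturbation of $1$); to make this rigorous without introducing that ratio inside the sum, a cleaner route is to expand $(\mu-\lambda)^n=((\mu_0-\lambda_0)+r)^n$ with $r:=(\mu-\mu_0)-(\lambda-\lambda_0)$, i.e.\ $\langle A\Pi_{\mathcal{E}_{\boldsymbol{k}_1}}u^{(1)},\Pi_{\mathcal{E}_{\boldsymbol{k}_2}}u^{(2)}\rangle = (\mu_0-\lambda_0)^{-n}\sum_{p=0}^{n}\binom{n}{p}(\mu_0-\lambda_0)^{n-p}(-1)^{?}\big\langle \text{(combination of }(P-\mu_0)^i,(P-\lambda_0)^j\text{ acting on }A\Pi_{\mathcal{E}}\text{)}\big\rangle$, each such combination being, by the $\mathrm{Ad}_P$ calculus, an operator whose $L^2\to L^2$ norm is controlled by $\sup_{0\leq\ell\leq n}\|\mathrm{Ad}_P^\ell A\|$ together with bounded scalars $\mu_0-\mu$, $\lambda_0-\lambda$. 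I expect the slickest presentation is: set $B:=A\Pi_{\mathcal{E}_{\boldsymbol{k}_1}}$; then $(P-\mu_0)^i B (P-\lambda_0)^j$ on $\mathrm{Ran}\,\Pi_{\mathcal{E}_{\boldsymbol{k}_1}}$ telescopes into a sum of $\mathrm{Ad}_P^\ell(A)$ ($\ell\le i+j$) composed with bounded multiplication by the $O(1)$ scalars $(\lambda_j-\lambda_0)$, $(\mu_i-\mu_0)$; combining gives that $\langle (P-\lambda_0)^n B u^{(1)}, u^{(2)}\rangle$ — which by the scalar action equals $(\mu_0-\lambda_0+O(1))$-type factors times $\langle Bu^{(1)},u^{(2)}\rangle$, the leading term being exactly $(\mu_0-\lambda_0)^n\langle B u^{(1)},u^{(2)}\rangle$ plus lower order — yields \eqref{adp2}.

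The main obstacle, and the only genuinely delicate point, is the bookkeeping needed to \emph{solve} for $\langle A\Pi_{\mathcal{E}_{\boldsymbol{k}_1}}u^{(1)},\Pi_{\mathcal{E}_{\boldsymbol{k}_2}}u^{(2)}\rangle$ from the family of quantities $\langle \mathrm{Ad}_P^\ell(A)\Pi_{\mathcal{E}_{\boldsymbol{k}_1}}u^{(1)},\Pi_{\mathcal{E}_{\boldsymbol{k}_2}}u^{(2)}\rangle$, $0\le\ell\le n$: one must organize the expansion of $(P-\mu_0)^i B(P-\lambda_0)^j$ so that the \emph{largest} scalar $(\mu_0-\lambda_0)^n$ multiplies precisely the wanted pairing while every remaining term carries at least one extra $\mathrm{Ad}_P$ and is compensated by an $O(1)$ spectral scalar, giving overall the gain $(\boldsymbol{k}_1-\boldsymbol{k}_2)^{-n}$ after using \eqref{diff}. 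Once \eqref{adp2} is in hand, the step also needs the observation that the statement is only asserted for $\boldsymbol{k}_1-\boldsymbol{k}_2\ge\tau$, so no uniformity issue arises when $\mu_0-\lambda_0$ could a priori be small. Summing a convergent geometric-type series in $n$ is not needed here; $n$ is fixed, and the implicit constant is allowed to depend on $n$.
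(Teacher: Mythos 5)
Your general instinct --- exploit the spectral gap between $I_{\boldsymbol{k}_1}$ and $I_{\boldsymbol{k}_2}$ and trade powers of $\lambda-\mu$ for applications of $\mathrm{Ad}_P$ --- is right, but none of the fixed-$n$ ``solve for the pairing'' schemes you outline actually produces the $(\boldsymbol{k}_1-\boldsymbol{k}_2)^{-n}$ decay, and you flag the bookkeeping as undone. Concretely, set $a_{\mu,\lambda}:=\langle A\Pi_\mu u^{(1)},\Pi_\lambda u^{(2)}\rangle$, $T_\ell:=\langle\mathrm{Ad}_P^\ell(A)\Pi_{\mathcal{E}_{\boldsymbol{k}_1}}u^{(1)},\Pi_{\mathcal{E}_{\boldsymbol{k}_2}}u^{(2)}\rangle=\sum(\lambda-\mu)^\ell a_{\mu,\lambda}$, and $S_j:=\sum r_{\mu,\lambda}^{\,j}\,a_{\mu,\lambda}$ where $r_{\mu,\lambda}=(\lambda-\lambda_0)-(\mu-\mu_0)=O(1)$. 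Your binomial expansion at level $n$ gives $T_n=\sum_{j=0}^n\binom{n}{j}(\lambda_0-\mu_0)^{n-j}S_j$, hence
\begin{equation*}
S_0=(\lambda_0-\mu_0)^{-n}\,T_n\;-\;\sum_{j=1}^{n}\binom{n}{j}(\lambda_0-\mu_0)^{-j}\,S_j .
\end{equation*}
The first term is of the required shape, but each $S_j$ with $j\ge1$ can only be bounded by $O_j(\|A\|\,\|u^{(1)}\|\,\|u^{(2)}\|)$ (pull the $O(1)$ operators $(P-\mu_0)^i\Pi_{\mathcal{E}_{\boldsymbol{k}_1}}$ and $(P-\lambda_0)^{j-i}\Pi_{\mathcal{E}_{\boldsymbol{k}_2}}$ onto the two vectors), so the $j=1$ term yields a gain of only $(\boldsymbol{k}_1-\boldsymbol{k}_2)^{-1}$, not $(\boldsymbol{k}_1-\boldsymbol{k}_2)^{-n}$. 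Your ``slickest'' formulation $\langle(P-\lambda_0)^nBu^{(1)},\cdot\rangle$ has the centre on the wrong side: moved onto $\Pi_{\mathcal{E}_{\boldsymbol{k}_2}}u^{(2)}$ the operator $(P-\lambda_0)^n$ has $O_n(1)$ norm, so this is a bounded quantity with no decay; and $(P-\lambda_0)^n$ does not act as a scalar on $Bu^{(1)}=A\Pi_{\mathcal{E}_{\boldsymbol{k}_1}}u^{(1)}$, since $A$ moves you out of $\mathrm{Ran}\,\Pi_{\mathcal{E}_{\boldsymbol{k}_1}}$. (The first double-sum bound with the ratio $(\mu_0-\lambda_0)/(\mu-\lambda)$ inside also fails: putting absolute values inside the $\sum_{\mu,\lambda}$ costs the square root of the cluster multiplicities.)

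The paper sidesteps this by an induction on $n$, and that is exactly what fixes the bookkeeping. At the induction step one expands $\langle[A,P]\Pi_{\mathcal{E}_{\boldsymbol{k}_1}}u^{(1)},\Pi_{\mathcal{E}_{\boldsymbol{k}_2}}u^{(2)}\rangle=\sum(\lambda-\mu)a_{\mu,\lambda}$, splits $\lambda-\mu=(\theta_{\boldsymbol{k}_1}-\theta_{\boldsymbol{k}_2})+(\lambda-\theta_{\boldsymbol{k}_1})+(\theta_{\boldsymbol{k}_2}-\mu)$, and --- crucially --- recognizes the two sub-leading pieces as \emph{the same pairing} evaluated at modified $v^{(i)}\in\mathrm{Ran}\,\Pi_{\mathcal{E}_{\boldsymbol{k}_i}}$ with $\|v^{(i)}\|_{L^2}\lesssim\|u^{(i)}\|_{L^2}$; the inductive hypothesis at rank $n$ then applies to these pieces and returns the full $(\boldsymbol{k}_1-\boldsymbol{k}_2)^{-n}$ decay (the $[A,P]$-pairing is likewise handled by the inductive hypothesis applied to $[A,P]$, which shifts $\sup_{\ell\le n}$ to $\sup_{\ell\le n+1}$). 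This is precisely the information your crude $S_j=O(1)$ bound throws away. For the record, a direct non-inductive argument does exist, but it requires factoring from the other side: insert $(P-\mu_0)^n(P-\mu_0)^{-n}$ in front of $\Pi_{\mathcal{E}_{\boldsymbol{k}_2}}u^{(2)}$ with $\mu_0\in I_{\boldsymbol{k}_1}$, so that the decay is extracted from $\|(P-\mu_0)^{-n}\Pi_{\mathcal{E}_{\boldsymbol{k}_2}}\|\lesssim_n(\boldsymbol{k}_1-\boldsymbol{k}_2)^{-n}$, and then commute $(P-\mu_0)^n$ past $A$ via $(P-\mu_0)^nA=\sum_\ell\binom{n}{\ell}(\mathrm{Ad}_P^\ell A)(P-\mu_0)^{n-\ell}$, the leftover powers now hitting $\Pi_{\mathcal{E}_{\boldsymbol{k}_1}}$ where they are $O_n(1)$. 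That would be a genuinely different route from the paper's; it is not, however, the one your proposal writes down.
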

\begin{proof}

We shall prove \eqref{adp2} by induction on $n$. For $n=0$, \eqref{adp2} is merely the Cauchy-Schwarz inequality. Let us assume that \eqref{adp2} is true for $n$ and let us prove it for $n+1$. For any eigenvalue $\lambda$ of $P=\sqrt{-\Delta+V}$, let us denote by $\Pi_{\lambda}$ the spectral projector associated with the eigenvalue $\lambda$ of $P$. By definition, we may write
\begin{equation*}
\Pi_{\mathcal{E}_{\boldsymbol{k}_1}}=\sum_{\lambda \in I_{\boldsymbol{k}_1}} \Pi_{\lambda} \qquad \mbox{and} \qquad P\Pi_{\lambda}=\lambda \Pi_{\lambda}
\end{equation*}
which may be used to reformulate 
\begin{eqnarray*}
\langle [A,P] \Pi_{\mathcal{E}_{\boldsymbol{k}_1}}u^{(1)}, \Pi_{\mathcal{E}_{\boldsymbol{k}_2}}u^{(2)} \rangle  & =& 
\langle AP \Pi_{\mathcal{E}_{\boldsymbol{k}_1}}u^{(1)}, \Pi_{\mathcal{E}_{\boldsymbol{k}_2}}u^{(2)} \rangle-
\langle  PA \Pi_{\mathcal{E}_{\boldsymbol{k}_1}}u^{(1)}, \Pi_{\mathcal{E}_{\boldsymbol{k}_2}}u^{(2)} \rangle\\
& =& \langle P \Pi_{\mathcal{E}_{\boldsymbol{k}_1}}u^{(1)}, A \Pi_{\mathcal{E}_{\boldsymbol{k}_2}}u^{(2)} \rangle-
\langle A \Pi_{\mathcal{E}_{\boldsymbol{k}_1}}u^{(1)}, P\Pi_{\mathcal{E}_{\boldsymbol{k}_2}}u^{(2)} \rangle\\
& = & \sum_{ \substack{\lambda \in I_{\boldsymbol{k}_1} \\ \mu \in I_{\boldsymbol{k}_2}}} (\lambda-\mu) \langle \Pi_{\lambda}u^{(1)},A\Pi_{\mu} u^{(2)} \rangle.
\end{eqnarray*}
We now formalize the idea that the eigenvalues in $I_{\boldsymbol{k}_1}$ and $I_{\boldsymbol{k}_2}$ can be compared to $\theta_{\boldsymbol{k}_1}$ and $\theta_{\boldsymbol{k}_2}$. More precisely, the last right-hand side can be written $S_0+S_1+S_2$ with 
\begin{eqnarray*}
S_0 &:= &\sum_{ \substack{\lambda \in I_{\boldsymbol{k}_1} \\ \mu \in I_{\boldsymbol{k}_2}}} (\theta_{\boldsymbol{k}_1}-\theta_{\boldsymbol{k}_2}) \langle \Pi_{\lambda}u^{(1)},A\Pi_{\mu} u^{(2)} \rangle=(\theta_{\boldsymbol{k}_1}-\theta_{\boldsymbol{k}_2}) \langle \Pi_{\mathcal{E}_{\boldsymbol{k}_1}}u^{(1)},A\Pi_{\mathcal{E}_{\boldsymbol{k}_2}}u^{(2)}\rangle, \\
S_1 &:=& \sum_{ \substack{\lambda \in I_{\boldsymbol{k}_1} \\ \mu \in I_{\boldsymbol{k}_2}}} (\lambda-\theta_{\boldsymbol{k}_1}) \langle \Pi_{\lambda}u^{(1)},A\Pi_{\mu} u^{(2)} \rangle =\Big\langle  \underbrace{\sum_{\lambda \in I_{\boldsymbol{k}_1}}  (\lambda-\theta_{\boldsymbol{k}_1})  \Pi_{\lambda}u^{(1)} }_{:=v^{(1)}},    A \Pi_{\mathcal{E}_{\boldsymbol{k}_2}}u^{(2)}  \Big\rangle,  \\
S_2&  := &\sum_{ \substack{\lambda \in I_{\boldsymbol{k}_1} \\ \mu \in I_{\boldsymbol{k}_2}}} (\theta_{\boldsymbol{k}_2}-\mu) \langle \Pi_{\lambda}u^{(1)},A\Pi_{\mu} u^{(2)} \rangle = \Big\langle    A \Pi_{\mathcal{E}_{\boldsymbol{k}_1}}u^{(1)} ,  \underbrace{ \sum_{\mu \in I_{\boldsymbol{k}_2}} (\theta_{\boldsymbol{k}_2}-\mu) \Pi_{\mu} u^{(2)} }_{:=v^{(2)}}\Big\rangle .
\end{eqnarray*}
In the previous formulas, we remark that $S_1$ and $S_2$ can be written as 
\begin{eqnarray*}
S_1& =& \langle v^{(1)},  A \Pi_{\mathcal{E}_{\boldsymbol{k}_2}}u^{(2)} \rangle \qquad \mbox{with} \qquad \Pi_{\mathcal{E}_{\boldsymbol{k}_1}} v^{(1)} = v^{(1)}, \\
S_2 & =& \langle  A \Pi_{\mathcal{E}_{\boldsymbol{k}_1}}u^{(1)}, v^{(2)}\rangle \qquad \mbox{with} \qquad 
\Pi_{\mathcal{E}_{\boldsymbol{k}_2}} v^{(2)} = v^{(2)}.
\end{eqnarray*}
Moreover,  thanks to \eqref{thet}, the following bounds hold true
\begin{equation}\label{boun1}
\|v^{(1)}\|_{L^2(\mathcal{M})}  \lesssim  \|u^{(1)}\|_{L^2(\mathcal{M})} \qquad \mbox{and}\qquad 
\|v^{(2)}\|_{L^2(\mathcal{M})} \lesssim \|u^{(2)}\|_{L^2(\mathcal{M})}.
\end{equation} 
Since $S_0$ contains the inner product we want to control in \eqref{adp2}, we bound as follows
\begin{eqnarray*}
S_0 & =& -S_1 -S_2+\langle [A,P] \Pi_{\mathcal{E}_{\boldsymbol{k}_1}}u^{(1)}, \Pi_{\mathcal{E}_{\boldsymbol{k}_2}}u^{(2)} \rangle \\
|S_0| & \leq &  |\langle \Pi_{\mathcal{E}_{\boldsymbol{k}_1}}v^{(1)},  A \Pi_{\mathcal{E}_{\boldsymbol{k}_2}}u^{(2)} \rangle|
+|\langle  A \Pi_{\mathcal{E}_{\boldsymbol{k}_1}}u^{(1)}, \Pi_{\mathcal{E}_{\boldsymbol{k}_2}} v^{(2)}\rangle|+|\langle [A,P] \Pi_{\mathcal{E}_{\boldsymbol{k}_1}}u^{(1)}, \Pi_{\mathcal{E}_{\boldsymbol{k}_2}}u^{(2)} \rangle|.
\end{eqnarray*}
Note that if the commutator operator $[A,P]$ is not bounded then \eqref{adp2} is of no interest since the upper bound is infinite. So there is no loss of generality to assume that $[A,P]$ is bounded.
Then we invoke the induction assumption \eqref{adp2} at rank $n$ to get the upper bound
\begin{eqnarray*}
|S_0| & \lesssim_n & 
\sup\limits_{0\leq \ell\leq n} \| (\mathrm{Ad}_P^\ell (A)\|_{L^2(\mathcal{M})\rightarrow L^2(\mathcal{M})}   \frac{ \|v^{(1)}\|_{L^2(\mathcal{M})} \|u^{(2)}\|_{L^2(\mathcal{M})} + \|u^{(1)}\|_{L^2(\mathcal{M})} \|v^{(2)}\|_{L^2(\mathcal{M})} }{(   \boldsymbol{k}_1 - \boldsymbol{k}_2 )^n} \\
 &  & +  \sup\limits_{0\leq \ell\leq n} \| (\mathrm{Ad}_P^\ell ([A,P])\|_{L^2(\mathcal{M})\rightarrow L^2(\mathcal{M})}   \frac{ \|u^{(1)}\|_{L^2(\mathcal{M})} \|u^{(2)}\|_{L^2(\mathcal{M})} }{(   \boldsymbol{k}_1 - \boldsymbol{k}_2  )^n} .
\end{eqnarray*}
We get the conclusion (namely \eqref{adp2} for $n+1$) by looking at \eqref{boun1}, \eqref{diff} and 
the equality 
\begin{equation*}
|S_0|=|(\theta_{\boldsymbol{k}_1}-\theta_{\boldsymbol{k}_2})|| \langle \Pi_{\mathcal{E}_{\boldsymbol{k}_1}}u^{(1)},A\Pi_{\mathcal{E}_{\boldsymbol{k}_2}}u^{(2)}\rangle|.
\end{equation*}
\end{proof}

\begin{lemma}\label{prod3}
Set the function $a$ given by the following product
\begin{equation}\label{defa}
a=(\Pi_{\mathcal E_{\boldsymbol{k}_{3}}} u^{(3)})\cdots (\Pi_{\mathcal E_{\boldsymbol{k}_{q}}}u^{(q)})
\end{equation}
Then following bound holds true 
\begin{equation}\label{defaa}
\|a\|_{W^{n,\infty}(\mathcal{M})} \lesssim_{n,q} \boldsymbol{k}_{3}^{\nu +n} \boldsymbol{k}_{4}^{\nu } \dots \boldsymbol{k}_{q}^{\nu } \times \prod_{\ell=3}^q \|\Pi_{\mathcal{E}_{k_\ell} }u^{(\ell)}\|_{L^2} . 
\end{equation}
\end{lemma}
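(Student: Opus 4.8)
The plan is to estimate $\|a\|_{W^{n,\infty}(\mathcal{M})}$ for the product $a=(\Pi_{\mathcal E_{\boldsymbol{k}_3}}u^{(3)})\cdots(\Pi_{\mathcal E_{\boldsymbol{k}_q}}u^{(q)})$ by combining two ingredients: the Sobolev embedding \eqref{soboi}, which reduces the $W^{n,\infty}$ bound to an $H^{n+s_0}$ bound, and a Bernstein-type estimate for the spectral truncations $\Pi_{\mathcal E_k}$, which says that on the range of $\Pi_{\mathcal E_k}$ one may replace the operator $(1-\Delta)^{1/2}$ (or $P$) by a factor comparable to $k$. First I would write, using \eqref{soboi}, that
$$
\|a\|_{W^{n,\infty}(\mathcal{M})} \lesssim_{n} \|a\|_{H^{n+s_0}(\mathcal{M})}.
$$
Then, since $n+s_0$ may not be an integer, I would dominate it by $\|a\|_{H^{N}(\mathcal{M})}$ for an integer $N$ with $N\geq n+s_0$ fixed in terms of $n$ (this only changes the constant and introduces the fixed exponent $\nu$: any integer $N\geq n+s_0$ with $N-n\leq \nu$ works once $\nu$ is chosen large enough, say $\nu \geq s_0+1$, which is how $\nu$ enters the statement).

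Next I would use the algebra structure of $H^N$ for $N>d/2$: since each factor $\Pi_{\mathcal E_{\boldsymbol{k}_\ell}}u^{(\ell)}$ lies in $H^N(\mathcal{M})$ (it is spectrally localized, hence smooth), the Leibniz rule / Moser-type product estimate in $H^N$ gives
$$
\|a\|_{H^N(\mathcal{M})} \lesssim_{N,q} \prod_{\ell=3}^{q}\|\Pi_{\mathcal E_{\boldsymbol{k}_\ell}}u^{(\ell)}\|_{H^N(\mathcal{M})},
$$
so it remains to estimate each single factor. Here I invoke the key Bernstein inequality: because $\Pi_{\mathcal E_k}$ projects onto eigenvectors of $P=\sqrt{-\Delta+V}$ with eigenvalue in the bounded interval $I_k$, and $\max(I_k)=Ck+O(1)$ by \eqref{thet0}--\eqref{thet}, one has for any integer $m\geq 0$
$$
\|P^m \Pi_{\mathcal E_k} v\|_{L^2}\leq \big(\max I_k\big)^m \|\Pi_{\mathcal E_k}v\|_{L^2}\lesssim_m k^m\|\Pi_{\mathcal E_k}v\|_{L^2},
$$
and combining this with \eqref{sobo-comp} (namely $\|w\|_{H^m}\lesssim \|P^m w\|_{L^2}$ for integer $m$) gives $\|\Pi_{\mathcal E_k}v\|_{H^m}\lesssim_m k^m\|\Pi_{\mathcal E_k}v\|_{L^2}$. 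Applying this with $m=N$ to each factor and recalling that one of the factors carries the extra derivative count (we may freely assign the worst power to the largest index; but here $\boldsymbol{k}_3$ is the largest of $\boldsymbol{k}_3,\dots,\boldsymbol{k}_q$ since the $\boldsymbol{k}_\ell^\star$ are ordered and the claim already singles out $\boldsymbol{k}_3$) yields a total power of the form $\boldsymbol{k}_3^{N}\cdot(\boldsymbol{k}_3\boldsymbol{k}_4\cdots\boldsymbol{k}_q)^{N-n}$ which, with $N-n\leq \nu$ and $N=n+O(1)$, is bounded by $\boldsymbol{k}_3^{\nu+n}\boldsymbol{k}_4^{\nu}\cdots\boldsymbol{k}_q^{\nu}$ times the product of $L^2$ norms, as claimed.

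The main obstacle, and the point where care is needed, is the clean passage between non-integer Sobolev exponents ($n+s_0$, and the eventual $s\in\mathbb{R}$ elsewhere) and the integer-order estimates \eqref{soboi}, \eqref{sobo-comp}, and the Bernstein bound, all of which are naturally stated for integers $m$ or $N$. I would handle this by simply fixing, once and for all, an integer $\nu$ large enough that $\lceil n+s_0\rceil - n\leq \nu$ for the relevant range of $n$ (in the application $n\geq 4$ and $s_0>d/2$ are fixed, so this is harmless), and noting that $H^{N}(\mathcal M)$ is an algebra for $N>d/2$ so the product estimate costs only a $q$-dependent constant. A secondary subtlety is that $P^m$ maps $\mathrm{Ran}\,\Pi_{\mathcal E_k}$ into itself with operator norm $\leq(\max I_k)^m$; this is immediate from the spectral decomposition $\Pi_{\mathcal E_k}=\sum_{\lambda\in I_k}\Pi_\lambda$ and $P\Pi_\lambda=\lambda\Pi_\lambda$, and the bound $\max I_k\lesssim k$ is exactly \eqref{thet0}--\eqref{thet}. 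No use of the Helffer--Sj\"ostrand formula or of a separately proven Bernstein inequality is needed — the spectral localization plus \eqref{soboi} does everything, which is precisely the simplification advertised in the remark following Proposition \ref{prop:multi}.
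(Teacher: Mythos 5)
There is a genuine gap in the middle of your argument. You invoke the plain algebra property of $H^N(\mathcal{M})$,
$$
\|a\|_{H^N(\mathcal{M})} \lesssim_{N,q} \prod_{\ell=3}^{q}\|\Pi_{\mathcal E_{\boldsymbol{k}_\ell}}u^{(\ell)}\|_{H^N(\mathcal{M})},
$$
and then apply the spectral (``Bernstein'') bound $\|\Pi_{\mathcal E_k}v\|_{H^N}\lesssim k^N\|\Pi_{\mathcal E_k}v\|_{L^2}$ to every factor. That chain of inequalities gives $\prod_{\ell=3}^{q}\boldsymbol{k}_\ell^{N}\prod_\ell\|u^{(\ell)}\|_{L^2}$ with $N\approx n+\nu$, i.e.\ $\boldsymbol{k}_3^{n+\nu}\boldsymbol{k}_4^{n+\nu}\cdots\boldsymbol{k}_q^{n+\nu}$, which overshoots the target bound $\boldsymbol{k}_3^{n+\nu}\boldsymbol{k}_4^{\nu}\cdots\boldsymbol{k}_q^{\nu}$ by the (potentially huge, since $n$ must be taken $\to\infty$ in the application) factor $\boldsymbol{k}_4^{n}\cdots\boldsymbol{k}_q^{n}$. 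The final expression you write down, $\boldsymbol{k}_3^{N}(\boldsymbol{k}_3\boldsymbol{k}_4\cdots\boldsymbol{k}_q)^{N-n}$, does not follow from the displayed inequalities; you seem to have silently assumed a tame product structure (only one factor carrying the full $H^N$ norm) while actually writing the non-tame estimate.

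The fix is precisely what the paper does: replace the crude algebra property by the tame product estimate $\|uw\|_{H^s}\lesssim_s \|u\|_{H^s}\|w\|_{L^\infty}+\|u\|_{L^\infty}\|w\|_{H^s}$ (from Alinhac--G\'erard), then use the Sobolev embedding to trade $L^\infty$ for $H^{d}$. Iterating over the $q-2$ factors yields
$$
\|a\|_{H^{n+\nu}}\lesssim_{n,q}\sum_{\ell=3}^{q}(1+\boldsymbol{k}_\ell)^{n+\nu}\prod_{j\neq\ell}(1+\boldsymbol{k}_j)^{d}\cdot\prod_{\ell=3}^q\|\Pi_{\mathcal E_{\boldsymbol{k}_\ell}}u^{(\ell)}\|_{L^2},
$$
and only then does the fact that $\boldsymbol{k}_3$ dominates among $\boldsymbol{k}_3,\dots,\boldsymbol{k}_q$ give the asymmetric bound $\boldsymbol{k}_3^{n+\nu}\prod_{j>3}\boldsymbol{k}_j^{\nu}$. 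The rest of your setup (using \eqref{soboi}, \eqref{sobo-comp}, the interval structure \eqref{thet0}--\eqref{thet} to get $\|\Pi_{\mathcal E_k}v\|_{H^m}\lesssim k^m\|v\|_{L^2}$, and avoiding Helffer--Sj\"ostrand) is correct and is indeed the same elementary route the paper follows; the only missing ingredient is the tame, rather than symmetric, product inequality.
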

\begin{proof}
Note that, thanks to \eqref{soboi} and upon increasing $\nu$, it is sufficient to prove
\begin{equation}\label{defaaa}
\|a\|_{H^{n}(\mathcal{M})} \lesssim_{n,q} \boldsymbol{k}_{3}^{\nu +n} \boldsymbol{k}_{4}^{\nu } \dots \boldsymbol{k}_{q}^{\nu } \times \prod_{\ell=3}^q \|\Pi_{\mathcal{E}_{k_\ell} }u^{(\ell)}\|_{L^2} . 
\end{equation}
For any $s>0$ and  $(u,w)\in (H^{s}(\mathcal{M})\cap L^\infty(\mathcal{M}))^2$, let us recall the following bound of a product of two functions (see  \cite[Proposition 2.1.1]{AG07}):
\begin{equation*}
\|uw\|_{H^s(\mathcal{M})}\lesssim_s 
\|u\|_{H^s(\mathcal{M})}\|w\|_{L^\infty(\mathcal{M}) }+
\|u\|_{L^\infty(\mathcal{M})}\|w\|_{H^s(\mathcal{M}) }.
\end{equation*}
Remembering the Sobolev embedding $H^{s_0}(\mathcal{M})\subset L^\infty(\mathcal{M})$ once 
we fix a real number $s_0>\frac{d}{2}$ (say $s_0=d$), we deduce 
\begin{equation*}
\|uw\|_{H^s(\mathcal{M})}\lesssim_s 
\|u\|_{H^s(\mathcal{M})}\|w\|_{H^{d}(\mathcal{M}) }+
\|u\|_{H^{d}(\mathcal{M})}\|w\|_{H^s(\mathcal{M}) }.
\end{equation*}
By using \eqref{sobo-comp},\eqref{thet0} and \eqref{thet}, we get 
$$
\|   (\Pi_{\mathcal E_{k}} u)\|_{H^{s}(\mathcal{M})}\lesssim (1+k)^s \|u\|_{L^2(\mathcal{M})}.
$$
We then understand that $\|a\|_{H^{n+\nu}(\mathcal{M})}$ is bounded (up to a multiplicative  constant depending on $n$) by
\begin{equation*}
\Big(\sum_{\ell=3}^{q} (1+\boldsymbol{k}_{\ell})^{n+\nu}  \prod_{j\neq \ell} (1+\boldsymbol{k}_j)^{d} \Big)\times \prod_{\ell=3}^q \|\Pi_{\mathcal{E}_{\boldsymbol{k}_\ell} }u^{(\ell)}\|_{L^2}  
\end{equation*}
and we immediately get \eqref{defaaa}.
\end{proof}

In view of Definition \ref{abs}, we have the following corollary
\begin{corollary}
The first set of inequalities \eqref{eq:DS_estim-KG} hold true.
\end{corollary}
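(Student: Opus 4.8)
The plan is to combine the commutator estimate \eqref{adp2} with the product estimate of Lemma \ref{prod3} to get \eqref{eq:DS_estim-KG}, and then to deduce \eqref{eq:DS_estim-KG2} from it. I would first reduce to the ordered case, assuming without loss of generality that $\boldsymbol{k}_1 \geq \cdots \geq \boldsymbol{k}_q$ (so that $\boldsymbol{k}_\ell^\star = \boldsymbol{k}_\ell$); the general case follows by relabelling. Writing $a := (\Pi_{\mathcal E_{\boldsymbol{k}_3}} u^{(3)}) \cdots (\Pi_{\mathcal E_{\boldsymbol{k}_q}} u^{(q)})$ and letting $A$ denote the operator of multiplication by $a$ (which is bounded and self-adjoint since $a \in L^\infty$), we recognize
$$
\int_{\mathcal M} (\Pi_{\mathcal E_{\boldsymbol{k}_1}} u^{(1)})\cdots (\Pi_{\mathcal E_{\boldsymbol{k}_q}} u^{(q)}) \,\mathrm{d}x = \langle A \,\Pi_{\mathcal E_{\boldsymbol{k}_1}} u^{(1)}, \Pi_{\mathcal E_{\boldsymbol{k}_2}} \overline{u^{(2)}} \rangle
$$
up to a harmless conjugation; so the quantity to estimate is exactly of the form handled by \eqref{adp2}.

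Next I would split into two regimes according to whether $\boldsymbol{k}_1 - \boldsymbol{k}_2$ is large or small. If $\boldsymbol{k}_1 - \boldsymbol{k}_2 \geq \tau$ (so the hypothesis of \eqref{adp2} holds), then \eqref{adp2} at order $n$ gives a bound by
$$
\sup_{0\leq \ell \leq n} \|\mathrm{Ad}_P^\ell(A)\|_{L^2 \to L^2} \, \frac{\|u^{(1)}\|_{L^2}\|u^{(2)}\|_{L^2}}{(\boldsymbol{k}_1 - \boldsymbol{k}_2)^n}.
$$
By Definition \ref{abs}, $\sup_{0\leq \ell \leq n}\|\mathrm{Ad}_P^\ell(A)\|_{L^2\to L^2} \lesssim_n \|a\|_{W^{n+\nu,\infty}}$, and Lemma \ref{prod3} (applied with $n+\nu$ in place of $n$, at the cost of enlarging the fixed exponent $\nu$) controls $\|a\|_{W^{n+\nu,\infty}}$ by $\boldsymbol{k}_3^{\nu'+n} \boldsymbol{k}_4^{\nu'} \cdots \boldsymbol{k}_q^{\nu'} \prod_{\ell=3}^q \|\Pi_{\mathcal E_{\boldsymbol{k}_\ell}} u^{(\ell)}\|_{L^2}$ for a possibly larger $\nu'$; bounding each $\|\Pi_{\mathcal E_{\boldsymbol{k}_\ell}} u^{(\ell)}\|_{L^2} \leq \|u^{(\ell)}\|_{L^2}$ yields a bound of the form
$$
\frac{\boldsymbol{k}_3^{\nu+n}\boldsymbol{k}_4^{\nu}\cdots\boldsymbol{k}_q^{\nu}}{(\boldsymbol{k}_1-\boldsymbol{k}_2)^n}\prod_{\ell=1}^q \|u^{(\ell)}\|_{L^2}.
$$
Since $\boldsymbol{k}_1 - \boldsymbol{k}_2 + \boldsymbol{k}_3 \leq (\boldsymbol{k}_1 - \boldsymbol{k}_2) + \boldsymbol{k}_3 \lesssim \max(\boldsymbol{k}_1 - \boldsymbol{k}_2, \boldsymbol{k}_3)$, one checks that $(\boldsymbol{k}_1-\boldsymbol{k}_2)^{-n} \boldsymbol{k}_3^n \lesssim_n (\boldsymbol{k}_1-\boldsymbol{k}_2+\boldsymbol{k}_3)^{-n}\boldsymbol{k}_3^{2n}$ when $\boldsymbol{k}_1-\boldsymbol{k}_2 \geq \boldsymbol{k}_3$, and directly $(\boldsymbol{k}_1 - \boldsymbol{k}_2+\boldsymbol{k}_3) \sim \boldsymbol{k}_3$ when $\boldsymbol{k}_1 - \boldsymbol{k}_2 \leq \boldsymbol{k}_3$; in both cases one absorbs the discrepancy into an extra power of $\boldsymbol{k}_3$ (again enlarging $\nu$), giving \eqref{eq:DS_estim-KG}. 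In the complementary regime $\boldsymbol{k}_1 - \boldsymbol{k}_2 < \tau$, the denominator $\boldsymbol{k}_1 - \boldsymbol{k}_2 + \boldsymbol{k}_3$ is comparable to $\boldsymbol{k}_3$ up to the fixed constant $\tau$, and one uses instead the crude Hölder bound $|\int_{\mathcal M} \prod (\Pi_{\mathcal E_{\boldsymbol{k}_\ell}} u^{(\ell)})| \leq \|\Pi_{\mathcal E_{\boldsymbol{k}_1}} u^{(1)}\|_{L^2}\|a\|_{L^\infty} \|u^{(2)}\|_{L^2}$ combined with Lemma \ref{prod3} at order $n$ (noting $\boldsymbol{k}_3^n \sim (\boldsymbol{k}_1-\boldsymbol{k}_2+\boldsymbol{k}_3)^n$ here); this again fits the claimed form of \eqref{eq:DS_estim-KG}.

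Finally, \eqref{eq:DS_estim-KG2} is a purely arithmetic consequence of \eqref{eq:DS_estim-KG}, as is shown in the forthcoming Lemma \ref{A.6} referenced in the body of the paper (the same reduction that shows \eqref{eq:DS-estimates} implies \eqref{eq:estimates_thm}); concretely one splits into the cases $\boldsymbol{k}_1^\star - \boldsymbol{k}_2^\star \leq \boldsymbol{k}_3^\star$ (where $(\boldsymbol{k}_1^\star - \boldsymbol{k}_2^\star + \boldsymbol{k}_3^\star)^{-n} \sim (\boldsymbol{k}_3^\star)^{-n}$, and one pays the factor $(\boldsymbol{k}_2^\star / \boldsymbol{k}_1^\star)^n \leq 1$ trivially while $\Gamma_{\boldsymbol{k}} \gtrsim (\boldsymbol{k}_1^\star - \boldsymbol{k}_2^\star + \cdots)^{-3}$ is handled by absorbing a bounded power) versus $\boldsymbol{k}_1^\star - \boldsymbol{k}_2^\star > \boldsymbol{k}_3^\star$ (where the denominator dominates $\boldsymbol{k}_1^\star - \boldsymbol{k}_2^\star$, which bounds $\langle \boldsymbol{\varsigma}_1 \boldsymbol{k}_1 + \cdots + \boldsymbol{\varsigma}_q \boldsymbol{k}_q \rangle$ from above for a suitable sign choice, so that $\Gamma_{\boldsymbol{k}}^{-1} \lesssim (\boldsymbol{k}_1^\star - \boldsymbol{k}_2^\star)^3$ and the ratio $(\boldsymbol{k}_2^\star/\boldsymbol{k}_1^\star)^n$ together with the large power $n$ in the denominator give the gain). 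The main obstacle is the bookkeeping in matching the rational weight $(\boldsymbol{k}_1^\star - \boldsymbol{k}_2^\star + \boldsymbol{k}_3^\star)^{-n}$ against $\Gamma_{\boldsymbol{k}}(\boldsymbol{k}_2^\star/\boldsymbol{k}_1^\star)^n$, which requires choosing $n$ (in the application of the already-proved estimate) larger than the $n$ appearing in the target by a fixed additive constant, and tracking how $\nu$ grows through the successive reductions; none of this is deep, but it must be done carefully. I therefore conclude the corollary, referring to Lemma \ref{A.6} for the last arithmetic step.
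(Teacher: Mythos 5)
Your overall strategy matches the paper's: reduce to the ordered case, write the integral as $\langle A\,\Pi_{\mathcal E_{\boldsymbol{k}_1}}u^{(1)},\Pi_{\mathcal E_{\boldsymbol{k}_2}}u^{(2)}\rangle$ with $A$ the multiplication by $a=\prod_{\ell\ge 3}\Pi_{\mathcal E_{\boldsymbol{k}_\ell}}u^{(\ell)}$, use the commutator bound \eqref{adp2} together with Definition~\ref{abs} and Lemma~\ref{prod3}, and split into two regimes. But your case split is at the wrong threshold, and this creates a genuine gap. You split on $\boldsymbol{k}_1-\boldsymbol{k}_2\ge\tau$ versus $\boldsymbol{k}_1-\boldsymbol{k}_2<\tau$, whereas the paper splits on $\boldsymbol{k}_1-\boldsymbol{k}_2\ge\max(\boldsymbol{k}_3,\tau)$ versus the complement. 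In the intermediate regime $\tau\le\boldsymbol{k}_1-\boldsymbol{k}_2<\boldsymbol{k}_3$ you apply \eqref{adp2} at order $n$, which yields a bound proportional to $\boldsymbol{k}_3^{\nu+n}\cdots\boldsymbol{k}_q^{\nu}/(\boldsymbol{k}_1-\boldsymbol{k}_2)^n$, while the target \eqref{eq:DS_estim-KG} in this regime is comparable to $\boldsymbol{k}_3^{\nu}\cdots\boldsymbol{k}_q^{\nu}$ (since $\boldsymbol{k}_1-\boldsymbol{k}_2+\boldsymbol{k}_3\sim\boldsymbol{k}_3$). The ratio between the two is $(\boldsymbol{k}_3/(\boldsymbol{k}_1-\boldsymbol{k}_2))^n$, which when $\boldsymbol{k}_1-\boldsymbol{k}_2\sim\tau$ is of order $\boldsymbol{k}_3^n$. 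This is not ``an extra power of $\boldsymbol{k}_3$'' that can be absorbed into $\nu$, because $\nu$ in \eqref{eq:DS_estim-KG} must be independent of $n$. So the claim ``in both cases one absorbs the discrepancy\ldots enlarging $\nu$'' fails in the subcase $\boldsymbol{k}_1-\boldsymbol{k}_2\le\boldsymbol{k}_3$.

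The correct move, as in the paper, is to not use the order-$n$ commutator estimate at all when $\boldsymbol{k}_1-\boldsymbol{k}_2<\max(\boldsymbol{k}_3,\tau)$: in that whole regime $(\boldsymbol{k}_1-\boldsymbol{k}_2+\boldsymbol{k}_3)^{-n}\boldsymbol{k}_3^n\sim 1$, so the target \eqref{eq:DS_estim-KG} reduces to $\boldsymbol{k}_3^{\nu}\cdots\boldsymbol{k}_q^{\nu}\prod\|u^{(\ell)}\|_{L^2}$, which is exactly what Cauchy--Schwarz plus the $L^\infty$ bound of Lemma~\ref{prod3} at order $0$ give. Conversely, when $\boldsymbol{k}_1-\boldsymbol{k}_2\ge\max(\boldsymbol{k}_3,\tau)$ one has $\boldsymbol{k}_1-\boldsymbol{k}_2\le\boldsymbol{k}_1-\boldsymbol{k}_2+\boldsymbol{k}_3\le 2(\boldsymbol{k}_1-\boldsymbol{k}_2)$, so the bound $(\boldsymbol{k}_1-\boldsymbol{k}_2)^{-n}$ from \eqref{adp2} transfers directly to $(\boldsymbol{k}_1-\boldsymbol{k}_2+\boldsymbol{k}_3)^{-n}$ with only a constant $2^n\lesssim_n 1$ loss, without any spurious $\boldsymbol{k}_3^{2n}$ factor as in your inequality. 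A secondary remark: the corollary asks only for \eqref{eq:DS_estim-KG}; your final paragraph about deducing \eqref{eq:DS_estim-KG2} via Lemma~\ref{A.6} is a correct description of what the paper does afterwards, but it is not part of this corollary.
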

\begin{proof}
Upon permuting the terms, one may assume $\boldsymbol{k}_1\geq \dots \geq \boldsymbol{k}_q$. We distinguish two cases.

\medskip

\noindent \underline{\emph{$\triangleright$ Case $1$: $\boldsymbol{k}_{1}-\boldsymbol{k}_{2} \geq \max(\boldsymbol{k}_3,\tau)$.}} 
In particular, we have  $\boldsymbol{k}_1-\boldsymbol{k}_2\geq \tau$.
Let $A$ be the bounded operator by the product function $a$ given in \eqref{defa}.
We now use \eqref{adp2} and \eqref{adP} to get
\begin{eqnarray}\nonumber
\Big| \int_{\mathcal M} (\Pi_{\mathcal E_{\boldsymbol{k}_{1}}} u^{(1)})\cdots (\Pi_{\mathcal E_{\boldsymbol{k}_{q}}}u^{(q)})\mathrm{d} x \Big|  & =& |\langle A \Pi_{\mathcal{E}_{\boldsymbol{k}_1}} u^{(1)}, \Pi_{\mathcal{E}_{\boldsymbol{k}_2}}u^{(2)}\rangle | \|\Pi_{\mathcal{E}_{\boldsymbol{k}_1}} u^{(1)}\|_{L^2(\mathcal{M})}\|\Pi_{\mathcal{E}_{\boldsymbol{k}_2}} u^{(2)}\|_{L^2(\mathcal{M})} \\ \label{bobo}
& \lesssim_n &  \frac{\|a\|_{W^{n+\nu,\infty}(\mathcal{M}) }}{(\boldsymbol{k}_{1}-\boldsymbol{k}_{2}  )^n} \|\Pi_{\mathcal{E}_{\boldsymbol{k}_1}} u^{(1)}\|_{L^2(\mathcal{M})}\|\Pi_{\mathcal{E}_{\boldsymbol{k}_2}} u^{(2)}\|_{L^2(\mathcal{M})} .
\end{eqnarray}
Again upon modifying $\nu$ and looking at \eqref{defaa}, we then obtain
\begin{equation}\label{bobo2}
\Big| \int_{\mathcal M} (\Pi_{\mathcal E_{\boldsymbol{k}_{1}}} u^{(1)})\cdots (\Pi_{\mathcal E_{\boldsymbol{k}_{q}}}u^{(q)})\mathrm{d} x \Big|   \lesssim_{n,q} 
\frac{\boldsymbol{k}_{3}^{\nu +n} \boldsymbol{k}_{4}^{\nu } \dots \boldsymbol{k}_{q}^{\nu }  }{ (\boldsymbol{k}_{1} - \boldsymbol{k}_{2})^{n}} \prod_{\ell=1}^q \| u^{(\ell)} \|_{L^2}.
\end{equation}
Since our assumption implies $\boldsymbol{k}_{1}-\boldsymbol{k}_{2} \geq \boldsymbol{k}_{3}$, we may write
\begin{equation*}
1\leq 
\frac{\boldsymbol{k}_3+\boldsymbol{k}_{1} - \boldsymbol{k}_{2}}{\boldsymbol{k}_{1} - \boldsymbol{k}_{2}} \leq 2.
\end{equation*}
Hence \eqref{bobo2} is exactly the expected upper bound \eqref{eq:DS_estim-KG}.

\medskip

\noindent \underline{\emph{$\triangleright$ Case $2$: $\max(\boldsymbol{k}_{3},\tau)> \boldsymbol{k}_{1}-\boldsymbol{k}_{2}$.}} 
By using the notation \eqref{defa}, we may invoke the Cauchy-Schwarz inequality:
\begin{equation*}
\big| \int_{\mathcal M} (\Pi_{\mathcal E_{\boldsymbol{k}_{1}}} u^{(1)})\cdots (\Pi_{\mathcal E_{\boldsymbol{k}_{q}}}u^{(q)})\mathrm{d} x \big|  \leq 
\|(\Pi_{\mathcal E_{\boldsymbol{k}_{1}}} u^{(1)})\|_{L^2(\mathcal{M})}
\|(\Pi_{\mathcal E_{\boldsymbol{k}_{2}}} u^{(2)})\|_{L^2(\mathcal{M})}
\|a\|_{L^\infty(\mathcal{M})}.
\end{equation*}
The computations made in the proof of Lemma \ref{prod3} are again available (see \eqref{defaa} for $n=0$) and we obtain the following inequality 
\begin{equation*}
\big| \int_{\mathcal M} (\Pi_{\mathcal E_{\boldsymbol{k}_{1}}} u^{(1)})\cdots (\Pi_{\mathcal E_{\boldsymbol{k}_{q}}}u^{(q)})\mathrm{d} x \big|  \lesssim_{n,q} \boldsymbol{k}_{3}^{\nu} \boldsymbol{k}_{4}^{\nu } \dots \boldsymbol{k}_{q}^{\nu }  \prod_{\ell=1}^q \| u^{(\ell)} \|_{L^2}.
\end{equation*}
Under the assumption $\max(\tau,\boldsymbol{k}_{3})> \boldsymbol{k}_{1}-\boldsymbol{k}_{2}$, the last upper bound is equivalent to 
\begin{equation*}
\left( 1+\frac{\boldsymbol{k}_{1}-\boldsymbol{k}_{2}}{\boldsymbol{k}_{3}}\right)^{-n}   \boldsymbol{k}_{3}^{\nu } \cdots \boldsymbol{k}_{q}^{\nu }\prod_{\ell=1}^q \| u^{(\ell)} \|_{L^2}
\end{equation*}
which is exactly the right-hand side of \eqref{eq:DS_estim-KG}.
\end{proof}

The second set of inequalities \eqref{eq:DS_estim-KG2} is finally a consequence of the following lemma (by keeping in mind that $n$ may be chosen arbitrary large in \eqref{eq:DS_estim-KG}).
\begin{lemma}\label{A.6} Let us consider $q\geq 3$, $n\geq 4$, $\nu \geq 0$, $\boldsymbol{k}_1 \geq \cdots \geq \boldsymbol{k}_q$, then the following inequalities hold
$$
 \frac{\boldsymbol{k}_{3}^{\nu +n}  \boldsymbol{k}_{4}^{\nu } \cdots \boldsymbol{k}_{q}^{\nu }}{ (\boldsymbol{k}_{1} - \boldsymbol{k}_{2} + \boldsymbol{k}_{3} )^{n}} \lesssim_{q,n} \Gamma_{\boldsymbol{k}} \Big( \frac{\boldsymbol{k}_2}{\boldsymbol{k}_1} \Big)^{n-3} \boldsymbol{k}_{3}^{\nu' }   \cdots \boldsymbol{k}_{q}^{\nu' }
$$
where $\nu ' = \nu +3$.
\end{lemma}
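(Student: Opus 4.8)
The plan is to strip the statement down to its essential content and then estimate three factors separately. Since every $\boldsymbol{k}_\ell$ is a positive integer, we have $\boldsymbol{k}_\ell^{\nu'}=\boldsymbol{k}_\ell^{\nu}\boldsymbol{k}_\ell^{3}\ge\boldsymbol{k}_\ell^{\nu}$ for $\ell\ge4$ and $\boldsymbol{k}_3^{\nu'}=\boldsymbol{k}_3^{\nu}\boldsymbol{k}_3^{3}$, so after cancelling the common monomial $\boldsymbol{k}_3^{\nu+3}\boldsymbol{k}_4^{\nu}\cdots\boldsymbol{k}_q^{\nu}$ from both sides it suffices to prove the reduced bound
\[
\frac{\boldsymbol{k}_3^{\,n-3}}{(\boldsymbol{k}_1-\boldsymbol{k}_2+\boldsymbol{k}_3)^{n}}\ \lesssim_{q,n}\ \Gamma_{\boldsymbol{k}}\,\Big(\frac{\boldsymbol{k}_2}{\boldsymbol{k}_1}\Big)^{n-3}.
\]
I would then write the left-hand side as $(\boldsymbol{k}_1-\boldsymbol{k}_2+\boldsymbol{k}_3)^{-3}$ times the geometric factor $\big(\boldsymbol{k}_3/(\boldsymbol{k}_1-\boldsymbol{k}_2+\boldsymbol{k}_3)\big)^{n-3}$ and bound the second factor against $(\boldsymbol{k}_2/\boldsymbol{k}_1)^{n-3}$ and the first against $\Gamma_{\boldsymbol{k}}$.

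For the geometric factor I claim $\boldsymbol{k}_3\le 2\,\boldsymbol{k}_2\,(\boldsymbol{k}_1-\boldsymbol{k}_2+\boldsymbol{k}_3)/\boldsymbol{k}_1$, which is elementary once one distinguishes whether $\boldsymbol{k}_1\le2\boldsymbol{k}_2$ or not: if $\boldsymbol{k}_1\le2\boldsymbol{k}_2$ then $2\boldsymbol{k}_2/\boldsymbol{k}_1\ge1$ while $\boldsymbol{k}_3\le\boldsymbol{k}_1-\boldsymbol{k}_2+\boldsymbol{k}_3$; if $\boldsymbol{k}_1\ge2\boldsymbol{k}_2$ then $\boldsymbol{k}_1-\boldsymbol{k}_2\ge\boldsymbol{k}_1/2$, hence $\boldsymbol{k}_3\le\boldsymbol{k}_2\le 2\boldsymbol{k}_2(\boldsymbol{k}_1-\boldsymbol{k}_2)/\boldsymbol{k}_1\le2\boldsymbol{k}_2(\boldsymbol{k}_1-\boldsymbol{k}_2+\boldsymbol{k}_3)/\boldsymbol{k}_1$. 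Raising this to the power $n-3\ge0$ controls the geometric factor by $2^{n-3}(\boldsymbol{k}_2/\boldsymbol{k}_1)^{n-3}$.

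It then remains to prove the lower bound $\Gamma_{\boldsymbol{k}}\gtrsim_q(\boldsymbol{k}_1-\boldsymbol{k}_2+\boldsymbol{k}_3)^{-3}$. Since $\Gamma_{\boldsymbol{k}}$ is a sum of nonnegative terms, it is enough to exhibit one sign vector $\boldsymbol{\varsigma}\in\{-1,1\}^q$ with $|\boldsymbol{\varsigma}_1\boldsymbol{k}_1+\cdots+\boldsymbol{\varsigma}_q\boldsymbol{k}_q|\le\max(\boldsymbol{k}_1-\boldsymbol{k}_2,\boldsymbol{k}_3)$, because then $\langle\boldsymbol{\varsigma}_1\boldsymbol{k}_1+\cdots+\boldsymbol{\varsigma}_q\boldsymbol{k}_q\rangle\le 1+\max(\boldsymbol{k}_1-\boldsymbol{k}_2,\boldsymbol{k}_3)\le2(\boldsymbol{k}_1-\boldsymbol{k}_2+\boldsymbol{k}_3)$ (using $\boldsymbol{k}_3\ge1$), so $\Gamma_{\boldsymbol{k}}\ge\langle\cdot\rangle^{-3}\ge\tfrac18(\boldsymbol{k}_1-\boldsymbol{k}_2+\boldsymbol{k}_3)^{-3}$. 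Such a $\boldsymbol{\varsigma}$ is produced greedily: take $\boldsymbol{\varsigma}_1=1$, $\boldsymbol{\varsigma}_2=-1$ so the partial sum $s_2=\boldsymbol{k}_1-\boldsymbol{k}_2\ge0$, and for $i\ge3$ set $\boldsymbol{\varsigma}_i=-1$ if $s_{i-1}\ge0$ and $\boldsymbol{\varsigma}_i=+1$ otherwise; since $0\le\boldsymbol{k}_i\le\boldsymbol{k}_3$ for $i\ge3$, an immediate induction gives $|s_i|\le\max(|s_{i-1}|,\boldsymbol{k}_i)\le\max(\boldsymbol{k}_1-\boldsymbol{k}_2,\boldsymbol{k}_3)$ for all $i$, in particular for $i=q$. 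Combining the three estimates gives the reduced bound, and the lemma follows. Honestly there is no deep obstacle here; the one point that requires care is to resist bounding $\boldsymbol{k}_3^{n-3}/(\boldsymbol{k}_1-\boldsymbol{k}_2+\boldsymbol{k}_3)^{n-3}$ by $1$, which is far too lossy when $\boldsymbol{k}_3\ll\boldsymbol{k}_1-\boldsymbol{k}_2$ — retaining this ratio and trading it for $(\boldsymbol{k}_2/\boldsymbol{k}_1)^{n-3}$, together with the uniform-in-$q$ lower bound on $\Gamma_{\boldsymbol{k}}$, is what makes the argument close.
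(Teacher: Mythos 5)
Your proposal is correct and follows essentially the same route as the paper: reduce to eliminating the $\nu$-powers (you cancel the common monomial, the paper assumes $\nu=0$ without loss), split the exponent $n$ as $3+(n-3)$, absorb the cube $(\boldsymbol{k}_1-\boldsymbol{k}_2+\boldsymbol{k}_3)^{-3}$ into $\Gamma_{\boldsymbol{k}}$ via one term of the defining sum, and trade the remaining $(n-3)$-power geometric factor $\boldsymbol{k}_3/(\boldsymbol{k}_1-\boldsymbol{k}_2+\boldsymbol{k}_3)$ for $\boldsymbol{k}_2/\boldsymbol{k}_1$. The only tactical differences are minor: you lower-bound $\Gamma_{\boldsymbol{k}}$ with a greedy choice of signs, where the paper simply uses $\boldsymbol{\varsigma}=(1,-1,1,\dots,1)$ together with the comparison $\boldsymbol{k}_3+\cdots+\boldsymbol{k}_q\sim_q\boldsymbol{k}_3$; and you bound the geometric factor by a two-case comparison with $2\boldsymbol{k}_2/\boldsymbol{k}_1$, where the paper invokes monotonicity of $t\mapsto t/(\boldsymbol{k}_1-\boldsymbol{k}_2+t)$ on $[0,\boldsymbol{k}_2]$.
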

\begin{proof}Without loss of generality, we assume that $\nu = 0$. First, we note that
$$
 \frac{\boldsymbol{k}_{3} }{ \boldsymbol{k}_{1} - \boldsymbol{k}_{2} + \boldsymbol{k}_{3} } \lesssim_q  \frac{\boldsymbol{k}_{3} }{ \boldsymbol{k}_{1} - \boldsymbol{k}_{2} + q\boldsymbol{k}_{3} } \lesssim_q  \frac{\boldsymbol{k}_{3} }{\boldsymbol{k}_{1} - \boldsymbol{k}_{2} + \boldsymbol{k}_{3} + \cdots +  \boldsymbol{k}_{q} } .
$$
Looking at the definition of $\Gamma_{\boldsymbol{k}}$ in \eqref{eq:def_gamma_k}, it follows that 
\begin{eqnarray*}
 \frac{\boldsymbol{k}_{3}^{n} }{ (\boldsymbol{k}_{1} - \boldsymbol{k}_{2} + \boldsymbol{k}_{3} )^{n}} & \lesssim_{q,n} & \Big(\frac{\boldsymbol{k}_{3}}{\boldsymbol{k}_{1} - \boldsymbol{k}_{2} + \boldsymbol{k}_{3} }\Big)^3 \Big(  \frac{\boldsymbol{k}_{3} }{ \boldsymbol{k}_{1} - \boldsymbol{k}_{2} + \boldsymbol{k}_{3} } \Big)^{n-3}
\\& \lesssim_{q,n} & \boldsymbol{k}_{3}^3  \Gamma_{\boldsymbol{k}} \Big(  \frac{\boldsymbol{k}_{3} }{ \boldsymbol{k}_{1} - \boldsymbol{k}_{2} + \boldsymbol{k}_{3} } \Big)^{n-3}.
\end{eqnarray*}
Since the function $t\mapsto \frac{t}{\boldsymbol{k}_{1} - \boldsymbol{k}_{2}+t}$ is non-decreasing for $t\in [0, \boldsymbol{k}_{2}]$, its highest value is $\frac{ \boldsymbol{k}_{2}}{ \boldsymbol{k}_{1}}$.
\end{proof}

\section{Proof of Corollary \ref{cor:smooth}} 
\label{sec:appendix_B}

Without loss of generality, we focus on positive times and we assume that $\mathcal{U} = B_{h^{s_0}}(0,\rho_0)$ for some $\rho_0>0$. To avoid any possible confusion, we use the letter $\delta$ instead of $\epsilon$. We fix\footnote{i.e. from now most of the constants will depend on $v$ but we will not try to track this dependency.} $v\in h^\infty \setminus \{0\}$ and for all $\delta \in (0,1)$, we denote by $u^{(\delta)} \in C^0([0,T_\delta^{\mathrm{max}});\mathcal{U})$ the maximal solution of \eqref{eq:ham-pde} in $h^{s_0}$ with initial datum $u^{(\delta)}(0) =\delta v$. Since this solution is maximal, it satisfies
$$
T_\delta^{\mathrm{max}} = +\infty \quad \mathrm{or} \quad \lim_{t\to T_\delta^{\mathrm{max}}} \| u^{(\delta)}(t) \|_{h^{s_0}} = +\infty.
$$
First, we note that since $u^{(\delta)}(0) \in h^\infty$, thank to the tame estimate \eqref{eq:lestimee_tame_pas_facile_a_pas_oublier}, we have that $u^{(\delta)} \in C^\infty([0,T_\delta^{\mathrm{max}});h^{\infty})$.
So it suffices to focus on estimates.

\medskip

We start by deducing the following corollary of Theorem \ref{thm:main}.
\begin{lemma} \label{lem:last}
For all $s_1\geq s_{\mathrm{min}}$, for all $\mathfrak{r}\geq 1$, there exists $\delta_{\mathfrak{r},s_1}\in (0,1)$ such that for all $\delta \leq \delta_{r,s_1}$ we have $ \delta^{-\mathfrak{r}} \leq T_\delta^{\mathrm{max}}$ and
$$
\forall t\leq \delta^{-\mathfrak{r}}, \quad \| u^{(\delta)}(t) \|_{h^{s_1}} \lesssim_{s_1}  \| u^{(\delta)}(0) \|_{h^{s_1}}.
$$
\end{lemma}
\begin{proof} We apply Theorem \ref{thm:main}, with $s_c = s_1$, $u^{(0)} = u^{(\delta)}(0)$, $\mathfrak{r} = r+1$ and $s\equiv s(r,s_c)$ minimal so that the assumption $s\gtrsim_{r,s_c} 1$ is satisfied. By definition we have $\varepsilon := \|u^{(\delta)}(0)\|_{h^{s_1}} = \delta \|v\|_{h^{s_1}} \sim_{s_1} \delta$ and, since $s$ is minimal, $\|u^{(\delta)}(0)\|_{h^{s}} \lesssim_{r,s_1} \delta$. So provided that $\delta \lesssim_{s_1,r} 1$, Theorem \ref{thm:main} ensures that $\varepsilon^{-\mathfrak{r}-1} \leq T_\delta^{\mathrm{max}}$ and that for all $t\leq \varepsilon^{-\mathfrak{r}-1}$, we have $\| u^{(\delta)}(t) \|_{h^{s_1}} \lesssim_{s_1}  \| u^{(\delta)}(0) \|_{h^{s_1}}$. Then it suffices to assume that $\delta$ satisfies another bound of the kind $\delta \lesssim_{s_1,r} 1$ to deduce that $\delta^{-\mathfrak{r}} \leq \varepsilon^{-\mathfrak{r}-1}$. Thus we have proven that, provided that $\delta \lesssim_{s_1,r} 1$, we have
$$
\delta^{-\mathfrak{r}} \leq T_\delta^{\mathrm{max}} \quad \mathrm{and} \quad \forall t\leq \delta^{-\mathfrak{r}}, \quad \| u^{\delta}(t) \|_{s_1} \lesssim_{s_1} \| u^{\delta}(0) \|_{s_1}.
$$
\end{proof}

Now we apply Lemma \ref{lem:last}. For all $r\geq 1$, we set
$$
\eta_r := 2^{-r} \min_{s_1 \in s_{\mathrm{min}} +  \{ 0,\cdots,r \} } \min_{\mathfrak{r} \in \{1,\cdots,r\}} \delta_{r,s_1} .
$$
We note that, thanks to the $2^{-r}$ factor, $\eta$ is decreasing and goes to $0$ as $r$ goes to $+\infty$. As a consequence, it makes sense to set, for all $\delta \in (0,1)$,
$$
T_\delta := \delta^{-r} \quad \mathrm{if} \quad \eta_{r+1}< \delta \leq \eta_r \quad \mathrm{with} \quad r\in \mathbb{N} \quad \mathrm{and} \quad T_\delta := 0 \quad \mathrm{if} \quad \delta > \eta_1.
$$
Thus, when $\delta>\eta_1$, there is nothing to prove. Moreover, we note that $T_\delta<T_\delta^{\mathrm{max}}$ and that
$$
\lim_{\delta\to 0} \delta^{r} T_\delta = +\infty. 
$$
As a consequence, to conclude the proof, it suffices to prove that if $\delta \leq \eta_1$, $t\leq T_\delta$ and $s_2\geq 0$ then $\| u^{\delta}(t) \|_{s_2} \lesssim_{s_2} \| u^{\delta}(0) \|_{s_2}$.

\medskip

So let $r\geq 1$, $\delta \in (\eta_{r+1},\eta_r]$ and $s_2\geq 0$. We have to distinguish $2$ cases.

\medskip

\noindent \emph{\underline{Case 1: $s_2 \leq s_{\mathrm{min}} +r$.}}
Let $s_1 \in s_{\mathrm{min}} +  \{ 0,\cdots,r \}$ be such that $s_2\leq s_1< 1+s_2$. By Lemma \ref{lem:last}, we have that
$$
\forall t\leq T_\delta, \quad  \| u^{(\delta)}(t) \|_{h^{s_1}} \lesssim_{s_1}   \| u^{(\delta)}(0) \|_{h^{s_1}}.
$$
Now, using that for all $z\in h^\infty$, $s_3\mapsto \| z \|_{h^{s_3}}$ is an increasing positive function, we deduce that for all $t\leq T_\delta$,
$$
 \| u^{(\delta)}(t) \|_{h^{s_2}} \leq   \| u^{(\delta)}(t) \|_{h^{s_1}}  \lesssim_{s_1}  \| u^{(\delta)}(0) \|_{h^{s_1}} 
$$
and
$$
\| u^{(\delta)}(0) \|_{h^{s_1}}  = \| u^{(\delta)}(0) \|_{h^{s_2}}  \frac{\| v \|_{h^{s_1}}}{\| v \|_{h^{s_2}}} \leq \| u^{(\delta)}(0) \|_{h^{s_2}} \frac{\| v \|_{h^{s_1}}}{\| v \|_{\ell^2}} \sim_{s_1}  \| u^{(\delta)}(0) \|_{h^{s_2}}.
$$
Since here $s_1$ is a function of $s_2$, we have proven that $\| u^{\delta}(t) \|_{s_2} \lesssim_{s_2} \| u^{\delta}(0) \|_{s_2}$.

\medskip

\noindent \emph{\underline{Case 2: $s_2 > s_{\mathrm{min}} +r$.}} Since $T_\delta<T_\delta^{\mathrm{max}}$, we know that for all $t\leq T_\delta$, $ u^{\delta}(t) \in \mathcal{U} \cap h^{s_2}$ and so using the tame estimate \eqref{eq:lestimee_tame_pas_facile_a_pas_oublier} on $g$ and the Gr\"onwall inequality, we have that 
$$
\forall t\leq T_\delta, \quad \| u^{\delta}(t) \|_{s_2} \leq \| u^{\delta}(0) \|_{s_2}  e^{C_{s_2} t}
$$
where $C_2>0$ is a constant depending only on $s_2$. Now using that $T_\delta =  \delta^{-r}$, $\delta>\eta_{r+1}$ and $s_2>r+ s_{\mathrm{min}}$, we get that 
$$
\forall t\leq T_\delta, \quad \| u^{\delta}(t) \|_{s_2} \leq \| u^{\delta}(0) \|_{s_2}  e^{C_{s_2} \eta_{r+1}^{-r}} \leq \| u^{\delta}(0) \|_{s_2}  e^{C_{s_2} \eta_{ \lfloor s_2 \rfloor  +2}^{-s_2}} \sim_{s_2} \| u^{\delta}(0) \|_{s_2} .
$$

\end{document}